\documentclass[11pt,a4paper]{article}
\usepackage[T1]{fontenc}
\usepackage[utf8]{inputenc}
\usepackage[english]{babel}
\usepackage{mathptmx}
\usepackage[a4paper,margin=1in]{geometry}
\usepackage{amsmath,amssymb,amsthm,mathtools,bm}
\usepackage{microtype}
\usepackage{enumitem}
\usepackage{array}
\usepackage[numbers,sort&compress]{natbib}
\usepackage{hyperref}
\hypersetup{hidelinks}
\allowdisplaybreaks
\numberwithin{equation}{section}

\newtheorem{theorem}{Theorem}[section]
\newtheorem{proposition}[theorem]{Proposition}
\newtheorem{lemma}[theorem]{Lemma}
\newtheorem{corollary}[theorem]{Corollary}
\theoremstyle{definition}
\newtheorem{definition}[theorem]{Definition}
\newtheorem{assumption}[theorem]{Assumption}
\theoremstyle{remark}
\newtheorem{remark}[theorem]{Remark}

\newcommand{\R}{\mathbb R}
\newcommand{\Ptwo}{\mathcal P_2}
\newcommand{\PP}{\mathcal P}

\newcommand{\cS}{\mathbb S}
\newcommand{\cH}{\mathbb H}
\newcommand{\cJ}{\mathbb J}
\newcommand{\Q}{\mathsf Q}
\newcommand{\cC}{\mathfrak S}
\newcommand{\Law}{\mathcal L}

\newcommand{\Var}{\operatorname{Var}}
\newcommand{\proj}{\operatorname{proj}}

\newcommand{\dd}{\,d}
\newcommand{\1}{\mathbf 1}

\title{Occupation Times at a Moving Interface and Sharp Yosida Rates for McKean--Vlasov BSDEs with Jumps and Fuzzy Selections}

\author{%
Kayembe Tshiswaka Tcheick\textsuperscript{a,}\thanks{Corresponding author. Email: tcheick.kayembe@unikin.ac.cd},\ \
Mabela Matendo Rostin\textsuperscript{a},\\[0.5ex]
Kabengele Mpunga Yannick\textsuperscript{a}\ \
and Bosonga Bofeki Jean-Pierre\textsuperscript{b}\\[2ex]
{\small \textsuperscript{a}University of Kinshasa, Faculty of Science and Technology,}\\
{\small Department of Mathematics, Statistics and Computer Science,}\\
{\small Kinshasa, Democratic Republic of the Congo}\\[0.5ex]
{\small \textsuperscript{b}University of Kinshasa, Faculty of Economics and Management,}\\
{\small Department of Mathematical Economics, Kinshasa, Democratic Republic of the Congo}}
\date{}

\begin{document}

\maketitle

\begin{abstract}
We study a forward--backward McKean--Vlasov system with jumps whose backward equation contains a source obtained by evaluating the minimal-norm selection of a maximal monotone operator along the forward state. This selection changes branch across an interface that moves with the state law, the control, and an imprecisely known parameter described by fuzzy cuts. Assuming normal non-degeneracy and finite-statistics dependence of the interface on the law, we prove that the first two moments of the time spent within distance $r$ of the interface are bounded by $Cr$ and $Cr^2$. The second-moment estimate makes it possible to integrate the source error before squaring it, which yields an $O(\lambda)$ Yosida error in the natural solution norm; a first-moment argument gives only $O(\lambda^{1/2})$. An explicit example shows that the order $\lambda$ is sharp. For compact classes of admissible selections, the corresponding solutions form the cuts of a fuzzy solution set. These cuts are compact, stable with respect to the fuzzy level and the data, and approximated at rate $O(\lambda)$ in Hausdorff distance. All estimates are uniform over a non-dominated family of probability measures representing volatility uncertainty. The quantitative results are obtained for separable quadratic potentials with finitely many affine branches.
\end{abstract}

\noindent\textbf{Keywords:} McKean--Vlasov equations; BSDEs with jumps; maximal monotone operators; Yosida approximation; occupation time; fuzzy sets; volatility uncertainty

\smallskip
\noindent\textbf{2020 Mathematics Subject Classification:} 60H10; 60G55; 60J75; 47H05; 03E72

\section{Introduction}\label{sec:intro}

\subsection{The problem and the main idea}

Yosida approximation is a classical tool for regularizing maximal monotone operators in stochastic equations and inclusions \cite{Brezis1973,PardouxRascanu1998}. Here the operator does not act on the backward variable. Its minimal-norm selection is evaluated along the forward state and then accumulated as a source in the backward equation. For the finite-branch potentials considered below, that selection is discontinuous across the switching set, so the regularization error is concentrated in a thin neighborhood of the interface. The size of this error is therefore tied to a probabilistic quantity: the time spent by the forward state near the interface. Our aim is to quantify this mechanism in a mean-field setting with jumps and to identify the resulting rate.

The source accumulated in the backward equation is
\[
K_t=\int_0^t (A^{q_s})^0(X_{s-},\mu_{s-},\alpha_s)\dd s,
\qquad
A^q(x,\mu,\alpha)=\partial\phi^q\bigl(x+h^q(\mu,\alpha)\bigr)+c^q(\mu,\alpha),
\]
where $X$ is the forward state, $\mu$ its law, $\alpha$ a control, $(A^q)^0$ the minimal-norm selection of $A^q$, and $\phi^q$ a separable quadratic potential with finitely many affine branches (Assumption~\ref{ass:finite-branches}). The density of $K$ changes branch when a coordinate of $X+h^q(\mu,\alpha)$ crosses a threshold. The interface is therefore not fixed: it moves with the law, with the control, and with an imprecisely known parameter $q$.

Our main prototype is an asymmetric loss-aversion (ALA) penalty (Assumption~\ref{ass:ALA}): deviations above and below a reference level are penalized with different slopes $\kappa_+^q$ and $\kappa_-^q$, while the reference level $-h^q(\mu,\alpha)$ depends on population statistics and on the control. This is meant to capture situations in which part of the parameter information is epistemic---for example, based on expert assessment rather than on a fully specified probabilistic model. We represent that information by a fuzzy parameter. At fuzzy level $\gamma\in(0,1]$, the admissible values of $q$ form the cut $\Q_\gamma(\mu,\alpha)$, and $q$ is a predictable selection of that cut. Since different selections may lead to different solutions, the resulting object is naturally a \emph{fuzzy set of solutions}.

\paragraph{Why the second occupation moment matters}
Let $\mathsf O_r$ denote the time spent by $X+h^q(\mu,\alpha)$ within distance $r$ of the thresholds, and let $\chi^\lambda$ and $\chi$ be the densities of the regularized and limiting sources. By the finite-branch geometry, the Yosida error is confined, up to a term of order $\lambda$, to a layer whose width is proportional to $\lambda$ (Lemma~\ref{lem:samebranch}). If one uses only the first-moment estimate $\mathbb E[\mathsf O_r]\le Cr$ and squares the pointwise error before integrating, one obtains only
$\mathbb E\bigl[\int_0^T|\chi_t^\lambda-\chi_t|^2\dd t\bigr]\le C\lambda$, hence an $O(\lambda^{1/2})$ bound in the backward norm. Our argument instead uses the second-moment estimate
\[
\mathbb E\bigl[\mathsf O_r^2\bigr]\le Cr^2,
\]
and combines it with a stability estimate for the backward equation that depends on $\sup_{t\le T}|K_t^1-K_t^2|$ rather than on the $L^2$ norm of the source densities (Lemma~\ref{lem:energy}). This allows the source error to be integrated in time \emph{before} it is squared. We then obtain $\mathbb E\bigl[\sup_{t\le T}|K_t^\lambda-K_t|^2\bigr]\le C\lambda^2$ and an $O(\lambda)$ error in the natural solution norm. An explicit model shows that this order is sharp.

\subsection{The system}

The forward state $X$ solves a McKean--Vlasov equation with jumps and Lipschitz coefficients, which does not depend on the backward solution. For a given selection $q$, the backward equation reads
\[
Y_t=g(X_T,\mu_T)+\int_t^Tf_s\dd s+K_T-K_t-\int_t^TZ_s\dd B_s-\int_t^T\int_EU_s(e)\widetilde N(\dd s,\dd e),
\]
where $f_s$ is the Lipschitz generator evaluated at $(s,X_{s-},Y_s,Z_s,U_s,\mu_{s-},\Law(Y_s),\alpha_s)$ and $K$ is the source above. The system is triangular. Its full form and the standing assumptions are given in Sections~\ref{sec:system}--\ref{sec:cadre}. If $\mathfrak A_{\gamma,M}$ is a compact class of admissible selections at level $\gamma$ (Definition~\ref{def:selsystem}), the \emph{solution cut} at level $\gamma$ is the set $\cC_{\gamma,M}$ of quadruples $(Y,Z,U,K)$ obtained as $q$ ranges over $\mathfrak A_{\gamma,M}$.

Model uncertainty on the volatility is represented by a non-dominated family $\PP$ of probability measures, as in \cite{SonerTouziZhang2011,SonerTouziZhang2012} and their extensions with jumps \cite{KaziTaniPossamaiZhou2015,KaziTaniPossamaiZhou2015b}. All estimates are proved under each $P\in\PP$, with constants that do not depend on $P$; no aggregated solution is constructed.

\subsection{Main results}

The quantitative results hold for the separable quadratic finite-branch potentials of Assumption~\ref{ass:finite-branches}. Unless stated otherwise, $C=C(M;\mathfrak C_{\rm str})$ is uniform in the model $P$, the fuzzy level $\gamma$, the data, and the selection (Remark~\ref{rem:constants}). We use the conventions
\begin{gather*}
\sup_{P,\gamma,q}:=\sup_{P\in\PP}\sup_{(\xi,\alpha)\in\mathfrak D_P}\sup_{\gamma\in(0,1]}\sup_{q\in\mathfrak A_{\gamma,M}^P[\xi,\alpha]},\qquad
\sup_{P,\gamma}:=\sup_{P\in\PP}\sup_{(\xi,\alpha)\in\mathfrak D_P}\sup_{\gamma\in(0,1]},
\end{gather*}
where $\mathfrak D_P$ is the class of data $(\xi,\alpha)$; the dependence on the data is omitted in the compact displays below.

\begin{enumerate}[label=(\Alph*),leftmargin=*]
\item \emph{Occupation of a moving interface} (Theorem~\ref{thm:occupation}). Let $\mathsf O_r^{P,q}=\mathsf O_r^{P,q}[\xi,\alpha]$ be the time spent at distance at most $r$ from the interface. If the interface depends on the law through finitely many statistics, the control and the selections are absolutely continuous in time, and the diffusion is non-degenerate in the normal direction, then
\[
\sup_{P,\gamma,q}\mathbb E^P[\mathsf O_r^{P,q}]\le Cr,
\qquad
\sup_{P,\gamma,q}\mathbb E^P[(\mathsf O_r^{P,q})^2]\le Cr^2,
\qquad 0<r\le1 .
\]
Only second moments of the jumps are required. Remark~\ref{rem:time-scope} shows that the time regularity of the control cannot simply be dropped.

\item \emph{Sharp Yosida rate} (Theorems~\ref{thm:yosidarate} and~\ref{thm:sharp}). Let $\mathcal Y^{P,q}=(Y,Z,U,K)$ be the selected solution, $\mathcal Y^{P,q,\lambda}$ its Yosida regularization, and $\|\cdot\|_{\mathfrak E_P^2}$ the natural norm of the quadruple. Then
\[
\sup_{P,\gamma,q}\|\mathcal Y^{P,q,\lambda}-\mathcal Y^{P,q}\|_{\mathfrak E_P^2}\le C\lambda,\qquad 0<\lambda\le1,
\]
and there is a model in the class for which $\inf_{P\in\PP}\|\mathcal Y^{P,\lambda}-\mathcal Y^{P}\|_{\mathfrak E_P^2}\ge c\lambda$ with $c>0$. Qualitative convergence for a fixed selection holds without the quantitative assumptions (Theorem~\ref{thm:yosidaqual}).

\item \emph{Approximation of the cuts} (Theorem~\ref{thm:hausY}, Corollary~\ref{cor:Y-joint}). The cuts $\cC_{\gamma,M,\lambda}^P$ of the regularized system satisfy
\[
\sup_{P,\gamma}d_H\bigl(\cC_{\gamma,M,\lambda}^P,\cC_{\gamma,M}^P\bigr)\le C\lambda,\qquad 0<\lambda\le1,
\]
where $d_H$ is the Hausdorff distance in $\mathfrak E_P^2$.

\item \emph{The fuzzy solution set} (Theorem~\ref{thm:stability}, Corollary~\ref{cor:lipschitzT}, Theorem~\ref{thm:cutcompact}, Propositions~\ref{prop:leftcuts}, \ref{prop:hausdorff-data}, and~\ref{prop:fuzzy-standard}). The solution map is Lipschitz with respect to the selection. For fixed $P$, the cuts $\cC_{\gamma,M}^P$ are compact, nested, and left-continuous, and they are exactly the cuts of a normal upper semicontinuous fuzzy set of solutions with compact support. For two sets of data,
\[
d_H\bigl(\cC_{\gamma,M}^P[\xi,\alpha],\cC_{\gamma',M}^P[\xi',\alpha']\bigr)
\le C\,\delta_P^{1/4}+C|\gamma-\gamma'|^\nu,
\]
where $\delta_P=\mathbb E^P[|\xi-\xi'|^2]+\mathbb E^P[\int_0^T|\alpha_t-\alpha'_t|^2\dd t]$.

\item \emph{Selection classes built from the fuzzy data} (Theorem~\ref{thm:feedback-class}, Corollary~\ref{cor:feedback}, Proposition~\ref{prop:center-radius}). The compactness and transport properties (S1)--(S5) used in (C)--(D) are not merely postulated: they are derived from the geometry of the fuzzy data for Lipschitz feedback selections of convex cuts when the fuzzy parameter is scalar, and for center--radius cuts in any dimension. For general convex cuts in dimension $d_q\ge2$, all conclusions remain valid except the quantitative Hausdorff bounds that rely on (S4).
\end{enumerate}

Well-posedness for a fixed selection (Theorem~\ref{thm:backward}, Corollary~\ref{cor:wellposed-selected}) rests on classical fixed-point and stability arguments; it serves as a starting point and is not claimed as a main contribution.

\subsection{Relation to the literature}

McKean--Vlasov equations describe dynamics whose coefficients depend on the law of the state \cite{McKean1966,Sznitman1991,CarmonaDelarue2018}, while BSDEs with jumps and law dependence are by now well established \cite{PardouxPeng1990,Royer2006,Li2018}. The closest points of contact with the present work are the following.

\paragraph{Comparison with the closest work}
The closest related preprint is \cite{KayembeSourceJDE}, which treats the same architecture without a fuzzy parameter, with an interface shifted by the control alone. For the ALA prototype and linear occupation, \cite{KayembeSourceJDE} establishes a first-moment occupation bound and derives from it, under a moment assumption of order $2+\delta$ on the source, a squared error of order $\lambda^{\varrho}$ with $\varrho=\delta/(3+\delta)<1$; the corresponding error in norm is therefore of order $\lambda^{\varrho/2}$. To compare on a common basis, specialize the present framework to this prototype: the selection is fixed, the interface no longer depends on the law, and the absolute continuity of the control and the normal non-degeneracy are already imposed in \cite{KayembeSourceJDE}. The main additional regularity requirement in this specialization is the $C^1$ representation with bounded derivatives of the shift in Assumption~\ref{ass:finitestat}, whereas \cite{KayembeSourceJDE} assumes a Lipschitz shift. In this specialization, the present paper obtains an $O(\lambda)$ rate in the natural solution norm without requiring jump moments above order two, and shows that this rate is sharp. This gain rests on three complementary ingredients: the finite-branch geometry, which localizes the Yosida error in a layer of width proportional to $\lambda$; the second-moment estimate for the occupation of this layer; and the stability Lemma~\ref{lem:energy}, which makes it possible to work with the integrated source. In the general model studied here, Assumption~\ref{ass:finitestat} also covers the motion of the interface due to the selection and to finitely many statistics of the law. The preprint \cite{KayembeRobustALA} introduces the ALA source in a robust McKean--Vlasov variational setting, with well-posedness, stability, Yosida approximation, and propagation of chaos, but without occupation estimates. Neither \cite{KayembeSourceJDE} nor \cite{KayembeRobustALA} treats fuzzy uncertainty.

\paragraph{Discontinuous coefficients}
Occupation and local-time estimates near a discontinuity are a central tool for SDEs with discontinuous drift, their numerical approximation, and irregular functionals of SDEs \cite{LeobacherSzolgyenyi2017,LeobacherSzolgyenyi2018,MullerGronbachYaroslavtseva2020,Avikainen2009,FournierPrintems2010,MenoukeuOuknineTangpi2019}. For jump diffusions with discontinuous drift, existence, uniqueness, and a strong Euler--Maruyama rate are proved in \cite{PrzybylowiczSzolgyenyi2021}, and matching lower error bounds in \cite{PrzybylowiczSchwarzSzolgyenyi2024}. In these works the discontinuity set is fixed by the coefficients. Here the required estimate is different: we need a second occupation moment, uniform in the selection, for an interface that depends simultaneously on the selection, the law, and the control.

\paragraph{Multivalued and mean-field backward equations}
BSDEs with subdifferential operators go back to \cite{PardouxRascanu1998}. On the McKean--Vlasov side, multivalued equations and variational inequalities include well-posed multivalued BSDEs \cite{QiaoGong2023}, stability for multivalued forward equations \cite{QiaoGong2025}, and coupled forward--backward systems \cite{NingWuZheng2024,DuanWu2026}. Mean-field BSDEs with jumps also enter dual representations of global risk measures \cite{ChenDumitrescuMincaSulem2023}, and non-asymptotic rates for McKean--Vlasov BSDEs arising from mean-field games are studied in \cite{PossamaiTangpi2025}. In \cite{PardouxRascanu1998,QiaoGong2023,QiaoGong2025,NingWuZheng2024,DuanWu2026} the multivalued operator acts on the variables of the system as a constraint or an inclusion; here it produces a source evaluated along the forward state.
To our knowledge, the works cited above do not establish, in their respective settings, a quantitative Yosida rate in the full solution norm of the type derived here.

\paragraph{Fuzzy stochastic equations}
Fuzzy stochastic differential equations have been developed in Brownian, set-valued, and semimartingale settings \cite{MalinowskiMichta2011,MalinowskiMichta2012,MalinowskiMichtaSobolewska2013,Michta2015,Malinowski2016}; in that literature the solution itself is typically fuzzy-set-valued. Here, by contrast, the elements of the fuzzy set are ordinary solution quadruples $(Y,Z,U,K)$, obtained by transporting compact families of predictable selections through the solution map.

\subsection{Scope}

The quantitative results (A), (B), (C) and the rates in (D) require normal non-degeneracy (Assumption~\ref{ass:elliptic}), the finite-statistics and time-regularity structure of Assumption~\ref{ass:finitestat} (see Remark~\ref{rem:time-scope} for a counterexample otherwise), and separable potentials with finitely many affine branches and a bounded shift $c^q$ (Assumption~\ref{ass:finite-branches}). Well-posedness and qualitative Yosida convergence do not. The solution cuts, and hence the fuzzy solution set, are defined relative to a fixed compact selection system (Definition~\ref{def:selsystem}, Remark~\ref{rem:chosen-class}). The sharpness of the rate is established by an explicit example within the class considered: it shows that the order $\lambda$ cannot be improved uniformly, not that it is attained for every model. Fully coupled systems, aggregation over $\PP$ in the sense of second-order BSDEs, and quantitative Hausdorff bounds for general convex cuts in dimension $d_q\ge2$ are not covered.

\subsection{Organization}

Section~\ref{sec:cadre} states the system and sets up the framework. Section~\ref{sec:bienpose} recalls well-posedness and establishes the stability lemma in the integrated-source norm. Section~\ref{sec:occupation-stabilite} contains the occupation estimates. Section~\ref{sec:feedback} constructs the selection classes. Stability and the fuzzy solution set are developed in Sections~\ref{sec:stability} and~\ref{sec:coupes}. Section~\ref{sec:yosida} is devoted to the Yosida rates and their optimality, and Section~\ref{sec:example} verifies the assumptions on an explicit example before the concluding discussion in Section~\ref{sec:conclusion}.

\section{Framework and assumptions}\label{sec:cadre}

\subsection{The selected system}\label{sec:system}

We begin with the full system and then introduce the objects and assumptions used throughout the paper.

Let $T>0$ and let $\PP$ be a non-dominated family of probability measures describing volatility uncertainty (Section~\ref{sec:base}). Under $P\in\PP$, $B^P$ is a continuous martingale with $\dd\langle B^P\rangle_t=a_t^P\dd t$, $\underline a\le a_t^P\le\overline a$, and $\widetilde N^P$ is a compensated Poisson random measure with compensator $\Pi(\dd e)\dd t$. The initial value is a square-integrable $\mathcal F_0$-measurable random variable,
\[
X_0^P=\xi\in L^2(\Omega,\mathcal F_0,P;\R^d),\qquad\text{that is,}\qquad \mathbb E^P\!\left[|\xi|^2\right]<\infty,
\]
so that the initial law $\mu_0^P=\Law^P(\xi)$ belongs to $\Ptwo(\R^d)$. For such an initial datum $\xi$ and a predictable control $\alpha$, the forward state $X^P$ and $\mu_t^P=\Law^P(X_t^P)$ satisfy
\begin{align}
X_t^P={}&\xi+\int_0^t b(s,X_{s-}^P,\mu_{s-}^P,\alpha_s)\dd s
 +\int_0^t\sigma(s,X_{s-}^P,\mu_{s-}^P,\alpha_s)\dd B_s^P \notag\\
&+\int_0^t\int_E\beta(s,X_{s-}^P,\mu_{s-}^P,\alpha_s,e)\widetilde N^P(\dd s,\dd e).
\label{eq:forward}
\end{align}
The epistemic parameter is a predictable process $q$ taking values in a compact set $\Q\subset\R^{d_q}$ and, at fuzzy level $\gamma\in(0,1]$,
\begin{equation}
q_t\in\Q_{\gamma}(\mu_{t-}^P,\alpha_t),\qquad \dd t\otimes\dd P\text{-a.e.}
\label{eq:selection}
\end{equation}
For $q\in\Q$, set
\begin{equation}
A^q(x,\mu,\alpha):=\partial\phi^q\bigl(x+h^q(\mu,\alpha)\bigr)+c^q(\mu,\alpha),
\label{eq:operator-intro}
\end{equation}
where $\phi^q$ is convex, $h^q$ shifts the interface, and $c^q$ is a translation. The ALA prototype (Assumption~\ref{ass:ALA}) combines a quadratic part with distinct slopes $\kappa_\pm^q$ on the two sides of the interface. If $(A^q)^0$ denotes the minimal-norm selection,
\begin{equation}
K_t^{P,q}=\int_0^t (A^{q_s})^0(X_{s-}^P,\mu_{s-}^P,\alpha_s)\dd s,
\label{eq:K}
\end{equation}
and, with $\eta_t^{P,q}=\Law^P(Y_t^{P,q})$,
\begin{align}
Y_t^{P,q}={}&g(X_T^P,\mu_T^P)
+\int_t^T f(s,X_{s-}^P,Y_s^{P,q},Z_s^{P,q},U_s^{P,q},\mu_{s-}^P,\eta_s^{P,q},\alpha_s)\dd s
+K_T^{P,q}-K_t^{P,q}\notag\\
&-\int_t^T Z_s^{P,q}\dd B_s^P
-\int_t^T\int_E U_s^{P,q}(e)\widetilde N^P(\dd s,\dd e).
\label{eq:backward}
\end{align}
The system is triangular. Distinct admissible selections may produce distinct solutions; the images of selection classes therefore form the cuts of a fuzzy solution set. The quantitative difficulty stems from the discontinuity of $(A^q)^0$ when $X^P+h^q(\mu^P,\alpha)$ crosses the interface.

\subsection{Stochastic framework and non-dominated family}\label{sec:base}

We work in the volatility-uncertainty framework of Soner, Touzi, and Zhang \cite{SonerTouziZhang2011,SonerTouziZhang2012}, with jumps as in \cite{KaziTaniPossamaiZhou2015}. Let $T>0$ and let $(\Omega,\mathcal F)$ be a measurable space endowed with a filtration $\mathbb F=(\mathcal F_t)_{0\le t\le T}$, an $\R^n$-valued càdlàg process $B$, and an integer-valued random measure $N$ on $[0,T]\times E$, where $(E,\mathcal E)$ is a measurable space. The filtration $\mathbb F$ is the raw, non-completed filtration. Let $\PP$ be a nonempty family of probability measures on $(\Omega,\mathcal F)$, and let $0<\underline a\le\overline a$ be two symmetric $n\times n$ matrices, with matrix inequalities understood in the Loewner sense. The family $\PP$ is said to be \emph{dominated} if there exists a probability measure $Q$ such that $P\ll Q$ for every $P\in\PP$, and \emph{non-dominated} otherwise; families of laws of martingales with different quadratic-variation densities provide standard examples of mutually singular models \cite{SonerTouziZhang2011}. For $P\in\PP$, let $\mathbb F^P$ denote the usual augmentation of $\mathbb F$ under $P$; ``predictable'' means predictable with respect to $\mathbb F^P$, and random fields $U(e)$ are measurable with respect to the product of the predictable $\sigma$-field and $\mathcal E$.

\paragraph{Conventions} Matrix norms are Hilbert--Schmidt norms. The letter $C$ denotes a deterministic constant that may change from line to line; its dependence is specified in Remark \ref{rem:constants}. For fixed $\lambda$, $C_\lambda$ denotes a constant that may depend on $\lambda$. Integrands are evaluated at left limits to ensure predictability; since a càdlàg path has at most countably many jumps, $X_{s-}=X_s$ for $\dd s\otimes\dd P$-almost every $(s,\omega)$.

For $P\in\PP$ (see Assumption \ref{ass:base} below for $a^P$), a $d\times n$ matrix $z$, and $u\in L^2(\Pi)$, set
\[
|z|_{a_t^P}^2=\operatorname{Tr}(za_t^Pz^\top),
\qquad
\|u\|_\Pi^2=\int_E|u(e)|^2\Pi(\dd e).
\]
We write $L^2_\Pi(E;\R^d)$, or $L^2(\Pi)$, for the space of measurable maps $u:E\to\R^d$ such that $\|u\|_\Pi<\infty$. For $s\in\R$, $s^+=\max(s,0)$ and $s^-=\max(-s,0)$; $\operatorname{Leb}$ denotes Lebesgue measure on $[0,T]$. For a function $V$ from $[0,T]$ into a Euclidean space, $\Var_{[0,T]}(V)=\sup\sum_{i=1}^{p}|V_{t_i}-V_{t_{i-1}}|$ denotes its total variation in the Euclidean norm, where the supremum is taken over partitions $0=t_0<t_1<\cdots<t_p=T$; if $V$ is absolutely continuous, $\Var_{[0,T]}(V)=\int_0^T|\dot V_t|\dd t$.
For $P\in\PP$, $\cS_P^2$ denotes the space of adapted càdlàg processes $Y$ such that $\mathbb E^P[\sup_{t\le T}|Y_t|^2]<\infty$, $\cH_P^2$ the space of predictable processes $Z$ such that $\mathbb E^P[\int_0^T|Z_t|_{a_t^P}^2\dd t]<\infty$, and $\cJ_P^2$ the space of predictable fields $U$ such that $\mathbb E^P[\int_0^T\|U_t\|_\Pi^2\dd t]<\infty$. Equalities between processes are understood up to indistinguishability, and relations between coefficients hold $\dd t\otimes\dd P$-a.e.
Since $\underline a\le a_t^P\le\overline a$, one has $\lambda_{\min}(\underline a)|z|^2\le|z|_{a_t^P}^2\le\lambda_{\max}(\overline a)|z|^2$ for every $d\times n$ matrix $z$, where $|z|$ is the Hilbert--Schmidt norm. Thus, as a set, $\cH_P^2$ coincides with the space of $\R^{d\times n}$-valued predictable processes that are square integrable with respect to $\dd t\otimes\dd P$, with equivalent norms and equivalence constants independent of $P$. After identifying elements at zero distance, $\cS_P^2$ is a Banach space, while $\cH_P^2$ and $\cJ_P^2$ are Hilbert spaces.

\paragraph{Additional notation} For a measurable process $V$ with values in a Euclidean space, $\|V\|_{\infty,P}$ denotes its norm in $L^\infty(\dd t\otimes\dd P)$, namely the essential supremum of $|V_t(\omega)|$ with respect to $\dd t\otimes\dd P$. For nonempty subsets $\mathcal A,\mathcal B$ of a metric space $(\mathfrak E,\mathrm d)$, the excess of $\mathcal A$ over $\mathcal B$ and the Hausdorff distance are
\[
e(\mathcal A,\mathcal B)=\sup_{x\in\mathcal A}\inf_{y\in\mathcal B}\mathrm d(x,y),
\qquad
d_H^{\mathfrak E}(\mathcal A,\mathcal B)=\max\{e(\mathcal A,\mathcal B),e(\mathcal B,\mathcal A)\}.
\]
The map $d_H^{\mathfrak E}$ is a distance on the collection of nonempty compact subsets of $\mathfrak E$ \cite{DiamondKloeden1994}; we write $d_H$ when the ambient space is clear.

\begin{assumption}\label{ass:base}
For every $P\in\PP$:
\begin{enumerate}[label=(\roman*)]
\item under $P$, $B=B^P$ is a continuous martingale with bracket $\dd\langle B\rangle_t=a_t^P\dd t$, where $a^P$ is predictable and $\underline a\le a_t^P\le\overline a$, $\dd t\otimes\dd P$-a.e.;
\item under $P$, $N$ is a Poisson random measure with compensator $\Pi(\dd e)\dd t$, where the $\sigma$-finite measure $\Pi$ does not depend on $P$; we write $\widetilde N^P=N-\Pi(\dd e)\dd t$;
\item (predictable representation) every square-integrable $(\mathbb F^P,P)$-martingale $\mathsf M$ admits the representation
\[
\mathsf M_t=\mathsf M_0+\int_0^t Z_s\dd B_s^P+
\int_0^t\int_E U_s(e)\widetilde N^P(\dd s,\dd e),
\qquad t\in[0,T],
\]
with $(Z,U)\in\cH_P^2\times\cJ_P^2$, uniquely in these spaces.
\end{enumerate}
\end{assumption}

The canonical example is the canonical space, on which $\PP$ is a family of strong-formulation measures in the sense of \cite{SonerTouziZhang2011}, under which the volatility of $B$ is uncertain in $[\underline a,\overline a]$ and the jump measure is Poisson with a fixed compensator. For the continuous part, predictable representation is established in \cite{SonerTouziZhang2011,SonerTouziZhang2012}; for the jump part, our fixed-compensator assumption is a special case of the framework in \cite{KaziTaniPossamaiZhou2015}, where the compensator may depend on $P$. The full property (iii), for the pair $(B^P,\widetilde N^P)$, nevertheless remains an independent assumption on the admissible models: it does not follow from non-domination alone and rules out any additional orthogonal martingale component. The classical setting of an independent Brownian motion and Poisson random measure with predictable representation corresponds to $\PP=\{P\}$ and $a^P$ equal to the identity. Since the equations only involve increments of $B$, we normalize $B_0=0$. A generic control value in $\mathbb D$ is denoted by $\alpha$, as is the control process $(\alpha_t)$; the letter $a$ is reserved for the covariance $a_t^P$ and its bounds $\underline a,\overline a$.

All constructions are carried out under a fixed probability measure $P\in\PP$. Since the constants are uniform over $\PP$ (Remark \ref{rem:constants}), statements of the form
\[
\sup_{P\in\PP}\mathbb E^P\!\left[\Phi^P\right]\le C
\]
are robust estimates with respect to volatility uncertainty. We do not construct an aggregated version of the solutions: the coefficients depend on the law $\Law^P(X^P)$, which varies with $P$, and aggregation in the sense of second-order BSDEs \cite{SonerTouziZhang2012,KaziTaniPossamaiZhou2015} lies beyond the scope of this paper.

Under each $P$, the Burkholder--Davis--Gundy (BDG) and Doob inequalities for stochastic integrals with respect to $B^P$ and $\widetilde N^P$ hold with constants independent of $P$: the continuous part is measured in the norm $|\cdot|_{a^P}$, and the jump part has the same compensator $\Pi(\dd e)\dd t$ for every model. Explicitly, for $(Z,U)\in\cH_P^2\times\cJ_P^2$ and $\mathsf M=\int_0^\cdot Z_s\dd B_s^P+\int_0^\cdot\int_EU_s(e)\widetilde N^P(\dd s,\dd e)$,
\begin{gather*}
\mathbb E^P\bigl[[\mathsf M]_T\bigr]=\mathbb E^P\!\left[\int_0^T\bigl(|Z_t|_{a_t^P}^2+\|U_t\|_\Pi^2\bigr)\dd t\right],\\
\mathbb E^P[\sup_{t\le T}|\mathsf M_t|^2]\le C_{\rm BDG}\,\mathbb E^P\bigl[[\mathsf M]_T\bigr],
\qquad
\mathbb E^P[\sup_{t\le T}|\mathsf M_t|]\le C_{\rm D}\,\mathbb E^P\bigl[[\mathsf M]_T^{1/2}\bigr],
\end{gather*}
with $C_{\rm BDG}=4$ (Doob) and a universal constant $C_{\rm D}$ (BDG, $p=1$) \cite[chap.~IV]{Protter2005}. Throughout the paper, Itô's formula for semimartingales with jumps \cite[chap.~II]{Protter2005} is used without further comment, and the isometry and compensation formula for stochastic integrals with respect to $\widetilde N^P$ are those of \cite[chap.~II]{IkedaWatanabe1989}. A \emph{strong solution} under $P$ is an $\mathbb F^P$-adapted solution constructed on the given space from $(\xi,B^P,\widetilde N^P)$.

Let $\Ptwo(\R^d)$ denote the set of Borel probability measures with finite second moment. For $\mu,\mu'\in\Ptwo(\R^d)$, the order-two Wasserstein distance is
\[
W_2(\mu,\mu')=
\left(\inf_{\pi\in\operatorname{Cpl}(\mu,\mu')}
\int_{\R^d\times\R^d}|x-y|^2\pi(\dd x,\dd y)\right)^{1/2},
\]
where $\operatorname{Cpl}(\mu,\mu')$ is the set of probability measures with marginals $\mu,\mu'$. The Dirac measure at $0$ is denoted by $\mathfrak d_0$. We set
\[
M_2(\mu)=\left(\int_{\R^d}|x|^2\mu(\dd x)\right)^{1/2}.
\]
For square-integrable random variables $V,V'$ under $P$, the natural coupling yields $W_2^2(\Law^P(V),\Law^P(V'))\le\mathbb E^P[|V-V'|^2]$. The space $(\Ptwo(\R^d),W_2)$ is a complete separable metric space \cite[chap.~6]{Villani2009}. Consequently, the Borel $\sigma$-field of a finite product of separable metric spaces, such as $[0,T]\times\R^d\times\Ptwo(\R^d)\times\mathbb D$, is the product of the Borel $\sigma$-fields; joint Borel measurability is always understood in this sense.

\begin{lemma}\label{lem:lawflows}
Fix $P\in\PP$ and $d'\ge1$. Let $V,V'\in L^2(\dd t\otimes\dd P;\R^{d'})$ be two jointly measurable processes. Then there exist Borel maps $\upsilon^V,\upsilon^{V'}:[0,T]\to(\Ptwo(\R^{d'}),W_2)$ such that $\upsilon_t^V=\Law^P(V_t)$ and $\upsilon_t^{V'}=\Law^P(V'_t)$ for almost every $t\in[0,T]$. Their equivalence classes depend only on the equivalence classes of $V$ and $V'$ in $L^2(\dd t\otimes\dd P)$, and
\[
\int_0^TW_2^2(\upsilon_t^V,\upsilon_t^{V'})\dd t\le\mathbb E^P\!\left[\int_0^T|V_t-V'_t|^2\dd t\right].
\]
\end{lemma}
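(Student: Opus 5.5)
By Fubini, the section $t\mapsto V_t(\omega)$ is square-integrable in $\omega$ for almost every $t$, so the law flow is well defined off a null set of times. The plan is to define $\upsilon^V_t:=\Law^P(V_t)$ on the Borel set $G_V:=\{t:\mathbb E^P[|V_t|^2]<\infty\}$ and $\upsilon^V_t:=\mathfrak d_0$ elsewhere, and likewise for $V'$. Since $t\mapsto\mathbb E^P[|V_t|^2]$ is Borel (Tonelli) and $\int_0^T\mathbb E^P[|V_t|^2]\dd t<\infty$, the set $[0,T]\setminus G_V$ is Borel and Lebesgue-null. Hence $\upsilon^V_t=\Law^P(V_t)$ for almost every $t$, with values in $\Ptwo(\R^{d'})$.

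The main point is Borel measurability of $t\mapsto\Law^P(V_t)$ into $(\Ptwo,W_2)$. I would proceed in three steps.
\begin{enumerate}[label=(\roman*)]
\item For each bounded continuous $\varphi$, the map $t\mapsto\int\varphi\,\dd\upsilon^V_t=\mathbb E^P[\varphi(V_t)]\1_{G_V}(t)+\varphi(0)\1_{G_V^c}(t)$ is Borel, by Fubini applied to the jointly measurable $\varphi(V_t(\omega))$.
\item The same holds for $t\mapsto\int|x|^2\,\dd\upsilon^V_t$.
\item The Borel $\sigma$-field of $(\Ptwo,W_2)$ is generated by the maps $\mu\mapsto\int\varphi\,\dd\mu$ with $\varphi$ bounded continuous together with $\mu\mapsto M_2(\mu)$. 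The topology of $W_2$ is weak convergence plus convergence of second moments. Since the space is Polish (separable), every open set is a countable union of open balls. Each map $\mu\mapsto W_2(\mu,\nu)$ is measurable for the weak Borel $\sigma$-field, being lower semicontinuous for weak convergence, and that $\sigma$-field restricted to $\Ptwo$ is generated by the integrals of a countable convergence-determining family.
\end{enumerate}
Combining these, preimages of $W_2$-balls are Borel, so $\upsilon^V$ is Borel. I expect step (iii), the identification of $\sigma$-fields, to be the only delicate point. The fallback would be to cite the standard fact that on $\Ptwo$ the Borel $\sigma$-fields of the weak and $W_2$ topologies coincide (e.g.\ \cite[chap.~6]{Villani2009}).

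If $V=V'$ $\dd t\otimes\dd P$-a.e., then for almost every $t$ we have $V_t=V'_t$ $P$-a.s., so the two laws agree for a.e.\ $t$. Thus the equivalence class of $\upsilon^V$ depends only on that of $V$.

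For the inequality, on $G_V\cap G_{V'}$ (full measure) the natural coupling $\Law^P(V_t,V'_t)$ gives $W_2^2(\upsilon^V_t,\upsilon^{V'}_t)\le\mathbb E^P[|V_t-V'_t|^2]$. The left side is Borel in $t$, since $W_2$ is continuous on the product and $(\upsilon^V,\upsilon^{V'})$ is Borel into the product, whose Borel $\sigma$-field is the product one by separability. Integrating over $[0,T]$ and applying Tonelli yields the claimed bound.
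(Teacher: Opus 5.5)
Your proposal is correct and follows the paper's proof: you set the flow equal to $\Law^P(V_t)$ off a Tonelli-null set of times and to $\mathfrak d_0$ on it, obtain weak measurability from Fubini applied to bounded continuous test functions, upgrade to Borel measurability into $(\Ptwo(\R^{d'}),W_2)$, and integrate the natural-coupling bound in time. The only difference is in the upgrade step. The paper combines weak measurability with measurability of $t\mapsto M_2(\upsilon_t^V)^2$ and the description of the $W_2$ topology as weak convergence plus convergence of second moments. Your argument, via weak lower semicontinuity of $W_2(\cdot,\nu)$ and separability, suffices on its own, so your step (ii) is not needed. You also check explicitly that $t\mapsto W_2^2(\upsilon^V_t,\upsilon^{V'}_t)$ is Borel before integrating, which the paper leaves implicit.
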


\begin{proof}
By Tonelli's theorem, $t\mapsto\mathbb E^P[|V_t|^2]$ is measurable and finite for almost every $t$; on the remaining null set, set $\upsilon_t^V=\mathfrak d_0$, and elsewhere set $\upsilon_t^V=\Law^P(V_t)$. For $\psi\in C_b(\R^{d'})$, Fubini's theorem shows that $t\mapsto\int\psi\dd \upsilon_t^V=\mathbb E^P[\psi(V_t)]$ is measurable; a countable family of such functions determining weak convergence yields measurability of $\upsilon^V$ for the weak topology. Since $t\mapsto M_2(\upsilon_t^V)^2$ is also measurable and the $W_2$ topology is the topology of weak convergence together with convergence of second moments, $\upsilon^V$ is Borel as a map into $(\Ptwo(\R^{d'}),W_2)$. Finally, for almost every $t$, $\Law^P(V_t,V'_t)$ is a coupling of $(\upsilon_t^V,\upsilon_t^{V'})$, hence $W_2^2(\upsilon_t^V,\upsilon_t^{V'})\le\mathbb E^P[|V_t-V'_t|^2]$; integration in time concludes the proof, and the case $V=V'$ in $L^2(\dd t\otimes\dd P)$ shows that the flows coincide for almost every $t$.
\end{proof}

\paragraph{Coefficients and admissible controls}
The coefficients have the following dimensions:
\begin{align*}
&b:[0,T]\times\R^d\times\Ptwo(\R^d)\times\mathbb D\to\R^d,
\qquad
\sigma:[0,T]\times\R^d\times\Ptwo(\R^d)\times\mathbb D\to\R^{d\times n},\\
&\beta:[0,T]\times\R^d\times\Ptwo(\R^d)\times\mathbb D\times E\to\R^d,\\
&f:[0,T]\times\R^d\times\R^d\times\R^{d\times n}\times L^2_\Pi(E;\R^d)\times\Ptwo(\R^d)\times\Ptwo(\R^d)\times\mathbb D\to\R^d,\\
&g:\R^d\times\Ptwo(\R^d)\to\R^d,
\end{align*}
where $\mathbb D\subset\R^{d_\alpha}$ is a Borel set containing $0$, and $f(t,x,y,z,u,\mu,\eta,\alpha)$ is evaluated at $y=Y_t$, $z=Z_t$, $u=U_t$, $\mu=\mu_{t-}$, $\eta=\Law^P(Y_t)$. To ensure that the same control is defined under all models, let $\mathcal A_2$ be the set of $\mathbb D$-valued processes $\alpha$ that are predictable with respect to the raw filtration $\mathbb F$ (and hence $\mathbb F^P$-predictable under every $P\in\PP$) and satisfy $\sup_{P\in\PP}\mathbb E^P\!\left[\int_0^T|\alpha_t|^2\dd t\right]<\infty$. The initial datum $\xi$ is $\mathcal F_0$-measurable for the raw filtration.

All statements below are formulated under a fixed probability measure $P\in\PP$. Their proofs only use $\xi\in L^2(\Omega,\mathcal F_0^P,P;\R^d)$ and the fact that $\alpha$ is $\mathbb F^P$-predictable, $\mathbb D$-valued, and satisfies $\mathbb E^P\!\left[\int_0^T|\alpha_t|^2\dd t\right]<\infty$. We denote by $\mathcal A_2^P\supseteq\mathcal A_2$ this larger class of controls. All results stated for $\alpha\in\mathcal A_2$ therefore remain valid verbatim for model-dependent controls $\alpha=\alpha^P\in\mathcal A_2^P$, for instance feedback controls constructed from $X^P$ (Section~\ref{sec:example}); uniformity of the constants over $\PP$ then requires the bounds on the data to be uniform in $P$.

\begin{assumption}\label{ass:F}
The coefficients $b,\sigma$ are jointly Borel in $(t,x,\mu,\alpha)$, and $\beta$ is jointly Borel in $(t,x,\mu,\alpha,e)$.

There exist $L_F,C_F>0$, independent of $P$, such that, for all $(t,x,\mu,\alpha)$ and $(t,x',\mu',\alpha')$,
\begin{align*}
&|b(t,x,\mu,\alpha)-b(t,x',\mu',\alpha')|^2
+|\sigma(t,x,\mu,\alpha)-\sigma(t,x',\mu',\alpha')|_{a_t^P}^2\\
&\qquad
+\int_E|\beta(t,x,\mu,\alpha,e)-\beta(t,x',\mu',\alpha',e)|^2\Pi(\dd e)\\
&\le
L_F^2\bigl(|x-x'|^2+W_2^2(\mu,\mu')+|\alpha-\alpha'|^2\bigr),
\end{align*}
and
\begin{align*}
&|b(t,0,\mathfrak d_0,0)|^2
+|\sigma(t,0,\mathfrak d_0,0)|_{a_t^P}^2
+\int_E|\beta(t,0,\mathfrak d_0,0,e)|^2\Pi(\dd e)
\le C_F^2.
\end{align*}
The initial state and the control satisfy
\[
\sup_{P\in\PP}\mathbb E^P\!\left[|\xi|^2\right]
+\sup_{P\in\PP}\mathbb E^P\!\left[\int_0^T|\alpha_t|^2\dd t\right]<\infty.
\]
\end{assumption}

\begin{assumption}\label{ass:B}
The generator $f$ is jointly Borel in its arguments, and $g$ is Borel. There exist $L_B,C_B>0$, independent of $P$, such that
\begin{align*}
&|f(t,x,y,z,u,\mu,\eta,\alpha)-f(t,x',y',z',u',\mu',\eta',\alpha')|\\
&\quad\le L_B\Bigl(
|x-x'|+|y-y'|+|z-z'|_{a_t^P}+\|u-u'\|_\Pi
+W_2(\mu,\mu')+W_2(\eta,\eta')+|\alpha-\alpha'|
\Bigr),
\end{align*}
and $|f(t,0,0,0,0,\mathfrak d_0,\mathfrak d_0,0)|\le C_B.$ The terminal function satisfies
\[
|g(x,\mu)-g(x',\mu')|
\le L_B\bigl(|x-x'|+W_2(\mu,\mu')\bigr),
\qquad |g(0,\mathfrak d_0)|\le C_B.
\]
\end{assumption}

\begin{remark}\label{rem:random-norms}
The norm $|\cdot|_{a_t^P}$ is random. The inequalities in Assumptions~\ref{ass:F} and \ref{ass:B} involving this norm, as well as the inequality in Assumption~\ref{ass:elliptic} below, are required for every $P\in\PP$, for $\dd t\otimes\dd P$-almost every $(t,\omega)$, and for all values of the remaining arguments. By the norm equivalence in Section~\ref{sec:base}, they hold whenever the corresponding deterministic inequalities hold for the Hilbert--Schmidt norm: in Assumption~\ref{ass:F}, with $L_F^2,C_F^2$ multiplied by $\max(1,\lambda_{\max}(\overline a))$; in Assumption~\ref{ass:B}, with $L_B$ multiplied by $\max(1,\lambda_{\min}(\underline a)^{-1/2})$; and in Assumption~\ref{ass:elliptic} if $\mathbf e_k^\top\sigma\sigma^\top \mathbf e_k\ge\underline\sigma^2/\lambda_{\min}(\underline a)$, since $\sigma a_t^P\sigma^\top\ge\lambda_{\min}(\underline a)\sigma\sigma^\top$.
\end{remark}

\subsection{Cuts of the fuzzy parameter}

Let $\Q\subset\R^{d_q}$ be compact and nonempty. For $(\mu,\alpha)\in\Ptwo(\R^d)\times\mathbb D$, the fuzzy parameter $\widetilde{\Q}(\mu,\alpha)$, viewed as a fuzzy subset of $\R^{d_q}$ in the sense of \cite{DiamondKloeden1994}, is described by its cuts
\[
\Q_{\gamma}(\mu,\alpha)=[\widetilde{\Q}(\mu,\alpha)]_\gamma,
\qquad \gamma\in(0,1].
\]

\begin{assumption}\label{ass:fuzzycuts}
\begin{enumerate}[label=(\roman*)]
\item For every $(\gamma,\mu,\alpha)$, $\Q_{\gamma}(\mu,\alpha)$ is nonempty and compact, and the cuts are nested:
\[
0<\gamma_1\le\gamma_2\le1
\quad\Longrightarrow\quad
\Q_{\gamma_2}(\mu,\alpha)\subseteq\Q_{\gamma_1}(\mu,\alpha).
\]
\item For every $\gamma\in(0,1]$, the multifunction $(\mu,\alpha)\mapsto\Q_\gamma(\mu,\alpha)$ is Borel measurable: for every open set $O\subset\R^{d_q}$,
\[
\{(\mu,\alpha)\in\Ptwo(\R^d)\times\mathbb D:\ \Q_\gamma(\mu,\alpha)\cap O\ne\varnothing\}
\]
is a Borel set.
\end{enumerate}
\end{assumption}

\begin{remark}
The selection $q$ represents an admissible epistemic realization. It should not be confused with the minimal-norm selection $(A^q)^0$ of the maximal monotone operator.
\end{remark}

\subsection{Maximal monotone family}
\begin{assumption}\label{ass:operator}
For every $q\in\Q$, $\phi^q:\R^d\to\R$ is convex and finite on $\R^d$ (hence continuous). In particular, $D(\partial\phi^q)=\R^d$. The maps $(q,\mu,\alpha)\mapsto h^q(\mu,\alpha)$ and $(q,\mu,\alpha)\mapsto c^q(\mu,\alpha)$ are Borel measurable, and there exist constants $L_h,L_c,C_{hc},C_\phi$ such that, for all $q,q'\in\Q$, $\mu,\mu'\in\Ptwo(\R^d)$, and $\alpha,\alpha'\in\mathbb D$,
\begin{align*}
|h^q(\mu,\alpha)-h^{q'}(\mu',\alpha')|
&\le L_h\bigl(|q-q'|+W_2(\mu,\mu')+|\alpha-\alpha'|\bigr),\\
|c^q(\mu,\alpha)-c^{q'}(\mu',\alpha')|
&\le L_c\bigl(|q-q'|+W_2(\mu,\mu')+|\alpha-\alpha'|\bigr),
\end{align*}
and
\[
|h^q(\mu,\alpha)|+|c^q(\mu,\alpha)|
\le C_{hc}(1+|\alpha|+M_2(\mu)).
\]
Finally, for the minimal selection, $|(\partial\phi^q)^0(z)|\le C_\phi(1+|z|)$ for all $q\in\Q$ and $z\in\R^d$.
\end{assumption}

For $x\in\R^d$, set
\[
A^q(x,\mu,\alpha)=\partial\phi^q(x+h^q(\mu,\alpha))+c^q(\mu,\alpha).
\]
For fixed $(q,\mu,\alpha)$, the map $x\mapsto A^q(x,\mu,\alpha)$ is maximal monotone, since it is obtained by translating a maximal monotone subdifferential. Its values are nonempty closed convex sets; we denote by $(A^q)^0(x,\mu,\alpha)$ the unique element of minimal Euclidean norm in $A^q(x,\mu,\alpha)$ (the \emph{minimal-norm selection}). For $\lambda>0$, the \emph{resolvent} and the \emph{Yosida approximation} in the state variable are
\begin{equation}\label{eq:resolvent-def}
J_\lambda^{q,\mu,\alpha}
=
(I+\lambda A^q(\cdot,\mu,\alpha))^{-1},
\qquad
A_\lambda^q(x,\mu,\alpha)
=
\frac{x-J_\lambda^{q,\mu,\alpha}(x)}{\lambda};
\end{equation}
we also write $J_\lambda^q(x,\mu,\alpha)=J_\lambda^{q,\mu,\alpha}(x)$. For a convex function $\phi$, $J_{\lambda\partial\phi}=(I+\lambda\partial\phi)^{-1}$ and $(\partial\phi)_\lambda=\lambda^{-1}(I-J_{\lambda\partial\phi})$. Since $A^q(\cdot,\mu,\alpha)$ is maximal monotone on $\R^d$, $J_\lambda^{q,\mu,\alpha}$ is defined on all of $\R^d$, single-valued and nonexpansive, and $A_\lambda^q(\cdot,\mu,\alpha)$ is maximal monotone and $\lambda^{-1}$-Lipschitz \cite[chap.~II]{Brezis1973}.

\begin{lemma}\label{lem:minimalgrowth}
Suppose Assumption \ref{ass:operator} holds. Then there exists a constant $C_A<\infty$ such that, for all $q\in\Q$, $x\in\R^d$, $\mu\in\Ptwo(\R^d)$, and $\alpha\in\mathbb D$,
\[
|(A^q)^0(x,\mu,\alpha)|
\le C_A(1+|x|+|\alpha|+M_2(\mu)).
\]
\end{lemma}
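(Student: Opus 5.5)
The bound follows from the structure of $A^q$ as a translated subdifferential, together with the growth assumptions in Assumption~\ref{ass:operator}. The plan is to identify the minimal-norm selection of $A^q(x,\mu,\alpha)$ explicitly. Then we bound it by the minimal selection of $\partial\phi^q$ at the shifted point plus the translation $c^q$.

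The first step is to note that the minimal-norm element of the translated set, rather than the set itself, is what must be controlled. For fixed $(q,\mu,\alpha)$ we have
\[
A^q(x,\mu,\alpha)=\partial\phi^q(z)+c^q(\mu,\alpha),\qquad z=x+h^q(\mu,\alpha).
\]
The translation by $c^q$ changes the minimal-norm element, so $(A^q)^0(x,\mu,\alpha)$ is in general not equal to $(\partial\phi^q)^0(z)+c^q(\mu,\alpha)$. We avoid computing it: since $(\partial\phi^q)^0(z)+c^q(\mu,\alpha)$ belongs to $A^q(x,\mu,\alpha)$, which is nonempty because $D(\partial\phi^q)=\R^d$, minimality of the norm gives
\[
|(A^q)^0(x,\mu,\alpha)|\le|(\partial\phi^q)^0(z)|+|c^q(\mu,\alpha)|.
\]

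The second step inserts the growth bounds. Assumption~\ref{ass:operator} gives
\[
|(\partial\phi^q)^0(z)|\le C_\phi(1+|z|)\le C_\phi\bigl(1+|x|+|h^q(\mu,\alpha)|\bigr).
\]
It also gives $|h^q(\mu,\alpha)|+|c^q(\mu,\alpha)|\le C_{hc}(1+|\alpha|+M_2(\mu))$. Combining these yields
\[
|(A^q)^0(x,\mu,\alpha)|\le C_\phi(1+|x|)+(C_\phi+1)C_{hc}\bigl(1+|\alpha|+M_2(\mu)\bigr).
\]
One can therefore take $C_A=C_\phi+(C_\phi+1)C_{hc}$, which does not depend on $(q,x,\mu,\alpha)$.

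There is no real obstacle. The only point needing care is the minimality comparison in the first step, which replaces the (generally false) identity for the minimal selection of a translated set. Measurability is not part of the statement, so nothing further is required.
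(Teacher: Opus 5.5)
Your proposal is correct and follows essentially the same route as the paper: both compare the minimal-norm element of the translated set $\partial\phi^q(z)+c^q(\mu,\alpha)$ with the admissible element $(\partial\phi^q)^0(z)+c^q(\mu,\alpha)$ and then insert the growth bounds of Assumption~\ref{ass:operator}. The only difference is that the paper first writes $(A^q)^0=\proj_{\partial\phi^q(z)}(-c)+c$ explicitly, which is not needed for the bound; your constant $C_A=C_\phi+(C_\phi+1)C_{hc}$ is valid.
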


\begin{proof}
Set
\[
z=x+h^q(\mu,\alpha),\qquad c=c^q(\mu,\alpha),\qquad \mathcal G=\partial\phi^q(z).
\]
Then $A^q(x,\mu,\alpha)=\mathcal G+c$ and, by translation of the closed convex set $\mathcal G$,
\[
(A^q)^0(x,\mu,\alpha)=\operatorname*{argmin}_{v\in \mathcal G+c}|v|
=\proj_{\mathcal G}(-c)+c.
\]
Set $w^0=(\partial\phi^q)^0(z)=\proj_{\mathcal G}(0)$. The minimizing property of the projection gives
\[
|\proj_{\mathcal G}(-c)+c|
\le |w^0+c|
\le |w^0|+|c|.
\]
By Assumption \ref{ass:operator},
\[
|w^0|\le C_\phi(1+|z|),
\qquad
|z|\le |x|+|h^q(\mu,\alpha)|,
\]
and
\[
|h^q(\mu,\alpha)|+|c|
\le C\bigl(1+|\alpha|+M_2(\mu)\bigr).
\]
Consequently,
\begin{align*}
|(A^q)^0(x,\mu,\alpha)|
&\le C_\phi\bigl(1+|x|+|h^q(\mu,\alpha)|\bigr)+|c|\\
&\le C_A\bigl(1+|x|+|\alpha|+M_2(\mu)\bigr),
\end{align*}
with $C_A$ independent of $(q,\mu,\alpha,x)$.
\end{proof}

\subsection{Measurability of the operator family}

\begin{assumption}\label{ass:measop}
The map $(q,z)\mapsto\phi^q(z)$ is a convex normal integrand in the sense of \cite[chap.~14]{RockafellarWets1998}. For every $\lambda>0$, the maps
\[
(q,\mu,\alpha,x)\longmapsto J_\lambda^q(x,\mu,\alpha),\qquad
(q,\mu,\alpha,x)\longmapsto A_\lambda^q(x,\mu,\alpha)
\]
are Borel measurable, and the minimal selection $(q,\mu,\alpha,x)\mapsto(A^q)^0(x,\mu,\alpha)$ is Borel measurable.
\end{assumption}

\begin{remark}\label{rem:measop}
(i) The last requirement in Assumption~\ref{ass:measop} follows from the preceding ones. By Lemma~\ref{lem:yosida-minimal} below, $(A^q)^0(x,\mu,\alpha)=\lim_{n\to\infty}A_{1/n}^q(x,\mu,\alpha)$ pointwise, and a pointwise limit of Borel maps is Borel.
(ii) Under Assumption~\ref{ass:ALA} or Assumption~\ref{ass:finite-branches}, Assumption~\ref{ass:measop} is automatically satisfied. Indeed, by \eqref{eq:resolvent-translation} and Lemma~\ref{lem:resolvent-slopes}, $(q,\mu,\alpha,x)\mapsto J_\lambda^q(x,\mu,\alpha)$ is continuous because $h$ and $c$ are Lipschitz and the slopes are Lipschitz in $q$; therefore $A_\lambda^q$ is continuous as well.
(iii) If $q$, $\alpha$, and $X_-$ are predictable and $t\mapsto\mu_t$ is a deterministic Borel flow, then $(t,\omega)\mapsto(q_t,\mu_{t-},\alpha_t,X_{t-})$ is measurable from the predictable $\sigma$-field into the Borel $\sigma$-field of $\Q\times\Ptwo(\R^d)\times\mathbb D\times\R^d$, which is the product $\sigma$-field (Section~\ref{sec:base}). Composition with the Borel maps in Assumption~\ref{ass:measop} therefore yields predictable processes. This is the argument used for all source densities below.
\end{remark}

\subsection{Epistemic selections and selection systems}

\begin{definition}\label{def:selection}
Let $P\in\PP$, let $(\xi,\alpha)$ be data with $\alpha\in\mathcal A_2$, let $\mu^P$ be the flow of laws of the forward state, and let $\gamma\in(0,1]$. An \emph{admissible epistemic selection at level $\gamma$} is a predictable $\Q$-valued process $q$ such that
\[
q_t\in\Q_\gamma(\mu_{t-}^P,\alpha_t),\qquad \dd t\otimes\dd P\text{-a.e.}
\]
\end{definition}

\begin{remark}[Existence of admissible selections]\label{rem:existence-selection}
By Assumption \ref{ass:fuzzycuts} and the Kuratowski--Ryll-Nardzewski selection theorem \cite{KuratowskiRyllNardzewski1965}, for every $\gamma\in(0,1]$ there exists a Borel map $s_\gamma:\Ptwo(\R^d)\times\mathbb D\to\Q$ such that $s_\gamma(\mu,\alpha)\in\Q_\gamma(\mu,\alpha)$ for every $(\mu,\alpha)$. Since the flow $\mu^P$ is continuous (Lemma \ref{lem:flowcontinuity}) and $\alpha$ is predictable, the process $q_t:=s_\gamma(\mu_{t-}^P,\alpha_t)$ is predictable (Remark \ref{rem:measop}(iii)) and is an admissible epistemic selection at level $\gamma$. Hence the set of admissible selections is nonempty; under Assumption \ref{ass:paramclass}, it contains the class $\mathfrak A_{\gamma,M}^P$, which is nonempty by Proposition \ref{prop:paramclass}.
\end{remark}

\begin{definition}\label{def:solution}
Let $P\in\PP$, let $(\xi,\alpha)$ satisfy $\alpha\in\mathcal A_2$, let $\gamma\in(0,1]$, and let $q$ be an admissible selection at level $\gamma$. A \emph{selected solution} is a quintuple
\[
(X,Y,Z,U,K)\in\cS_P^2\times\cS_P^2\times\cH_P^2\times\cJ_P^2\times\cS_P^2
\]
such that $X,Y$ are adapted and càdlàg, $Z,U$ are predictable, $K$ is adapted, continuous, and absolutely continuous in time, and, with $\mu_t=\Law^P(X_t)$ and $\eta_t=\Law^P(Y_t)$, the identities \eqref{eq:forward}, \eqref{eq:K}, and \eqref{eq:backward} hold for every $t\in[0,T]$, up to $P$-indistinguishability. For $\lambda>0$, a \emph{regularized solution} is defined in the same way by replacing $(A^{q_s})^0$ with $A_\lambda^{q_s}$ in \eqref{eq:K}.
\end{definition}

 \begin{definition}\label{def:regular}
Let $P\in\PP$. For an $\mathbb F^P$-adapted process $V$ with continuous paths in a Euclidean space, define, componentwise,
\[
\dot V_t:=\limsup_{n\to\infty}n\bigl(V_t-V_{(t-1/n)\vee0}\bigr)\ \text{ whenever this upper limit is finite},
\qquad
\dot V_t:=0\ \text{ otherwise}.
\]
Let $(\xi,\alpha)$ be data and $\gamma\in(0,1]$. An admissible epistemic selection $q$ at level $\gamma$ is called \emph{regular} if its paths are absolutely continuous, $P$-a.s.; it is regular \emph{with energy bound} $M$ if, in addition, $\mathbb E^P\!\left[\int_0^T|\dot q_t|^2\dd t\right]\le M^2$.
\end{definition}

The process $\dot V$ is predictable: each $n(V_t-V_{(t-1/n)\vee0})$ is adapted and continuous, hence predictable, and an upper limit of predictable processes is predictable. If $V$ has absolutely continuous paths, the Lebesgue differentiation theorem shows that, $P$-a.s., $\dot V_t$ coincides with the path derivative for almost every $t$, so that $V_t=V_0+\int_0^t\dot V_s\dd s$ for every $t\in[0,T]$. All time derivatives of controls and selections are taken in this predictable version.

For the quantitative results and reconstruction from cuts, we use selection systems (Definition \ref{def:selsystem} below). For each $P$, fix a nonempty set $\mathfrak D_P$ of data $(\xi,\alpha)$ admissible for \ref{ass:F}, with common quadratic bounds independent of $P$. The properties below are imposed only on realized pairs
\[
(\mu,\alpha)=(\mu^P[\xi,\alpha],\alpha),
\qquad (\xi,\alpha)\in\mathfrak D_P,
\]
where $\mu^P[\xi,\alpha]$ is the flow of laws of the unique forward solution. Any comparison between two data sets uses two elements of $\mathfrak D_P$ under the same model $P$. In the quantitative sub-regime, $\mathfrak D_P$ is restricted to absolutely continuous controls satisfying the time-regularity bounds specified below.

\begin{definition}\label{def:selsystem}
Let $M>0$, $L_Q>0$, and $\nu\in(0,1]$. A \emph{selection system} with energy bound $M$ assigns to every $P\in\PP$, every datum $(\xi,\alpha)\in\mathfrak D_P$ with forward flow $\mu=\mu^P[\xi,\alpha]$, and every $\gamma\in(0,1]$ a set of processes $\mathfrak A_{\gamma,M}^P[\xi,\alpha]$ such that:
\begin{enumerate}[label=(S\arabic*)]
\item $\mathfrak A_{\gamma,M}^P[\xi,\alpha]$ is nonempty and compact in $(L^\infty(\dd t\otimes\dd P),\|\cdot\|_{\infty,P})$, and each of its elements is a regular admissible selection at level $\gamma$ with energy bound $M$ (Definitions \ref{def:selection} and \ref{def:regular});
\item $\mathfrak A_{\gamma_2,M}^P[\xi,\alpha]\subseteq\mathfrak A_{\gamma_1,M}^P[\xi,\alpha]$ for $0<\gamma_1\le\gamma_2\le1$;
\item $\mathfrak A_{\gamma,M}^P[\xi,\alpha]=\bigcap_{0<\iota<\gamma}\mathfrak A_{\iota,M}^P[\xi,\alpha]$;
\item for all $(\xi,\alpha),(\xi',\alpha')\in\mathfrak D_P$ with flows $\mu,\mu'$, all $\gamma,\gamma'\in(0,1]$, and every $q\in\mathfrak A_{\gamma,M}^P[\xi,\alpha]$, there exists $q'\in\mathfrak A_{\gamma',M}^P[\xi',\alpha']$ such that
\[
|q_t-q'_t|\le L_Q\bigl(|\gamma-\gamma'|^\nu+W_2(\mu_{t-},\mu'_{t-})+|\alpha_t-\alpha'_t|\bigr),\qquad\dd t\otimes\dd P\text{-a.e.};
\]
\item there exists a compact set $\overline{\mathfrak A}_M^P[\xi,\alpha]\subset L^\infty(\dd t\otimes\dd P)$ of predictable $\Q$-valued processes with absolutely continuous paths and energy bound $M$, containing $\mathfrak A_{\gamma,M}^P[\xi,\alpha]$ for every $\gamma\in(0,1]$.
\end{enumerate}
When the data are fixed, we write $\mathfrak A_{\gamma,M}^P$.
\end{definition}

All results in Sections \ref{sec:stability}--\ref{sec:yosida} are stated for an arbitrary selection system. Two instances are available. Section \ref{sec:feedback} constructs \emph{intrinsic} selection systems from the geometry of the fuzzy data: Lipschitz feedback selections of convex cuts, for which (S1)--(S3) and (S5) are proved in every dimension and (S4) when $d_q=1$, and center--radius classes, for which (S1)--(S5) are proved in every dimension. The parametric families in the following assumption provide a second, more general instance (Proposition \ref{prop:paramclass}).

\begin{assumption}\label{ass:paramclass}
There exists a nonempty compact metric space $(\mathfrak P,d_{\mathfrak P})$ and, for every $\gamma\in(0,1]$ and every realized pair $(\mu,\alpha)$ arising from data in $\mathfrak D_P$, a family $\{q^{\gamma,\mathfrak p}[\mu,\alpha]:\mathfrak p\in\mathfrak P\}$ of predictable processes with absolutely continuous paths such that:
\begin{enumerate}[label=(\roman*)]
\item $q_t^{\gamma,\mathfrak p}[\mu,\alpha]\in\Q_\gamma(\mu_{t-},\alpha_t)$, $\dd t\otimes\dd P$-a.e.;
\item $\mathfrak p\mapsto q^{\gamma,\mathfrak p}[\mu,\alpha]$ is continuous from $\mathfrak P$ into $(L^\infty(\dd t\otimes\dd P),\|\cdot\|_{\infty,P})$;
\item there exists a constant $M>0$ common to all indices: for every $P\in\PP$, every datum $(\xi,\alpha)\in\mathfrak D_P$, every $\gamma\in(0,1]$, and every $\mathfrak p\in\mathfrak P$,
\[
\mathbb E^P\!\left[\int_0^T
|\dot q_t^{\gamma,\mathfrak p}[\mu^P[\xi,\alpha],\alpha]|^2
\dd t\right]\le M^2;
\]
\item there exists a constant $C_Q<\infty$ such that, for all $\gamma,\gamma'\in(0,1]$ and $\mathfrak p\in\mathfrak P$,
\[
\|q^{\gamma,\mathfrak p}[\mu,\alpha]-q^{\gamma',\mathfrak p}[\mu,\alpha]\|_{\infty,P}\le C_Q|\gamma-\gamma'|^\nu;
\]
\item if $\gamma_1\le\gamma_2$, then for every $\mathfrak p$ there exists $\mathfrak p'$ such that $q^{\gamma_2,\mathfrak p}[\mu,\alpha]=q^{\gamma_1,\mathfrak p'}[\mu,\alpha]$;
\item for all $(\gamma,\mu,\alpha)$, $(\gamma',\mu',\alpha')$, and every $\mathfrak p$,
\begin{align}
|q_t^{\gamma,\mathfrak p}[\mu,\alpha]-q_t^{\gamma',\mathfrak p}[\mu',\alpha']|
&\le L_Q\bigl(|\gamma-\gamma'|^\nu+W_2(\mu_{t-},\mu'_{t-})+|\alpha_t-\alpha'_t|\bigr),
\label{eq:param-cross-coupling}
\end{align}
$\dd t\otimes\dd P$-a.e.
\end{enumerate}
For the flow $(\mu^P,\alpha)$ under consideration, set
\begin{equation}\label{eq:paramclass}
\mathfrak A_{\gamma,M}^P=\{q^{\gamma,\mathfrak p}[\mu^P,\alpha]:\mathfrak p\in\mathfrak P\}.
\end{equation}
\end{assumption}

\begin{remark}[Scope of the selection class]\label{rem:chosen-class}
The family $\mathfrak F=(\mathfrak P,(q^{\gamma,\mathfrak p}[\mu,\alpha])_{\gamma,\mathfrak p})$ is part of the model specification. Writing $\operatorname{Sel}_\gamma^P(\mu,\alpha)$ for the set of admissible epistemic selections at level $\gamma$ (Definition \ref{def:selection}), one has $\mathfrak A_{\gamma,M}^P\subseteq\operatorname{Sel}_\gamma^P(\mu,\alpha)$, and the inclusion is generally strict. The notation $\mathfrak A_{\gamma,M}^P$ implicitly refers to this fixed family. Two parameterizations with the same image of selections define the same class; different images may lead to different solution cuts for the same fuzzy data. For this parametric family, compactness and nesting are assumed; Section \ref{sec:feedback} proves them for intrinsic classes.
\end{remark}

\begin{proposition}\label{prop:paramclass}
Fix $P\in\PP$ and a datum $(\xi,\alpha)\in\mathfrak D_P$, with forward flow of laws $\mu^P$, and suppose Assumption \ref{ass:paramclass} holds. Then, for every $\gamma\in(0,1]$, the class $\mathfrak A_{\gamma,M}^P$ is nonempty and compact in $(L^\infty(\dd t\otimes\dd P),\|\cdot\|_{\infty,P})$; for $0<\gamma_1\le\gamma_2\le1$, $\mathfrak A_{\gamma_2,M}^P\subseteq\mathfrak A_{\gamma_1,M}^P$; and
\begin{equation}\label{eq:left-paramclass}
\mathfrak A_{\gamma,M}^P=\bigcap_{0<\iota<\gamma}\mathfrak A_{\iota,M}^P.
\end{equation}

In particular, the classes \eqref{eq:paramclass} form a selection system in the sense of Definition \ref{def:selsystem}, with the constants $M$, $L_Q$, and $\nu$ from Assumption \ref{ass:paramclass}.
\end{proposition}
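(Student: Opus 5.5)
The plan is to read all four properties directly off the parametrization $\mathfrak p\mapsto q^{\gamma,\mathfrak p}[\mu^P,\alpha]$ of Assumption~\ref{ass:paramclass}, then check (S1)--(S5) one at a time.

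\textbf{Nonemptiness and compactness.} Nonemptiness is immediate since $\mathfrak P\neq\varnothing$. By item (ii), the map $\mathfrak p\mapsto q^{\gamma,\mathfrak p}$ is continuous from the compact space $\mathfrak P$ into $(L^\infty(\dd t\otimes\dd P),\|\cdot\|_{\infty,P})$. Its image $\mathfrak A_{\gamma,M}^P$ is therefore compact.

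Each element is predictable, $\Q$-valued and has absolutely continuous paths. By (i) it is an admissible selection at level $\gamma$. By (iii) it is regular with energy bound $M$, since the predictable derivative of Definition~\ref{def:regular} coincides with the pathwise derivative a.e. This gives (S1).

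\textbf{Nesting.} Nesting is exactly item (v): each $q^{\gamma_2,\mathfrak p}$ equals some $q^{\gamma_1,\mathfrak p'}$, so $\mathfrak A_{\gamma_2,M}^P\subseteq\mathfrak A_{\gamma_1,M}^P$. This gives (S2).

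\textbf{Left-continuity \eqref{eq:left-paramclass}.} The inclusion $\subseteq$ follows from nesting. For $\supseteq$, take $q$ in the intersection and a sequence $\iota_n\uparrow\gamma$. For each $n$, pick $\mathfrak p_n\in\mathfrak P$ with $q=q^{\iota_n,\mathfrak p_n}$. By compactness of $\mathfrak P$, pass to a subsequence with $\mathfrak p_n\to\mathfrak p$. Then
\[
\|q-q^{\gamma,\mathfrak p}\|_{\infty,P}\le\|q^{\iota_n,\mathfrak p_n}-q^{\gamma,\mathfrak p_n}\|_{\infty,P}+\|q^{\gamma,\mathfrak p_n}-q^{\gamma,\mathfrak p}\|_{\infty,P}.
\]
The first term is at most $C_Q|\gamma-\iota_n|^\nu$ by (iv), uniformly in $\mathfrak p_n$. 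The second term tends to zero by (ii). Hence $q=q^{\gamma,\mathfrak p}\in\mathfrak A_{\gamma,M}^P$, which gives (S3).

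This is the only step needing care. The key point is that (iv) is uniform in $\mathfrak p$, which is what makes the diagonal argument work; without that uniformity one would need joint continuity in $(\gamma,\mathfrak p)$.

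\textbf{(S4).} Given $q=q^{\gamma,\mathfrak p}[\mu,\alpha]$, take $q'=q^{\gamma',\mathfrak p}[\mu',\alpha']$ with the same parameter. Then (vi) gives the required pointwise bound with constant $L_Q$.

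\textbf{(S5).} Set $\overline{\mathfrak A}_M^P=\{q^{\gamma,\mathfrak p}:\gamma\in(0,1],\ \mathfrak p\in\mathfrak P\}$. It is enough to show that the map $(\gamma,\mathfrak p)\mapsto q^{\gamma,\mathfrak p}$ is continuous and extends continuously to $\gamma=0$.

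Continuity holds on $(0,1]\times\mathfrak P$ by combining (iv) and (ii). Item (iv) also shows that $\gamma\mapsto q^{\gamma,\mathfrak p}$ is uniformly Cauchy as $\gamma\downarrow0$, uniformly in $\mathfrak p$. Define $q^{0,\mathfrak p}$ as the $L^\infty$ limit. The extended map is continuous on the compact set $[0,1]\times\mathfrak P$, so its image is compact.

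The added limit points inherit the required properties:
\begin{itemize}
\item they are predictable and $\Q$-valued, since $\Q$ is compact and these properties pass to $L^\infty$ limits;
\item they have absolutely continuous paths with energy bound $M$: the derivatives are bounded in $L^2(\dd t\otimes\dd P)$ by (iii), so a weakly convergent subsequence exists, and the integral representation $q_t=q_0+\int_0^t\dot q_s\dd s$ passes to the limit, with the bound preserved by weak lower semicontinuity.
\end{itemize}
Taking $\overline{\mathfrak A}_M^P$ to be this compact image proves (S5), and hence that the classes \eqref{eq:paramclass} form a selection system with the constants $M$, $L_Q$, $\nu$.
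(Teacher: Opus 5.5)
Your proposal is correct and follows the paper's proof essentially step for step: compactness as the continuous image of the compact set $\mathfrak P$, nesting from (v), left-continuity via the same triangle inequality using the uniformity of (iv) in $\mathfrak p$, (S4) by keeping the parameter fixed and invoking (vi), and (S5) via the continuous extension of $(\gamma,\mathfrak p)\mapsto q^{\gamma,\mathfrak p}$ to $[0,1]\times\mathfrak P$ together with weak compactness of the derivatives and weak lower semicontinuity of the energy. You leave one step implicit that the paper writes out. Because the $q^{\gamma_n,\mathfrak p}$ have continuous paths, the $\|\cdot\|_{\infty,P}$ Cauchy bound holds for every $t$ almost surely, so the convergence is uniform pathwise. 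This is what gives a continuous adapted (hence predictable) representative of $q^{0,\mathfrak p}$ and lets $q_t-q_0=\int_0^t\dot q_s\dd s$ pass to the limit at every $t$, including $t=0$.
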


\begin{proof}
Fix $P$, $\gamma$, $\mu^P$, and $\alpha$, and define
\[
\Psi_\gamma:\mathfrak P\longrightarrow L^\infty(\dd t\otimes\dd P),
\qquad
\Psi_\gamma(\mathfrak p)=q^{\gamma,\mathfrak p}[\mu^P,\alpha].
\]
By \ref{ass:paramclass}(ii), $\Psi_\gamma$ is continuous. Since $\mathfrak P$ is compact, $\mathfrak A_{\gamma,M}^P=\Psi_\gamma(\mathfrak P)$ is nonempty and compact in $L^\infty(\dd t\otimes\dd P)$.

Let $0<\gamma_1\le\gamma_2\le1$ and $q\in\mathfrak A_{\gamma_2,M}^P$. There exists $\mathfrak p\in\mathfrak P$ such that $q=q^{\gamma_2,\mathfrak p}[\mu^P,\alpha]$. By \ref{ass:paramclass}(v), there exists $\mathfrak p'\in\mathfrak P$ such that
\[
q^{\gamma_2,\mathfrak p}[\mu^P,\alpha]
=q^{\gamma_1,\mathfrak p'}[\mu^P,\alpha]
\in\mathfrak A_{\gamma_1,M}^P,
\]
hence $\mathfrak A_{\gamma_2,M}^P\subseteq\mathfrak A_{\gamma_1,M}^P$. This inclusion yields
\[
\mathfrak A_{\gamma,M}^P\subseteq\bigcap_{0<\iota<\gamma}\mathfrak A_{\iota,M}^P.
\]

For the reverse inclusion, choose a sequence $\iota_n\uparrow\gamma$ and
\[
q\in\bigcap_{0<\iota<\gamma}\mathfrak A_{\iota,M}^P.
\]
For each $n$, there exists $\mathfrak p_n\in\mathfrak P$ such that $q=q^{\iota_n,\mathfrak p_n}[\mu^P,\alpha]$. Compactness of $\mathfrak P$ yields a subsequence, still denoted $(\mathfrak p_n)$, and $\mathfrak p\in\mathfrak P$ such that $d_{\mathfrak P}(\mathfrak p_n,\mathfrak p)\longrightarrow0$. Then
\begin{align*}
\|q-q^{\gamma,\mathfrak p}[\mu^P,\alpha]\|_{\infty,P}
&\le
\|q^{\iota_n,\mathfrak p_n}[\mu^P,\alpha]-q^{\gamma,\mathfrak p_n}[\mu^P,\alpha]\|_{\infty,P}\\
&\quad+
\|q^{\gamma,\mathfrak p_n}[\mu^P,\alpha]-q^{\gamma,\mathfrak p}[\mu^P,\alpha]\|_{\infty,P}\\
&\le C_Q|\gamma-\iota_n|^\nu
+\|\Psi_\gamma(\mathfrak p_n)-\Psi_\gamma(\mathfrak p)\|_{\infty,P}
\longrightarrow0.
\end{align*}
Thus $q=q^{\gamma,\mathfrak p}[\mu^P,\alpha]\in\mathfrak A_{\gamma,M}^P$, proving \eqref{eq:left-paramclass}.

We now verify that these classes form a selection system. (S1)--(S3) are the assertions already proved, together with items (i) and (iii) of Assumption \ref{ass:paramclass}. For (S4), given $q=q^{\gamma,\mathfrak p}[\mu,\alpha]$, the process $q'=q^{\gamma',\mathfrak p}[\mu',\alpha']$ satisfies the required bound by \eqref{eq:param-cross-coupling}. For (S5), item (iv) shows that, for every $\mathfrak p$, $q^{\gamma,\mathfrak p}[\mu^P,\alpha]$ converges in $L^\infty(\dd t\otimes\dd P)$, as $\gamma\downarrow0$, to a limit $q^{0,\mathfrak p}$, with $\|q^{\gamma,\mathfrak p}-q^{0,\mathfrak p}\|_{\infty,P}\le C_Q\gamma^\nu$ uniformly in $\mathfrak p$. Hence the map $\overline\Psi(\gamma,\mathfrak p)=q^{\gamma,\mathfrak p}[\mu^P,\alpha]$, extended by $q^{0,\mathfrak p}$ at $\gamma=0$, satisfies
\[
\|\overline\Psi(\gamma,\mathfrak p)-\overline\Psi(\bar\gamma,\bar{\mathfrak p})\|_{\infty,P}\le C_Q|\gamma-\bar\gamma|^\nu+\|\overline\Psi(\bar\gamma,\mathfrak p)-\overline\Psi(\bar\gamma,\bar{\mathfrak p})\|_{\infty,P},
\]
and is continuous on the compact set $[0,1]\times\mathfrak P$ (for $\bar\gamma=0$, the last term is a uniform limit of continuous functions of $\mathfrak p$). Its image $\overline{\mathfrak A}_M^P$ is therefore compact and contains every $\mathfrak A_{\gamma,M}^P$. Its elements are predictable and $\Q$-valued. Fix $\mathfrak p$ and $\gamma_n\downarrow0$, and write $q^n=q^{\gamma_n,\mathfrak p}$. Since the $q^n$ are continuous, the bound $|q_t^n-q_t^{n'}|\le\|q^n-q^{n'}\|_{\infty,P}$, valid $\dd t\otimes\dd P$-a.e., extends, $P$-a.s., to every $t\in[0,T]$. Thus the sequence is uniformly Cauchy on $[0,T]$, $P$-a.s., and its limit provides a continuous adapted, hence predictable, representative of $q^{0,\mathfrak p}$, with $q_t^n\to q_t^{0,\mathfrak p}$ in $L^2(P)$ for every $t\in[0,T]$. The derivatives $\dot q^n$ are bounded by $M$ in $L^2_{\rm pred}(\dd t\otimes\dd P;\R^{d_q})$; a subsequence converges weakly to some $v$. For fixed $t$, the map $w\mapsto\int_0^tw_s\dd s$ is linear and bounded from $L^2(\dd s\otimes\dd P)$ into $L^2(P)$, hence weakly continuous, and the identity $q_t^n-q_0^n=\int_0^t\dot q_s^n\dd s$ passes to the limit: $q_t^{0,\mathfrak p}-q_0^{0,\mathfrak p}=\int_0^tv_s\dd s$ in $L^2(P)$. This equality holds $P$-a.s. simultaneously for all rational $t$, and then for every $t\in[0,T]$ by continuity of both sides. Therefore $q^{0,\mathfrak p}$ has absolutely continuous paths and, by weak lower semicontinuity of the norm, energy bound $M$.
\end{proof}

\begin{remark}\label{rem:noncompact}
The parameterization is structural. The class of all predictable admissible processes satisfying only an average bound on $\dot q$ is not compact in the norm $\|\cdot\|_{\infty,P}$ on a nonatomic space. Indeed, let $\bar q$ be a regular admissible selection with $\mathbb E^P\!\left[\int_0^T|\dot{\bar q}_t|^2\dd t\right]<\infty$, let $\mathbf e_1$ be a unit vector in $\R^{d_q}$, and let $\varepsilon_0>0$ be such that $\bar q_t+s\mathbf e_1\in\Q_\gamma(\mu_{t-},\alpha_t)$ for every $s\in[0,\varepsilon_0/2]$, $\dd t\otimes\dd P$-a.e.; let $A_n\in\mathcal F_{T/2}$ be disjoint events with $P(A_n)=2^{-n}$. The perturbations
\[
q_t^n=\bar q_t+\varepsilon_0T^{-1}(t-T/2)^+\1_{A_n}\mathbf e_1
\]
are continuous and adapted, hence predictable, and admissible because $0\le\varepsilon_0T^{-1}(t-T/2)^+\le\varepsilon_0/2$. They satisfy
\[
\sup_n\mathbb E^P\!\left[\int_0^T|\dot q_t^n|^2\dd t\right]\le2\,\mathbb E^P\!\left[\int_0^T|\dot{\bar q}_t|^2\dd t\right]+\frac{\varepsilon_0^2}{T},
\]
whereas, for $n\ne n'$, $|q_t^n-q_t^{n'}|=\varepsilon_0T^{-1}(t-T/2)^+$ on $A_n\cup A_{n'}$, so that $\|q^n-q^{n'}\|_{\infty,P}=\varepsilon_0/2$. Section \ref{sec:feedback} shows that Lipschitz feedback selections provide a canonical compact substitute.
\end{remark}

\begin{remark}[Structural constants]\label{rem:constants}
The data class $\mathfrak D_P$ satisfies the common bounds
\[
\mathfrak M_\xi:=\sup_{P\in\PP}\sup_{(\xi,\alpha)\in\mathfrak D_P}\mathbb E^P[|\xi|^2]<\infty,
\qquad
\mathfrak M_\alpha:=\sup_{P\in\PP}\sup_{(\xi,\alpha)\in\mathfrak D_P}\mathbb E^P\!\left[\int_0^T|\alpha_t|^2\dd t\right]<\infty .
\]
We denote by $\mathfrak C_{\rm str}$ the collection consisting of $T$, the dimensions $d,n,d_q,d_\alpha$, the matrices $\underline a,\overline a$, the constants $L_F,C_F,L_B,C_B,L_h,L_c,C_\phi,C_{hc}$, the constants $L_Q,\nu$ of the selection system (Definition~\ref{def:selsystem}), the constant $C_Q$ from Assumption~\ref{ass:paramclass} when that assumption is used, and the bounds $\mathfrak M_\xi,\mathfrak M_\alpha$. In the quantitative sub-regime it also contains $\ell$, $\max_j\|\psi_j\|_{C_b^2}$, the bound on the first derivatives of $\widehat h$, $C_{\dot\alpha}$, $\underline\sigma$, $\alpha_A$, $\underline\kappa,\overline\kappa$, $L_\kappa$, $C_c$, and $N_\Theta,\Theta_*,\overline v,L_v$ from Assumption~\ref{ass:finite-branches}. A constant denoted by $C(M;\mathfrak C_{\rm str})$, or simply $C$ in quantitative results, depends only on $M$ and $\mathfrak C_{\rm str}$. In particular, it does not depend on $P$, the datum in $\mathfrak D_P$, $\gamma$, $\mathfrak p$, the selection, or $\lambda$. In uniform statements, we use the conventions $\sup_{P,\gamma,q}$ and $\sup_{P,\gamma}$ from the Introduction; when a statement fixes $P$ or the data, the corresponding supremum is omitted. We fix a reference occupation width $r_0=1$. Intermediate local-time estimates remain valid for an arbitrary fixed spatial radius, notably when the interfaces have several nonzero thresholds.
\end{remark}

\subsection{Quantitative regime: motion of the interface}

Lipschitz continuity of $h^q$ in $W_2$ alone does not control how the interface moves in time. For the occupation estimates below, we therefore impose $L^2$ bounds on the relevant time derivatives; these bounds control the total variation of the interface shift up to second moments.

\begin{assumption}\label{ass:finitestat}
There exist an integer $\ell\ge1$, functions $\psi_j\in C_b^2(\R^d)$, and a map $\widehat h:\Q\times\mathbb D\times\R^\ell\longrightarrow\R^d$ that is the restriction of a $C^1$ map defined on an open neighborhood of $\Q\times\mathbb D\times\R^\ell$, with uniformly bounded first derivatives, such that
\[
h^q(\mu,\alpha)
=
\widehat h\!\left(q,\alpha,m_1(\mu),\ldots,m_\ell(\mu)\right),
\qquad
m_j(\mu)=\int_{\R^d}\psi_j(x)\mu(\dd x).
\]
Set $B_\psi=\bigl(\sum_{j=1}^\ell\|D\psi_j\|_\infty^2\bigr)^{1/2}$.

The control and the regular selections admit versions with absolutely continuous paths: for every $P\in\PP$ and every regular selection,
\begin{equation}\label{eq:AC-all-t}
\alpha_t=\alpha_0+\int_0^t\dot\alpha_s\dd s,\qquad
q_t=q_0+\int_0^t\dot q_s\dd s,
\qquad P\text{-a.s., for every }t\in[0,T],
\end{equation}
and the derivatives, defined for $\dd t$-almost every $t$, satisfy
\[
\sup_{P,\gamma,q}
\mathbb E^P\!\left[\int_0^T|\dot q_t|^2\dd t\right]\le M^2,
\qquad
\sup_{P\in\PP}\sup_{(\xi,\alpha)\in\mathfrak D_P}\mathbb E^P\!\left[\int_0^T|\dot\alpha_t|^2\dd t\right]
\le C_{\dot\alpha}<\infty.
\]
The bound $M$ is the energy bound of the selection system (Definition \ref{def:selsystem}); $C_{\dot\alpha}$ is a structural constant.
\end{assumption}

Absolute continuity further implies
\begin{equation}\label{eq:alpha0-bound}
\mathbb E^P\!\left[|\alpha_0|^2\right]\le \frac{2}{T}\mathbb E^P\!\left[\int_0^T|\alpha_t|^2\dd t\right]+2T \mathbb E^P\!\left[\int_0^T|\dot\alpha_t|^2\dd t\right]<\infty.
\end{equation}

\begin{remark}[Temporal scope of the rates]\label{rem:time-scope}
The occupation results and the rates derived from them use Assumption \ref{ass:finitestat}. Lipschitz feedback alone is not sufficient. For example, if $X_t=B_t^P$ ($d=n=1$), $\alpha_t=X_t$, and $h^q(\mu,\alpha)=-\alpha$, then $S_t=X_t+h^q(\mu_t,\alpha_t)=X_t-X_t=0$ for every $t\in[0,T]$, so $\int_0^T\1_{\{|S_t|\le r\}}\dd t=T$ for every $r>0$, despite the non-degeneracy of $X$. An extension to semimartingale controls would therefore require assumptions on the martingale part of $S$.
\end{remark}

\begin{remark}
The occupation estimates use only second moments of the jumps: the quadratic assumptions in Assumption \ref{ass:F}, together with $\psi_j\in C_b^2$, control the derivative of the law statistics in $L^1(\dd t)$, and the convex jump remainder in Tanaka's formula is nonnegative.
\end{remark}

\subsection{Finite-branch potentials; the ALA prototype}\label{sec:ALA}

\begin{assumption}[Finite-branch potentials]\label{ass:finite-branches}
For $q\in\Q$,
\[
\phi^q(s)=\frac{\alpha_A}{2}|s|^2+\sum_{k=1}^d\mathfrak g_k^q(s_k),\qquad\alpha_A>0\ \text{independent of }q,
\]
where, for each $k$, there exists a finite nonempty set of thresholds $\Theta_k=\{\theta_{k,1}<\cdots<\theta_{k,N_k}\}$, independent of $q$, such that $\mathfrak g_k^q$ is convex, continuous, and affine on each connected component $I_{k,0},\ldots,I_{k,N_k}$ of $\R\setminus\Theta_k$, ordered increasingly. The slopes $v_{k,j}^q$ of $\mathfrak g_k^q$ on $I_{k,j}$ satisfy
\[
|v_{k,j}^q|\le\overline v,
\qquad
|v_{k,j}^q-v_{k,j}^{q'}|\le L_v|q-q'|,
\]
and $(q,s)\mapsto \mathfrak g_k^q(s)$ is Borel measurable. Finally, $\sup_{q,\mu,\alpha}|c^q(\mu,\alpha)|\le C_c$.
\end{assumption}

Our main example is the asymmetric loss-aversion (ALA) prototype.

\begin{assumption}\label{ass:ALA}
For $q\in\Q$,
\[
\phi^q(s)=\sum_{k=1}^d\{\kappa_+^q(s_k)^++\kappa_-^q(s_k)^-\}
+\frac{\alpha_A}{2}|s|^2,
\]
with
\[
0<\underline\kappa\le\kappa_+^q,\kappa_-^q\le\overline\kappa,
\qquad \alpha_A>0.
\]
The quadratic coefficient $\alpha_A$ does not depend on $q$; there exist $L_\kappa,C_c<\infty$ such that
\[
|\kappa_\pm^q-\kappa_\pm^{q'}|\le L_\kappa|q-q'|\quad(q,q'\in\Q),
\qquad
\sup_{q\in\Q,\ \mu\in\Ptwo(\R^d),\ \alpha\in\mathbb D}|c^q(\mu,\alpha)|\le C_c .
\]
Both the interface and the nonsmooth part remain $q$-dependent.
\end{assumption}

Assumption \ref{ass:ALA} is the special case $\Theta_k=\{0\}$, with slopes $v_{k,0}^q=-\kappa_-^q$ and $v_{k,1}^q=\kappa_+^q$, $\overline v=\overline\kappa$, and $L_v=L_\kappa$. By convexity, $\partial \mathfrak g_k^q(s)$ is the singleton $\{v_{k,j}^q\}$ if $s\in I_{k,j}$, and the interval $[v_{k,j-1}^q,v_{k,j}^q]$ if $s=\theta_{k,j}$; hence all its elements are bounded by $\overline v$. In particular, $|(\partial\phi^q)^0(z)|\le\alpha_A|z|+\sqrt d\,\overline v$, and Assumption \ref{ass:operator} is satisfied. Assumption \ref{ass:measop} is also satisfied: the resolvent is continuous in $(q,x)$ by Lemma \ref{lem:resolvent-slopes}, and the minimal selection is Borel because it is given branch by branch by continuous formulas. Set
\[
N_\Theta=\sum_{k=1}^dN_k,
\qquad
\Theta_*=\max_{k,j}|\theta_{k,j}|,
\qquad
\rho^q(x,\mu,\alpha)=\min_{1\le k\le d}\ \min_{1\le j\le N_k}|x_k+h_k^q(\mu,\alpha)-\theta_{k,j}|.
\]

\subsection{Normal non-degeneracy and the quantitative regime}\label{sec:Hquant}

\begin{assumption}\label{ass:elliptic}
For each $k$,
\[
\mathbf e_k^\top\sigma(t,x,\mu,\alpha)a_t^P\sigma(t,x,\mu,\alpha)^\top \mathbf e_k
\ge\underline\sigma^2>0
\]
for every $P\in\PP$, $\dd t\otimes\dd P$-a.e., and all $(x,\mu,\alpha)\in\R^d\times\Ptwo(\R^d)\times\mathbb D$ (see Remark~\ref{rem:random-norms}).
\end{assumption}

We denote by $(H_{\rm quant})$ the collection of Assumptions \ref{ass:base}, \ref{ass:F}, \ref{ass:B}, \ref{ass:fuzzycuts}, \ref{ass:operator}, \ref{ass:measop}, \ref{ass:finitestat}, \ref{ass:finite-branches}, and \ref{ass:elliptic}, on the probabilistic basis of Section~\ref{sec:cadre}, together with a selection system $(\mathfrak A_{\gamma,M}^P)$ with energy bound $M$ (Definition~\ref{def:selsystem}), fixed once and for all. The parametric classes of Assumption~\ref{ass:paramclass} (Proposition~\ref{prop:paramclass}) and the intrinsic classes of Section~\ref{sec:feedback} are instances of this setting. The occupation results below use only the assumptions explicitly listed in their statements.
\section{Well-posedness of the selected systems}\label{sec:bienpose}

The triangular system is solved through the composition
\[
(\xi,\alpha)\longmapsto(X^P,\mu^P)\longmapsto K^{P,q}\longmapsto(Y^{P,q},Z^{P,q},U^{P,q}),
\]
The forward state is solved first, then the selected source, and finally the BSDE. None of these steps uses occupation estimates, compactness of the cuts, or the ALA geometry. Because the well-posedness arguments are standard, we record them in a condensed form while keeping the constants needed later. The estimate that matters for the quantitative part of the paper is the integrated-source stability bound in Lemma~\ref{lem:energy}.

\subsection{Forward McKean--Vlasov equation with jumps}

\begin{theorem}\label{thm:forward}
Fix $P\in\PP$ and suppose Assumptions \ref{ass:base} and \ref{ass:F} hold. Let $\xi\in L^2(\Omega,\mathcal F_0,P;\R^d)$ and $\alpha\in\mathcal A_2$. Then equation \eqref{eq:forward} has a unique strong solution $X^P\in\cS_P^2$, and there exists a constant $C$, depending only on $T$, $\overline a$, $L_F$, and $C_F$, such that
\begin{equation}\label{eq:forward-apriori}
\mathbb E^P\!\left[\sup_{t\le T}|X_t^P|^2\right]
\le
C\left(
1+\mathbb E^P\!\left[|\xi|^2\right]+\mathbb E^P\!\left[\int_0^T|\alpha_t|^2\dd t\right]
\right).
\end{equation}
\end{theorem}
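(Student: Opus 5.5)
We will prove Theorem~\ref{thm:forward} by a Picard fixed-point argument on the flow of laws, reducing the McKean--Vlasov equation to a family of classical SDEs with jumps. Fix $P$ and let $\mathcal C$ be the set of càdlàg (or merely Borel, square-integrable) flows $\nu:[0,T]\to\Ptwo(\R^d)$ with $\sup_t M_2(\nu_t)<\infty$, metrized by $D_\kappa(\nu,\nu')=\sup_{t\le T}e^{-\kappa t}W_2(\nu_t,\nu'_t)$ for some $\kappa>0$ to be chosen. For a frozen flow $\nu\in\mathcal C$, the coefficients $(t,x,\omega)\mapsto b(t,x,\nu_{t-},\alpha_t)$, $\sigma(\dots)$, $\beta(\dots,e)$ are predictable in $(t,\omega)$ for fixed $x$ (by joint Borel measurability and Remark~\ref{rem:measop}(iii)), Lipschitz in $x$ with constant $L_F$ in the sense of Assumption~\ref{ass:F}, and have linear growth bounded by $C_F+L_F(M_2(\nu_{t-})+|\alpha_t|)$ at $x=0$. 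The standard theory of SDEs driven by $(B^P,\widetilde N^P)$ with random Lipschitz coefficients (a Picard iteration in $\cS_P^2$ using the Doob/BDG bounds of Section~\ref{sec:base}, whose constants are independent of $P$) yields a unique strong solution $X^\nu\in\cS_P^2$. Define $\Phi(\nu)_t=\Law^P(X_t^\nu)$.

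Next, we show $\Phi$ maps $\mathcal C$ into itself and is a contraction for $D_\kappa$. For the a priori bound, apply Itô's formula to $|X^\nu_t|^2$ or directly estimate $\sup_{s\le t}|X_s^\nu|^2$ with Cauchy--Schwarz on the drift, Doob for the martingale parts, the isometries (where $|\sigma|_{a^P}$ appears, so only the bound in Assumption~\ref{ass:F} is used), and Gronwall; this gives $\mathbb E^P[\sup_{s\le t}|X^\nu_s|^2]\le C(1+\mathbb E^P|\xi|^2+\mathbb E^P\int_0^T|\alpha|^2+\int_0^tM_2(\nu_{s})^2\dd s)$. For two flows, the same computation for $X^\nu-X^{\nu'}$ gives $\mathbb E^P[\sup_{s\le t}|X^\nu_s-X^{\nu'}_s|^2]\le C\int_0^tW_2^2(\nu_{s-},\nu'_{s-})\dd s$, with $C$ depending on $T,L_F,\overline a$; combined with the coupling bound $W_2^2(\Law^P(V),\Law^P(V'))\le\mathbb E^P|V-V'|^2$, this gives $W_2^2(\Phi(\nu)_t,\Phi(\nu')_t)\le C\int_0^tW_2^2(\nu_s,\nu'_s)\dd s$ (left limits differ from values only at countably many times, harmless under $\dd s$), hence $D_\kappa(\Phi\nu,\Phi\nu')^2\le (C/2\kappa)D_\kappa(\nu,\nu')^2$. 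Choosing $\kappa$ large yields a contraction; completeness of $(\mathcal C,D_\kappa)$ follows from completeness of $(\Ptwo,W_2)$, so a unique fixed point $\mu$ exists, and $X^P:=X^\mu$ is the unique strong solution (any solution's law flow is a fixed point, giving uniqueness).

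Finally, the estimate \eqref{eq:forward-apriori} comes from the a priori bound at the fixed point: with $\nu=\mu$ and $M_2(\mu_s)^2=\mathbb E^P|X_s|^2\le\mathbb E^P\sup_{r\le s}|X_r|^2$, Gronwall's lemma closes the inequality with constant depending only on $T,\overline a,L_F,C_F$. The only delicate point, and the one I expect to need care, is measurability/regularity of the frozen flow: I will either restrict $\mathcal C$ to Borel flows (using Lemma~\ref{lem:lawflows} to see $\Phi(\nu)$ is Borel, indeed càdlàg in $W_2$ since $X^\nu$ is càdlàg and square-uniformly integrable) so that the frozen coefficients are genuinely predictable, and check that the contraction estimate is unaffected by replacing $\nu_{s-}$ with $\nu_s$ a.e.
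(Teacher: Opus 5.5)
Your proposal is correct and follows essentially the same route as the paper. The paper also runs a contraction for the frozen-law map on flows under an exponentially weighted sup-$W_2$ metric, uses Gronwall for the difference of the frozen SDEs, and proves the quadratic estimate at the fixed point. The only differences are cosmetic: the paper works with continuous flows, so that $\upsilon_{s-}=\upsilon_s$ automatically, and it proves uniqueness by a direct Gronwall argument on two solutions rather than through uniqueness of the fixed point.
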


\begin{proof}
The result is classical for McKean--Vlasov SDEs with coefficients Lipschitz in $(x,\mu)$ driven by a continuous martingale and a compensated Poisson random measure; see \cite{Sznitman1991,CarmonaDelarue2018} for the continuous case and \cite{Li2018} for the case with jumps. We recall the argument because its constants are used later.

\emph{Step 1: frozen law.} Let $\mathcal M_2$ be the set of flows $\upsilon\in C([0,T];\Ptwo(\R^d))$ with $\upsilon_0=\Law^P(\xi)$, endowed with the complete metric $d_{\omega}(\upsilon,\upsilon')^2=\sup_{t\le T}e^{-\omega t}W_2^2(\upsilon_t,\upsilon'_t)$, $\omega>0$. For $\upsilon\in\mathcal M_2$, the equation obtained by replacing $\mu_{s-}$ with $\upsilon_s$ in \eqref{eq:forward} has predictable coefficients that are Lipschitz in $x$, uniformly in $(t,\omega)$, and square integrable at $x=0$ with respect to $\dd t\otimes\dd P$ (Assumption~\ref{ass:F}); hence it has a unique solution $X^\upsilon\in\cS_P^2$, see for instance \cite[Chap.~IV]{IkedaWatanabe1989}. Let $\upsilon,\upsilon'\in\mathcal M_2$. Both solutions start from $\xi$ and are driven by the same $(B^P,\widetilde N^P)$. The inequality $(x_1+x_2+x_3)^2\le3(x_1^2+x_2^2+x_3^2)$, Cauchy--Schwarz for the drift, Doob's inequality with the isometries for the two stochastic integrals, and Assumption~\ref{ass:F} give
\[
\mathbb E^P\!\left[\sup_{r\le t}|X_r^\upsilon-X_r^{\upsilon'}|^2\right]\le3\max(T,4)L_F^2\int_0^t\Bigl(\mathbb E^P\!\left[\sup_{u\le s}|X_u^\upsilon-X_u^{\upsilon'}|^2\right]+W_2^2(\upsilon_s,\upsilon'_s)\Bigr)\dd s .
\]
Gronwall's lemma then yields, with $C_T=3\max(T,4)L_F^2\exp\bigl(3\max(T,4)L_F^2T\bigr)$,
\[
\mathbb E^P\!\left[\sup_{r\le t}|X_r^\upsilon-X_r^{\upsilon'}|^2\right]
\le C_T\int_0^t W_2^2(\upsilon_s,\upsilon'_s)\dd s,
\qquad t\in[0,T].
\]
The same inequalities show that $t\mapsto X_t^\upsilon$ is continuous in $L^2(P)$, so that $\Gamma(\upsilon)_t:=\Law^P(X_t^\upsilon)$ defines a map $\Gamma:\mathcal M_2\to\mathcal M_2$.

\emph{Step 2: fixed point and uniqueness.} The coupling $(X_t^\upsilon,X_t^{\upsilon'})$ and the preceding estimate give
\[
e^{-\omega t}W_2^2(\Gamma(\upsilon)_t,\Gamma(\upsilon')_t)
\le C_Td_\omega(\upsilon,\upsilon')^2e^{-\omega t}\int_0^te^{\omega s}\dd s
\le\frac{C_T}{\omega}d_\omega(\upsilon,\upsilon')^2 .
\]
For $\omega=2C_T$, $\Gamma$ is a contraction; its unique fixed point $\mu$ yields the solution $X^P=X^\mu$, with $\mu_t=\Law^P(X_t^P)$. If $X$ and $\bar X$ are two solutions, the natural coupling $W_2^2(\Law^P(X_s),\Law^P(\bar X_s))\le\mathbb E^P[|X_s-\bar X_s|^2]$ and the same estimates give $\mathbb E^P[\sup_{r\le t}|X_r-\bar X_r|^2]\le C\int_0^t\mathbb E^P[\sup_{u\le s}|X_u-\bar X_u|^2]\dd s$, and uniqueness follows from Gronwall's lemma.

\emph{Step 3: quadratic estimate.} Taking $(x',\mu',\alpha')=(0,\mathfrak d_0,0)$ in Assumption \ref{ass:F} and using $W_2(\mu,\mathfrak d_0)=M_2(\mu)$ gives the linear-growth bound
\[
|b(t,x,\mu,\alpha)|^2+|\sigma(t,x,\mu,\alpha)|_{a_t^P}^2+\int_E|\beta(t,x,\mu,\alpha,e)|^2\Pi(\dd e)\le2C_F^2+2L_F^2\bigl(|x|^2+M_2(\mu)^2+|\alpha|^2\bigr).
\]
This bound, $M_2(\mu_s)^2=\mathbb E^P[|X_s|^2]$, the inequalities of Step~1, and Gronwall's lemma yield \eqref{eq:forward-apriori}. All constants depend only on $T$, $L_F$, $C_F$, and $\overline a$, hence are uniform in $P$.
\end{proof}

\begin{lemma}\label{lem:flowcontinuity}
Fix $P\in\PP$ and suppose \ref{ass:base} and \ref{ass:F} hold. Let $\xi\in L^2(\Omega,\mathcal F_0,P;\R^d)$ and $\alpha\in\mathcal A_2$, and let $X^P$ be the solution of \eqref{eq:forward} given by Theorem \ref{thm:forward}, with $\mu_t^P=\Law^P(X_t^P)$. Then, for every $t\in(0,T]$, $P(X_t^P\ne X_{t-}^P)=0$. Moreover, the flow $t\mapsto\mu_t^P$ is continuous from $[0,T]$ into $(\Ptwo(\R^d),W_2)$, and $\mu_{t-}^P=\mu_t^P=\Law^P(X_{t-}^P)$ for every $t\in(0,T]$.
\end{lemma}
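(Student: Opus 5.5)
The plan has two parts. First I show that $X^P$ has no fixed-time jumps. Then I deduce continuity of the flow from $L^2$-continuity of $t\mapsto X_t^P$.

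\emph{No jump at a fixed time.} The process $X^P$ is a semimartingale. Its continuous parts (the $\dd s$ integral and the $\dd B^P$ integral) have no jumps. Its jumps come only from the stochastic integral against $\widetilde N^P$, so $\Delta X_t^P=\int_E\beta(t,X_{t-}^P,\mu_{t-}^P,\alpha_t,e)N(\{t\},\dd e)$. By Assumption~\ref{ass:base}(ii), the compensator of $N$ is $\Pi(\dd e)\dd t$, which charges no fixed time. Hence
\[
\mathbb E^P\bigl[N(\{t\}\times E_m)\bigr]=\Pi(E_m)\cdot\operatorname{Leb}(\{t\})=0
\]
for any $E_m\uparrow E$ with $\Pi(E_m)<\infty$, using $\sigma$-finiteness. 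So $N(\{t\}\times E)=0$ $P$-a.s., and therefore $\Delta X_t^P=0$ a.s.

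A cleaner route, which avoids expanding the jump formula, is the $L^2$ estimate in the next step: $\mathbb E^P[|X_t-X_s|^2]\to0$ as $s\uparrow t$. Combined with $X_s\to X_{t-}$ a.s. and Fatou's lemma, this gives $\mathbb E^P[|X_t-X_{t-}|^2]=0$. I would use this second route, since it relies only on bounds already established.

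\emph{$L^2$-continuity.} For $s<t$, write $X_t-X_s$ as the sum of the three integrals over $(s,t]$. I bound them as follows:
\begin{itemize}
\item the drift term by Cauchy--Schwarz, which gives $(t-s)\int_s^t|b|^2$;
\item the Brownian term by the isometry with $|\cdot|_{a^P}$;
\item the jump term by the $\widetilde N^P$ isometry, using $\int_E|\beta|^2\Pi(\dd e)$.
\end{itemize}
The linear-growth bound from Step~3 of Theorem~\ref{thm:forward}, together with \eqref{eq:forward-apriori}, then yields
\[
\mathbb E^P\bigl[|X_t-X_s|^2\bigr]\le C\,\mathbb E^P\!\left[\int_s^t\bigl(1+|X_{r-}|^2+M_2(\mu_r)^2+|\alpha_r|^2\bigr)\dd r\right].
\]
The integrand lies in $L^1(\dd r\otimes\dd P)$, by \eqref{eq:forward-apriori} and $\alpha\in\mathcal A_2$. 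Absolute continuity of the Lebesgue integral then shows that this bound tends to $0$ as $|t-s|\to0$, uniformly in the location of the interval.

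\emph{Conclusion.} By the natural coupling, $W_2^2(\mu_t,\mu_s)\le\mathbb E^P[|X_t-X_s|^2]\to0$, so $t\mapsto\mu_t^P$ is continuous in $(\Ptwo(\R^d),W_2)$. In particular, the left limit $\mu_{t-}^P$ exists and equals $\mu_t^P$. Finally, $X_{t-}=X_t$ a.s. from the first step, so $\Law^P(X_{t-}^P)=\mu_t^P$.

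The only delicate point is the first step. Passing to the limit requires $X_s\to X_{t-}$ in $L^2$ and not merely almost surely. This follows from Fatou's lemma applied to $|X_t-X_s|^2$ as $s\uparrow t$, combined with the $L^2$-continuity estimate: the almost-sure limit is $|X_t-X_{t-}|^2$, and its expectation is bounded by $\liminf_{s\uparrow t}\mathbb E^P[|X_t-X_s|^2]=0$.
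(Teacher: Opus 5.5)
Your proposal is correct and follows the paper's proof. You derive the same $L^2$ modulus $\mathbb E^P[|X_t^P-X_s^P|^2]\to0$ from Cauchy--Schwarz, the two isometries and the linear-growth bound, obtain $W_2$-continuity of the flow by the natural coupling, and combine that modulus with $X_s^P\to X_{t-}^P$ a.s. as $s\uparrow t$ to get $X_t^P=X_{t-}^P$ a.s.; your use of Fatou in place of the paper's identification of the a.s. and $L^2$ limits is an equivalent variant. Your closing remark that one needs $X_s^P\to X_{t-}^P$ in $L^2$ is unnecessary, since Fatou only uses the a.s. convergence you already have.
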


\begin{proof}
For $0\le s<t\le T$, Cauchy--Schwarz for the drift and the isometries for the two stochastic integrals give
\[
\mathbb E^P\bigl[|X_t^P-X_s^P|^2\bigr]\le3\,\mathbb E^P\!\left[\int_s^t\Bigl(T|b_u|^2+|\sigma_u|_{a_u^P}^2+\int_E|\beta_u(e)|^2\Pi(\dd e)\Bigr)\dd u\right],
\]
where $b_u$, $\sigma_u$, and $\beta_u(e)$ denote the coefficients evaluated along $X^P$. By the linear-growth bound in the proof of Theorem~\ref{thm:forward} and \eqref{eq:forward-apriori}, the integrand is integrable on $[0,T]\times\Omega$; hence the right-hand side tends to zero as $t-s\to0$, and the natural coupling yields $W_2(\mu_s^P,\mu_t^P)\to0$. Fix $t\in(0,T]$ and let $s_n\uparrow t$ with $s_n<t$. Then $X_{s_n}^P\to X_{t-}^P$, $P$-a.s., and $X_{s_n}^P\to X_t^P$ in $L^2(P)$; hence $X_t^P=X_{t-}^P$, $P$-a.s. Consequently $\Law^P(X_{t-}^P)=\mu_t^P$, and continuity of the flow gives $\mu_{t-}^P=\mu_t^P$.
\end{proof}

In particular, $\mu_{s-}^P$ may be replaced by $\mu_s^P$ in \eqref{eq:forward}, \eqref{eq:K}, and \eqref{eq:backward} without changing these equations; this also shows that the fixed point constructed in the proof of Theorem~\ref{thm:forward}, which solves the equation with $\upsilon_s=\mu_s$, solves \eqref{eq:forward}.

\subsection{Integrability of the selected source}

\begin{proposition}\label{prop:K}
Fix $P\in\PP$ and suppose Assumptions \ref{ass:base}, \ref{ass:F}, \ref{ass:operator}, and \ref{ass:measop} hold. Let $\xi\in L^2(\Omega,\mathcal F_0,P;\R^d)$, $\alpha\in\mathcal A_2$, let $X^P$ be the solution of \eqref{eq:forward} given by Theorem \ref{thm:forward}, set $\mu_t^P=\Law^P(X_t^P)$, let $\gamma\in(0,1]$, and let $q$ be an admissible epistemic selection at level $\gamma$ (Definition \ref{def:selection}). Then the process
\[
\chi_t^{P,q}
:=
(A^{q_t})^0(X_{t-}^P,\mu_{t-}^P,\alpha_t),\qquad t\in[0,T],
\]
is predictable and belongs to $L^2(\dd t\otimes\dd P;\R^d)$, and the process $K_t^{P,q}=\int_0^t\chi_s^{P,q}\dd s$ is adapted, continuous, of finite variation, and belongs to $\cS_P^2$. Moreover,
\begin{align}
&\mathbb E^P\!\left[\int_0^T|\chi_t^{P,q}|^2\dd t\right]
+\mathbb E^P\!\left[\sup_{t\le T}|K_t^{P,q}|^2\right] \notag\\
&\qquad\le
C\left(
1+\mathbb E^P\!\left[|\xi|^2\right]+\mathbb E^P\!\left[\int_0^T|\alpha_t|^2\dd t\right]
\right),
\label{eq:K-apriori}
\end{align}
where $C$ is independent of $P$, $\gamma$, and $q$.
\end{proposition}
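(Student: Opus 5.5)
The plan has three steps: predictability of $\chi^{P,q}$, a pointwise growth bound, and the properties of $K^{P,q}$ derived from it.

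\emph{Predictability.} The selection $q$ and the control $\alpha$ are predictable by definition, and $X_{t-}^P$ is predictable because it is the left limit of an adapted càdlàg process. By Lemma~\ref{lem:flowcontinuity}, $t\mapsto\mu_{t-}^P=\mu_t^P$ is a deterministic continuous flow in $(\Ptwo(\R^d),W_2)$, hence Borel. Remark~\ref{rem:measop}(iii) then shows that $(t,\omega)\mapsto(q_t,\mu_{t-}^P,\alpha_t,X_{t-}^P)$ is measurable from the predictable $\sigma$-field into the product Borel $\sigma$-field. Composing with the Borel map $(q,\mu,\alpha,x)\mapsto(A^q)^0(x,\mu,\alpha)$ of Assumption~\ref{ass:measop} shows that $\chi^{P,q}$ is predictable.

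\emph{Square integrability.} Lemma~\ref{lem:minimalgrowth} gives
\[
|\chi_t^{P,q}|^2\le 4C_A^2\bigl(1+|X_{t-}^P|^2+|\alpha_t|^2+M_2(\mu_t^P)^2\bigr),
\]
with $C_A$ independent of $q$, and hence independent of $\gamma$ and of the selection. We have $M_2(\mu_t^P)^2=\mathbb E^P[|X_t^P|^2]\le\mathbb E^P[\sup_{s\le T}|X_s^P|^2]$, and also $|X_{t-}^P|\le\sup_{s\le T}|X_s^P|$. Integrating over $[0,T]\times\Omega$ and applying the a priori bound \eqref{eq:forward-apriori} of Theorem~\ref{thm:forward}, whose constant depends only on $T,\overline a,L_F,C_F$, bounds $\mathbb E^P[\int_0^T|\chi_t^{P,q}|^2\dd t]$ by the right-hand side of \eqref{eq:K-apriori}. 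This constant is uniform in $P$, $\gamma$ and $q$.

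\emph{Properties of $K^{P,q}$.} Since $\int_0^T|\chi_t^{P,q}|\dd t<\infty$ $P$-a.s., the process $K^{P,q}$ is pathwise well defined, continuous, absolutely continuous, and of finite variation. It is adapted by Fubini, because $\chi^{P,q}$ is progressively measurable (being predictable). Cauchy--Schwarz in time gives $\sup_{t\le T}|K_t^{P,q}|^2\le T\int_0^T|\chi_t^{P,q}|^2\dd t$. Combining this with the previous step gives $K^{P,q}\in\cS_P^2$ and the full estimate \eqref{eq:K-apriori}.

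The only delicate point is the joint measurability in the first step: it requires the law flow to enter as a deterministic Borel map rather than through $\omega$, and this is exactly what Lemma~\ref{lem:flowcontinuity} supplies. The remaining steps are routine.
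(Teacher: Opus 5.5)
Your proposal is correct and follows the paper's proof step by step. Predictability comes from composing the Borel minimal selection of Assumption~\ref{ass:measop} with the predictable quadruple via Remark~\ref{rem:measop}(iii); square integrability comes from the growth bound of Lemma~\ref{lem:minimalgrowth} combined with $M_2(\mu_t^P)^2=\mathbb E^P[|X_t^P|^2]$ and \eqref{eq:forward-apriori}; and $\sup_{t\le T}|K_t^{P,q}|^2\le T\int_0^T|\chi_t^{P,q}|^2\dd t$ yields the remaining bound.
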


\begin{proof}
By Assumption \ref{ass:measop}, the map $(q,\mu,\alpha,x)\longmapsto(A^q)^0(x,\mu,\alpha)$ is Borel measurable. The processes $q$, $\alpha$, and $X_{-}^P$ are predictable, and the flow $t\mapsto\mu_t^P$ is deterministic and Borel. Hence $\chi^{P,q}$ is predictable (Remark~\ref{rem:measop}(iii)). Lemma \ref{lem:minimalgrowth} gives
\[
|\chi_t^{P,q}|^2
\le
C\left(
1+|X_{t-}^P|^2+|\alpha_t|^2+M_2(\mu_{t-}^P)^2
\right),
\]
and $M_2(\mu_t^P)^2=\mathbb E^P[|X_t^P|^2]$; integrating in time, taking expectations, and using \eqref{eq:forward-apriori} gives the first part of \eqref{eq:K-apriori}. The second part follows from
\[
\sup_{t\le T}|K_t^{P,q}|^2
\le
\left(\int_0^T|\chi_s^{P,q}|\dd s\right)^2
\le
T\int_0^T|\chi_s^{P,q}|^2\dd s.
\]
Finally, $K^{P,q}$ is continuous and $\Var_{[0,T]}(K^{P,q})=\int_0^T|\chi_t^{P,q}|\dd t<\infty$, $P$-a.s.
\end{proof}

\subsection{McKean--Vlasov BSDE with a selected monotone source}

In the ALA case (Assumption \ref{ass:ALA}), set, for $k=1,\ldots,d$,
\[
S_{t-}^{P,q,k}=X_{t-}^{P,k}+h_k^{q_t}(\mu_{t-}^P,\alpha_t),
\qquad c_{t,k}=c_k^{q_t}(\mu_{t-}^P,\alpha_t).
\]
The minimal selection entering $K_t^{P,q}=\int_0^t\chi_s^{P,q}\dd s$ is
\[
\chi_{t,k}^{P,q}=
\begin{cases}
\alpha_AS_{t-}^{P,q,k}+c_{t,k}+\kappa_+^{q_t},&S_{t-}^{P,q,k}>0,\\
\alpha_AS_{t-}^{P,q,k}+c_{t,k}-\kappa_-^{q_t},&S_{t-}^{P,q,k}<0,\\
c_{t,k}+\proj_{[-\kappa_-^{q_t},\kappa_+^{q_t}]}(-c_{t,k}),
&S_{t-}^{P,q,k}=0.
\end{cases}
\]
Indeed, away from the interface the subdifferential is a singleton; on the interface, minimizing $|c_{t,k}+v|$ over $v\in[-\kappa_-^{q_t},\kappa_+^{q_t}]$ gives the displayed projection. Separation of the coordinates justifies this minimization for the Euclidean norm in $\R^d$. Predictability and integrability of this source follow from Proposition \ref{prop:K}, including on the interface.

The ALA geometry only enters later, in source stability and the Yosida rates.

\begin{theorem}\label{thm:backward}
Fix $P\in\PP$ and suppose Assumptions \ref{ass:base}, \ref{ass:F}, \ref{ass:B}, \ref{ass:operator}, and \ref{ass:measop} hold. Let $\xi\in L^2(\Omega,\mathcal F_0,P;\R^d)$, $\alpha\in\mathcal A_2$, $\gamma\in(0,1]$, and let $q$ be an admissible epistemic selection at level $\gamma$, with forward solution $X^P$, flow $\mu^P$, and source $K^{P,q}$ as in Proposition \ref{prop:K}. Then \eqref{eq:backward} has a unique solution
\[
(Y^{P,q},Z^{P,q},U^{P,q})
\in
\cS_P^2\times\cH_P^2\times\cJ_P^2,
\]
and there exists a constant $C$, independent of $P$, $\gamma$, and $q$, such that
\begin{align}
&\mathbb E^P\!\left[
\sup_{t\le T}|Y_t^{P,q}|^2
+\int_0^T|Z_t^{P,q}|_{a_t^P}^2\dd t
+\int_0^T\|U_t^{P,q}\|_\Pi^2\dd t
+\sup_{t\le T}|K_t^{P,q}|^2
\right]\notag\\
&\le
C\left(
1+\mathbb E^P\!\left[|\xi|^2\right]+\mathbb E^P\!\left[\int_0^T|\alpha_t|^2\dd t\right]
\right).
\label{eq:backward-apriori}
\end{align}
\end{theorem}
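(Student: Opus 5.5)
The backward equation \eqref{eq:backward} is a McKean--Vlasov BSDE with jumps whose forward inputs $X^P$, $\mu^P$, $\alpha$ and source $K^{P,q}$ are frozen. By Theorem~\ref{thm:forward} and Proposition~\ref{prop:K}, these inputs are given processes in $\cS_P^2$, with $K^{P,q}$ continuous, adapted, of finite variation, and bounded in $\cS_P^2$ by \eqref{eq:K-apriori}. We therefore solve by a Picard contraction on $\cS_P^2\times\cH_P^2\times\cJ_P^2$, endowed with the weighted norm
\[
\|(Y,Z,U)\|_\omega^2=\mathbb E^P\!\left[\int_0^Te^{\omega t}\bigl(|Y_t|^2+|Z_t|_{a_t^P}^2+\|U_t\|_\Pi^2\bigr)\dd t\right].
\]

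\emph{Step 1: the map $\Phi$.} Given $(y,z,u)$, let $\eta_t$ be the Borel version of $\Law^P(y_t)$ from Lemma~\ref{lem:lawflows}. Set $F_s=f(s,X_{s-}^P,y_s,z_s,u_s,\mu_{s-}^P,\eta_s,\alpha_s)$; it is predictable by joint Borel measurability (Remark~\ref{rem:measop}(iii)) and lies in $L^2(\dd t\otimes\dd P)$ by the linear growth of $f$ (Assumption~\ref{ass:B}), using $W_2(\eta_s,\mathfrak d_0)^2=\mathbb E^P|y_s|^2$ and \eqref{eq:forward-apriori}. The random variable $\xi^\ast=g(X_T^P,\mu_T^P)+\int_0^TF_s\dd s+K_T^{P,q}$ is square integrable. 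We apply the predictable representation of Assumption~\ref{ass:base}(iii) to $\mathsf M_t=\mathbb E^P[\xi^\ast\mid\mathcal F_t^P]$, obtaining $(Z,U)$. We then define $Y_t=\mathsf M_t-\int_0^tF_s\dd s-K_t^{P,q}$, which is càdlàg and adapted, and lies in $\cS_P^2$ by Doob's inequality. By construction $(Y,Z,U)$ solves the linear equation with driver $F$, so we set $\Phi(y,z,u)=(Y,Z,U)$.

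\emph{Step 2: contraction.} Take two inputs and write $\delta$ for differences. The source $K^{P,q}$ cancels in $\delta Y$ because it does not depend on $(y,z,u)$. We apply Itô's formula to $e^{\omega t}|\delta Y_t|^2$, using the quadratic-variation identity of Section~\ref{sec:base}. The Lipschitz bound of Assumption~\ref{ass:B}, together with $W_2^2(\eta_s,\eta_s')\le\mathbb E^P|\delta y_s|^2$ (Lemma~\ref{lem:lawflows}), and Young's inequality then give
\[
\|\delta(Y,Z,U)\|_\omega^2\le\frac{C}{\omega}\,\|\delta(y,z,u)\|_\omega^2,
\]
with $C$ depending only on $L_B$ and $\overline a,\underline a$. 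Choosing $\omega$ large makes $\Phi$ a contraction. Its fixed point gives existence, and the same computation gives uniqueness in the stated spaces. The sup-norm part follows from BDG with the constant $C_{\rm BDG}$.

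\emph{Step 3: the a priori bound \eqref{eq:backward-apriori}.} Itô's formula applied to $e^{\omega t}|Y_t|^2$ produces the cross term $2\int_t^Te^{\omega s}Y_s\cdot\dd K_s$. We handle it either by Young's inequality with $|\chi|^2$, using \eqref{eq:K-apriori}, or more cleanly by setting $\bar Y=Y-K^{P,q}$: this removes the source from the equation at the cost of a terminal value $g+K_T$ and Lipschitz shifts in the $y$-argument. Either way, we bound the result by the $L^2$ norms of $g(X_T^P,\mu_T^P)$, $f(\cdot,0,\ldots)$, $X^P$, $\alpha$ and $K^{P,q}$. Then we apply Gronwall, and BDG for $\sup|Y|^2$. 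We add $\mathbb E^P\sup|K|^2$ using \eqref{eq:K-apriori}, and conclude with \eqref{eq:forward-apriori}.

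All constants come from $L_B$, $C_B$, $T$, $\overline a$, $\underline a$ and the BDG constants, which are uniform in $P$; and $q$ enters only through \eqref{eq:K-apriori}, which is uniform in $\gamma$ and $q$. The main obstacle is measurability of the law term $\Law^P(y_t)$ for a merely $\cS_P^2$ input, which Lemma~\ref{lem:lawflows} handles. The other delicate point is tracking uniformity in $P$ through the random norm $|\cdot|_{a^P}$, which is consistent because Assumption~\ref{ass:B} is stated in that norm.
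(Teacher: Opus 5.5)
Your proposal follows essentially the paper's route: a Picard contraction in an exponentially weighted $L^2(\dd t\otimes\dd P)$ norm, built from the predictable representation, with the source entering as an exogenous term. You put $K^{P,q}$ into the terminal variable and the paper puts $\chi$ into the generator, which is equivalent; both then use It\^o's formula and Doob's inequality for \eqref{eq:backward-apriori}. Two repairs are needed. First, $\cS_P^2\times\cH_P^2\times\cJ_P^2$ is not complete under your $\|\cdot\|_\omega$, because that norm sees $Y$ only in $L^2(\dd t\otimes\dd P)$; run the fixed point on $L^2_{\rm pred}(\dd t\otimes\dd P)\times\cH_P^2\times\cJ_P^2$ as the paper does (taking the left-limit version of the output, which also gives the predictability you claimed for $F$), or use your Doob/BDG bound to show the Picard iterates are Cauchy in $\cS_P^2$ as well; second, the translation that removes the source is $\bar Y=Y+K^{P,q}$, not $Y-K^{P,q}$.
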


\begin{proof}
We omit the indices $(P,q)$ and set $\zeta=g(X_T,\mu_T)$, $\chi_t=(A^{q_t})^0(X_{t-},\mu_{t-},\alpha_t)$, $K_t=\int_0^t\chi_s\dd s$, and $L=L_B$. By Theorem \ref{thm:forward}, Assumption \ref{ass:B}, and Proposition \ref{prop:K}, $\zeta\in L^2(P)$ and $\chi\in L^2(\dd t\otimes\dd P)$. Since $K_T-K_t=\int_t^T\chi_s\dd s$ and $\chi$ does not depend on $(Y,Z,U)$, the density $\chi$ enters \eqref{eq:backward} as an exogenous square-integrable term of the generator. The argument is then the standard one for mean-field BSDEs with jumps \cite{Royer2006,Li2018}.

\emph{Step 1: Picard map.} Let $\mathcal H_P^2=L^2_{\rm pred}(\dd t\otimes\dd P;\R^d)\times\cH_P^2\times\cJ_P^2$. For $(y,z,u)\in\mathcal H_P^2$, the frozen generator
\[
\mathsf f_s=f\bigl(s,X_{s-},y_s,z_s,u_s,\mu_{s-},\Law^P(y_s),\alpha_s\bigr)+\chi_s
\]
is well defined by Lemma \ref{lem:lawflows} and belongs to $L^2(\dd t\otimes\dd P)$ by the linear growth of $f$, since $M_2(\Law^P(y_s))^2=\mathbb E^P[|y_s|^2]$. Predictable representation (Assumption \ref{ass:base}(iii)) applied to the martingale $\mathbb E^P[\zeta+\int_0^T\mathsf f_s\dd s\mid\mathcal F_t^P]$ yields a unique triplet $(Y',Z',U')\in\cS_P^2\times\cH_P^2\times\cJ_P^2$ such that
\[
Y'_t=\zeta+\int_t^T\mathsf f_s\dd s-\int_t^TZ'_s\dd B_s^P-\int_t^T\int_EU'_s(e)\widetilde N^P(\dd s,\dd e).
\]
Set $\Phi(y,z,u)=(Y'_-,Z',U')$, where $Y'_-$ is the left-limit process, with $Y'_{0-}=Y'_0$. This process is predictable and coincides with $Y'$ $\dd t\otimes\dd P$-a.e., since a càdlàg path has only countably many jumps; therefore the norms and time integrals below are unchanged.

\emph{Step 2: contraction.} Let $(y,z,u)$ and $(\bar y,\bar z,\bar u)$ be two inputs of $\mathcal H_P^2$, with frozen generators $\mathsf f$ and $\bar{\mathsf f}$, and images $(Y',Z',U')$ and $(Y'',Z'',U'')$. Set
\[
\delta Y_t=Y'_t-Y''_t,\qquad \delta Z_t=Z'_t-Z''_t,\qquad \delta U_t=U'_t-U''_t,\qquad \delta\mathsf f_t=\mathsf f_t-\bar{\mathsf f}_t .
\]
The source $\chi$ cancels in $\delta\mathsf f$. By Assumption \ref{ass:B} and the natural coupling,
\begin{equation}\label{eq:G-Picard}
\mathbb E^P[|\delta\mathsf f_t|^2]\le8L^2\,\mathbb E^P\bigl[|y_t-\bar y_t|^2+|z_t-\bar z_t|_{a_t^P}^2+\|u_t-\bar u_t\|_\Pi^2\bigr].
\end{equation}
For $\varpi>0$, Itô's formula applied to $e^{\varpi t}|\delta Y_t|^2$, with $\delta Y_T=0$, gives, after taking expectations and dropping the nonnegative term $\mathbb E^P[|\delta Y_0|^2]$ (the stochastic integrals are true martingales, since $\delta Y\in\cS_P^2$ and $\delta Z,\delta U$ are square integrable),
\begin{align*}
\mathbb E^P\!\left[\int_0^Te^{\varpi t}\bigl(\varpi|\delta Y_t|^2+|\delta Z_t|_{a_t^P}^2+\|\delta U_t\|_\Pi^2\bigr)\dd t\right]
&\le2\,\mathbb E^P\!\left[\int_0^Te^{\varpi t}\langle \delta Y_t,\delta\mathsf f_t\rangle\dd t\right]\\
&\le\frac{\varpi}{2}\mathbb E^P\!\left[\int_0^Te^{\varpi t}|\delta Y_t|^2\dd t\right]+\frac{2}{\varpi}\mathbb E^P\!\left[\int_0^Te^{\varpi t}|\delta\mathsf f_t|^2\dd t\right].
\end{align*}
Consider the norm $\|(y,z,u)\|_\varpi^2=\mathbb E^P\!\left[\int_0^Te^{\varpi t}(|y_t|^2+|z_t|_{a_t^P}^2+\|u_t\|_\Pi^2)\dd t\right]$, which is equivalent to the usual norm. The preceding inequality bounds $\mathbb E^P\!\left[\int_0^Te^{\varpi t}|\delta Y_t|^2\dd t\right]$ by $4\varpi^{-2}\mathbb E^P\!\left[\int_0^Te^{\varpi t}|\delta\mathsf f_t|^2\dd t\right]$ and the $(\delta Z,\delta U)$ part by $2\varpi^{-1}\mathbb E^P\!\left[\int_0^Te^{\varpi t}|\delta\mathsf f_t|^2\dd t\right]$. Whenever $\varpi\ge2$, so that $4\varpi^{-2}\le2\varpi^{-1}$, \eqref{eq:G-Picard} gives
\[
\|\Phi(y,z,u)-\Phi(\bar y,\bar z,\bar u)\|_\varpi^2\le\frac{4}{\varpi}\mathbb E^P\!\left[\int_0^Te^{\varpi t}|\delta\mathsf f_t|^2\dd t\right]\le\frac{32L^2}{\varpi}\|(y,z,u)-(\bar y,\bar z,\bar u)\|_\varpi^2 .
\]
For $\varpi=\max(64L^2,2)$, $\Phi$ is a contraction with ratio at most $2^{-1/2}$ on the complete space $(\mathcal H_P^2,\|\cdot\|_\varpi)$. Its unique fixed point $(Y,Z,U)$ coincides $\dd t\otimes\dd P$-a.e. with the càdlàg process in $\cS_P^2$ associated with it in Step 1; this càdlàg representative solves \eqref{eq:backward}. Conversely, any solution of \eqref{eq:backward} is a fixed point of $\Phi$; hence uniqueness in $\mathcal H_P^2$, and then in $\cS_P^2$ upon returning to the equation.

\emph{Step 3: a priori estimate.} Set
\[
\mathsf D_t=C_B+L\bigl(|X_{t-}|+M_2(\mu_{t-})+|\alpha_t|\bigr)+|\chi_t|,
\qquad F_t=f(t,X_{t-},Y_t,Z_t,U_t,\mu_{t-},\eta_t,\alpha_t)+\chi_t .
\]
Since $W_2(\eta_t,\mathfrak d_0)=(\mathbb E^P[|Y_t|^2])^{1/2}$,
\[
|F_t|\le \mathsf D_t+L\bigl(|Y_t|+(\mathbb E^P[|Y_t|^2])^{1/2}+|Z_t|_{a_t^P}+\|U_t\|_\Pi\bigr).
\]
The inequalities $2|Y|\mathsf D\le|Y|^2+\mathsf D^2$ and $2L|Y|(|Z|_{a^P}+\|U\|_\Pi)\le4L^2|Y|^2+\frac12(|Z|_{a^P}^2+\|U\|_\Pi^2)$, together with $\mathbb E^P\bigl[|Y_t|(\mathbb E^P[|Y_t|^2])^{1/2}\bigr]\le\mathbb E^P[|Y_t|^2]$, imply
\[
2\,\mathbb E^P[\langle Y_t,F_t\rangle]\le(1+4L+4L^2)\mathbb E^P[|Y_t|^2]+\mathbb E^P[\mathsf D_t^2]+\tfrac12\mathbb E^P\bigl[|Z_t|_{a_t^P}^2+\|U_t\|_\Pi^2\bigr].
\]
Applying Itô's formula to $e^{\varpi t}|Y_t|^2$ between $0$ and $T$, where the stochastic integrals are true martingales as in Step 2, gives
\[
\mathbb E^P[|Y_0|^2]+\mathbb E^P\!\left[\int_0^Te^{\varpi t}\bigl(\varpi|Y_t|^2+|Z_t|_{a_t^P}^2+\|U_t\|_\Pi^2\bigr)\dd t\right]=e^{\varpi T}\,\mathbb E^P[|\zeta|^2]+2\,\mathbb E^P\!\left[\int_0^Te^{\varpi t}\langle Y_t,F_t\rangle\dd t\right] .
\]
Using the previous bound with $\varpi=2+4L+4L^2$, and $1\le e^{\varpi t}\le e^{\varpi T}$, we obtain
\begin{equation}\label{eq:backward-energy-L2}
\mathbb E^P\!\left[\int_0^T\Bigl(|Y_t|^2+\tfrac12|Z_t|_{a_t^P}^2+\tfrac12\|U_t\|_\Pi^2\Bigr)\dd t\right]
\le e^{\varpi T}\,\mathbb E^P\Bigl[|\zeta|^2+\int_0^T\mathsf D_t^2\dd t\Bigr].
\end{equation}
For the supremum, $Y_t=\zeta+\int_t^TF_s\dd s-(\mathsf M_T-\mathsf M_t)$, where $\mathsf M=\int Z\dd B^P+\int\!\!\int U\dd\widetilde N^P$; Doob's inequality gives $\mathbb E^P[\sup_t|\mathsf M_t|^2]\le4\,\mathbb E^P\!\left[\int_0^T(|Z_t|_{a_t^P}^2+\|U_t\|_\Pi^2)\dd t\right]$. Together with $\mathbb E^P[|F_t|^2]\le5\,\mathbb E^P[\mathsf D_t^2]+5L^2\,\mathbb E^P[2|Y_t|^2+|Z_t|_{a_t^P}^2+\|U_t\|_\Pi^2]$ and \eqref{eq:backward-energy-L2}, this gives
\[
\mathbb E^P\Bigl[\sup_{t\le T}|Y_t|^2+\int_0^T\bigl(|Z_t|_{a_t^P}^2+\|U_t\|_\Pi^2\bigr)\dd t\Bigr]
\le C\,\mathbb E^P\Bigl[|\zeta|^2+\int_0^T\bigl(1+|X_{t-}|^2+M_2(\mu_{t-})^2+|\alpha_t|^2+|\chi_t|^2\bigr)\dd t\Bigr],
\]
where $C$ depends only on $L_B,C_B,T$. Finally, $|\zeta|^2\le C(1+|X_T|^2+M_2(\mu_T)^2)$, and \eqref{eq:forward-apriori} together with \eqref{eq:K-apriori} yields \eqref{eq:backward-apriori}.
\end{proof}

\begin{corollary}\label{cor:wellposed-selected}
Fix $P\in\PP$ and suppose the assumptions of Theorem \ref{thm:backward} hold. Let $\xi\in L^2(\Omega,\mathcal F_0,P;\R^d)$, $\alpha\in\mathcal A_2$, $\gamma\in(0,1]$, and let $q$ be an admissible epistemic selection at level $\gamma$. Then the triangular system \eqref{eq:forward}, \eqref{eq:K}, \eqref{eq:backward} has a unique selected solution in the sense of Definition \ref{def:solution},
\[
(X^P,Y^{P,q},Z^{P,q},U^{P,q},K^{P,q})
\in
\cS_P^2\times\cS_P^2\times\cH_P^2\times\cJ_P^2\times\cS_P^2.
\]
\end{corollary}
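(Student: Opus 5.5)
The corollary follows by composing the three results already established. I will follow the triangular order $(\xi,\alpha)\mapsto(X^P,\mu^P)\mapsto K^{P,q}\mapsto(Y^{P,q},Z^{P,q},U^{P,q})$ and check that each component lies in the required space and satisfies the identities in Definition~\ref{def:solution}.

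\emph{Existence.} Theorem~\ref{thm:forward} gives a unique strong solution $X^P\in\cS_P^2$ of \eqref{eq:forward}, which is adapted and càdlàg. By Lemma~\ref{lem:flowcontinuity} its flow $\mu^P$ is continuous, and $\mu_{t-}^P=\mu_t^P$. For the given admissible selection $q$, Proposition~\ref{prop:K} shows that $\chi^{P,q}$ is predictable and square integrable. It also shows that $K^{P,q}=\int_0^\cdot\chi_s^{P,q}\dd s$ is adapted, continuous, absolutely continuous in time, and belongs to $\cS_P^2$; this is exactly the regularity required of $K$ in Definition~\ref{def:solution}, and \eqref{eq:K} holds by construction. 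Theorem~\ref{thm:backward} then gives $(Y^{P,q},Z^{P,q},U^{P,q})\in\cS_P^2\times\cH_P^2\times\cJ_P^2$ solving \eqref{eq:backward}, with $Y^{P,q}$ càdlàg and $Z^{P,q},U^{P,q}$ predictable. Together these give a selected solution.

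\emph{Uniqueness.} Let $(\bar X,\bar Y,\bar Z,\bar U,\bar K)$ be another selected solution. Its first component solves \eqref{eq:forward} in $\cS_P^2$, so the uniqueness part of Theorem~\ref{thm:forward} gives $\bar X=X^P$ up to indistinguishability. The laws then coincide, $\bar\mu=\mu^P$. Hence the integrand in \eqref{eq:K} agrees $\dd t\otimes\dd P$-a.e., and since both $K$'s are continuous time integrals, $\bar K=K^{P,q}$ up to indistinguishability. Finally, $(\bar Y,\bar Z,\bar U)$ solves \eqref{eq:backward} with the same terminal value and the same source. The uniqueness part of Theorem~\ref{thm:backward} therefore gives equality in $\cS_P^2\times\cH_P^2\times\cJ_P^2$.

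The only delicate point is bookkeeping of versions. Uniqueness in $\cH_P^2\times\cJ_P^2$ is understood modulo $\dd t\otimes\dd P$-null sets. For the càdlàg components, equality a.e.\ upgrades to indistinguishability by right-continuity: for $\bar Y$ this comes from returning to the equation, as in the end of Step~2 of Theorem~\ref{thm:backward}. I expect no substantive obstacle beyond this.
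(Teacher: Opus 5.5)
Your proposal is correct and follows the same route as the paper: you apply Theorem~\ref{thm:forward}, Proposition~\ref{prop:K}, and Theorem~\ref{thm:backward} successively along the triangular composition, and deduce uniqueness from the uniqueness of each step. Your additional remarks on versions and indistinguishability (equality of the integrands once $\bar X=X^P$, and the return to the equation for $\bar Y$) fill in bookkeeping that the paper leaves implicit.
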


\begin{proof}
Theorem \ref{thm:forward}, Proposition \ref{prop:K}, and Theorem \ref{thm:backward}, applied successively along the composition displayed at the beginning of this section, give existence; since each step has a unique output, the selected solution is unique.
\end{proof}

\paragraph{Solution map and solution cuts}
For fixed $P$, set
\[
\mathfrak E_P^2=\cS_P^2\times\cH_P^2\times\cJ_P^2\times\cS_P^2,
\]
\[
\|(Y,Z,U,K)\|_{\mathfrak E_P^2}^2
=
\mathbb E^P\!\left[\sup_{t\le T}|Y_t|^2\right]
+\mathbb E^P\!\left[\int_0^T|Z_t|_{a_t^P}^2\dd t\right]
+\mathbb E^P\!\left[\int_0^T\|U_t\|_\Pi^2\dd t\right]
+\mathbb E^P\!\left[\sup_{t\le T}|K_t|^2\right].
\]
For $P\in\PP$, a datum $(\xi,\alpha)\in\mathfrak D_P$, and an admissible selection $q$, denote by $\mathcal Y^{P,q}=\mathcal T_P^{\xi,\alpha}(q)=(Y^{P,q},Z^{P,q},U^{P,q},K^{P,q})$ the selected solution from Corollary \ref{cor:wellposed-selected}, and, for the selection system $(\mathfrak A_{\gamma,M}^P)$ of Definition \ref{def:selsystem}, define the solution cuts
\begin{equation}\label{eq:cuts-def}
\cC_{\gamma,M}^P[\xi,\alpha]
=
\mathcal T_P^{\xi,\alpha}\bigl(\mathfrak A_{\gamma,M}^P[\xi,\alpha]\bigr),
\qquad \gamma\in(0,1].
\end{equation}
When $P$ and the data are fixed and no comparison between data sets is made, we write $\mathcal T_P$ and $\cC_{\gamma,M}^P$. By \eqref{eq:backward-apriori} and \eqref{eq:K-apriori}, there exists $C_{\rm ap}$, independent of $P,\gamma$, and $q$ for data in $\mathfrak D_P$ with common bounds, such that
\begin{equation}\label{eq:Cap}
\|\mathcal T_P(q)\|_{\mathfrak E_P^2}^2\le C_{\rm ap}.
\end{equation}
After identifying elements at zero distance, $\mathfrak E_P^2$ is a Banach space as a finite product of the Banach spaces introduced in Section~\ref{sec:base}. The map $\mathcal T_P$ depends only on the $\dd t\otimes\dd P$-equivalence class of $q$. Indeed, if $q=q'$ $\dd t\otimes\dd P$-a.e., then the source densities coincide $\dd t\otimes\dd P$-a.e., hence $K^{P,q}=K^{P,q'}$ up to indistinguishability, and uniqueness in Theorem~\ref{thm:backward} gives $\mathcal T_P(q)=\mathcal T_P(q')$. Thus $\mathcal T_P$ is well defined on subsets of $L^\infty(\dd t\otimes\dd P)$, and the cuts \eqref{eq:cuts-def} are subsets of $\mathfrak E_P^2$.

\subsection{Backward stability in the norm of the integrated source}

The next estimate is formulated directly in terms of the integrated source $K$. This is the form needed to preserve the rates coming from occupation estimates, and its constant is independent of the spatial regularity of the source density.

\begin{lemma}\label{lem:energy}
Fix $P\in\PP$ and suppose Assumptions \ref{ass:base} and \ref{ass:B} hold. For $i=1,2$, let $X^i\in\cS_P^2$ with $\mu_t^i=\Law^P(X_t^i)$, let $\alpha^i\in L^2(\dd t\otimes\dd P)$ be predictable and $\mathbb D$-valued, let $\zeta^i\in L^2(\Omega,\mathcal F_T,P;\R^d)$, and let $K^i$ be an adapted continuous finite-variation process with $K_0^i=0$ and $\mathbb E^P[\sup_{t\le T}|K_t^i|^2]<\infty$. Let $(Y^i,Z^i,U^i)\in\cS_P^2\times\cH_P^2\times\cJ_P^2$ solve
\[
Y_t^i=\zeta^i+\int_t^TF_s^i\dd s+K_T^i-K_t^i-\int_t^TZ_s^i\dd B_s^P-\int_t^T\int_EU_s^i(e)\widetilde N^P(\dd s,\dd e),
\]
where $F_s^i=f(s,X_{s-}^i,Y_s^i,Z_s^i,U_s^i,\mu_{s-}^i,\Law^P(Y_s^i),\alpha_s^i)$. Then
\begin{align}
&\mathbb E^P\!\left[\sup_{t\le T}|Y_t^1-Y_t^2|^2\right]+\mathbb E^P\!\left[\int_0^T\bigl(|Z_t^1-Z_t^2|_{a_t^P}^2+\|U_t^1-U_t^2\|_\Pi^2\bigr)\dd t\right]\notag\\
&\quad\le C_E \mathbb E^P\!\left[|\zeta^1-\zeta^2|^2+\sup_{t\le T}|K_t^1-K_t^2|^2\right.\notag\\
&\left.\qquad\qquad+\int_0^T\bigl(|X_{s-}^1-X_{s-}^2|^2+W_2^2(\mu_{s-}^1,\mu_{s-}^2)+|\alpha_s^1-\alpha_s^2|^2\bigr)\dd s\right],
\label{eq:backward-stability-source}
\end{align}
where $C_E$ depends only on $L_B$ and $T$; in particular, $C_E$ is independent of $P$ and of the regularity of $K^1,K^2$.
\end{lemma}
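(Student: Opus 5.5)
The key device is to absorb the source difference into the backward variable. Set $\delta Y=Y^1-Y^2$, $\delta Z$, $\delta U$, $\delta\zeta=\zeta^1-\zeta^2$, $\delta K=K^1-K^2$, and define the shifted process
\[
\widetilde Y_t:=\delta Y_t+\delta K_t .
\]
Since $K^i$ is continuous with finite variation and $\delta K_0=0$, the pair $(\widetilde Y,\delta Z,\delta U)$ solves
\[
\widetilde Y_t=\delta\zeta+\delta K_T+\int_t^T\delta F_s\dd s-\int_t^T\delta Z_s\dd B_s^P-\int_t^T\int_E\delta U_s(e)\widetilde N^P(\dd s,\dd e),
\]
with $\delta F=F^1-F^2$. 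The finite-variation source has disappeared from the dynamics; it enters only through the terminal value $\delta\zeta+\delta K_T$ and through the generator, where $\delta Y=\widetilde Y-\delta K$. This is why no regularity of $K^i$ (for example an $L^2$ density) is needed.

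By Assumption~\ref{ass:B} and the natural coupling $W_2(\Law^P(Y_s^1),\Law^P(Y_s^2))\le(\mathbb E^P|\delta Y_s|^2)^{1/2}$, I will bound
\[
|\delta F_s|\le L_B\bigl(|\widetilde Y_s|+|\delta K_s|+(\mathbb E^P|\delta Y_s|^2)^{1/2}+|\delta Z_s|_{a_s^P}+\|\delta U_s\|_\Pi+\mathsf R_s\bigr),
\]
where
\[
\mathsf R_s=|X^1_{s-}-X^2_{s-}|+W_2(\mu^1_{s-},\mu^2_{s-})+|\alpha^1_s-\alpha^2_s| .
\]
The term $\mathbb E^P|\delta Y_s|^2$ is in turn at most $2\mathbb E^P|\widetilde Y_s|^2+2\mathbb E^P\sup_t|\delta K_t|^2$.

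Next I apply Itô's formula to $e^{\varpi t}|\widetilde Y_t|^2$, as in Step~3 of Theorem~\ref{thm:backward}. The stochastic integrals are true martingales because $\widetilde Y\in\cS_P^2$ and $\delta Z,\delta U$ are square integrable. I use Young's inequality with $\tfrac12(|\delta Z|^2_{a^P}+\|\delta U\|_\Pi^2)$ absorbed on the left, and choose $\varpi$ depending only on $L_B$ to absorb the $|\widetilde Y|^2$ and mean-field terms. This gives
\[
\sup_t\mathbb E^P|\widetilde Y_t|^2+\mathbb E^P\!\int_0^T\bigl(|\widetilde Y_t|^2+|\delta Z_t|^2_{a_t^P}+\|\delta U_t\|_\Pi^2\bigr)\dd t\le C\,\mathbb E^P\Bigl[|\delta\zeta|^2+\sup_t|\delta K_t|^2+\int_0^T\mathsf R_s^2\dd s\Bigr],
\]
where the $\delta K_T$ in the terminal value and the $\delta K_s$ in the generator are both controlled by $\sup_t|\delta K_t|^2$, with a factor $T$ for the latter.

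For the supremum, I write
\[
\widetilde Y_t=\delta\zeta+\delta K_T+\int_t^T\delta F_s\dd s-(\mathsf M_T-\mathsf M_t).
\]
I then combine Doob's inequality $\mathbb E^P\sup_t|\mathsf M_t|^2\le4\,\mathbb E^P[\mathsf M]_T$ with Cauchy--Schwarz, $(\int|\delta F|)^2\le T\int|\delta F|^2$, and the previous bound. This controls $\mathbb E^P\sup_t|\widetilde Y_t|^2$. Finally, $\sup_t|\delta Y_t|^2\le2\sup_t|\widetilde Y_t|^2+2\sup_t|\delta K_t|^2$ yields \eqref{eq:backward-stability-source}.

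All constants arise only from $L_B$, $T$, and the universal Doob constant, so $C_E$ is uniform in $P$; the $a^P$-norm is handled intrinsically by the Lipschitz condition in Assumption~\ref{ass:B}. The main point to check is the change of variable itself: $K^i$ must be continuous with $K_0^i=0$, so that Itô's formula for $\widetilde Y$ creates no extra bracket or jump terms. The mean-field term must also be controlled through $\widetilde Y$ and $\delta K$ rather than through $\delta Y$ directly; otherwise the estimate would not close.
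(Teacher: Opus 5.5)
Your proposal is correct and follows the paper's proof essentially step by step. The paper's $R_t=(Y_t^1-Y_t^2)+(K_t^1-K_t^2)$ is your $\widetilde Y$, and the argument then proceeds in the same order: the weighted It\^o estimate for $e^{\varpi t}|R_t|^2$, absorption of the $Z$, $U$, mean-field and $\delta K$ terms, the Doob/Cauchy--Schwarz bound for $\mathbb E^P[\sup_t|R_t|^2]$, and the return to $Y^1-Y^2$ via $\sup_t|\delta Y_t|^2\le 2\sup_t|R_t|^2+2\sup_t|\delta K_t|^2$.
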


\begin{proof}
\emph{Step 1: translation and Lipschitz bound.} In this proof, $L=L_B$. Set $R_t=(Y_t^1-Y_t^2)+(K_t^1-K_t^2)$ and
\[
\mathsf M_t=\int_0^t(Z_s^1-Z_s^2)\dd B_s^P
+\int_0^t\int_E(U_s^1(e)-U_s^2(e))\widetilde N^P(\dd s,\dd e).
\]
Subtracting the two backward equations gives, $P$-a.s., for every $t\in[0,T]$,
\[
R_t=(\zeta^1-\zeta^2)+(K_T^1-K_T^2)
+\int_t^T(F_s^1-F_s^2)\dd s-(\mathsf M_T-\mathsf M_t).
\]
The natural coupling of the laws of $Y_s^1$ and $Y_s^2$ implies
\[
W_2(\Law^P(Y_s^1),\Law^P(Y_s^2))
\le(\mathbb E^P\!\left[|Y_s^1-Y_s^2|^2\right])^{1/2}
\le(\mathbb E^P\!\left[|R_s|^2\right])^{1/2}+(\mathbb E^P\!\left[|K_s^1-K_s^2|^2\right])^{1/2}.
\]
Set
\[
\Xi_s=|X_{s-}^1-X_{s-}^2|+
W_2(\mu_{s-}^1,\mu_{s-}^2)+|\alpha_s^1-\alpha_s^2|.
\]
Lipschitz continuity of the generator gives
\begin{align*}
|F_s^1-F_s^2|
\le L\Big(&|R_s|+|Z_s^1-Z_s^2|_{a_s^P}+\|U_s^1-U_s^2\|_\Pi+(\mathbb E^P\!\left[|R_s|^2\right])^{1/2}\\
&+|K_s^1-K_s^2|+(\mathbb E^P\!\left[|K_s^1-K_s^2|^2\right])^{1/2}+\Xi_s\Big).
\end{align*}

\emph{Step 2: It\^o's formula, martingale and jump terms.} The linear martingale terms in Itô's formula have zero expectation. Indeed, $R\in\cS_P^2$ and, with $\mathsf M$ defined above, BDG and Cauchy--Schwarz yield
\[
\mathbb E^P\!\left[\sup_{t\le T}
\left|\int_0^t e^{\varpi s}\langle R_{s-},\dd\mathsf M_s\rangle\right|\right]
\le C_\varpi
\left(\mathbb E^P\!\left[\sup_{t\le T}|R_t|^2\right]\right)^{1/2}
\left(\mathbb E^P\!\left[[\mathsf M]_T\right]\right)^{1/2}<\infty.
\]
Since a local martingale $\mathsf M'$ satisfying $\mathbb E^P[\sup_{t\le T}|\mathsf M'_t|]<\infty$ is a true martingale \cite[chap.~I]{Protter2005}, this bound implies the martingale property and hence zero expectation.
The quadratic jump term in Itô's formula is $\int_t^T\int_Ee^{\varpi s}|U_s^1(e)-U_s^2(e)|^2N(\dd s,\dd e)$. Since $\mathbb E^P\!\left[\int_0^T\int_E|U_s^1(e)-U_s^2(e)|^2\Pi(\dd e)\dd s\right]<\infty$, the compensation formula \cite[chap.~II]{IkedaWatanabe1989} gives
\begin{align*}
&\mathbb E^P\!\left[\int_t^T\int_Ee^{\varpi s}|U_s^1(e)-U_s^2(e)|^2N(\dd s,\dd e)\right]\\
&\qquad=\mathbb E^P\!\left[\int_t^Te^{\varpi s}\|U_s^1-U_s^2\|_\Pi^2\dd s\right],
\end{align*}
which explains the occurrence of $\|U_s^1-U_s^2\|_\Pi^2$ on the left-hand side below. Apply Itô's formula \cite[chap.~II]{Protter2005} to $e^{\varpi t}|R_t|^2$; since $R_s=R_{s-}$, $\dd s\otimes\dd P$-a.e., we write $R_s$ in the Lebesgue integrals. After taking expectations,
\begin{align*}
&\mathbb E^P\!\left[e^{\varpi t}|R_t|^2\right]+\mathbb E^P\!\left[\int_t^Te^{\varpi s}\Bigl(\varpi|R_s|^2+|Z_s^1-Z_s^2|_{a_s^P}^2+\|U_s^1-U_s^2\|_\Pi^2\Bigr)\dd s\right]\\
&\qquad=\mathbb E^P\!\left[e^{\varpi T}|(\zeta^1-\zeta^2)+(K_T^1-K_T^2)|^2\right]
+2\mathbb E^P\!\left[\int_t^Te^{\varpi s}\langle R_s,F_s^1-F_s^2\rangle\dd s\right].
\end{align*}

\emph{Step 3: absorption of the generator.} The terms involving $Z^1-Z^2$ and $U^1-U^2$ satisfy
\[
2L|R_s|\Bigl(|Z_s^1-Z_s^2|_{a_s^P}+\|U_s^1-U_s^2\|_\Pi\Bigr)
\le4L^2|R_s|^2+\frac12\Bigl(|Z_s^1-Z_s^2|_{a_s^P}^2+\|U_s^1-U_s^2\|_\Pi^2\Bigr).
\]
Moreover,
\begin{align*}
&2L|R_s|\Bigl(|K_s^1-K_s^2|+(\mathbb E^P\!\left[|K_s^1-K_s^2|^2\right])^{1/2}+\Xi_s\Bigr)\\
&\qquad\le3|R_s|^2+L^2\Bigl(|K_s^1-K_s^2|^2+\mathbb E^P\!\left[|K_s^1-K_s^2|^2\right]+\Xi_s^2\Bigr).
\end{align*}
For the remaining law term, Cauchy--Schwarz gives
\[
\mathbb E^P\!\left[
|R_s|\bigl(\mathbb E^P\!\left[|R_s|^2\right]\bigr)^{1/2}
\right]\le\mathbb E^P\!\left[|R_s|^2\right].
\]
Thus, after taking expectations, the contributions of $|R_s|$ and its $L^2(P)$ norm are bounded by $4L\mathbb E^P\!\left[|R_s|^2\right]$. Summing the three preceding estimates gives
\begin{align*}
2\,\mathbb E^P[\langle R_s,F_s^1-F_s^2\rangle]
\le{}&(4L+4L^2+3)\,\mathbb E^P[|R_s|^2]+\tfrac12\mathbb E^P\bigl[|Z_s^1-Z_s^2|_{a_s^P}^2+\|U_s^1-U_s^2\|_\Pi^2\bigr]\\
&+L^2\,\mathbb E^P\bigl[2|K_s^1-K_s^2|^2+\Xi_s^2\bigr].
\end{align*}
With $\varpi=4L+4L^2+4$, so that $\varpi-(4L+4L^2+3)=1$, the term $2\mathbb E^P\!\left[\int_t^Te^{\varpi s}\langle R_s,F_s^1-F_s^2\rangle\dd s\right]$ is absorbed into the left-hand side; coefficient one remains in front of the integral of $e^{\varpi s}|R_s|^2$ and coefficient $1/2$ remains in front of the integrals of $e^{\varpi s}|Z_s^1-Z_s^2|_{a_s^P}^2$ and $e^{\varpi s}\|U_s^1-U_s^2\|_\Pi^2$. Finally, $1\le e^{\varpi s}\le e^{\varpi T}$,
$|(\zeta^1-\zeta^2)+(K_T^1-K_T^2)|^2\le2|\zeta^1-\zeta^2|^2+2|K_T^1-K_T^2|^2$, and the integral of $|K_s^1-K_s^2|^2$ is bounded by $T\sup_{s\le T}|K_s^1-K_s^2|^2$. Hence
\begin{align*}
&\mathbb E^P\!\left[\int_0^T\Bigl(|R_s|^2+|Z_s^1-Z_s^2|_{a_s^P}^2+\|U_s^1-U_s^2\|_\Pi^2\Bigr)\dd s\right]\\
&\qquad\le C \mathbb E^P\!\left[|\zeta^1-\zeta^2|^2+\sup_{t\le T}|K_t^1-K_t^2|^2+\int_0^T\Xi_s^2\dd s\right].
\end{align*}

\emph{Step 4: supremum of $R$.} The representation of $R$ in Step~1 also gives
\[
\sup_{t\le T}|R_t|
\le |(\zeta^1-\zeta^2)+(K_T^1-K_T^2)|+\int_0^T|F_s^1-F_s^2|\dd s+2\sup_{t\le T}|\mathsf M_t|.
\]
To control the supremum, the inequality $(a_1+\cdots+a_7)^2\le7\sum_{i=1}^7 a_i^2$ and the Lipschitz bound give
\begin{align*}
\mathbb E^P\!\left[\int_0^T|F_s^1-F_s^2|^2\dd s\right]
&\le7L^2\mathbb E^P\!\left[\int_0^T
\left(2|R_s|^2+|Z_s^1-Z_s^2|_{a_s^P}^2+\|U_s^1-U_s^2\|_\Pi^2
\right.\right.\\
&\hspace{3cm}\left.\left.+2|K_s^1-K_s^2|^2+\Xi_s^2\right)\dd s\right].
\end{align*}
The isometries and Doob's inequality for the square-integrable martingale $\mathsf M$ yield
\[
\mathbb E^P\!\left[\sup_{t\le T}|\mathsf M_t|^2\right]
\le4\mathbb E^P\!\left[\int_0^T
\left(|Z_s^1-Z_s^2|_{a_s^P}^2+\|U_s^1-U_s^2\|_\Pi^2\right)\dd s\right].
\]
Squaring the preceding supremum representation, we use $(a+b+c)^2\le3(a^2+b^2+c^2)$ and $(\int_0^T|F_s^1-F_s^2|\dd s)^2\le T\int_0^T|F_s^1-F_s^2|^2\dd s$. The integral estimate already obtained then yields
\[
\mathbb E^P\!\left[\sup_{t\le T}|R_t|^2\right]
\le C \mathbb E^P\!\left[|\zeta^1-\zeta^2|^2+\sup_{t\le T}|K_t^1-K_t^2|^2+\int_0^T\Xi_s^2\dd s\right].
\]

\emph{Step 5: return to $Y^1-Y^2$ and conclusion.} Since $Y_t^1-Y_t^2=R_t-(K_t^1-K_t^2)$,
\[
\sup_{t\le T}|Y_t^1-Y_t^2|^2\le2\sup_{t\le T}|R_t|^2+2\sup_{t\le T}|K_t^1-K_t^2|^2,
\]
and
\[
\Xi_s^2\le3\Bigl(|X_{s-}^1-X_{s-}^2|^2+W_2^2(\mu_{s-}^1,\mu_{s-}^2)+|\alpha_s^1-\alpha_s^2|^2\Bigr),
\]
which gives \eqref{eq:backward-stability-source}. All constants $C$ above depend only on $L_B$ and $T$, hence $C_E=C_E(L_B,T)$: the martingale estimates are written in the norms $|\cdot|_{a_t^P}$ and $\|\cdot\|_\Pi$, the Doob and BDG constants are universal, and the compensator $\Pi(\dd e)\dd t$ is common to all $P\in\PP$.
\end{proof}
\section{Occupation estimates}\label{sec:occupation-stabilite}

Throughout this section we fix $P\in\PP$, $(\xi,\alpha)\in\mathfrak D_P$, $\gamma\in(0,1]$, and $q\in\mathfrak A_{\gamma,M}^P[\xi,\alpha]$, while keeping all constants uniform in these parameters. Here and below, $b_t$, $\sigma_t$, and $\beta_t(e)$ denote the coefficients evaluated along $(t,X_{t-}^P,\mu_{t-}^P,\alpha_t)$ and, for $\beta$, at $(t,X_{t-}^P,\mu_{t-}^P,\alpha_t,e)$. The argument has two parts: we first control the motion of the interface (Sections \ref{sec:lawstat}--\ref{sec:shift}) and then estimate the time spent by the forward state near that interface (Section \ref{sec:loctime}).

\subsection{Variation of the law statistics}\label{sec:lawstat}

For $j=1,\ldots,\ell$,
\[
m_j^P(t)=\mathbb E^P\!\left[\psi_j(X_t^P)\right].
\]

\begin{lemma}\label{lem:lawstatistics}
Fix $P\in\PP$ and suppose Assumptions \ref{ass:base}, \ref{ass:F}, and \ref{ass:finitestat} hold. Let $\xi\in L^2(\Omega,\mathcal F_0,P;\R^d)$ and $\alpha\in\mathcal A_2$, let $X^P$ be the corresponding forward solution, and set $m_j^P(t)=\mathbb E^P[\psi_j(X_t^P)]$, $j=1,\ldots,\ell$. Then each $m_j^P$ is absolutely continuous on $[0,T]$ and, for almost every $t$, with $b_t,\sigma_t,\beta_t(e)$ evaluated at $(t,X_{t-}^P,\mu_{t-}^P,\alpha_t)$,
\begin{align}
\dot m_j^P(t)
={}&\mathbb E^P\!\left[
D\psi_j(X_{t-}^P)\cdot b_t
+\frac12\operatorname{Tr}(\sigma_ta_t^P\sigma_t^\top D^2\psi_j(X_{t-}^P))\right.\notag\\
&\left.+\int_E\{\psi_j(X_{t-}^P+\beta_t(e))-\psi_j(X_{t-}^P)
-D\psi_j(X_{t-}^P)\cdot\beta_t(e)\}\Pi(\dd e)
\right].
\label{eq:law-stat-derivative}
\end{align}
Moreover, there exists $C_j$, independent of $P$ for data satisfying common quadratic bounds, such that
\begin{equation}\label{eq:law-stat-L1}
\int_0^T|\dot m_j^P(t)|\dd t\le C_j.
\end{equation}
\end{lemma}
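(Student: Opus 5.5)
The plan is to apply Itô's formula to $\psi_j(X_t^P)$ and take expectations, after checking that every martingale term is a true martingale. Since $\psi_j\in C_b^2(\R^d)$, Itô's formula for semimartingales with jumps gives, $P$-a.s. for every $t$,
\begin{align*}
\psi_j(X_t^P)={}&\psi_j(\xi)+\int_0^t\Bigl(D\psi_j(X_{s-}^P)\cdot b_s+\tfrac12\operatorname{Tr}\bigl(\sigma_sa_s^P\sigma_s^\top D^2\psi_j(X_{s-}^P)\bigr)\Bigr)\dd s
+\int_0^tD\psi_j(X_{s-}^P)\sigma_s\dd B_s^P\\
&+\int_0^t\int_E\{\psi_j(X_{s-}^P+\beta_s(e))-\psi_j(X_{s-}^P)\}\widetilde N^P(\dd s,\dd e)\\
&+\int_0^t\int_E\{\psi_j(X_{s-}^P+\beta_s(e))-\psi_j(X_{s-}^P)-D\psi_j(X_{s-}^P)\cdot\beta_s(e)\}\Pi(\dd e)\dd s .
\end{align*}
Here the compensated form of the jump sum uses $\dd\langle B\rangle=a^P\dd t$ and the fixed compensator $\Pi(\dd e)\dd t$ from Assumption~\ref{ass:base}.

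Next I would show that the two stochastic integrals are square-integrable martingales. Their integrands satisfy $|D\psi_j\,\sigma_s|_{a^P}\le\|D\psi_j\|_\infty|\sigma_s|_{a_s^P}$ and $|\psi_j(x+\beta)-\psi_j(x)|\le\|D\psi_j\|_\infty|\beta|$. The linear-growth bound from Step~3 of the proof of Theorem~\ref{thm:forward}, combined with \eqref{eq:forward-apriori} and $M_2(\mu_s^P)^2=\mathbb E^P[|X_s^P|^2]$, then gives
\[
\mathbb E^P\!\int_0^T\Bigl(|\sigma_s|_{a_s^P}^2+\int_E|\beta_s(e)|^2\Pi(\dd e)\Bigr)\dd s\le C\Bigl(1+\mathbb E^P|\xi|^2+\mathbb E^P\!\int_0^T|\alpha_s|^2\dd s\Bigr),
\]
so both integrals have zero expectation. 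The jump remainder in the compensator term must be controlled using only second moments of $\beta$. Taylor's formula gives
\[
|\psi_j(x+\beta)-\psi_j(x)-D\psi_j(x)\cdot\beta|\le\tfrac12\|D^2\psi_j\|_\infty|\beta|^2,
\]
which is integrable against $\Pi(\dd e)\dd s\otimes\dd P$ by the same bound. The trace term is bounded by $\tfrac12\|D^2\psi_j\|_\infty|\sigma_s|_{a_s^P}^2$, and the drift term by $\|D\psi_j\|_\infty|b_s|$. Call $G_s$ the whole $\dd s$-integrand; then $\mathbb E^P\int_0^T|G_s|\dd s\le C$.

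Taking expectations, I obtain $m_j^P(t)=m_j^P(0)+\int_0^t\mathbb E^P[G_s]\dd s$, where the exchange of $\mathbb E^P$ and $\dd s$ is justified by Fubini, since $G\in L^1(\dd s\otimes\dd P)$. Thus $m_j^P$ is absolutely continuous and $\dot m_j^P(t)=\mathbb E^P[G_t]$ for a.e.\ $t$, which is \eqref{eq:law-stat-derivative}; the substitution $X_{t-}=X_t$ a.s.\ from Lemma~\ref{lem:flowcontinuity} is not even needed. Finally, $\int_0^T|\dot m_j^P|\dd t\le\mathbb E^P\int_0^T|G_s|\dd s$, and this is at most
\[
C(\|\psi_j\|_{C_b^2},L_F,C_F,T,\overline a)\bigl(1+\mathfrak M_\xi+\mathfrak M_\alpha\bigr),
\]
uniform in $P$. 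This gives \eqref{eq:law-stat-L1}.

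The only delicate point is the jump term. One must not split it into $\psi_j(x+\beta)-\psi_j(x)$ and $D\psi_j\cdot\beta$ separately, because $\int_E|\beta|\Pi(\dd e)$ may be infinite when $\Pi$ is merely $\sigma$-finite. Keeping the second-order Taylor remainder together, as the statement already does, avoids this and requires only the second moments of $\beta$.
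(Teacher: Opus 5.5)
Your proposal is correct and follows the paper's proof essentially step for step. Like the paper, you apply Itô's formula to $\psi_j(X^P)$, keep the jump remainder together and bound it by $\tfrac12\|D^2\psi_j\|_\infty|\beta|^2$ so that only second moments of $\beta$ enter, show that both martingale terms are square integrable with zero mean, use Fubini to obtain $m_j^P(t)=m_j^P(0)+\int_0^t\mathbb E^P[G_j(s)]\dd s$, and deduce the uniform $L^1$ bound from the $L^1(\dd s\otimes\dd P)$ bound on the drift integrand.
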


\begin{proof}
Fix $P$ and $j$, and abbreviate the coefficients evaluated at $(t,X_{t-}^P,\mu_{t-}^P,\alpha_t)$ by $b_t,\sigma_t,\beta_t(e)$. Set $B_j=\|D\psi_j\|_\infty$ and $B_j^{(2)}=\|D^2\psi_j\|_\infty$. Taylor's formula with integral remainder gives, for all $x,z\in\R^d$,
\[
|\psi_j(x+z)-\psi_j(x)-D\psi_j(x)\cdot z|
\le \tfrac12B_j^{(2)}|z|^2.
\]
Hence the drift term $G_j(t)$ in Itô's formula satisfies
\[
|G_j(t)|\le B_j|b_t|+
\tfrac12B_j^{(2)}|\sigma_t|_{a_t^P}^2+
\tfrac12B_j^{(2)}\int_E|\beta_t(e)|^2\Pi(\dd e).
\]
The growth assumptions and \eqref{eq:forward-apriori} imply
\[
\mathbb E^P\!\left[\int_0^T\left(|b_t|^2+|\sigma_t|_{a_t^P}^2+
\int_E|\beta_t(e)|^2\Pi(\dd e)\right)\dd t\right]\le C,
\]
uniformly in $P$. Thus $G_j\in L^1(\dd t\otimes\dd P)$. The two martingale integrands in Itô's formula satisfy
\[
|D\psi_j(X_{t-}^P)\sigma_t|_{a_t^P}^2
+\int_E|\psi_j(X_{t-}^P+\beta_t(e))-\psi_j(X_{t-}^P)|^2\Pi(\dd e)
\le B_j^2\left(|\sigma_t|_{a_t^P}^2+
\int_E|\beta_t(e)|^2\Pi(\dd e)\right).
\]
The corresponding stochastic integrals are therefore square-integrable martingales with zero expectation. Itô's formula \cite[chap.~II]{Protter2005} and Fubini's theorem yield, for every $t\in[0,T]$,
\[
m_j^P(t)=m_j^P(0)+\int_0^t \mathbb E^P\!\left[G_j(s)\right]\dd s.
\]
Fubini's theorem applies because $\mathbb E^P\!\left[\int_0^T|G_j(s)|\dd s\right]<\infty$. This identity proves absolute continuity and \eqref{eq:law-stat-derivative} for almost every $t$. Finally,
\begin{align*}
\int_0^T|\dot m_j^P(t)|\dd t
&\le B_j\sqrt T\left(\mathbb E^P\!\left[\int_0^T|b_t|^2\dd t\right]\right)^{1/2}\\
&\quad+\tfrac12B_j^{(2)} \mathbb E^P\!\left[\int_0^T\left(|\sigma_t|_{a_t^P}^2+
\int_E|\beta_t(e)|^2\Pi(\dd e)\right)\dd t\right]\le C_j.
\end{align*}
The constant is uniform in $P$.
\end{proof}

\subsection{Total variation of the interface shift}\label{sec:shift}

Set
\[
H_t^{P,q}=h^{q_t}(\mu_t^P,\alpha_t).
\]

\begin{proposition}\label{prop:interface-motion}
Fix $P\in\PP$ and suppose Assumptions \ref{ass:base}, \ref{ass:F}, and \ref{ass:finitestat} hold. Let $(\xi,\alpha)\in\mathfrak D_P$, $\gamma\in(0,1]$, and $q\in\mathfrak A_{\gamma,M}^P[\xi,\alpha]$. Then the process $H_t^{P,q}=h^{q_t}(\mu_t^P,\alpha_t)$ is absolutely continuous and, for almost every $t$,
\[
\dot H_t^{P,q}
=
D_q\widehat h\,\dot q_t+D_\alpha\widehat h\,\dot\alpha_t
+\sum_{j=1}^{\ell}D_{m_j}\widehat h\,\dot m_j^P(t).
\]
Moreover, there exist constants $C_H$ and $C_H^{(2)}$, depending on $M$, such that
\begin{equation}\label{eq:H-var-L1}
\sup_{P,\gamma,q}
\mathbb E^P\!\left[\Var_{[0,T]}(H^{P,q})\right]
=
\sup_{P,\gamma,q}\mathbb E^P\!\left[\int_0^T|\dot H_t^{P,q}|\dd t\right]
\le C_H
\end{equation}
and
\begin{equation}\label{eq:H-var-L2}
\sup_{P,\gamma,q}\mathbb E^P\!\left[\Var_{[0,T]}(H^{P,q})^2\right]\le C_H^{(2)}.
\end{equation}
\end{proposition}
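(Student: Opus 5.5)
The plan is to write $H_t^{P,q}=\widehat h\bigl(q_t,\alpha_t,m_1^P(t),\ldots,m_\ell^P(t)\bigr)$ using Assumption~\ref{ass:finitestat}. Each argument path is absolutely continuous: $q$ and $\alpha$ by \eqref{eq:AC-all-t}, and $m_j^P$ by Lemma~\ref{lem:lawstatistics}. Here $\mu_t^P=\mu_{t-}^P$ by Lemma~\ref{lem:flowcontinuity}. Since $\widehat h$ is $C^1$ with bounded first derivatives on an open neighborhood, it is globally Lipschitz on convex pieces.

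More precisely, I would argue via the chain rule for a $C^1$ map composed with an absolutely continuous path. For $s<t$, the mean value theorem along the segment joining the two arguments gives $|H_t-H_s|\le L_{\widehat h}\bigl(|q_t-q_s|+|\alpha_t-\alpha_s|+\sum_j|m_j^P(t)-m_j^P(s)|\bigr)$. A potential subtlety is that the segment may leave $\Q\times\mathbb D\times\R^\ell$. If needed, I would use the chain rule directly: at any point where all components are differentiable, $C^1$-differentiability of $\widehat h$ gives the stated formula. Absolute continuity of $H$ follows from the Lipschitz-type bound above applied on short intervals, where the path stays inside the neighborhood by continuity and compactness of the path image. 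Hence $H$ is absolutely continuous pathwise, and its derivative is given a.e. by the displayed chain-rule formula.

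For the bounds, I would let $\|D\widehat h\|_\infty\le L_{\widehat h}$ denote the structural bound. This gives
\[
|\dot H_t|\le L_{\widehat h}\Bigl(|\dot q_t|+|\dot\alpha_t|+\sum_{j}|\dot m_j^P(t)|\Bigr).
\]
Integrating, $\Var_{[0,T]}(H)\le L_{\widehat h}\bigl(\int_0^T|\dot q_t|\dd t+\int_0^T|\dot\alpha_t|\dd t+\sum_j\int_0^T|\dot m_j^P|\dd t\bigr)$. The last sum is deterministic and bounded by $\sum_jC_j$ uniformly in $P$ by \eqref{eq:law-stat-L1}. For the first two terms, Cauchy--Schwarz in time gives $\bigl(\int_0^T|\dot q_t|\dd t\bigr)^2\le T\int_0^T|\dot q_t|^2\dd t$, and similarly for $\alpha$. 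Taking expectations and using the energy bounds $M^2$ and $C_{\dot\alpha}$ of Assumption~\ref{ass:finitestat} gives $\mathbb E^P[\Var(H)^2]\le 3L_{\widehat h}^2\bigl(TM^2+TC_{\dot\alpha}+(\sum_jC_j)^2\bigr)=:C_H^{(2)}$. Then \eqref{eq:H-var-L1} follows from Jensen, since $\mathbb E[\Var]\le(\mathbb E[\Var^2])^{1/2}$. The equality of $\Var$ and $\int|\dot H|$ is the absolutely continuous case recorded in the conventions.

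I expect the main obstacle to be the rigorous justification of the chain rule and absolute continuity. The difficulty is that $\widehat h$ is only defined near $\Q\times\mathbb D\times\R^\ell$, and $\mathbb D$ need not be convex. I would handle it locally, as described: cover the compact path image by finitely many balls inside the open neighborhood, on each of which $\widehat h$ is Lipschitz with constant $L_{\widehat h}$. This shows that absolute continuity is preserved, with the a.e. derivative computed by the classical chain rule at points of joint differentiability. Uniformity in $(P,\gamma,q)$ is immediate because every constant used is structural or depends only on $M$.
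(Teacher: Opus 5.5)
Your proposal is correct and follows the paper's own argument essentially step for step. You use the chain rule for the $C^1$ map $\widehat h$ along the absolutely continuous path $(q,\alpha,m_1^P,\ldots,m_\ell^P)$, the pointwise bound on $|\dot H_t|$, Cauchy--Schwarz in time for the $q$ and $\alpha$ terms, and the deterministic bound \eqref{eq:law-stat-L1} for the statistics, arriving at the same constant $C_H^{(2)}$. The differences are only cosmetic: you derive \eqref{eq:H-var-L1} from \eqref{eq:H-var-L2} by Jensen rather than directly, and your local covering argument makes explicit the domain issue for $\widehat h$ that the paper passes over.
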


\begin{proof}
For almost every path, $(q,\alpha,m_1^P,\ldots,m_\ell^P)$ is absolutely continuous. Since $\widehat h$ is $C^1$ with bounded first derivatives, its composition with an absolutely continuous path is absolutely continuous, and the chain rule holds at every time at which all components of the path are differentiable, hence for almost every $t$. The derivatives $\dot q,\dot\alpha$ are taken in the predictable version of Definition \ref{def:regular}, and $\dot m_j^P$ is deterministic, so $\dot H^{P,q}$ is measurable and the expectations below are well defined. Since the derivatives of $\widehat h$ are bounded,
\[
|\dot H_t^{P,q}|
\le C\left(|\dot q_t|+|\dot\alpha_t|
+\sum_{j=1}^{\ell}|\dot m_j^P(t)|\right).
\]
Since $\Var_{[0,T]}(H^{P,q})=\int_0^T|\dot H_t^{P,q}|\dd t$, Cauchy--Schwarz in $(t,\omega)$ gives
\[
\begin{aligned}
\mathbb E^P\!\left[\Var_{[0,T]}(H^{P,q})\right]
\le C\Bigg(&T^{1/2}\Bigl(\mathbb E^P\!\left[\int_0^T|\dot q_t|^2\dd t\right]\Bigr)^{1/2}\\
&+T^{1/2}\Bigl(\mathbb E^P\!\left[\int_0^T|\dot\alpha_t|^2\dd t\right]\Bigr)^{1/2}
+\sum_{j=1}^{\ell}\int_0^T|\dot m_j^P(t)|\dd t\Bigg).
\end{aligned}
\]
The first two terms are controlled by the energy bounds of $q$ and $\alpha$, and the last one by Lemma \ref{lem:lawstatistics}. This proves \eqref{eq:H-var-L1}. Moreover,
\[
\left(\int_0^T|\dot q_t|\dd t\right)^2\le T\int_0^T|\dot q_t|^2\dd t,
\qquad
\left(\int_0^T|\dot\alpha_t|\dd t\right)^2\le T\int_0^T|\dot\alpha_t|^2\dd t.
\]
The functions $m_j^P$ are deterministic for fixed $P$, and Lemma \ref{lem:lawstatistics} uniformly bounds their total variation. More precisely,
\[
\Var_{[0,T]}(H^{P,q})^2\le3C_{\widehat h}^2\left[T\int_0^T|\dot q_t|^2\dd t+T\int_0^T|\dot\alpha_t|^2\dd t+\left(\sum_j\int_0^T|\dot m_j^P(t)|\dd t\right)^2\right].
\]
Taking expectations yields \eqref{eq:H-var-L2}; in particular, $C_H^{(2)}$ depends on $M$.
\end{proof}

\subsection{Local time and occupation}\label{sec:loctime}

Define
\[
S_t^{P,q,k}=X_t^{P,k}+H_t^{P,q,k}.
\]
Since $H^{P,q,k}$ is continuous and of finite variation,
\[
(S^{P,q,k})^c=(X^{P,k})^c=\int_0^\cdot \mathbf e_k^\top\sigma(s,X_{s-}^P,\mu_{s-}^P,\alpha_s)\dd B_s^P,
\]
hence
\[
\dd\langle(S^{P,q,k})^c\rangle_t
=
\mathbf e_k^\top\sigma_ta_t^P\sigma_t^\top \mathbf e_k\,\dd t.
\]

We use $\operatorname{sgn}(0)=-1$, corresponding to the left derivative of $|\cdot|$, and the version of local time $\mathsf L^y$ that is right-continuous in $y$. For a real-valued càdlàg semimartingale $S$ and $y\in\R$, we use the following two facts, with the convention of \cite[chap.~IV]{Protter2005}.
\begin{enumerate}[label=(\alph*)]
\item (Meyer--Tanaka formula.) There exists an adapted continuous increasing process $\mathsf L^y(S)$ with $\mathsf L_0^y(S)=0$ such that, $P$-a.s., for every $t\in[0,T]$,
\begin{align*}
|S_t-y|&=|S_0-y|+\int_0^t\operatorname{sgn}(S_{s-}-y)\dd S_s+\mathsf L_t^y(S)\\
&\quad+\sum_{0<s\le t}\Bigl(|S_s-y|-|S_{s-}-y|-\operatorname{sgn}(S_{s-}-y)(S_s-S_{s-})\Bigr),
\end{align*}
where the series is absolutely convergent.
\item (Occupation density formula.) There exists a jointly measurable version of $(y,t,\omega)\mapsto \mathsf L_t^y(S)(\omega)$ that is càdlàg in $y$ and such that, $P$-a.s., for every $t\in[0,T]$ and every bounded Borel function $\varphi:\R\to\R$,
\[
\int_0^t\varphi(S_{s-})\dd[S]_s^c=\int_\R\varphi(y)\mathsf L_t^y(S)\dd y,
\]
where $[S]^c$ is the continuous part of the quadratic variation, equal to the bracket $\langle S^c\rangle$ of the continuous martingale part of $S$.
\end{enumerate}
We always use this version of local time. For the convex function $x\mapsto|x-y|$, the sum of jump remainders is nonnegative and finite on each compact time interval, hence absolutely convergent.

\begin{lemma}\label{lem:localtime}\label{lem:localtime-second}
Suppose Assumptions \ref{ass:base}, \ref{ass:F}, \ref{ass:operator}, \ref{ass:finitestat}, and \ref{ass:elliptic} hold. For each admissible choice of $P\in\PP$, $(\xi,\alpha)\in\mathfrak D_P$, $\gamma\in(0,1]$, $q\in\mathfrak A_{\gamma,M}^P[\xi,\alpha]$, and $k\in\{1,\ldots,d\}$, set $S^{P,q,k}=X^{P,k}+H^{P,q,k}$. Then, for every $\bar y>0$, there exist $C_{\rm loc}(\bar y),C_{\rm loc}^{(2)}(\bar y)<\infty$, depending only on $\bar y$, $M$, and the structural constants, such that
\begin{gather*}
\sup_{P,\gamma,q}\ \max_{1\le k\le d}\ \sup_{|y|\le\bar y}
\mathbb E^P\!\left[\mathsf L_T^y(S^{P,q,k})\right]\le C_{\rm loc}(\bar y),\\
\sup_{P,\gamma,q}\ \max_{1\le k\le d}\ \sup_{|y|\le\bar y}
\mathbb E^P\!\left[\mathsf L_T^y(S^{P,q,k})^2\right]\le C_{\rm loc}^{(2)}(\bar y).
\end{gather*}
\end{lemma}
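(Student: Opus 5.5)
We will prove both bounds from the Meyer--Tanaka formula (a) applied to $S=S^{P,q,k}$ and a fixed level $y$ with $|y|\le\bar y$. The key observation is that the jump remainder in (a) is nonnegative, because $x\mapsto|x-y|$ is convex. Solving (a) for the local time therefore gives, $P$-a.s.,
\[
\mathsf L_T^y(S)\le |S_T-y|-|S_0-y|-\int_0^T\operatorname{sgn}(S_{s-}-y)\dd S_s
\le |S_T-S_0|+\Bigl|\int_0^T\operatorname{sgn}(S_{s-}-y)\dd S_s\Bigr| .
\]
This avoids any jump moments beyond order two. We then decompose
\[
\dd S_s=\bigl(b_{s,k}\dd s+\dot H_s^{P,q,k}\dd s\bigr)+\mathbf e_k^\top\sigma_s\dd B_s^P+\int_E\beta_{s,k}(e)\widetilde N^P(\dd s,\dd e).
\]
This uses that $H^{P,q}$ is absolutely continuous with derivative given by Proposition~\ref{prop:interface-motion}. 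Since $|\operatorname{sgn}|\le1$, the stochastic integral is bounded by a finite-variation part and a martingale part:
\[
\Bigl|\int_0^T\operatorname{sgn}(S_{s-}-y)\dd S_s\Bigr|\le\int_0^T|b_s|\dd s+\Var_{[0,T]}(H^{P,q})+\Bigl|\int_0^T\operatorname{sgn}(S_{s-}-y)\,\dd X_s^{P,k,\mathrm{mart}}\Bigr| .
\]
Likewise, $|S_T-S_0|\le|X_T^{P,k}-X_0^{P,k}|+\Var_{[0,T]}(H^{P,q})$.

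For the second moment, we square this bound and take expectations. We treat the resulting terms as follows.
\begin{itemize}
\item The drift term satisfies $\mathbb E^P[(\int_0^T|b_s|\dd s)^2]\le T\,\mathbb E^P[\int_0^T|b_s|^2\dd s]$. This is bounded uniformly by the linear-growth estimate from the proof of Theorem~\ref{thm:forward}, together with \eqref{eq:forward-apriori} and the common bounds $\mathfrak M_\xi,\mathfrak M_\alpha$.
\item The martingale term has second moment bounded, via the isometries, by $\mathbb E^P[\int_0^T(|\sigma_s|_{a_s^P}^2+\int_E|\beta_s(e)|^2\Pi(\dd e))\dd s]\le C$. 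The integrand is predictable because $S_{-}$ is predictable.
\item The term $\mathbb E^P[|X_T^{P,k}-X_0^{P,k}|^2]$ is bounded by $4\sup_{P}\mathbb E^P[\sup_t|X_t^P|^2]$.
\item The interface term $\mathbb E^P[\Var_{[0,T]}(H^{P,q})^2]$ is bounded by $C_H^{(2)}$ from \eqref{eq:H-var-L2}.
\end{itemize}
All these constants depend only on $M$ and the structural constants, and are uniform in $P,\gamma,q,k$. The first-moment bound then follows from Jensen's inequality, or directly from \eqref{eq:H-var-L1}.

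Two remarks on this estimate. The bound is in fact independent of $\bar y$ and of $y$, because the terms involving $|S_0-y|$ cancel. Assumption~\ref{ass:elliptic} and the operator assumption are not needed for these bounds. They will be used only afterwards, through the occupation density formula (b), to convert local-time bounds into occupation bounds.

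The only delicate point is rigor in the Meyer--Tanaka step. We must check three things: that the absolutely convergent nonnegative jump sum may be dropped pathwise; that $\int\operatorname{sgn}(S_{s-}-y)\dd S_s$ splits into a Lebesgue--Stieltjes part and a square-integrable martingale part (the $\widetilde N$-integral being well defined since $\beta$ is square integrable against $\Pi\,\dd t\otimes\dd P$); and that the finite-variation part of $S$ is exactly $\int(b_{s,k}+\dot H_s^k)\dd s$. This last fact needs \eqref{eq:AC-all-t} and the absolute continuity of $m_j^P$ from Lemma~\ref{lem:lawstatistics}. Once these are checked, the supremum over $|y|\le\bar y$ is harmless, since the bound does not depend on $y$.
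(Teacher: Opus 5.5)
Your proposal is correct and follows the paper's argument: the Meyer--Tanaka formula for $x\mapsto|x-y|$, dropping the nonnegative jump remainder, and bounding the drift, the interface variation via \eqref{eq:H-var-L2}, and the two martingale terms via the isometries. Your departures from the paper are harmless streamlinings. First, the paper localizes with $\tau_n$ and concludes with Fatou's lemma, which is unnecessary because the integrands are already square integrable. Second, the paper bounds $|S_T-y|$ and $|S_0-y|$ separately, which uses the growth of $h$ from Assumption~\ref{ass:operator} and introduces the dependence on $\bar y$; your reverse triangle inequality $|S_T-y|-|S_0-y|\le|S_T-S_0|$ instead gives a constant independent of $y$ and $\bar y$.
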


\begin{proof}
\emph{Step 1: decomposition and localization.} Fix $P\in\PP$, $(\xi,\alpha)\in\mathfrak D_P$, $\gamma\in(0,1]$, $q\in\mathfrak A_{\gamma,M}^P[\xi,\alpha]$, $k\in\{1,\ldots,d\}$, and $y\in[-\bar y,\bar y]$, and write $S_t=S_t^{P,q,k}=X_t^{P,k}+H_t^{P,q,k}$. The canonical decomposition is
\[
S_t=S_0+\int_0^t b_k(s,X_{s-}^P,\mu_{s-}^P,\alpha_s)\dd s
+\int_0^t\dot H_s^{P,q,k}\dd s+\mathsf M_t^c+\mathsf M_t^d,
\]
with
\[
\mathsf M_t^c=\int_0^t \mathbf e_k^\top\sigma(s,X_{s-}^P,\mu_{s-}^P,\alpha_s)\dd B_s^P,
\]
\[
\mathsf M_t^d=\int_0^t\int_E\beta_k(s,X_{s-}^P,\mu_{s-}^P,\alpha_s,e)\widetilde N^P(\dd s,\dd e).
\]
Choose
\[
\tau_n=\inf\left\{t:\langle \mathsf M^c\rangle_t+\int_0^t\int_E|\beta_k(s,X_{s-}^P,\mu_{s-}^P,\alpha_s,e)|^2\Pi(\dd e)\dd s\ge n\right\}\wedge T.
\]
The integral defining $\tau_n$ is continuous, increasing, and finite at $T$, $P$-a.s., by the quadratic bounds for the forward state. Hence, for almost every path, $\tau_n=T$ for all sufficiently large $n$. The stopping times depend on $P$; only the constant in the estimates below will be uniform in $P$.

\emph{Step 2: Tanaka's formula.} The integrands $\operatorname{sgn}(S_{t-}-y)$ are predictable and bounded by one in absolute value. The isometries therefore imply that the two stopped martingale integrals belong to $L^2(P)$. Tanaka's formula applied to $x\mapsto|x-y|$ gives
\begin{align}
|S_{T\wedge\tau_n}-y|
={}&|S_0-y|
+\int_0^{T\wedge\tau_n}\operatorname{sgn}(S_{t-}-y)b_k(t,X_{t-}^P,\mu_{t-}^P,\alpha_t)\dd t\notag\\
&+\int_0^{T\wedge\tau_n}\operatorname{sgn}(S_{t-}-y)\dot H_t^{P,q,k}\dd t
+\int_0^{T\wedge\tau_n}\operatorname{sgn}(S_{t-}-y)\dd \mathsf M_t^c\notag\\
&+\int_0^{T\wedge\tau_n}\operatorname{sgn}(S_{t-}-y)\dd \mathsf M_t^d
+\mathsf L_{T\wedge\tau_n}^y(S)+R_{T\wedge\tau_n}^y,
\label{eq:tanaka-localtime-full}
\end{align}
where, by convexity of $x\mapsto|x-y|$,
\[
R_t^y=\sum_{0<s\le t}\Bigl(|S_s-y|-|S_{s-}-y|-\operatorname{sgn}(S_{s-}-y)(S_s-S_{s-})\Bigr)\ge0.
\]
Isolating $\mathsf L^y$ in \eqref{eq:tanaka-localtime-full}, and using $R^y\ge0$ and $|\operatorname{sgn}|\le1$,
\begin{align*}
\mathsf L_{T\wedge\tau_n}^y(S)
\le{}&|S_{T\wedge\tau_n}-y|+|S_0-y|
+\int_0^{T\wedge\tau_n}|b_k(t,X_{t-}^P,\mu_{t-}^P,\alpha_t)|\dd t
+\int_0^{T\wedge\tau_n}|\dot H_t^{P,q,k}|\dd t\\
&-\int_0^{T\wedge\tau_n}\operatorname{sgn}(S_{t-}-y)\dd \mathsf M_t^c
-\int_0^{T\wedge\tau_n}\operatorname{sgn}(S_{t-}-y)\dd \mathsf M_t^d.
\end{align*}
The two stopped martingales are square integrable and have zero expectation. Therefore,
\begin{align}
\mathbb E^P\!\left[\mathsf L_{T\wedge\tau_n}^y(S)\right]
\le{}&\mathbb E^P\!\left[|S_{T\wedge\tau_n}-y|\right]+\mathbb E^P\!\left[|S_0-y|\right]\\
&+\mathbb E^P\!\left[\int_0^T|b_k(t,X_{t-}^P,\mu_{t-}^P,\alpha_t)|\dd t\right]
+\mathbb E^P\!\left[\Var_{[0,T]}(H^{P,q,k})\right].
\label{eq:localtime-L1-components}
\end{align}

\emph{Step 3: first moment.} The four terms in \eqref{eq:localtime-L1-components} are bounded separately. First,
\[
\mathbb E^P\!\left[|S_{T\wedge\tau_n}-y|\right]
\le (\mathbb E^P\!\left[\sup_{t\le T}|X_t^P|^2\right])^{1/2}
+(\mathbb E^P\!\left[\sup_{t\le T}|H_t^{P,q,k}|^2\right])^{1/2}+\bar y.
\]
Next,
\[
\mathbb E^P\!\left[|S_0-y|\right]\le(\mathbb E^P\!\left[|\xi|^2\right])^{1/2}+(\mathbb E^P\!\left[|H_0^{P,q,k}|^2\right])^{1/2}+\bar y.
\]
The growth of $h$ (Assumption \ref{ass:operator}) gives $|H_0^{P,q,k}|\le C_{hc}(1+|\alpha_0|+M_2(\mu_0^P))$, with $M_2(\mu_0^P)^2=\mathbb E^P[|\xi|^2]$. Hence, by \eqref{eq:alpha0-bound},
\[
\mathbb E^P[|H_0^{P,q,k}|^2]\le3C_{hc}^2\bigl(1+\mathbb E^P[|\alpha_0|^2]+\mathbb E^P[|\xi|^2]\bigr)<\infty .
\]
This bound, the inequality
\[
\sup_{t\le T}|H_t^{P,q,k}|\le |H_0^{P,q,k}|+\Var_{[0,T]}(H^{P,q,k})
\]
and \eqref{eq:H-var-L2} give
\[
\mathbb E^P\!\left[\sup_{t\le T}|H_t^{P,q,k}|^2\right]\le2\mathbb E^P\!\left[|H_0^{P,q,k}|^2\right]+2C_H^{(2)}<\infty.
\]
For the drift,
\begin{align*}
\mathbb E^P\!\left[\int_0^T|b_k(t,X_{t-}^P,\mu_{t-}^P,\alpha_t)|\dd t\right]
&\le T^{1/2}\left(\mathbb E^P\!\left[\int_0^T|b_k(t,X_{t-}^P,\mu_{t-}^P,\alpha_t)|^2\dd t\right]\right)^{1/2}\\
&\le C\left(1+\mathbb E^P\!\left[\sup_{t\le T}|X_t^P|^2\right]+\mathbb E^P\!\left[\int_0^T|\alpha_t|^2\dd t\right]\right)^{1/2}.
\end{align*}
Finally,
\[
\mathbb E^P\!\left[\Var_{[0,T]}(H^{P,q,k})\right]\le C_H.
\]
Inserting these estimates into \eqref{eq:localtime-L1-components} gives
\[
\sup_{n}\,\sup_{P,\gamma,q}\,\max_{1\le k\le d}\,\sup_{|y|\le\bar y}\mathbb E^P\!\left[\mathsf L_{T\wedge\tau_n}^y(S^{P,q,k})\right]\le C_{\rm loc}(\bar y).
\]
Since $\mathsf L_{T\wedge\tau_n}^y(S)\to \mathsf L_T^y(S)$, $P$-a.s., Fatou's lemma yields
\[
\mathbb E^P\!\left[\mathsf L_T^y(S)\right]\le\liminf_{n\to\infty}\mathbb E^P\!\left[\mathsf L_{T\wedge\tau_n}^y(S)\right]\le C_{\rm loc}(\bar y).
\]

\emph{Step 4: second moment.} With the notation of \eqref{eq:tanaka-localtime-full},
\begin{align*}
\mathsf L_{T\wedge\tau_n}^y(S)
\le{}&|S_{T\wedge\tau_n}-y|+|S_0-y|+\int_0^T|b_k(t,X_{t-}^P,\mu_{t-}^P,\alpha_t)|\dd t+\Var_{[0,T]}(H^{P,q,k})\\
&+\left|\int_0^{T\wedge\tau_n}\operatorname{sgn}(S_{t-}-y)\dd \mathsf M_t^c\right|
+\left|\int_0^{T\wedge\tau_n}\operatorname{sgn}(S_{t-}-y)\dd \mathsf M_t^d\right|.
\end{align*}
Using $(x_1+x_2+x_3+x_4+x_5+x_6)^2\le6(x_1^2+x_2^2+x_3^2+x_4^2+x_5^2+x_6^2)$,
\begin{align}
\mathbb E^P\!\left[\mathsf L_{T\wedge\tau_n}^y(S)^2\right]
\le{}&6\mathbb E^P\!\left[|S_{T\wedge\tau_n}-y|^2\right]+6\mathbb E^P\!\left[|S_0-y|^2\right]
+6T \mathbb E^P\!\left[\int_0^T|b_k(t,X_{t-}^P,\mu_{t-}^P,\alpha_t)|^2\dd t\right]\notag\\
&+6\mathbb E^P\!\left[\Var_{[0,T]}(H^{P,q,k})^2\right]+6\mathbb E^P\!\left[|\mathsf M_{T\wedge\tau_n}^{c,y}|^2\right]+6\mathbb E^P\!\left[|\mathsf M_{T\wedge\tau_n}^{d,y}|^2\right],
\label{eq:localtime-L2-components}
\end{align}
where
\[
\mathsf M_t^{c,y}=\int_0^t\operatorname{sgn}(S_{s-}-y)\dd \mathsf M_s^c,
\qquad
\mathsf M_t^{d,y}=\int_0^t\operatorname{sgn}(S_{s-}-y)\dd \mathsf M_s^d.
\]
The isometries give
\begin{align*}
\mathbb E^P\!\left[|\mathsf M_{T\wedge\tau_n}^{c,y}|^2\right]
&=\mathbb E^P\!\left[\int_0^{T\wedge\tau_n}
\mathbf e_k^\top\sigma(s,X_{s-}^P,\mu_{s-}^P,\alpha_s)a_s^P
\sigma(s,X_{s-}^P,\mu_{s-}^P,\alpha_s)^\top \mathbf e_k\dd s\right]\\
&\le \mathbb E^P\!\left[\int_0^T
|\sigma(s,X_{s-}^P,\mu_{s-}^P,\alpha_s)|_{a_s^P}^2\dd s\right].
\end{align*}
\begin{align*}
\mathbb E^P\!\left[|\mathsf M_{T\wedge\tau_n}^{d,y}|^2\right]
&=\mathbb E^P\!\left[\int_0^{T\wedge\tau_n}\int_E
|\beta_k(s,X_{s-}^P,\mu_{s-}^P,\alpha_s,e)|^2\Pi(\dd e)\dd s\right]\\
&\le \mathbb E^P\!\left[\int_0^T\int_E
|\beta(s,X_{s-}^P,\mu_{s-}^P,\alpha_s,e)|^2\Pi(\dd e)\dd s\right].
\end{align*}
By \ref{ass:F} and \eqref{eq:forward-apriori},
\begin{align*}
&\mathbb E^P\!\left[\int_0^T\Bigg(
|b(s,X_{s-}^P,\mu_{s-}^P,\alpha_s)|^2
+|\sigma(s,X_{s-}^P,\mu_{s-}^P,\alpha_s)|_{a_s^P}^2\Bigg.\right.\\
&\qquad\left.\Bigg.+\int_E|\beta(s,X_{s-}^P,\mu_{s-}^P,\alpha_s,e)|^2\Pi(\dd e)
\Bigg)\dd s\right]\le C.
\end{align*}
By the growth of $h$ and \eqref{eq:H-var-L2},
\[
\mathbb E^P\!\left[|S_0-y|^2\right]+\mathbb E^P\!\left[\sup_{t\le T}|S_t-y|^2\right]+\mathbb E^P\!\left[\Var_{[0,T]}(H^{P,q,k})^2\right]\le C(\bar y,M).
\]
To make the uniformity explicit, set
\[
C_{S,2}=\sup_{P,\gamma,q}\max_{1\le k\le d}\mathbb E^P\!\left[\sup_{t\le T}|S_t^{P,q,k}|^2\right],
\]
and let $C_b,C_\sigma,C_\beta$ denote the uniform bounds for the three corresponding terms in the preceding estimate. The bound \eqref{eq:alpha0-bound}, the growth of $h$, and \eqref{eq:H-var-L2} yield $C_{S,2}<\infty$. Since $|S_t-y|^2\le2|S_t|^2+2\bar y^2$, one admissible constant is
\[
C_{\rm loc}^{(2)}(\bar y)=
24(C_{S,2}+\bar y^2)+6T C_b+6C_H^{(2)}+6C_\sigma+6C_\beta.
\]
It depends neither on $n$ nor on $P,\gamma,q,k,y$ with $|y|\le \bar y$. Substituting into \eqref{eq:localtime-L2-components} gives
\[
\sup_{n}\,\sup_{P,\gamma,q}\,\max_{1\le k\le d}\,\sup_{|y|\le\bar y}\mathbb E^P\!\left[\mathsf L_{T\wedge\tau_n}^y(S^{P,q,k})^2\right]\le C_{\rm loc}^{(2)}(\bar y).
\]
Fatou's lemma implies
\[
\mathbb E^P\!\left[\mathsf L_T^y(S)^2\right]\le\liminf_{n\to\infty}\mathbb E^P\!\left[\mathsf L_{T\wedge\tau_n}^y(S)^2\right]\le C_{\rm loc}^{(2)}(\bar y).
\]
\end{proof}

\begin{theorem}\label{thm:occupation}
Suppose \ref{ass:base}, \ref{ass:F}, \ref{ass:operator}, \ref{ass:finitestat}, \ref{ass:elliptic}, and \ref{ass:finite-branches}. For each admissible choice of $P\in\PP$, $(\xi,\alpha)\in\mathfrak D_P$, $\gamma\in(0,1]$, and $q\in\mathfrak A_{\gamma,M}^P[\xi,\alpha]$, and for every $r>0$, set
\[
\mathsf O_r^{P,q}[\xi,\alpha]
=\int_0^T\1_{\{\rho^{q_t}(X_{t-}^P,\mu_{t-}^P,\alpha_t)\le r\}}\dd t,
\]
where $X^P$ is the forward solution for the datum $(\xi,\alpha)$ and $\mu^P$ is its flow of laws; we write $\mathsf O_r^{P,q}$ when the data are fixed. Then there exist constants $C_{\rm occ},C_{\rm occ}^{(2)}$, depending only on $M$ and the structural constants, and therefore independent of these indices, such that, for every $r\in(0,1]$,
\[
\sup_{P,\gamma,q}\mathbb E^P\!\left[\mathsf O_r^{P,q}\right]\le C_{\rm occ}r
\]
and
\begin{equation}\label{eq:occupation-second}
\sup_{P,\gamma,q}\mathbb E^P\!\left[\bigl(\mathsf O_r^{P,q}\bigr)^2\right]\le C_{\rm occ}^{(2)}r^2.
\end{equation}
Moreover, for all such $P$, $(\xi,\alpha)$, $\gamma$, and $q$, $\operatorname{Leb}\{t\in[0,T]:\rho^{q_t}(X_{t-}^P,\mu_{t-}^P,\alpha_t)=0\}=0$, $P$-a.s.
\end{theorem}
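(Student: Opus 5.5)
Reduce the occupation time to local times via the occupation density formula, then use the uniform local-time bounds of Lemma~\ref{lem:localtime}.

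Since $\rho^{q_t}(X_{t-}^P,\mu_{t-}^P,\alpha_t)=\min_{k,j}|S_{t-}^{P,q,k}-\theta_{k,j}|$ with $S^{P,q,k}=X^{P,k}+H^{P,q,k}$ (using $\mu_{t-}^P=\mu_t^P$ from Lemma~\ref{lem:flowcontinuity}), a union bound gives
\[
\mathsf O_r^{P,q}\le\sum_{k=1}^d\sum_{j=1}^{N_k}\int_0^T\1_{\{|S_{t-}^{P,q,k}-\theta_{k,j}|\le r\}}\dd t .
\]
By Assumption~\ref{ass:elliptic}, $\dd\langle(S^{P,q,k})^c\rangle_t\ge\underline\sigma^2\dd t$. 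Hence, by the occupation density formula (b) applied to $\varphi=\1_{[\theta_{k,j}-r,\theta_{k,j}+r]}$,
\[
\int_0^T\1_{\{|S_{t-}^{k}-\theta_{k,j}|\le r\}}\dd t\le\underline\sigma^{-2}\int_0^T\varphi(S^k_{t-})\dd[S^k]^c_t=\underline\sigma^{-2}\int_{\theta_{k,j}-r}^{\theta_{k,j}+r}\mathsf L_T^y(S^{k})\dd y .
\]

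For the first moment, take expectations and use Tonelli's theorem (the version of local time is jointly measurable). Lemma~\ref{lem:localtime} with $\bar y=\Theta_*+1$ covers all $|y|\le\Theta_*+r$ for $r\le1$. This gives $\mathbb E^P[\mathsf O_r^{P,q}]\le 2N_\Theta\underline\sigma^{-2}C_{\rm loc}(\Theta_*+1)\,r$.

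For the second moment, square the sum with Cauchy--Schwarz over the $N_\Theta$ terms and apply Jensen's inequality in $y$:
\[
\Bigl(\int_{\theta-r}^{\theta+r}\mathsf L_T^y\dd y\Bigr)^2\le 2r\int_{\theta-r}^{\theta+r}(\mathsf L_T^y)^2\dd y .
\]
Taking expectations and using the second bound of Lemma~\ref{lem:localtime} gives
\[
\mathbb E^P[(\mathsf O_r^{P,q})^2]\le N_\Theta^2\underline\sigma^{-4}\,4r^2\,C^{(2)}_{\rm loc}(\Theta_*+1).
\]
This is the crucial point. Integrating the local time in $y$ over a window of length $2r$ before squaring produces $r^2$, and this needs only the $y$-uniform $L^2$ bound already available. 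The constants depend only on $M$ and $\mathfrak C_{\rm str}$ through $C_{\rm loc}$, $C^{(2)}_{\rm loc}$, $N_\Theta$, $\Theta_*$ and $\underline\sigma$, so they are uniform in $P,\gamma,q$ and the data.

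For the last claim, the set $\{\rho=0\}$ is contained in $\{\rho\le r\}$ for every $r$. So $\mathbb E^P[\operatorname{Leb}\{\rho=0\}]\le C_{\rm occ}r\to0$, which gives $\operatorname{Leb}\{\rho=0\}=0$ $P$-a.s. Alternatively, the occupation formula with $\varphi=\1_{\{\theta\}}$ gives it directly, since a single point has Lebesgue measure zero in $y$.

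The main obstacle is making the occupation-density step rigorous: the occupation formula holds simultaneously for all bounded Borel $\varphi$ almost surely, and the left-limit $S_{t-}$ agrees with the integrand in $\mathsf O_r$ outside a countable set of times. Everything hard, namely the uniform local-time moments despite the moving interface and the jumps, is already contained in Lemma~\ref{lem:localtime}.
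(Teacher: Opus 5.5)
Your proposal is correct and follows the paper's proof essentially step by step: the union bound over thresholds, the occupation density formula with $\dd\langle (S^{P,q,k})^c\rangle_t\ge\underline\sigma^2\dd t$, Tonelli together with Lemma~\ref{lem:localtime} at $\bar y=\Theta_*+1$, Cauchy--Schwarz in $y$ on the window of length $2r$ for the second moment, and even the same constants $2N_\Theta\underline\sigma^{-2}C_{\rm loc}$ and $4N_\Theta^2\underline\sigma^{-4}C^{(2)}_{\rm loc}$. The only cosmetic difference is the zero-occupation claim. There you use $\{\rho=0\}\subset\{\rho\le r\}$, or alternatively $\varphi=\1_{\{\theta\}}$ in the occupation formula, whereas the paper uses dominated convergence as $r\downarrow0$; all three arguments are equally valid.
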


\begin{proof}
\emph{Step 1: reduction to single thresholds.} Fix $P$, $(\xi,\alpha)$, $\gamma$, $q$, and $0<r\le1$, and set $R_\Theta=\Theta_*+1$. The process $H^{P,q}$ is continuous because it is absolutely continuous, and the flow $\mu^P$ is continuous (Lemma \ref{lem:flowcontinuity}); hence $S_{t-}^{P,q,k}=X_{t-}^{P,k}+h_k^{q_t}(\mu_{t-}^P,\alpha_t)$ and
\[
\rho^{q_t}(X_{t-}^P,\mu_{t-}^P,\alpha_t)=\min_{k,j}|S_{t-}^{P,q,k}-\theta_{k,j}|.
\]
For a threshold $\theta=\theta_{k,j}$, set $\mathsf O_r^{k,j}=\int_0^T\1_{\{|S_{t-}^{P,q,k}-\theta|\le r\}}\dd t$, so that the occupation time $\mathsf O_r=\mathsf O_r^{P,q}$ of $\{\rho^{q_t}\le r\}$ satisfies $\mathsf O_r\le\sum_{k,j}\mathsf O_r^{k,j}$. Since $H^{P,q}$ is of finite variation, $\dd\langle(S^{P,q,k})^c\rangle_t=\mathbf e_k^\top\sigma_ta_t^P\sigma_t^\top \mathbf e_k\,\dd t\ge\underline\sigma^2\dd t$; and since a càdlàg path has only countably many jumps, $P$-a.s., $S_t^{P,q,k}=S_{t-}^{P,q,k}$ for almost every $t\in[0,T]$. The occupation density formula therefore gives, $P$-a.s.,
\[
\underline\sigma^2\mathsf O_r^{k,j}\le\int_{\theta-r}^{\theta+r}\mathsf L_T^y(S^{P,q,k})\dd y,
\qquad [\theta-r,\theta+r]\subset[-R_\Theta,R_\Theta].
\]

\emph{Step 2: first moment.} Taking expectations, Tonelli's theorem and the first bound of Lemma \ref{lem:localtime}, with $\bar y=R_\Theta$, give
\[
\mathbb E^P\!\left[\mathsf O_r^{k,j}\right]
\le\underline\sigma^{-2}\int_{\theta-r}^{\theta+r}\mathbb E^P\!\left[\mathsf L_T^y(S^{P,q,k})\right]\dd y
\le2\underline\sigma^{-2}C_{\rm loc}(R_\Theta)\,r .
\]

\emph{Step 3: second moment.} By Cauchy--Schwarz on the interval $[\theta-r,\theta+r]$, of length $2r$,
\[
\bigl(\mathsf O_r^{k,j}\bigr)^2
\le\underline\sigma^{-4}\left(\int_{\theta-r}^{\theta+r}\mathsf L_T^y(S^{P,q,k})\dd y\right)^2
\le2r\,\underline\sigma^{-4}\int_{\theta-r}^{\theta+r}\mathsf L_T^y(S^{P,q,k})^2\dd y .
\]
Taking expectations, Tonelli's theorem and the second bound of Lemma \ref{lem:localtime} give
\[
\mathbb E^P\!\left[\bigl(\mathsf O_r^{k,j}\bigr)^2\right]
\le2r\,\underline\sigma^{-4}\int_{\theta-r}^{\theta+r}\mathbb E^P\!\left[\mathsf L_T^y(S^{P,q,k})^2\right]\dd y
\le4\underline\sigma^{-4}C_{\rm loc}^{(2)}(R_\Theta)\,r^2 .
\]

\emph{Step 4: summation over the thresholds.} Recall that $N_\Theta$ is the total number of thresholds. Since $\mathsf O_r\le\sum_{k,j}\mathsf O_r^{k,j}$ and $(\sum_{k,j}x_{k,j})^2\le N_\Theta\sum_{k,j}x_{k,j}^2$,
\[
\mathbb E^P\!\left[\mathsf O_r\right]\le\sum_{k,j}\mathbb E^P\!\left[\mathsf O_r^{k,j}\right]\le2N_\Theta\underline\sigma^{-2}C_{\rm loc}(R_\Theta)\,r,
\qquad
\mathbb E^P\!\left[\mathsf O_r^2\right]\le N_\Theta\sum_{k,j}\mathbb E^P\!\left[\bigl(\mathsf O_r^{k,j}\bigr)^2\right]\le4N_\Theta^2\underline\sigma^{-4}C_{\rm loc}^{(2)}(R_\Theta)\,r^2 .
\]
The two estimates therefore hold with
\[
C_{\rm occ}=2N_\Theta\underline\sigma^{-2}C_{\rm loc}(R_\Theta),\qquad C_{\rm occ}^{(2)}=4N_\Theta^2\underline\sigma^{-4}C_{\rm loc}^{(2)}(R_\Theta),
\]
uniformly in $P,\gamma,q$.

\emph{Step 5: the interface has zero occupation.} As $r\downarrow0$, $\1_{\{\rho^{q_t}\le r\}}\downarrow\1_{\{\rho^{q_t}=0\}}$, and these indicators are bounded by $1$. Dominated convergence together with the first estimate gives
\[
\mathbb E^P\!\left[\int_0^T
\1_{\{\rho^{q_t}(X_{t-}^P,\mu_{t-}^P,\alpha_t)=0\}}\dd t\right]=0.
\]
\end{proof}

\begin{remark}\label{rem:occupation-scope}
The proofs of Proposition \ref{prop:interface-motion}, Lemma \ref{lem:localtime}, and Theorem \ref{thm:occupation} use the process $q$ only through the following properties: $q$ is predictable, $\Q$-valued, has absolutely continuous paths, and satisfies $\mathbb E^P\!\left[\int_0^T|\dot q_t|^2\dd t\right]\le M^2$. Membership in a cut $\Q_\gamma$ plays no role. The three results, with the same constants, therefore hold for every such process, in particular for every regular admissible selection with energy bound $M$ and for the elements of the ambient class $\overline{\mathfrak A}_M^P$ in (S5).
\end{remark}
\section{Intrinsic selection systems}\label{sec:feedback}

Remark \ref{rem:noncompact} shows that the class of all regular selections need not be compact. We therefore construct \emph{intrinsic} selection systems directly from the geometry of the fuzzy data and a regularity constant $L$. For Lipschitz feedback selections of convex cuts, (S1)--(S3) and (S5) hold in every dimension, while (S4) is proved when the fuzzy parameter is scalar (Section \ref{sec:fb-convex}). For center--radius cuts, all properties (S1)--(S5) hold in arbitrary dimension (Section \ref{sec:fb-cr}). The results of Sections \ref{sec:stability}--\ref{sec:yosida} can then be applied without postulating these properties separately. Throughout the section, $\bar\alpha$ denotes a generic element of the control set $\mathbb D$, to distinguish it from the control process $\alpha$ in the data.

\subsection{Lipschitz feedback selections of convex cuts}\label{sec:fb-convex}

\begin{assumption}[Intrinsic feedback regime]\label{ass:feedback}
\begin{enumerate}[label=(\roman*)]
\item $\mathbb D$ is closed, and there exists $R_\alpha>0$ such that, for every $P\in\PP$ and every $(\xi,\alpha)\in\mathfrak D_P$, $P$-a.s., $|\alpha_t|\le R_\alpha$ for all $t\in[0,T]$; the controls satisfy the time regularity of Assumption \ref{ass:finitestat}. Set $\mathbb D_{R_\alpha}=\{\bar\alpha\in\mathbb D:|\bar\alpha|\le R_\alpha\}$ and $\mathsf G_{R_\alpha}=[0,T]\times\mathbb D_{R_\alpha}$; both are compact.
\item For every $(\gamma,\mu,\bar\alpha)$, the set $\Q_\gamma(\mu,\bar\alpha)$ is convex. The cuts are left-continuous in the sense that
\[
\Q_\gamma(\mu,\bar\alpha)
=\bigcap_{0<\iota<\gamma}\Q_\iota(\mu,\bar\alpha),
\qquad \gamma\in(0,1].
\]
\item There exist maps $\widehat\Q_\gamma$ from $\mathbb D\times\R^\ell$ into the nonempty compact convex subsets of $\Q$, with the functions $\psi_j$ from Assumption \ref{ass:finitestat}, such that $\Q_\gamma(\mu,\bar\alpha)=\widehat\Q_\gamma(\bar\alpha,m_1(\mu),\ldots,m_\ell(\mu))$ and
\[
d_H\bigl(\widehat\Q_\gamma(\bar\alpha,y),\widehat\Q_{\gamma'}(\bar\alpha',y')\bigr)\le L_{\widehat\Q}\bigl(|\bar\alpha-\bar\alpha'|+|y-y'|\bigr)+C_{\widehat\Q}|\gamma-\gamma'|^\nu .
\]
\end{enumerate}
\end{assumption}

Item (ii) actually follows from (iii) and closedness of the cuts: if $x\in\Q_\iota(\mu,\bar\alpha)$ for every $\iota<\gamma$, then $\operatorname{dist}(x,\Q_\gamma(\mu,\bar\alpha))\le C_{\widehat\Q}|\gamma-\iota|^\nu$ for every $\iota<\gamma$, hence $x\in\Q_\gamma(\mu,\bar\alpha)$. We state it separately because it is the property used below; it also holds for the upper level sets of any membership function, since $\{u\ge\gamma\}=\bigcap_{0<\iota<\gamma}\{u\ge\iota\}$ \cite{DiamondKloeden1994}.

\begin{lemma}\label{lem:mflow-lip}
Fix $P\in\PP$ and suppose \ref{ass:base}, \ref{ass:F}, \ref{ass:finitestat}, and \ref{ass:feedback}(i) hold. Let $(\xi,\alpha)\in\mathfrak D_P$ have forward flow $\mu^P$, and set $y(t)=(m_1(\mu_t^P),\ldots,m_\ell(\mu_t^P))$. Then $y$ is Lipschitz on $[0,T]$, with a constant $C_y$ depending only on $\mathfrak C_{\rm str}$ and $R_\alpha$.
\end{lemma}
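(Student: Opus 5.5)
The plan is to reuse the representation from Lemma~\ref{lem:lawstatistics} and turn its $L^1$ bound on $\dot m_j^P$ into a pointwise bound, using the boundedness of the controls from Assumption~\ref{ass:feedback}(i). By \eqref{eq:law-stat-derivative}, each $m_j^P$ is absolutely continuous with
\[
m_j^P(t)-m_j^P(s)=\int_s^t\mathbb E^P[G_j(u)]\dd u .
\]
It therefore suffices to show $\sup_{u\in[0,T]}\mathbb E^P[|G_j(u)|]\le C$ for a constant depending only on $\mathfrak C_{\rm str}$ and $R_\alpha$. Then $|y(t)-y(s)|\le C_y|t-s|$, with $C_y=\sqrt\ell\,\max_jC$.

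\emph{Step 1: pointwise bound on the drift $G_j(u)$.} Taylor's bound, as in the proof of Lemma~\ref{lem:lawstatistics}, gives
\[
|G_j(u)|\le B_j|b_u|+\tfrac12B_j^{(2)}\Bigl(|\sigma_u|_{a_u^P}^2+\int_E|\beta_u(e)|^2\Pi(\dd e)\Bigr).
\]
The linear-growth bound from Step~3 of the proof of Theorem~\ref{thm:forward} controls the coefficients at time $u$ by
\[
2C_F^2+2L_F^2\bigl(|X_{u-}^P|^2+M_2(\mu_{u-}^P)^2+|\alpha_u|^2\bigr).
\]
Here we use $|\alpha_u|\le R_\alpha$, $P$-a.s. for every $u$, by Assumption~\ref{ass:feedback}(i). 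We also use $M_2(\mu_{u-}^P)^2=\mathbb E^P[|X_{u}^P|^2]$, by Lemma~\ref{lem:flowcontinuity}. Taking expectations, and using $|b_u|\le 1+|b_u|^2$ for the first term, we get
\[
\mathbb E^P[|G_j(u)|]\le C\Bigl(1+R_\alpha^2+\mathbb E^P\bigl[\sup_{t\le T}|X_t^P|^2\bigr]\Bigr).
\]

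\emph{Step 2: uniform moment bound.} By \eqref{eq:forward-apriori}, $\mathbb E^P[\sup_t|X_t^P|^2]$ is bounded by $C(1+\mathfrak M_\xi+\mathfrak M_\alpha)$, uniformly in $P$ and in the data in $\mathfrak D_P$. Alternatively one can bound it by $C(1+\mathfrak M_\xi+TR_\alpha^2)$. Either way, $\sup_u\mathbb E^P[|G_j(u)|]\le C(\mathfrak C_{\rm str},R_\alpha)$.

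\emph{Main obstacle.} There is no real analytic difficulty. The only care needed is the following point. The derivative formula \eqref{eq:law-stat-derivative} holds only for almost every $t$, and $G_j$ is defined only $\dd u\otimes\dd P$-a.e. The Lipschitz bound must therefore come from the integral identity for all $s<t$, not from pointwise differentiation. This requires the bound on $\mathbb E^P[|G_j(u)|]$ for almost every $u$, which Step~1 gives, since the pathwise bound on $\alpha$ holds for all $t$ and $X_{u-}$ is controlled by the supremum. Also, $\psi_j\in C_b^2$ guarantees that $B_j,B_j^{(2)}$ belong to the structural constants.
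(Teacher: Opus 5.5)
Your proposal is correct and follows essentially the same route as the paper. Both proofs combine the linear-growth bound from Theorem~\ref{thm:forward}, the pathwise bound $|\alpha_t|\le R_\alpha$, continuity of the flow and \eqref{eq:forward-apriori} to get a uniform bound on the coefficients, feed this into \eqref{eq:law-stat-derivative} to bound $\dot m_j^P$ for almost every $t$, and conclude via absolute continuity (your integral identity). The only difference is cosmetic: you estimate $\mathbb E^P|b_u|$ through $|b_u|\le 1+|b_u|^2$ where the paper uses Cauchy--Schwarz, which changes only the constant.
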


\begin{proof}
By the linear-growth bound in the proof of Theorem \ref{thm:forward} and $M_2(\mu_{t-}^P)^2=\mathbb E^P[|X_{t-}^P|^2]$,
\[
\mathbb E^P\Bigl[|b_t|^2+|\sigma_t|_{a_t^P}^2+\int_E|\beta_t(e)|^2\Pi(\dd e)\Bigr]\le2C_F^2+2L_F^2\Bigl(2\,\mathbb E^P[\sup_{s\le T}|X_s^P|^2]+R_\alpha^2\Bigr)=:\Lambda,
\]
and this is bounded by \eqref{eq:forward-apriori}. Formula \eqref{eq:law-stat-derivative} then gives, for almost every $t$, $|\dot m_j^P(t)|\le\|D\psi_j\|_\infty\Lambda^{1/2}+\frac12\|D^2\psi_j\|_\infty\Lambda$. Since each $m_j^P=m_j(\mu_\cdot^P)$ is absolutely continuous, $y$ is Lipschitz with constant
\[
C_y=\Bigl(\sum_j\bigl(\|D\psi_j\|_\infty\Lambda^{1/2}+\tfrac12\|D^2\psi_j\|_\infty\Lambda\bigr)^2\Bigr)^{1/2}.
\]
\end{proof}

For a nonempty compact convex set $\Sigma\subset\R^{d_q}$, let $\operatorname{st}(\Sigma)\in \Sigma$ denote its Steiner point. There exists a constant $\mathfrak s_{d_q}$, depending only on $d_q$, such that $|\operatorname{st}(\Sigma)-\operatorname{st}(\Sigma')|\le \mathfrak s_{d_q}d_H(\Sigma,\Sigma')$ \cite{AubinCellina1984}; for $d_q=1$, $\operatorname{st}([l,u])=(l+u)/2$ and $\mathfrak s_1=1$. Set
\[
L_*=\mathfrak s_{d_q}L_{\widehat\Q}\max(1,C_y).
\]

\begin{definition}\label{def:feedback}
Fix $P\in\PP$, $(\xi,\alpha)\in\mathfrak D_P$ with forward flow $\mu^P$, $\gamma\in(0,1]$, and $L>0$. Let $\mathcal S_{\gamma,L}^P[\xi,\alpha]$ be the set of maps $s:[0,T]\times\mathbb D_{R_\alpha}\to\R^{d_q}$ such that
\[
|s(t,\bar\alpha)-s(t',\bar\alpha')|\le L\bigl(|t-t'|+|\bar\alpha-\bar\alpha'|\bigr),\qquad s(t,\bar\alpha)\in\Q_\gamma(\mu_t^P,\bar\alpha),
\]
for all $(t,\bar\alpha),(t',\bar\alpha')$. The class of \emph{feedback selections} is $\mathfrak A_{\gamma,L}^{{\rm fb},P}[\xi,\alpha]=\{(s(t,\alpha_t))_{t\in[0,T]}:s\in\mathcal S_{\gamma,L}^P[\xi,\alpha]\}$, with the convention $s(t,\alpha_t):=s(0,\alpha_\star)$, for a fixed $\alpha_\star\in\mathbb D_{R_\alpha}$, on the $P$-null set where $\alpha_t\notin\mathbb D_{R_\alpha}$ for some $t$; this set belongs to $\mathcal F_0^P$, so the convention preserves predictability and does not affect the equivalence classes in $L^\infty(\dd t\otimes\dd P)$. When the data are fixed, we omit $[\xi,\alpha]$.
\end{definition}

\begin{theorem}\label{thm:feedback-class}
Fix $P\in\PP$ and $(\xi,\alpha)\in\mathfrak D_P$, suppose \ref{ass:base}, \ref{ass:F}, \ref{ass:fuzzycuts}, \ref{ass:finitestat}, and \ref{ass:feedback}, and let $L\ge L_*$ and $M_L^2=2L^2(T+C_{\dot\alpha})$. Then, for every $\gamma\in(0,1]$:
\begin{enumerate}[label=(\alph*)]
\item $\mathfrak A_{\gamma,L}^{{\rm fb},P}$ is nonempty and compact in $(L^\infty(\dd t\otimes\dd P),\|\cdot\|_{\infty,P})$, and each of its elements is a regular admissible selection at level $\gamma$ with $|\dot q_t|\le L(1+|\dot\alpha_t|)$, hence with energy bound $M_L$;
\item $\mathfrak A_{\gamma_2,L}^{{\rm fb},P}\subseteq\mathfrak A_{\gamma_1,L}^{{\rm fb},P}$ for $0<\gamma_1\le\gamma_2\le1$;
\item $\mathfrak A_{\gamma,L}^{{\rm fb},P}=\bigcap_{0<\iota<\gamma}\mathfrak A_{\iota,L}^{{\rm fb},P}$;
\item if $d_q=1$, then for all $\gamma,\gamma'\in(0,1]$, every $(\xi',\alpha')\in\mathfrak D_P$ with flow $\mu'^P$, and every $q\in\mathfrak A_{\gamma,L}^{{\rm fb},P}[\xi,\alpha]$, there exists $q'\in\mathfrak A_{\gamma',L}^{{\rm fb},P}[\xi',\alpha']$ such that, $P$-a.s., for every $t\in[0,T]$,
\begin{equation}\label{eq:fb-coupling}
|q_t-q'_t|\le L|\alpha_t-\alpha'_t|+L_{\widehat\Q}B_\psi W_2(\mu_t^P,\mu_t'^P)+C_{\widehat\Q}|\gamma-\gamma'|^\nu.
\end{equation}
\item the set $\overline{\mathfrak A}_L^{{\rm fb},P}[\xi,\alpha]$ of processes $(s(t,\alpha_t))_t$, where $s:[0,T]\times\mathbb D_{R_\alpha}\to\Q$ is $L$-Lipschitz, is compact in $L^\infty(\dd t\otimes\dd P)$ and contains $\mathfrak A_{\gamma,L}^{{\rm fb},P}$ for every $\gamma$; its elements are predictable, $\Q$-valued, have absolutely continuous paths, and have energy bound $M_L$.
\end{enumerate}
\end{theorem}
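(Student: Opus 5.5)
The plan is to work at the level of the feedback maps $s$. By (i) of Assumption \ref{ass:feedback}, $\mathsf G_{R_\alpha}$ is compact, so the set $\mathcal L_L$ of $L$-Lipschitz maps $s:\mathsf G_{R_\alpha}\to\Q$ is compact in the uniform norm by Arzel\`a--Ascoli. The constraint set $\mathcal S_{\gamma,L}^P$ is closed in $\mathcal L_L$. Indeed, the multifunction $t\mapsto\Q_\gamma(\mu_t^P,\bar\alpha)=\widehat\Q_\gamma(\bar\alpha,y(t))$ has closed values, so the pointwise constraint $s(t,\bar\alpha)\in\Q_\gamma(\mu_t^P,\bar\alpha)$ passes to uniform limits.

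The evaluation map $\mathcal E:s\mapsto(s(t,\alpha_t))_t$ satisfies $\|\mathcal E(s)-\mathcal E(s')\|_{\infty,P}\le\sup_{\mathsf G_{R_\alpha}}|s-s'|$, hence it is continuous into $L^\infty(\dd t\otimes\dd P)$. Therefore compactness in (a) and (e) follows once the images are identified. Predictability holds because $t\mapsto s(t,\alpha_t)$ is a continuous function of $(t,\alpha_t)$ with $\alpha$ predictable. Absolute continuity and the bound $|\dot q_t|\le L(1+|\dot\alpha_t|)$ come from $|q_t-q_{t'}|\le L(|t-t'|+|\alpha_t-\alpha_{t'}|)$ and the absolute continuity of $\alpha$ in \eqref{eq:AC-all-t}. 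Then $\mathbb E^P\int_0^T|\dot q_t|^2\dd t\le 2L^2(T+C_{\dot\alpha})=M_L^2$. Admissibility at level $\gamma$ is built into the definition, since $\mu_{t-}^P=\mu_t^P$ by Lemma \ref{lem:flowcontinuity}.

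Nonemptiness is the one genuinely geometric step. I would take $s_\gamma(t,\bar\alpha)=\operatorname{st}\bigl(\widehat\Q_\gamma(\bar\alpha,y(t))\bigr)$. By the Steiner-point Lipschitz bound, Assumption \ref{ass:feedback}(iii), and Lemma \ref{lem:mflow-lip},
\[
|s_\gamma(t,\bar\alpha)-s_\gamma(t',\bar\alpha')|\le\mathfrak s_{d_q}L_{\widehat\Q}\bigl(|\bar\alpha-\bar\alpha'|+C_y|t-t'|\bigr)\le L_*(|t-t'|+|\bar\alpha-\bar\alpha'|).
\]
Hence $s_\gamma\in\mathcal S_{\gamma,L}^P$ for $L\ge L_*$. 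The nesting in (b) is immediate from the nesting of the cuts. For (c), the inclusion $\subseteq$ follows from (b). For $\supseteq$, take $q$ in the intersection and choose, for $\iota_n\uparrow\gamma$, maps $s_n\in\mathcal S_{\iota_n,L}^P$ with $\mathcal E(s_n)=q$. Arzel\`a--Ascoli extracts a uniform limit $s$. Since $\operatorname{dist}(s_n(t,\bar\alpha),\Q_\gamma)\le C_{\widehat\Q}|\gamma-\iota_n|^\nu\to0$, closedness gives $s\in\mathcal S_{\gamma,L}^P$. Continuity of $\mathcal E$ then gives $\mathcal E(s)=q$. This uses Assumption \ref{ass:feedback}(ii)--(iii).

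The main obstacle is the coupling (d). Given $s$ for $(\xi,\alpha,\gamma)$, I must build an $L$-Lipschitz $s'$ with values in the other cuts, close to $s$ pointwise. For $d_q=1$ the cuts are intervals $[l(t,\bar\alpha),u(t,\bar\alpha)]$ and $[l',u']$, and I would set $s'=\proj_{[l',u']}(s)=\max(l',\min(u',s))$. Lattice operations preserve Lipschitz constants. The endpoints $l',u'$ are Lipschitz with constant $L_{\widehat\Q}\max(1,C_y)\le L_*\le L$, because the Hausdorff distance between intervals dominates the endpoint differences. So $s'\in\mathcal S_{\gamma',L}^P[\xi',\alpha']$. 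The projection moves $s$ by at most $d_H$ of the two cuts at the same $(t,\bar\alpha)$; this is the step that fails for $d_q\ge2$, where metric projection onto moving convex sets is not Lipschitz in the set.

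It remains to evaluate $q'_t=s'(t,\alpha'_t)$ against $q_t=s(t,\alpha_t)$. Write
\[
|q_t-q'_t|\le|s(t,\alpha_t)-s'(t,\alpha_t)|+|s'(t,\alpha_t)-s'(t,\alpha'_t)|.
\]
The second term is at most $L|\alpha_t-\alpha'_t|$. The first term is bounded by the Hausdorff estimate in (iii) with $\bar\alpha=\bar\alpha'=\alpha_t$, which gives $L_{\widehat\Q}|y(t)-y'(t)|+C_{\widehat\Q}|\gamma-\gamma'|^\nu$. Finally, $|m_j(\mu)-m_j(\mu')|\le\|D\psi_j\|_\infty W_2(\mu,\mu')$ via the coupling, so $|y(t)-y'(t)|\le B_\psi W_2(\mu_t^P,\mu_t'^P)$. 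This yields \eqref{eq:fb-coupling} for every $t$ on the full-measure set where both controls lie in $\mathbb D_{R_\alpha}$.
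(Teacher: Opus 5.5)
Your proposal is correct and follows essentially the paper's route. Both arguments use Arzel\`a--Ascoli on the Lipschitz feedback maps, transported by the $1$-Lipschitz evaluation map. Nonemptiness comes from the Steiner point, and the coupling for $d_q=1$ from the clipping $s'=\max(l',\min(u',s))$.

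The only differences are cosmetic. In (c) you pass to the limit using the H\"older bound in $\gamma$ from Assumption~\ref{ass:feedback}(iii) rather than the left-continuity (ii) directly; as the paper notes, (ii) follows from (iii). In (d) you split the triangle inequality at $s'(t,\alpha_t)$ instead of at $s(t,\alpha'_t)$.
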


\begin{proof}
Set $\Phi(s)=(s(t,\alpha_t))_t$. Since, $P$-a.s., $\alpha_t\in\mathbb D_{R_\alpha}$ for all $t\in[0,T]$,
\begin{equation}\label{eq:fb-Phi}
\|\Phi(s)-\Phi(s')\|_{\infty,P}\le\sup_{\mathsf G_{R_\alpha}}|s-s'| .
\end{equation}

(a) \emph{Step 1: nonemptiness.} By Assumption \ref{ass:feedback}(iii) and Lemma \ref{lem:mflow-lip}, the map $(t,\bar\alpha)\mapsto\Q_\gamma(\mu_t^P,\bar\alpha)=\widehat\Q_\gamma(\bar\alpha,y(t))$ satisfies $d_H\le L_{\widehat\Q}\max(1,C_y)(|t-t'|+|\bar\alpha-\bar\alpha'|)$. Hence $s_\gamma(t,\bar\alpha)=\operatorname{st}(\Q_\gamma(\mu_t^P,\bar\alpha))$ belongs to $\mathcal S_{\gamma,L_*}^P\subseteq\mathcal S_{\gamma,L}^P$.

\emph{Step 2: compactness.} The set $\mathcal S_{\gamma,L}^P\subset C(\mathsf G_{R_\alpha};\R^{d_q})$ is uniformly bounded, because its elements take values in the compact set $\Q$, and equi-Lipschitz; by the Arzelà--Ascoli theorem, it is relatively compact in the uniform norm. It is closed because a uniform limit of $L$-Lipschitz maps is $L$-Lipschitz and the sets $\Q_\gamma(\mu_t^P,\bar\alpha)$ are closed. Thus $\mathcal S_{\gamma,L}^P$ is compact, and so is its image under $\Phi$, which is continuous by \eqref{eq:fb-Phi}.

\emph{Step 3: regularity and admissibility.} Let $q=\Phi(s)$. Since $s$ is continuous and $\alpha$ is predictable, $q$ is predictable; since $\mu^P$ is continuous (Lemma \ref{lem:flowcontinuity}), $q_t\in\Q_\gamma(\mu_{t-}^P,\alpha_t)$. Moreover, $|q_t-q_u|\le L(|t-u|+|\alpha_t-\alpha_u|)$, so $q$ has absolutely continuous paths with $|\dot q_t|\le L(1+|\dot\alpha_t|)$ for almost every $t$, and $\mathbb E^P\!\left[\int_0^T|\dot q_t|^2\dd t\right]\le2L^2(T+C_{\dot\alpha})=M_L^2$.

(b) If $\gamma_1\le\gamma_2$, then $\Q_{\gamma_2}\subseteq\Q_{\gamma_1}$ by Assumption \ref{ass:fuzzycuts}, hence $\mathcal S_{\gamma_2,L}^P\subseteq\mathcal S_{\gamma_1,L}^P$.

(c) The inclusion $\subseteq$ follows from (b). Conversely, let $q\in\bigcap_{0<\iota<\gamma}\mathfrak A_{\iota,L}^{{\rm fb},P}$, let $\iota_n\uparrow\gamma$, and choose $s_n\in\mathcal S_{\iota_n,L}^P$ such that $q=\Phi(s_n)$ in $L^\infty(\dd t\otimes\dd P)$. All $s_n$ belong to the compact set $\mathcal S_{\iota_1,L}^P$, so a subsequence converges uniformly to an $L$-Lipschitz map $s$. For fixed $j$ and $n\ge j$, $s_n(t,\bar\alpha)\in\Q_{\iota_n}(\mu_t^P,\bar\alpha)\subseteq\Q_{\iota_j}(\mu_t^P,\bar\alpha)$, which is closed; hence $s(t,\bar\alpha)\in\bigcap_j\Q_{\iota_j}(\mu_t^P,\bar\alpha)=\Q_\gamma(\mu_t^P,\bar\alpha)$ by Assumption \ref{ass:feedback}(ii). Thus $s\in\mathcal S_{\gamma,L}^P$ and, by \eqref{eq:fb-Phi}, $q=\Phi(s)$.

(d) Let $d_q=1$ and $q=\Phi(s)$ with $s\in\mathcal S_{\gamma,L}^P[\xi,\alpha]$. Write $[l'(t,\bar\alpha),u'(t,\bar\alpha)]=\Q_{\gamma'}(\mu_t'^P,\bar\alpha)$. For intervals, $\max(|l-\tilde l|,|u-\tilde u|)=d_H([l,u],[\tilde l,\tilde u])$; hence, as in (a), $l'$ and $u'$ are $L_*$-Lipschitz. The map $s'=\min\{\max\{s,l'\},u'\}$, that is, the projection of $s(t,\bar\alpha)$ onto $\Q_{\gamma'}(\mu_t'^P,\bar\alpha)$, is $L$-Lipschitz and belongs to $\mathcal S_{\gamma',L}^P[\xi',\alpha']$. Since $s(t,\bar\alpha)\in\Q_\gamma(\mu_t^P,\bar\alpha)$,
\[
|s'(t,\bar\alpha)-s(t,\bar\alpha)|\le d_H\bigl(\Q_\gamma(\mu_t^P,\bar\alpha),\Q_{\gamma'}(\mu_t'^P,\bar\alpha)\bigr)\le L_{\widehat\Q}B_\psi W_2(\mu_t^P,\mu_t'^P)+C_{\widehat\Q}|\gamma-\gamma'|^\nu,
\]
where we used $|m_j(\mu)-m_j(\mu')|\le\|D\psi_j\|_\infty W_2(\mu,\mu')$. With $q'=(s'(t,\alpha'_t))_t$, the estimate $|q_t-q'_t|\le|s(t,\alpha_t)-s(t,\alpha'_t)|+|s(t,\alpha'_t)-s'(t,\alpha'_t)|$ gives \eqref{eq:fb-coupling}.
(e) The set of $L$-Lipschitz maps $\mathsf G_{R_\alpha}\to\Q$ is compact in the uniform norm by the Arzelà--Ascoli theorem, and so is its image under $\Phi$, by \eqref{eq:fb-Phi}; this image contains $\Phi(\mathcal S_{\gamma,L}^P)$ for every $\gamma$. Predictability, absolute continuity, and the energy bound are obtained as in (a).
\end{proof}

\begin{corollary}\label{cor:feedback}
Suppose \ref{ass:base}, \ref{ass:F}, \ref{ass:fuzzycuts}, \ref{ass:finitestat}, and \ref{ass:feedback}, and let $L\ge L_*$. Then, for every $P\in\PP$, the classes $\mathfrak A_{\gamma,L}^{{\rm fb},P}[\xi,\alpha]$, $(\xi,\alpha)\in\mathfrak D_P$, $\gamma\in(0,1]$, satisfy (S1)--(S3) and (S5) of Definition \ref{def:selsystem} with $M=M_L$. If $d_q=1$, they also satisfy (S4) with $L_Q=\max(L,L_{\widehat\Q}B_\psi,C_{\widehat\Q})$, and therefore form a selection system; under the remaining assumptions of each result, the conclusions of Sections \ref{sec:stability}--\ref{sec:yosida} apply to these selection classes. If $d_q\ge2$, under the analytic assumptions of $(H_{\rm quant})$ and without requiring (S4), the results in these sections that do not use (S4) remain applicable, namely Lemma \ref{lem:commonforward}, the first assertion of Corollary \ref{cor:lipschitzT}, Theorem \ref{thm:cutcompact}, Proposition \ref{prop:leftcuts}, Lemma \ref{lem:membership}, Proposition \ref{prop:fuzzy-standard}, and the results of Section \ref{sec:yosida} other than Corollary \ref{cor:Y-joint}.
\end{corollary}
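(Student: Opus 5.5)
The corollary is a bookkeeping statement. I would prove it by matching each item of Theorem~\ref{thm:feedback-class} to the corresponding requirement of Definition~\ref{def:selsystem}, with energy bound $M=M_L$.

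\textbf{(S1)--(S3) and (S5).} Fix $P\in\PP$ and $(\xi,\alpha)\in\mathfrak D_P$, and set $\mathfrak A_{\gamma,M_L}^P[\xi,\alpha]:=\mathfrak A_{\gamma,L}^{{\rm fb},P}[\xi,\alpha]$. Then:
\begin{enumerate}[label=(\roman*)]
\item (S1) is exactly Theorem~\ref{thm:feedback-class}(a). That item gives nonemptiness, compactness in $\|\cdot\|_{\infty,P}$, regularity and admissibility at level $\gamma$, and energy bound $M_L$ via $|\dot q_t|\le L(1+|\dot\alpha_t|)$ together with $\mathbb E^P[\int_0^T|\dot\alpha_t|^2\dd t]\le C_{\dot\alpha}$.
\item (S2) is item (b), and (S3) is item (c).
\item (S5) is item (e), taking $\overline{\mathfrak A}_{M_L}^P[\xi,\alpha]:=\overline{\mathfrak A}_L^{{\rm fb},P}[\xi,\alpha]$.
\end{enumerate}
I would also check that $M_L$ and $L$ depend only on $\mathfrak C_{\rm str}$, $R_\alpha$ and $L$, so that the energy bound is common to all $P$, all data and all $\gamma$, as Definition~\ref{def:selsystem} and Assumption~\ref{ass:finitestat} require. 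This holds because $L_*$ involves $C_y$ only through Lemma~\ref{lem:mflow-lip}, whose constant is uniform in $P$.

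\textbf{(S4) when $d_q=1$.} The bound \eqref{eq:fb-coupling} holds $P$-a.s. for every $t$, hence $\dd t\otimes\dd P$-a.e. Since the flows are continuous (Lemma~\ref{lem:flowcontinuity}), $W_2(\mu_t^P,\mu_t'^P)=W_2(\mu_{t-}^P,\mu_{t-}'^P)$. Each of the three coefficients in \eqref{eq:fb-coupling} is bounded by $L_Q=\max(L,L_{\widehat\Q}B_\psi,C_{\widehat\Q})$, which yields (S4). Hence the classes form a selection system, and the later results apply verbatim.

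\textbf{The case $d_q\ge2$.} This is the only step needing care: one has to verify which downstream results never invoke (S4). I would go through the listed results (Lemma~\ref{lem:commonforward}, the first part of Corollary~\ref{cor:lipschitzT}, Theorem~\ref{thm:cutcompact}, Proposition~\ref{prop:leftcuts}, Lemma~\ref{lem:membership}, Proposition~\ref{prop:fuzzy-standard}, and Section~\ref{sec:yosida} except Corollary~\ref{cor:Y-joint}) and confirm that their proofs use only (S1)--(S3), (S5) and the occupation estimates. The occupation estimates depend on $q$ only through predictability, absolute continuity and the energy bound (Remark~\ref{rem:occupation-scope}). The excluded results are exactly those comparing cuts at different data or levels, such as Proposition~\ref{prop:hausdorff-data} and Corollary~\ref{cor:Y-joint}, and these need the transport property (S4). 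This verification relies on the structure of proofs that appear later in the paper, so at this point I would record it as a forward reference rather than argue it.
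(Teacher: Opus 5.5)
Your proposal is correct and follows the paper's proof essentially verbatim: (S1)--(S3) and (S5) are items (a)--(c) and (e) of Theorem~\ref{thm:feedback-class}, (S4) for $d_q=1$ is item (d) together with $W_2(\mu_t,\mu'_t)=W_2(\mu_{t-},\mu'_{t-})$ from Lemma~\ref{lem:flowcontinuity}, and the $d_q\ge2$ clause rests on the observation that the listed proofs use only (S1)--(S3) and (S5). Your side remark that the excluded results are \emph{exactly} those comparing cuts is not needed, since the corollary only asserts that the listed results apply, and it is slightly loose: Theorem~\ref{thm:stability}, for example, is also omitted from the list, although it works with a given coupled pair of selections rather than with cuts.
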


\begin{proof}
(S1)--(S3) are Theorem \ref{thm:feedback-class}(a)--(c), and (S5) is item (e). For $d_q=1$, (S4) is item (d), since $W_2(\mu_t,\mu'_t)=W_2(\mu_{t-},\mu'_{t-})$ by Lemma \ref{lem:flowcontinuity}. The proofs of the cited results use only (S1)--(S3) and (S5).
\end{proof}

\subsection{Center--radius cuts in arbitrary dimension}\label{sec:fb-cr}

For $d_q\ge2$, the transport used in Theorem \ref{thm:feedback-class}(d) fails for general convex cuts: the projection $s'$ of $s(t,\bar\alpha)$ onto $\Q_{\gamma'}(\mu_t'^P,\bar\alpha)$ still satisfies $|s'-s|\le d_H$, but is generally not Lipschitz in $(t,\bar\alpha)$, since metric projection onto a moving convex set is only $\frac12$-Hölder with respect to the Hausdorff distance. The following structure provides a canonical transport in every dimension.

\begin{assumption}[Center--radius cuts]\label{ass:center-radius}
Assumption \ref{ass:feedback}(i) holds, and there exist a compact set $\mathcal K\subset\R^{d_q}$ star-shaped with respect to $0$, a decreasing function $\varrho:(0,1]\to[0,\overline\varrho ]$ with $|\varrho_\gamma-\varrho_{\gamma'}|\le C_\varrho|\gamma-\gamma'|^\nu$, and a map $\widehat c:\mathbb D\times\R^\ell\to\R^{d_q}$ satisfying $|\widehat c(\bar\alpha,y)-\widehat c(\bar\alpha',y')|\le L_{\widehat c}(|\bar\alpha-\bar\alpha'|+|y-y'|)$ such that, with the functions $\psi_j$ from Assumption \ref{ass:finitestat},
\[
\Q_\gamma(\mu,\bar\alpha)=\widehat c\bigl(\bar\alpha,m_1(\mu),\ldots,m_\ell(\mu)\bigr)+\varrho_\gamma \mathcal K\subseteq\Q .
\]
\end{assumption}

This covers fuzzy balls, with $\mathcal K$ the closed unit ball, and, for $d_q=1$, symmetric fuzzy numbers $[\widehat c-\varrho_\gamma,\widehat c+\varrho_\gamma]$ with $\mathcal K=[-1,1]$, including symmetric triangular and trapezoidal fuzzy numbers; the cuts need not be convex. Set $|\mathcal K|_\infty=\max_{x\in \mathcal K}|x|$, let $\mathfrak K_L$ be the set of $L$-Lipschitz maps $\varsigma:[0,T]\times\mathbb D_{R_\alpha}\to \mathcal K$, endowed with the uniform norm, and, with $y(t)=(m_1(\mu_t),\ldots,m_\ell(\mu_t))$, define
\begin{equation}\label{eq:cr-selection}
q_t^{\gamma,\varsigma}[\mu,\alpha]=\widehat c(\alpha_t,y(t))+\varrho_\gamma\,\varsigma(t,\alpha_t),\qquad \varsigma\in\mathfrak K_L .
\end{equation}

\begin{proposition}\label{prop:center-radius}
Suppose \ref{ass:base}, \ref{ass:F}, \ref{ass:finitestat}, and \ref{ass:center-radius}, and let $L>0$. Then Assumption \ref{ass:paramclass} is satisfied with $\mathfrak P=\mathfrak K_L$, the selections \eqref{eq:cr-selection},
\begin{gather*}
M^2=2(L_{\widehat c}+\overline\varrho L)^2C_{\dot\alpha}+2T(L_{\widehat c}C_y+\overline\varrho L)^2,
\qquad C_Q=C_\varrho|\mathcal K|_\infty,\\
L_Q=\max\bigl(L_{\widehat c}+\overline\varrho L,\ L_{\widehat c}B_\psi,\ C_\varrho|\mathcal K|_\infty\bigr),
\end{gather*}
where $C_y$ is the constant from Lemma \ref{lem:mflow-lip}. In particular, by Proposition \ref{prop:paramclass}, these classes form a selection system in every dimension $d_q$. Under the remaining assumptions of each result, the conclusions of Sections \ref{sec:stability}--\ref{sec:yosida} apply to these selection classes.
\end{proposition}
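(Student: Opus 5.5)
We verify items (i)--(vi) of Assumption~\ref{ass:paramclass} directly for the family \eqref{eq:cr-selection}, with $\mathfrak P=\mathfrak K_L$, and then invoke Proposition~\ref{prop:paramclass}. The parameter space is compact: by Arzelà--Ascoli, $\mathfrak K_L$ is relatively compact in $C(\mathsf G_{R_\alpha};\R^{d_q})$, since it is equi-Lipschitz with values in the bounded set $\mathcal K$. It is also closed, because $\mathcal K$ is closed and uniform limits of $L$-Lipschitz maps are $L$-Lipschitz.

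\emph{Items (i), (ii), (iv).} For (i), we have $\varsigma(t,\alpha_t)\in\mathcal K$ on the full-measure set where $|\alpha_t|\le R_\alpha$. Hence $q_t^{\gamma,\varsigma}\in\widehat c+\varrho_\gamma\mathcal K=\Q_\gamma(\mu_t,\alpha_t)$, and $\mu_t=\mu_{t-}$ by Lemma~\ref{lem:flowcontinuity}. On the null set we use the convention of Definition~\ref{def:feedback}. Predictability follows because $\alpha$ is predictable, $\varsigma$ and $\widehat c$ are continuous, and $y$ is deterministic and continuous. For (ii), we have $\|q^{\gamma,\varsigma}-q^{\gamma,\varsigma'}\|_{\infty,P}\le\overline\varrho\sup_{\mathsf G_{R_\alpha}}|\varsigma-\varsigma'|$. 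For (iv), $|q^{\gamma,\varsigma}_t-q^{\gamma',\varsigma}_t|=|\varrho_\gamma-\varrho_{\gamma'}||\varsigma(t,\alpha_t)|\le C_\varrho|\mathcal K|_\infty|\gamma-\gamma'|^\nu$, which gives $C_Q$.

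\emph{Items (iii), (v), (vi).} For (iii), Lemma~\ref{lem:mflow-lip} gives that $y$ is $C_y$-Lipschitz. Then
\[
|q_t-q_u|\le L_{\widehat c}(|\alpha_t-\alpha_u|+C_y|t-u|)+\overline\varrho L(|t-u|+|\alpha_t-\alpha_u|),
\]
so $q$ has absolutely continuous paths and
\[
|\dot q_t|\le(L_{\widehat c}+\overline\varrho L)|\dot\alpha_t|+(L_{\widehat c}C_y+\overline\varrho L).
\]
Squaring with $(a+b)^2\le2a^2+2b^2$ and integrating with $C_{\dot\alpha}$ gives the stated $M^2$.

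Item (v), nesting, is where star-shapedness enters, and it is the step needing care. For $\gamma_1\le\gamma_2$ we have $\varrho_{\gamma_2}\le\varrho_{\gamma_1}$. If $\varrho_{\gamma_1}>0$, set
\[
\varsigma'=(\varrho_{\gamma_2}/\varrho_{\gamma_1})\varsigma.
\]
This map is $L$-Lipschitz, since the ratio lies in $[0,1]$. It takes values in $\mathcal K$ because $\mathcal K$ is star-shaped with respect to $0$. Moreover $\varrho_{\gamma_1}\varsigma'=\varrho_{\gamma_2}\varsigma$ identically. If $\varrho_{\gamma_1}=0$, then also $\varrho_{\gamma_2}=0$, and $\varsigma'=\varsigma$ works.

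For (vi), use the same $\varsigma$ and bound $|\widehat c(\alpha_t,y(t))-\widehat c(\alpha'_t,y'(t))|\le L_{\widehat c}(|\alpha_t-\alpha'_t|+B_\psi W_2(\mu_t,\mu'_t))$, using $|m_j(\mu)-m_j(\mu')|\le\|D\psi_j\|_\infty W_2(\mu,\mu')$. Also $|\varrho_\gamma\varsigma(t,\alpha_t)-\varrho_{\gamma}\varsigma(t,\alpha'_t)|\le\overline\varrho L|\alpha_t-\alpha'_t|$ and $|\varrho_\gamma-\varrho_{\gamma'}||\varsigma|\le C_\varrho|\mathcal K|_\infty|\gamma-\gamma'|^\nu$. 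Summing these three bounds yields \eqref{eq:param-cross-coupling} with the stated $L_Q$, after replacing $\mu_t$ by $\mu_{t-}$ via Lemma~\ref{lem:flowcontinuity}.

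Proposition~\ref{prop:paramclass} then gives (S1)--(S5). The only genuinely nontrivial point is (v); the rest are direct Lipschitz bookkeeping.
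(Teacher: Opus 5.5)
Your proposal is correct and follows the paper's proof essentially step by step. Both use Arzelà--Ascoli for the compactness of $\mathfrak K_L$, then verify items (i)--(vi) directly with the same constants, and obtain the nesting property (v) from the rescaling $\varsigma'=(\varrho_{\gamma_2}/\varrho_{\gamma_1})\varsigma$, which stays in $\mathcal K$ by star-shapedness. Your treatment of the null set where $\alpha_t\notin\mathbb D_{R_\alpha}$ is a harmless extra detail that the paper leaves implicit.
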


\begin{proof}
Fix $P\in\PP$ and $(\xi,\alpha)\in\mathfrak D_P$; all constants below are independent of this choice.
\emph{Compactness of $\mathfrak K_L$.} Its elements take values in the compact set $\mathcal K$ and are equi-Lipschitz on the compact set $\mathsf G_{R_\alpha}$; by the Arzelà--Ascoli theorem and closedness of $\mathcal K$, $\mathfrak K_L$ is compact.

(i) The process $q^{\gamma,\varsigma}$ is continuous and adapted, hence predictable, and $q_t^{\gamma,\varsigma}\in\Q_\gamma(\mu_t,\alpha_t)=\Q_\gamma(\mu_{t-},\alpha_t)$ by Lemma \ref{lem:flowcontinuity}.

(ii) $\|q^{\gamma,\varsigma}-q^{\gamma,\varsigma'}\|_{\infty,P}\le\overline\varrho \sup_{\mathsf G_{R_\alpha}}|\varsigma-\varsigma'|$.

(iii) By Lemma \ref{lem:mflow-lip}, $y$ is $C_y$-Lipschitz, and $\alpha$ is absolutely continuous. Hence $q^{\gamma,\varsigma}$ is absolutely continuous and, for almost every $t$, $|\dot q_t^{\gamma,\varsigma}|\le L_{\widehat c}(|\dot\alpha_t|+C_y)+\overline\varrho L(1+|\dot\alpha_t|)$. Squaring and integrating gives the bound $M^2$.

(iv) $\|q^{\gamma,\varsigma}-q^{\gamma',\varsigma}\|_{\infty,P}\le|\varrho_\gamma-\varrho_{\gamma'}|\,|\mathcal K|_\infty\le C_\varrho|\mathcal K|_\infty|\gamma-\gamma'|^\nu$.

(v) Let $\gamma_1\le\gamma_2$, so that $\varrho_{\gamma_2}\le \varrho_{\gamma_1}$. If $\varrho_{\gamma_1}=0$, then $q^{\gamma_2,\varsigma}=q^{\gamma_1,\varsigma}$. Otherwise, $\varsigma'=(\varrho_{\gamma_2}/\varrho_{\gamma_1})\varsigma$ takes values in $\mathcal K$, because $\mathcal K$ is star-shaped with respect to $0$, is $L$-Lipschitz, and satisfies $q^{\gamma_2,\varsigma}=q^{\gamma_1,\varsigma'}$.

(vi) For two pairs $(\mu,\alpha)$, $(\mu',\alpha')$ and the same $\varsigma$, using $|m_j(\mu_t)-m_j(\mu'_t)|\le\|D\psi_j\|_\infty W_2(\mu_t,\mu'_t)$,
\[
|q_t^{\gamma,\varsigma}[\mu,\alpha]-q_t^{\gamma',\varsigma}[\mu',\alpha']|\le L_{\widehat c}\bigl(|\alpha_t-\alpha'_t|+B_\psi W_2(\mu_t,\mu'_t)\bigr)+C_\varrho|\mathcal K|_\infty|\gamma-\gamma'|^\nu+\overline\varrho L|\alpha_t-\alpha'_t|,
\]
which is \eqref{eq:param-cross-coupling} with the stated constant $L_Q$, since $\mu_t=\mu_{t-}$ and $\mu'_t=\mu'_{t-}$.
\end{proof}

\begin{remark}
The selections \eqref{eq:example-selection} in Section \ref{sec:example} are center--radius selections, with $\mathcal K=[-1,1]$, $\widehat c(\bar\alpha,y)=\mathfrak q_{\rm c}+\varepsilon_\mu\tanh(y)+\varepsilon_\alpha\tanh(\bar\alpha)$, $\varrho_\gamma=\varepsilon_Q(1-\gamma)$, and $\varsigma(t,\bar\alpha)=\vartheta_t$, for bounded controls such as the concrete control given there. They are also feedback selections in the sense of Definition \ref{def:feedback} for sufficiently large $L$.
\end{remark}
\section{Stability with respect to the selection, the level, and the data}\label{sec:stability}

We compare two selected systems driven by the same pair $(B^P,\widetilde N^P)$. A geometric branch-switching estimate and the occupation bounds from Section \ref{sec:occupation-stabilite} control the difference between their sources; Lemma \ref{lem:energy} then transfers that control to the backward variables.

\subsection{Quantitative stability and coupling of the data}
The argument first controls the forward perturbation, then the source difference created by branch changes, and finally the BSDE.

Under the same $P$ and on the same stochastic basis, consider two data sets
\[
(\xi,\alpha,\gamma)
\qquad\text{and}\qquad
(\xi',\alpha',\gamma').
\]
The two forward equations are driven by the same $(B^P,\widetilde N^P)$. We directly consider two regular admissible selections
\[
q_t\in\Q_\gamma(\mu_{t-},\alpha_t),
\qquad
q'_t\in\Q_{\gamma'}(\mu'_{t-},\alpha'_t),
\qquad \dd t\otimes\dd P\text{-a.e.},
\]
and assume that they are coupled as in (S4):
\begin{equation}\label{eq:metriccoupling}
|q_t-q'_t|
\le
L_Q\Bigl(
|\gamma-\gamma'|^\nu
+W_2(\mu_{t-},\mu'_{t-})
+|\alpha_t-\alpha'_t|
\Bigr),
\qquad \dd t\otimes\dd P\text{-a.e.}
\end{equation}

Set
\begin{equation}\label{eq:R-stab}
\mathfrak R
=
\mathbb E^P\!\left[|\xi-\xi'|^2\right]
+
\mathbb E^P\!\left[\int_0^T|\alpha_t-\alpha'_t|^2\dd t\right]
+
|\gamma-\gamma'|^{2\nu}.
\end{equation}

\subsection{Stability of the forward state}

\begin{proposition}\label{prop:forward-stability}
Fix $P\in\PP$ and suppose that Assumptions \ref{ass:base} and \ref{ass:F} hold. Let $\xi,\xi'\in L^2(\Omega,\mathcal F_0,P;\R^d)$ and $\alpha,\alpha'\in\mathcal A_2$, and let $X,X'\in\cS_P^2$ be the corresponding solutions of \eqref{eq:forward}, driven by the same $(B^P,\widetilde N^P)$, with law flows $\mu_t=\Law^P(X_t)$ and $\mu'_t=\Law^P(X'_t)$. Then there exists $C$, depending only on $T$, $\overline a$, and $L_F$, such that
\begin{align}
&\mathbb E^P\!\left[\sup_{t\le T}|X_t-X'_t|^2\right]
+
\sup_{t\le T}W_2^2(\mu_t,\mu'_t)\notag\\
&\qquad\le
C\left(
\mathbb E^P\!\left[|\xi-\xi'|^2\right]
+
\mathbb E^P\!\left[\int_0^T|\alpha_t-\alpha'_t|^2\dd t\right]
\right).
\label{eq:forwardstab}
\end{align}
\end{proposition}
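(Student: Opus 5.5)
We run the same synchronous-coupling and Gronwall argument as in Steps 1--2 of the proof of Theorem~\ref{thm:forward}. The two solutions now differ in their initial data and controls, and they are driven by the same $(B^P,\widetilde N^P)$. Set $\delta X=X-X'$ and
\[
\phi(t)=\mathbb E^P\!\left[\sup_{r\le t}|\delta X_r|^2\right],
\]
which is finite by \eqref{eq:forward-apriori}. Subtracting the two versions of \eqref{eq:forward} writes $\delta X_t$ as $\xi-\xi'$ plus three integrals of coefficient differences: a drift integral, a $\dd B^P$ integral and a $\widetilde N^P$ integral. The coefficients are evaluated at $(s,X_{s-},\mu_{s-},\alpha_s)$ and $(s,X'_{s-},\mu'_{s-},\alpha'_s)$.

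First we bound each term. Use $(x_1+\dots+x_4)^2\le4\sum x_i^2$ and Cauchy--Schwarz for the drift. For the two stochastic integrals, use Doob's inequality with the isometries; the Brownian part is measured in $|\cdot|_{a^P_s}$ and the jump part in $\|\cdot\|_\Pi$, so the constants do not depend on $P$. The Lipschitz bound of Assumption~\ref{ass:F} then gives
\[
\phi(t)\le4\,\mathbb E^P[|\xi-\xi'|^2]+4\max(T,4)L_F^2\int_0^t\Bigl(\mathbb E^P[|\delta X_{s-}|^2]+W_2^2(\mu_{s-},\mu'_{s-})+\mathbb E^P[|\alpha_s-\alpha'_s|^2]\Bigr)\dd s .
\]

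Next we close the estimate. The natural coupling gives $W_2^2(\mu_{s-},\mu'_{s-})\le\mathbb E^P[|X_{s-}-X'_{s-}|^2]$; this holds because $\mu_{s-}=\Law^P(X_{s-})$ by Lemma~\ref{lem:flowcontinuity}. Together with $\mathbb E^P[|\delta X_{s-}|^2]\le\phi(s)$, this yields
\[
\phi(t)\le C\Bigl(\mathbb E^P[|\xi-\xi'|^2]+\mathbb E^P\!\left[\int_0^T|\alpha_s-\alpha'_s|^2\dd s\right]\Bigr)+C\int_0^t\phi(s)\dd s .
\]
Gronwall's lemma then bounds $\phi(T)$ by the right-hand side of \eqref{eq:forwardstab}, with $C$ depending only on $T$, $\overline a$ and $L_F$. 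For the second term on the left, use $W_2^2(\mu_t,\mu'_t)\le\mathbb E^P[|\delta X_t|^2]\le\phi(T)$ for every $t$ and take the supremum over $t$.

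The only step needing care is the justification of the Doob/isometry bounds. These require the difference integrands to be square integrable, which follows from the linear-growth bound in the proof of Theorem~\ref{thm:forward} together with \eqref{eq:forward-apriori} for both solutions. The Brownian constant is uniform in $P$ because Assumption~\ref{ass:F} is stated in the $a^P$-norm, via Remark~\ref{rem:random-norms}. No localization is needed, since $\phi$ is finite a priori.
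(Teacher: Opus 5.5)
Your proposal is correct and follows essentially the same route as the paper's proof. Both subtract the two equations, bound the drift by Cauchy--Schwarz and the two stochastic integrals by Doob/BDG with the isometries, apply the Lipschitz bound of Assumption~\ref{ass:F} and the natural coupling $W_2^2(\mu_{s-},\mu'_{s-})\le\mathbb E^P[|X_{s-}-X'_{s-}|^2]$, close with Gronwall, and read off the $\sup_t W_2^2$ term from the same coupling. Your appeal to Lemma~\ref{lem:flowcontinuity} to identify $\mu_{s-}$ with $\Law^P(X_{s-})$ makes explicit a step the paper leaves implicit.
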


\begin{proof}
Subtracting the two forward equations gives, $P$-a.s., for every $t\in[0,T]$,
\begin{align*}
X_t-X'_t
={}&\xi-\xi'\\
&+\int_0^t\Bigl[b(s,X_{s-},\mu_{s-},\alpha_s)-b(s,X'_{s-},\mu'_{s-},\alpha'_s)\Bigr]\dd s\\
&+\int_0^t\Bigl[\sigma(s,X_{s-},\mu_{s-},\alpha_s)-\sigma(s,X'_{s-},\mu'_{s-},\alpha'_s)\Bigr]\dd B_s^P\\
&+\int_0^t\int_E\Bigl[\beta(s,X_{s-},\mu_{s-},\alpha_s,e)-\beta(s,X'_{s-},\mu'_{s-},\alpha'_s,e)\Bigr]\widetilde N^P(\dd s,\dd e).
\end{align*}
For $t\in[0,T]$, the inequality $(x_1+x_2+x_3+x_4)^2\le4(x_1^2+x_2^2+x_3^2+x_4^2)$, Cauchy--Schwarz, and BDG yield
\begin{align*}
&\mathbb E^P\!\left[\sup_{r\le t}|X_r-X'_r|^2\right]\\
&\le4\mathbb E^P\!\left[|\xi-\xi'|^2\right]\\
&\quad+4t\mathbb E^P\!\left[\int_0^t|b(s,X_{s-},\mu_{s-},\alpha_s)-b(s,X'_{s-},\mu'_{s-},\alpha'_s)|^2\dd s\right]\\
&\quad+4C_{\rm BDG}\mathbb E^P\!\left[\int_0^t|\sigma(s,X_{s-},\mu_{s-},\alpha_s)-\sigma(s,X'_{s-},\mu'_{s-},\alpha'_s)|_{a_s^P}^2\dd s\right]\\
&\quad+4C_{\rm BDG}\mathbb E^P\!\left[\int_0^t\int_E|\beta(s,X_{s-},\mu_{s-},\alpha_s,e)-\beta(s,X'_{s-},\mu'_{s-},\alpha'_s,e)|^2\Pi(\dd e)\dd s\right].
\end{align*}
By Assumption \ref{ass:F},
\begin{align*}
&|b(s,X_{s-},\mu_{s-},\alpha_s)-b(s,X'_{s-},\mu'_{s-},\alpha'_s)|^2\\
&\quad+|\sigma(s,X_{s-},\mu_{s-},\alpha_s)-\sigma(s,X'_{s-},\mu'_{s-},\alpha'_s)|_{a_s^P}^2\\
&\quad+\int_E|\beta(s,X_{s-},\mu_{s-},\alpha_s,e)-\beta(s,X'_{s-},\mu'_{s-},\alpha'_s,e)|^2\Pi(\dd e)\\
&\le L_F^2\Bigl(|X_{s-}-X'_{s-}|^2+W_2^2(\mu_{s-},\mu'_{s-})+|\alpha_s-\alpha'_s|^2\Bigr).
\end{align*}
Since $(X_s,X'_s)$ is a coupling of $(\mu_s,\mu'_s)$,
\[
W_2^2(\mu_s,\mu'_s)\le \mathbb E^P\!\left[|X_s-X'_s|^2\right]\le \mathbb E^P\!\left[\sup_{u\le s}|X_u-X'_u|^2\right].
\]
Hence
\[
\mathbb E^P\!\left[\sup_{r\le t}|X_r-X'_r|^2\right]
\le4\mathbb E^P\!\left[|\xi-\xi'|^2\right]+C \mathbb E^P\!\left[\int_0^t|\alpha_s-\alpha'_s|^2\dd s\right]
+C\int_0^t\mathbb E^P\!\left[\sup_{u\le s}|X_u-X'_u|^2\right]\dd s.
\]
Gronwall's lemma gives
\[
\mathbb E^P\!\left[\sup_{t\le T}|X_t-X'_t|^2\right]
\le C\left(\mathbb E^P\!\left[|\xi-\xi'|^2\right]+\mathbb E^P\!\left[\int_0^T|\alpha_s-\alpha'_s|^2\dd s\right]\right),
\]
and
\[
\sup_{t\le T}W_2^2(\mu_t,\mu'_t)
\le \mathbb E^P\!\left[\sup_{t\le T}|X_t-X'_t|^2\right].
\]
\end{proof}

\subsection{Geometric branch-switching lemma}

Under the quantitative assumptions, the value of the minimal selection at the thresholds does not affect the source: by the last assertion of Theorem \ref{thm:occupation}, two choices on $\{\rho^{q_t}=0\}$ yield source densities that coincide $\dd t\otimes\dd P$-a.e., and hence the same $K$ up to indistinguishability.

\begin{lemma}\label{lem:branch}
Suppose that Assumptions \ref{ass:operator} and \ref{ass:finite-branches} hold. For $(x,q,\mu,\alpha)$ and $(x',q',\mu',\alpha')$ in $\R^d\times\Q\times\Ptwo(\R^d)\times\mathbb D$, set $D=|x-x'|+|q-q'|+W_2(\mu,\mu')+|\alpha-\alpha'|$. Then there exist constants $C_1,C_{\rm geo}>0$, independent of these points, such that, for every $r>0$,
\begin{equation}\label{eq:branch-comparison}
|(A^q)^0(x,\mu,\alpha)-(A^{q'})^0(x',\mu',\alpha')|
\le C_1D+2\sqrt d\,\overline v\bigl(\1_{\{\rho^q(x,\mu,\alpha)\le r\}}+\1_{\{D\ge r/C_{\rm geo}\}}\bigr).
\end{equation}
\end{lemma}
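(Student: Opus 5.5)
The plan is to exploit the separable structure of Assumption~\ref{ass:finite-branches} and write the minimal-norm selection coordinatewise. Set $z=x+h^q(\mu,\alpha)$, $c=c^q(\mu,\alpha)$ and, analogously, $z'=x'+h^{q'}(\mu',\alpha')$, $c'=c^{q'}(\mu',\alpha')$. Since $\phi^q$ is a quadratic term plus a sum of one-dimensional convex functions, $\partial\phi^q(z)=\alpha_Az+\prod_k\partial\mathfrak g_k^q(z_k)$. Hence $A^q(x,\mu,\alpha)$ is a product of intervals, and the minimal-norm element is obtained by minimizing each coordinate separately, as in the ALA computation of Section~\ref{sec:bienpose}. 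This gives the representation
\[
(A^q)^0_k(x,\mu,\alpha)=\alpha_Az_k+c_k+w_k,\qquad w_k\in\partial\mathfrak g_k^q(z_k),\quad |w_k|\le\overline v .
\]
Here $w_k=v_{k,j}^q$ when $z_k\in I_{k,j}$, and $w_k$ is the point of $[v_{k,j-1}^q,v_{k,j}^q]$ closest to $-(\alpha_Az_k+c_k)$ when $z_k=\theta_{k,j}$. The same representation holds for the primed point, with $w'_k$.

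Next, I will record the Lipschitz bounds from Assumption~\ref{ass:operator}. They give $|z-z'|\le|x-x'|+L_h(|q-q'|+W_2(\mu,\mu')+|\alpha-\alpha'|)\le C_{\rm geo}D$ with $C_{\rm geo}=1+L_h$, and $|c-c'|\le L_cD$. Subtracting the two representations then yields, in every case,
\[
|(A^q)^0(x,\mu,\alpha)-(A^{q'})^0(x',\mu',\alpha')|\le\alpha_A|z-z'|+|c-c'|+|w-w'|\le(\alpha_AC_{\rm geo}+L_c)D+|w-w'| .
\]
Only $|w-w'|$ remains to be controlled. I will split into two cases.

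\emph{Bad case:} $\rho^q(x,\mu,\alpha)\le r$ or $D\ge r/C_{\rm geo}$. Then at least one indicator in \eqref{eq:branch-comparison} equals $1$, and the crude bound $|w-w'|\le|w|+|w'|\le2\sqrt d\,\overline v$ suffices.

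\emph{Good case:} $\rho^q(x,\mu,\alpha)>r$ and $D<r/C_{\rm geo}$. For every $k$, the coordinate $z_k$ lies at distance greater than $r$ from every threshold in $\Theta_k$. Moreover, $|z_k-z'_k|\le C_{\rm geo}D<r$. The segment $[z_k,z'_k]$ therefore contains no threshold. Hence $z'_k$ lies in the same open component $I_{k,j}$ as $z_k$, and this component is the same for $q$ and $q'$ because $\Theta_k$ does not depend on $q$. Both subdifferentials are singletons on the same branch, so $w_k=v_{k,j}^q$ and $w'_k=v_{k,j}^{q'}$. Consequently $|w-w'|\le\sqrt d\,L_v|q-q'|\le\sqrt d\,L_vD$.

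Combining the two cases gives \eqref{eq:branch-comparison} with $C_1=\alpha_AC_{\rm geo}+L_c+\sqrt d\,L_v$. The only delicate point is the first step: justifying that the minimal-norm selection of the translated operator $\alpha_Az+c+\prod_k\partial\mathfrak g_k^q(z_k)$ decomposes coordinatewise and that its nonsmooth part stays bounded by $\overline v$. This holds because the Euclidean norm squared is a sum over coordinates and the constraint set is a product of intervals, exactly as in the ALA display of Section~\ref{sec:bienpose}. Everything else is a direct threshold-crossing argument.
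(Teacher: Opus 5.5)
Your proposal is correct and follows the paper's proof essentially line by line. It uses the same splitting of the difference into $\alpha_A(z-z')+(c-c')+(w-w')$, the same good/bad event split, and the same constant $C_1=\alpha_AC_{\rm geo}+L_c+\sqrt d\,L_v$; your $C_{\rm geo}=1+L_h$ is slightly larger than the paper's $\max(1,L_h)$, which is harmless. The step you flag as delicate is not actually needed: the argument uses only the membership $(A^q)^0(x,\mu,\alpha)\in\alpha_Az+c+\prod_k\partial\mathfrak g_k^q(z_k)$, not which element of the interval the minimal norm selects.
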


\begin{proof}
Set $z=x+h^q(\mu,\alpha)$ and $z'=x'+h^{q'}(\mu',\alpha')$; then $|z-z'|\le C_{\rm geo}D$ with $C_{\rm geo}=\max(1,L_h)$. Write
\[
(A^q)^0(x,\mu,\alpha)-(A^{q'})^0(x',\mu',\alpha')=\alpha_A(z-z')+\bigl(c^q(\mu,\alpha)-c^{q'}(\mu',\alpha')\bigr)+(v^0-v'^0),
\]
where $v^0\in\prod_k\partial \mathfrak g_k^q(z_k)$ and $v'^0\in\prod_k\partial \mathfrak g_k^{q'}(z'_k)$ are the slope parts of the minimal selections. The first two terms are bounded by $(\alpha_AC_{\rm geo}+L_c)D$. On the event $\{\rho^q>r\}\cap\{D<r/C_{\rm geo}\}$, each $z_k$ is at distance strictly greater than $r$ from $\Theta_k$ and $|z_k-z'_k|<r$: the segment $[z_k,z'_k]$ crosses no threshold, and $z_k,z'_k$ belong to the same interval $I_{k,j}$. Hence $v^0_k=v^q_{k,j}$ and $v'^0_k=v^{q'}_{k,j}$, so $|v^0-v'^0|\le\sqrt d\,L_v|q-q'|\le\sqrt d\,L_vD$. On the complement, which is contained in $\{\rho^q\le r\}\cup\{D\ge r/C_{\rm geo}\}$, we use $|v^0-v'^0|\le2\sqrt d\,\overline v$. This proves the claim with $C_1=\alpha_AC_{\rm geo}+L_c+\sqrt d\,L_v$.
\end{proof}

\subsection{Estimate for the difference of the sources}

Set
\[
\chi_t=(A^{q_t})^0(X_{t-},\mu_{t-},\alpha_t),
\qquad
\chi'_t=(A^{q'_t})^0(X'_{t-},\mu'_{t-},\alpha'_t).
\]

\begin{corollary}\label{cor:K-stability}
Fix $P\in\PP$ and suppose that $(H_{\rm quant})$ holds. Let $(\xi,\alpha),(\xi',\alpha')\in\mathfrak D_P$, $\gamma,\gamma'\in(0,1]$, and let $q\in\mathfrak A_{\gamma,M}^P[\xi,\alpha]$ and $q'\in\mathfrak A_{\gamma',M}^P[\xi',\alpha']$ satisfy \eqref{eq:metriccoupling}, with sources $K=K^{P,q}$ and $K'=K^{P,q'}$ constructed with the same $(B^P,\widetilde N^P)$. Let $\mathfrak R$ be defined by \eqref{eq:R-stab}. If $0<\mathfrak R\le 1$, then
\begin{equation}\label{eq:K-stability}
\mathbb E^P\!\left[\sup_{t\le T}|K_t-K'_t|^2\right]\le C\mathfrak R^{1/2},
\end{equation}
where $C$ depends only on $M$ and the structural constants.
\end{corollary}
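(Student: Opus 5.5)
We would bound $\sup_{t\le T}|K_t-K'_t|\le\int_0^T|\chi_t-\chi'_t|\dd t$ and apply Lemma~\ref{lem:branch} pointwise with $x=X_{t-}$, $x'=X'_{t-}$, $(q,\mu,\alpha)=(q_t,\mu_{t-},\alpha_t)$ and their primed versions. Writing $D_t=|X_{t-}-X'_{t-}|+|q_t-q'_t|+W_2(\mu_{t-},\mu'_{t-})+|\alpha_t-\alpha'_t|$, this gives, for any $r\in(0,1]$,
\[
\int_0^T|\chi_t-\chi'_t|\dd t\le C_1\int_0^TD_t\dd t+2\sqrt d\,\overline v\Bigl(\mathsf O_r^{P,q}+\int_0^T\1_{\{D_t\ge r/C_{\rm geo}\}}\dd t\Bigr).
\]
We then square and take expectations, treating the three terms separately.

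\textbf{Step 1: the Lipschitz term.} By Cauchy--Schwarz in time, $\mathbb E^P[(\int_0^TD_t\dd t)^2]\le T\,\mathbb E^P[\int_0^TD_t^2\dd t]$. Using the coupling \eqref{eq:metriccoupling} and Proposition~\ref{prop:forward-stability}, this is at most $C\mathfrak R$. Since $\mathfrak R\le1$, it is also at most $C\mathfrak R^{1/2}$.

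\textbf{Step 2: the occupation term.} The second moment bound \eqref{eq:occupation-second} of Theorem~\ref{thm:occupation} gives $\mathbb E^P[(\mathsf O_r^{P,q})^2]\le C_{\rm occ}^{(2)}r^2$. This is uniform in the selection, because $q$ belongs to the selection system.

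\textbf{Step 3: the large-deviation term.} Again by Cauchy--Schwarz, $(\int_0^T\1_{\{D_t\ge r/C_{\rm geo}\}}\dd t)^2\le T\int_0^T\1_{\{D_t\ge r/C_{\rm geo}\}}\dd t$. By Chebyshev, the expectation is at most $T C_{\rm geo}^2r^{-2}\,\mathbb E^P[\int_0^TD_t^2\dd t]\le C\mathfrak R r^{-2}$.

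\textbf{Step 4: optimize $r$.} Combining the three steps,
\[
\mathbb E^P\Bigl[\sup_{t\le T}|K_t-K'_t|^2\Bigr]\le C\bigl(\mathfrak R+r^2+\mathfrak R r^{-2}\bigr).
\]
The choice $r=\mathfrak R^{1/4}\in(0,1]$ makes both $r^2$ and $\mathfrak R r^{-2}$ equal to $\mathfrak R^{1/2}$, which gives \eqref{eq:K-stability}.

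The main obstacle is Step~3. The bound there comes only from Chebyshev on the $L^2$ size of $D$. It is the balance between this term and the second occupation moment that fixes the exponent $1/2$.

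The constants must be uniform in $P$, $\gamma$, $q$ and the data. This holds because Theorem~\ref{thm:occupation} and Proposition~\ref{prop:forward-stability} are uniform in these parameters.

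Two technical points need checking. First, the value of the minimal selection on the interface $\{\rho^{q_t}=0\}$ is irrelevant, since this set has zero occupation. Second, $W_2(\mu_{t-},\mu'_{t-})$ is controlled by $\sup_tW_2(\mu_t,\mu'_t)$, using Lemma~\ref{lem:flowcontinuity}.
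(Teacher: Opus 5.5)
Your proposal is correct and follows the paper's proof essentially step for step. It uses the same pointwise application of Lemma~\ref{lem:branch}, the bound $\mathbb E^P[\int_0^T D_t^2\dd t]\le C\mathfrak R$ from \eqref{eq:metriccoupling} and Proposition~\ref{prop:forward-stability}, the second occupation moment \eqref{eq:occupation-second} for the layer term, the Chebyshev-type bound $\1_{\{D_t\ge r/C_{\rm geo}\}}\le C_{\rm geo}^2r^{-2}D_t^2$ for the crossing term, and the choice $r=\mathfrak R^{1/4}$.
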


\begin{proof}
\emph{Step 1: energy of the distance $D$.} Set
\[
D_t=|X_{t-}-X'_{t-}|+|q_t-q'_t|+W_2(\mu_{t-},\mu'_{t-})+|\alpha_t-\alpha'_t|.
\]
By \eqref{eq:metriccoupling},
\begin{align*}
D_t^2
\le C\Bigl(&|X_{t-}-X'_{t-}|^2+W_2^2(\mu_{t-},\mu'_{t-})+|\alpha_t-\alpha'_t|^2+|\gamma-\gamma'|^{2\nu}\Bigr).
\end{align*}
Integrating and using \eqref{eq:forwardstab},
\begin{equation}\label{eq:D-energy}
\mathbb E^P\!\left[\int_0^T D_t^2\dd t\right]\le C\mathfrak R.
\end{equation}

\emph{Step 2: pointwise comparison of the densities.} For $r\in(0,1]$, \eqref{eq:branch-comparison} gives $|\chi_t-\chi'_t|\le C_1D_t+C_2I_t^r+C_2\widetilde I_t^r,$
where
\[
I_t^r=\1_{\{\rho^{q_t}(X_{t-},\mu_{t-},\alpha_t)\le r\}},
\qquad
\widetilde I_t^r=\1_{\{D_t\ge r/C_{\rm geo}\}},
\qquad C_2=2\sqrt d\,\overline v.
\]
Since
\[
K_t-K'_t=\int_0^t(\chi_s-\chi'_s)\dd s,
\]
we have
\begin{align*}
\sup_{t\le T}|K_t-K'_t|^2
&\le3C_1^2\left(\int_0^TD_t\dd t\right)^2
+3C_2^2\left(\int_0^TI_t^r\dd t\right)^2
+3C_2^2\left(\int_0^T\widetilde I_t^r\dd t\right)^2.
\end{align*}

\emph{Step 3: the three contributions.} The first term satisfies
\[
\mathbb E^P\!\left[\left(\int_0^TD_t\dd t\right)^2\right]
\le T \mathbb E^P\!\left[\int_0^TD_t^2\dd t\right]
\le CT\mathfrak R.
\]
Since $q\in\mathfrak A_{\gamma,M}^P$ is a regular selection with energy bound $M$ (Definition \ref{def:regular}) and $r\le1=r_0$, the second term satisfies, by \eqref{eq:occupation-second},
\[
\mathbb E^P\!\left[\left(\int_0^TI_t^r\dd t\right)^2\right]\le C_{\rm occ}^{(2)}r^2.
\]
For the third term,
\[
\left(\int_0^T\widetilde I_t^r\dd t\right)^2\le T\int_0^T\widetilde I_t^r\dd t,
\]
and
\[
\widetilde I_t^r\le \frac{C_{\rm geo}^2}{r^2}D_t^2.
\]
Therefore
\[
\mathbb E^P\!\left[\left(\int_0^T\widetilde I_t^r\dd t\right)^2\right]
\le \frac{TC_{\rm geo}^2}{r^2}\mathbb E^P\!\left[\int_0^TD_t^2\dd t\right]
\le C\mathfrak R r^{-2}.
\]
Consequently,
\[
\mathbb E^P\!\left[\sup_{t\le T}|K_t-K'_t|^2\right]
\le C\left(\mathfrak R+r^2+\mathfrak R r^{-2}\right).
\]

\emph{Step 4: choice of the layer width.} For $0<\mathfrak R\le1$, take $r=\mathfrak R^{1/4}$. Then
\[
r\le 1,
\qquad r^2=\mathfrak R^{1/2},
\qquad \mathfrak Rr^{-2}=\mathfrak R^{1/2},
\qquad \mathfrak R\le\mathfrak R^{1/2},
\]
and hence
\[
\mathbb E^P\!\left[\sup_{t\le T}|K_t-K'_t|^2\right]\le C\mathfrak R^{1/2}.
\]
\end{proof}

\subsection{Transfer to the McKean--Vlasov BSDE}

\begin{theorem}\label{thm:stability}
Fix $P\in\PP$ and suppose that $(H_{\rm quant})$ holds. Let $(\xi,\alpha),(\xi',\alpha')\in\mathfrak D_P$, $\gamma,\gamma'\in(0,1]$, let $q\in\mathfrak A_{\gamma,M}^P[\xi,\alpha]$ and $q'\in\mathfrak A_{\gamma',M}^P[\xi',\alpha']$ satisfy \eqref{eq:metriccoupling}, and let $(Y,Z,U,K)$ and $(Y',Z',U',K')$ be the corresponding selected solutions, constructed with the same $(B^P,\widetilde N^P)$. Let $\mathfrak R$ be defined by \eqref{eq:R-stab}. Then there exists $C=C(M;\mathfrak C_{\rm str})$ such that
\begin{align}
&\mathbb E^P\!\left[\sup_{t\le T}|Y_t-Y'_t|^2\right]
+\mathbb E^P\!\left[\int_0^T|Z_t-Z'_t|_{a_t^P}^2\dd t\right]
+\mathbb E^P\!\left[\int_0^T\|U_t-U'_t\|_\Pi^2\dd t\right]\notag\\
&\qquad
+\mathbb E^P\!\left[\sup_{t\le T}|K_t-K'_t|^2\right]
\le
C\mathfrak R^{1/2}.
\label{eq:full-stability}
\end{align}
\end{theorem}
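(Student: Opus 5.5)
The plan is to apply Lemma~\ref{lem:energy} with $i=1$ corresponding to $(X,\mu,\alpha,\zeta=g(X_T,\mu_T),K)$ and $i=2$ to $(X',\mu',\alpha',\zeta'=g(X'_T,\mu'_T),K')$. Both selected BSDEs \eqref{eq:backward} have exactly the form required there. The generators are $f(s,X_{s-}^i,\dots)$, the sources $K,K'$ are adapted, continuous, of finite variation, vanish at $0$ and lie in $\cS_P^2$ by Proposition~\ref{prop:K}, and the terminal values are square integrable by Assumption~\ref{ass:B} and Theorem~\ref{thm:forward}. The lemma then bounds the $(Y,Z,U)$-part of the left side of \eqref{eq:full-stability} by $C_E$ times
\[
\mathbb E^P\bigl[|\zeta-\zeta'|^2\bigr]+\mathbb E^P\bigl[\sup_{t\le T}|K_t-K'_t|^2\bigr]+\mathbb E^P\Bigl[\int_0^T\bigl(|X_{s-}-X'_{s-}|^2+W_2^2(\mu_{s-},\mu'_{s-})+|\alpha_s-\alpha'_s|^2\bigr)\dd s\Bigr].
\]

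Each term is bounded next. By the Lipschitz property of $g$ in Assumption~\ref{ass:B} and the natural coupling, we have $\mathbb E^P[|\zeta-\zeta'|^2]\le 2L_B^2(\mathbb E^P[|X_T-X'_T|^2]+W_2^2(\mu_T,\mu'_T))$. Proposition~\ref{prop:forward-stability} bounds this by $C(\mathbb E^P[|\xi-\xi'|^2]+\mathbb E^P[\int_0^T|\alpha-\alpha'|^2\dd t])\le C\mathfrak R$. The same proposition handles the time integral of $|X_{s-}-X'_{s-}|^2+W_2^2(\mu_{s-},\mu'_{s-})$, using $X_{s-}=X_s$ for a.e.\ $s$ and the continuity of the flows (Lemma~\ref{lem:flowcontinuity}). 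The $\alpha$ term is directly part of $\mathfrak R$. The $K$ term, which appears both as an input of Lemma~\ref{lem:energy} and as the fourth summand on the left of \eqref{eq:full-stability}, is controlled by Corollary~\ref{cor:K-stability}: $\mathbb E^P[\sup_t|K_t-K'_t|^2]\le C\mathfrak R^{1/2}$ when $0<\mathfrak R\le1$.

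The remaining issue is the range of $\mathfrak R$, which is where I expect the only care to be needed, since Corollary~\ref{cor:K-stability} is stated only for $0<\mathfrak R\le 1$. There are three cases.
\begin{itemize}
\item If $0<\mathfrak R\le1$, then $\mathfrak R\le\mathfrak R^{1/2}$ and everything combines to $C\mathfrak R^{1/2}$.
\item If $\mathfrak R>1$, I use the a priori bound \eqref{eq:Cap}. Both quadruples have $\|\cdot\|_{\mathfrak E_P^2}^2\le C_{\rm ap}$ uniformly for data in $\mathfrak D_P$, so the left side is at most $2C_{\rm ap}\le 2C_{\rm ap}\mathfrak R^{1/2}$.
\item If $\mathfrak R=0$, then $\xi=\xi'$ a.s., $\alpha=\alpha'$ $\dd t\otimes\dd P$-a.e.\ and $\gamma=\gamma'$. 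Proposition~\ref{prop:forward-stability} gives $X=X'$ and $\mu=\mu'$, and \eqref{eq:metriccoupling} gives $q=q'$ a.e. The sources therefore have equal densities, so $K=K'$, and Lemma~\ref{lem:energy} (or uniqueness in Theorem~\ref{thm:backward}) yields zero difference.
\end{itemize}

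All constants come from $C_E(L_B,T)$, the forward stability constant, $C_{\rm ap}$, and the constant of Corollary~\ref{cor:K-stability}. They therefore depend only on $M$ and $\mathfrak C_{\rm str}$, giving $C=C(M;\mathfrak C_{\rm str})$.
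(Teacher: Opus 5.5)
Your proposal is correct and follows the paper's proof essentially step by step: the terminal bound via Proposition~\ref{prop:forward-stability}, Lemma~\ref{lem:energy} combined with Corollary~\ref{cor:K-stability} for $0<\mathfrak R\le1$, uniqueness for $\mathfrak R=0$, and the a priori bound \eqref{eq:Cap} for $\mathfrak R>1$. The only slip is in the constant for $\mathfrak R>1$, where the correct bound is $\|\mathcal Y-\mathcal Y'\|_{\mathfrak E_P^2}^2\le 2\|\mathcal Y\|_{\mathfrak E_P^2}^2+2\|\mathcal Y'\|_{\mathfrak E_P^2}^2\le 4C_{\rm ap}$ rather than $2C_{\rm ap}$, and this changes nothing in the conclusion.
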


\begin{proof}
\emph{Step 1: terminal condition.} Set
\[
\zeta=g(X_T,\mu_T),\qquad \zeta'=g(X'_T,\mu'_T).
\]
By the Lipschitz continuity of $g$,
\[
|\zeta-\zeta'|^2\le2L_B^2\left(|X_T-X'_T|^2+W_2^2(\mu_T,\mu'_T)\right),
\]
and \eqref{eq:forwardstab} gives
\begin{equation}\label{eq:terminal-stab}
\mathbb E^P\!\left[|\zeta-\zeta'|^2\right]\le C\mathfrak R.
\end{equation}

\emph{Step 2: the case $\mathfrak R=0$.} If $\mathfrak R=0$, then $\xi=\xi'$ $P$-a.s., $\alpha=\alpha'$ $\dd t\otimes\dd P$-a.e., and $\gamma=\gamma'$. Proposition \ref{prop:forward-stability} yields $X=X'$ and $\mu=\mu'$, while \eqref{eq:metriccoupling} yields $q=q'$ $\dd t\otimes\dd P$-a.e. Hence $K=K'$, uniqueness in Theorem \ref{thm:backward} gives $(Y,Z,U)=(Y',Z',U')$, and \eqref{eq:full-stability} is trivial.

\emph{Step 3: the case $0<\mathfrak R\le1$.} Assume next that $0<\mathfrak R\le1$. Then \eqref{eq:K-stability} gives
\[
\mathbb E^P\!\left[\sup_{t\le T}|K_t-K'_t|^2\right]\le C\mathfrak R^{1/2}.
\]
Applying Lemma \ref{lem:energy} to the two BSDEs gives
\begin{align*}
&\mathbb E^P\!\left[\sup_{t\le T}|Y_t-Y'_t|^2\right]
+\mathbb E^P\!\left[\int_0^T\left(|Z_t-Z'_t|_{a_t^P}^2+\|U_t-U'_t\|_\Pi^2\right)\dd t\right]\\
&\le C_E\Bigg(\mathbb E^P\!\left[|\zeta-\zeta'|^2\right]+\mathbb E^P\!\left[\sup_{t\le T}|K_t-K'_t|^2\right]\\
&\hspace{2.4cm}+\mathbb E^P\!\left[\int_0^T\left(|X_{t-}-X'_{t-}|^2+W_2^2(\mu_{t-},\mu'_{t-})+|\alpha_t-\alpha'_t|^2\right)\dd t\right]\Bigg).
\end{align*}
By \eqref{eq:forwardstab}, \eqref{eq:terminal-stab}, and $\mathfrak R\le\mathfrak R^{1/2}$,
\[
\mathbb E^P\!\left[\sup_{t\le T}|Y_t-Y'_t|^2\right]
+\mathbb E^P\!\left[\int_0^T\left(|Z_t-Z'_t|_{a_t^P}^2+\|U_t-U'_t\|_\Pi^2\right)\dd t\right]
\le C\mathfrak R^{1/2}.
\]
Adding \eqref{eq:K-stability} yields \eqref{eq:full-stability}.

\emph{Step 4: the case $\mathfrak R>1$.} If $\mathfrak R>1$, let $\mathcal Y=\mathcal T_P(q)$ and $\mathcal Y'=\mathcal T_P(q')$. The inequality $\|\mathcal Y-\mathcal Y'\|^2\le2\|\mathcal Y\|^2+2\|\mathcal Y'\|^2$ and \eqref{eq:Cap} give
\[
\|\mathcal Y-\mathcal Y'\|_{\mathfrak E_P^2}^2\le4C_{\rm ap}\le4C_{\rm ap}\mathfrak R^{1/2}.
\]
After replacing $C$ by $C\vee4C_{\rm ap}$, \eqref{eq:full-stability} holds for every $\mathfrak R>0$. Taking square roots gives $\|\mathcal Y-\mathcal Y'\|_{\mathfrak E_P^2}\le C\mathfrak R^{1/4}$.
\end{proof}

\subsection{Variation of the fuzzy level and Hausdorff distance}

For fixed initial data and control, the forward system and its law coincide. The coupling \eqref{eq:metriccoupling} gives
\[
|q_t-q'_t|\le L_Q|\gamma-\gamma'|^\nu\qquad \dd t\otimes\dd P\text{-a.e.}
\]
This bound is uniform in $(t,\omega)$, which yields a Lipschitz rate in $|\gamma-\gamma'|^\nu$ instead of the exponent $\nu/2$ that would follow from Theorem \ref{thm:stability}.

\begin{lemma}\label{lem:commonforward}
Suppose that $(H_{\rm quant})$ holds. There exists $C_\star=C_\star(M;\mathfrak C_{\rm str})$ such that, for every $P\in\PP$, $(\xi,\alpha)\in\mathfrak D_P$, with forward solution $X^P$ and law flow $\mu^P$, every $\gamma,\gamma'\in(0,1]$, every $\varepsilon\ge0$, and all regular admissible selections $q,q'$ at levels $\gamma,\gamma'$ for these data, with energy bound $M$, satisfying $\|q-q'\|_{\infty,P}\le\varepsilon$, one has
\begin{equation}\label{eq:commonforward-lip}
\|\mathcal T_P(q)-\mathcal T_P(q')\|_{\mathfrak E_P^2}\le C_\star\varepsilon.
\end{equation}
\end{lemma}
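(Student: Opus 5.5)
The forward state and its law flow are the same for $q$ and $q'$, because the data $(\xi,\alpha)$ are common. In Lemma~\ref{lem:energy} the terminal terms $\zeta^1=\zeta^2=g(X_T^P,\mu_T^P)$ therefore coincide, and the forward and control differences vanish. This leaves
\[
\|\mathcal T_P(q)-\mathcal T_P(q')\|_{\mathfrak E_P^2}^2\le (C_E+1)\,\mathbb E^P\!\left[\sup_{t\le T}|K_t^{P,q}-K_t^{P,q'}|^2\right].
\]
It therefore suffices to show $\mathbb E^P[\sup_t|K_t^{P,q}-K_t^{P,q'}|^2]\le C\varepsilon^2$, where $C$ depends only on $M$ and the structural constants. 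The case $\varepsilon=0$ is trivial: then $q=q'$ $\dd t\otimes\dd P$-a.e., so $\mathcal T_P(q)=\mathcal T_P(q')$. For large $\varepsilon$, say $\varepsilon>1$ (or $\varepsilon>1/C_{\rm geo}$), the a priori bound \eqref{eq:Cap} gives $\|\mathcal T_P(q)-\mathcal T_P(q')\|\le 2C_{\rm ap}^{1/2}\le 2C_{\rm ap}^{1/2}\varepsilon$. So we may assume $0<\varepsilon\le\varepsilon_0$ for a small structural $\varepsilon_0$.

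For the source, apply Lemma~\ref{lem:branch} pointwise with the same $(x,\mu,\alpha)=(X_{t-}^P,\mu_{t-}^P,\alpha_t)$ and parameters $q_t,q'_t$. Then $D_t=|q_t-q'_t|\le\varepsilon$ $\dd t\otimes\dd P$-a.e. The key choice is the layer width $r=C_{\rm geo}\varepsilon'$ with $\varepsilon'$ slightly larger than $\varepsilon$, e.g.\ $\varepsilon'=2\varepsilon$. With this choice the event $\{D_t\ge r/C_{\rm geo}\}$ is $\dd t\otimes\dd P$-null, which is exactly where the uniform $L^\infty$ bound beats the $L^2$ coupling of Theorem~\ref{thm:stability}. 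Hence
\[
|\chi_t-\chi'_t|\le C_1\varepsilon+2\sqrt d\,\overline v\,\1_{\{\rho^{q_t}(X_{t-}^P,\mu_{t-}^P,\alpha_t)\le 2C_{\rm geo}\varepsilon\}}
\]
$\dd t\otimes\dd P$-a.e. Integrating in time before squaring gives
\[
\sup_{t\le T}|K_t-K'_t|\le C_1T\varepsilon+2\sqrt d\,\overline v\,\mathsf O^{P,q}_{2C_{\rm geo}\varepsilon}.
\]

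Now square and take expectations, using the second occupation moment \eqref{eq:occupation-second} of Theorem~\ref{thm:occupation}. This applies because $q$ is a regular selection with energy bound $M$ (Remark~\ref{rem:occupation-scope}: membership in $\mathfrak A_{\gamma,M}^P$ is not needed). Provided $2C_{\rm geo}\varepsilon\le1$, which fixes $\varepsilon_0$, we get $\mathbb E^P[(\mathsf O^{P,q}_{2C_{\rm geo}\varepsilon})^2]\le 4C_{\rm geo}^2C_{\rm occ}^{(2)}\varepsilon^2$. Hence $\mathbb E^P[\sup_t|K_t-K'_t|^2]\le C\varepsilon^2$, and combining with the first paragraph yields \eqref{eq:commonforward-lip}, with $C_\star$ depending only on $M$ and $\mathfrak C_{\rm str}$.

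The main point to check is the handling of the indicator term. One must verify that the branch lemma's second indicator really vanishes, so the strict inequality in its definition needs care, which is why $r$ is chosen proportional to $2\varepsilon$. One must also check that the occupation bound is invoked at a radius proportional to $\varepsilon$ and in its second-moment form; the first-moment form would only yield $\varepsilon^{1/2}$. A minor point is that the minimal selection on the interface itself is irrelevant, by the last assertion of Theorem~\ref{thm:occupation}.
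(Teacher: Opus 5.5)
Your proposal is correct and follows the paper's proof essentially step for step: the layer width $r=2C_{\rm geo}\varepsilon$ that makes the crossing indicator of Lemma~\ref{lem:branch} vanish, integrating in time before squaring, the second occupation moment justified by Remark~\ref{rem:occupation-scope}, and Lemma~\ref{lem:energy} with common forward data. The only item to tidy is the large-$\varepsilon$ case: since $r\le1$ forces the threshold $\varepsilon_0=1/(2C_{\rm geo})<1$, on the range $(\varepsilon_0,1]$ the a priori bound gives $2C_{\rm ap}^{1/2}\le 4C_{\rm geo}C_{\rm ap}^{1/2}\varepsilon$ (the paper's constant), not $2C_{\rm ap}^{1/2}\varepsilon$.
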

\begin{proof}
If $\varepsilon=0$, the sources coincide $\dd t\otimes\dd P$-a.e. and uniqueness gives the result. Assume henceforth that $\varepsilon>0$. If $C_{\rm geo}\varepsilon\le 1/2$, take $r=2C_{\rm geo}\varepsilon$. Then $D_t\le\varepsilon<r/C_{\rm geo}$ and the crossing indicator in Lemma \ref{lem:branch} vanishes. Hence
\[
|\chi_t-\chi'_t|\le C_1\varepsilon+2\sqrt d\,\overline v\1_{\{\rho^{q_t}\le2C_{\rm geo}\varepsilon\}}.
\]
Integrating in time,
\[
\sup_{t\le T}|K_t-K'_t|\le C_1T\varepsilon+2\sqrt d\,\overline v\int_0^T\1_{\{\rho^{q_t}\le2C_{\rm geo}\varepsilon\}}\dd t .
\]
Squaring, using $(x_1+x_2)^2\le2x_1^2+2x_2^2$ and \eqref{eq:occupation-second} with $r=2C_{\rm geo}\varepsilon\le1$, which applies to $q$ by Remark \ref{rem:occupation-scope},
\[
\mathbb E^P\!\left[\sup_{t\le T}|K_t-K'_t|^2\right]
\le2C_1^2T^2\varepsilon^2+8d\,\overline v^2C_{\rm occ}^{(2)}(2C_{\rm geo}\varepsilon)^2
=C(M;\mathfrak C_{\rm str})\varepsilon^2,
\]
with $C(M;\mathfrak C_{\rm str})=2C_1^2T^2+32d\,\overline v^2C_{\rm geo}^2C_{\rm occ}^{(2)}$.
Lemma \ref{lem:energy}, with identical forward state and control, gives \eqref{eq:commonforward-lip}. If $C_{\rm geo}\varepsilon>1/2$, then \eqref{eq:Cap} gives $\|\mathcal T_P(q)-\mathcal T_P(q')\|_{\mathfrak E_P^2}\le2C_{\rm ap}^{1/2}<4C_{\rm geo}C_{\rm ap}^{1/2}\varepsilon$. Thus \eqref{eq:commonforward-lip} holds in all cases with $C_\star(M)=\max\{((1+C_E)C)^{1/2},4C_{\rm geo}C_{\rm ap}^{1/2}\}$, where $C$ is the constant in the preceding display.
\end{proof}

\begin{corollary}\label{cor:lipschitzT}
Fix $P\in\PP$ and data $(\xi,\alpha)\in\mathfrak D_P$, and suppose that $(H_{\rm quant})$ holds. Then, for every $\gamma\in(0,1]$, the solution map $\mathcal T_P:(\mathfrak A_{\gamma,M}^P,\|\cdot\|_{\infty,P})\to\mathfrak E_P^2$ is Lipschitz with constant $C_\star$. Moreover, for every $\gamma,\gamma'\in(0,1]$ and every $q\in\mathfrak A_{\gamma,M}^P$, there exists $q'\in\mathfrak A_{\gamma',M}^P$ such that
\begin{equation}\label{eq:gamma-lip}
\|\mathcal T_P(q)-\mathcal T_P(q')\|_{\mathfrak E_P^2}\le C_\star L_Q|\gamma-\gamma'|^\nu.
\end{equation}
\end{corollary}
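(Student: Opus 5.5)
Both assertions follow from Lemma~\ref{lem:commonforward} together with the selection-system properties (S1) and (S4). Throughout, $P$ and the datum $(\xi,\alpha)\in\mathfrak D_P$ are fixed, so the forward state $X^P$ and the flow $\mu^P$ are common to all selections considered.

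For the Lipschitz property, take $q,q'\in\mathfrak A_{\gamma,M}^P$. By (S1), both are regular admissible selections at level $\gamma$ with energy bound $M$. Set $\varepsilon=\|q-q'\|_{\infty,P}$. Lemma~\ref{lem:commonforward}, applied with $\gamma'=\gamma$, gives directly
\[
\|\mathcal T_P(q)-\mathcal T_P(q')\|_{\mathfrak E_P^2}\le C_\star\|q-q'\|_{\infty,P}.
\]
Since $\mathcal T_P$ depends only on the $\dd t\otimes\dd P$-class of $q$ (as noted after \eqref{eq:cuts-def}), this is a genuine Lipschitz statement on the subset $\mathfrak A_{\gamma,M}^P$ of $L^\infty(\dd t\otimes\dd P)$, with constant $C_\star=C_\star(M;\mathfrak C_{\rm str})$. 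This constant does not depend on $P$, $\gamma$, or the datum.

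For \eqref{eq:gamma-lip}, fix $q\in\mathfrak A_{\gamma,M}^P$ and apply (S4) with the same datum on both sides, $(\xi',\alpha')=(\xi,\alpha)$. Then $\mu'=\mu$ and $\alpha'=\alpha$, so the $W_2$ and control terms vanish. (S4) therefore yields some $q'\in\mathfrak A_{\gamma',M}^P$ with
\[
|q_t-q'_t|\le L_Q|\gamma-\gamma'|^\nu \qquad \dd t\otimes\dd P\text{-a.e.},
\]
that is, $\|q-q'\|_{\infty,P}\le L_Q|\gamma-\gamma'|^\nu$. Here $q$ and $q'$ are regular admissible selections at levels $\gamma$ and $\gamma'$ for the same data, both with energy bound $M$ by (S1). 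Lemma~\ref{lem:commonforward} is stated precisely for selections at possibly different levels, so it applies with $\varepsilon=L_Q|\gamma-\gamma'|^\nu$ and gives the claimed bound $C_\star L_Q|\gamma-\gamma'|^\nu$.

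No step is a real obstacle, since the analytic work (the occupation bound \eqref{eq:occupation-second} and Lemma~\ref{lem:energy}) is already contained in Lemma~\ref{lem:commonforward}. The only point to check is the membership hypothesis that lemma requires: each selection must lie in the correct cut and carry the energy bound $M$. This is guaranteed by (S1), and the occupation estimate is invoked only for $q$, which is covered by Remark~\ref{rem:occupation-scope}.
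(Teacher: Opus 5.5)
Your proposal is correct and is the paper's proof. The first assertion is Lemma~\ref{lem:commonforward} applied to two elements of $\mathfrak A_{\gamma,M}^P$, which share the same forward data and carry energy bound $M$ by (S1); the second is (S4) with identical data, giving $\|q-q'\|_{\infty,P}\le L_Q|\gamma-\gamma'|^\nu$, followed by Lemma~\ref{lem:commonforward} with $\varepsilon=L_Q|\gamma-\gamma'|^\nu$.
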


\begin{proof}
The elements of $\mathfrak A_{\gamma,M}^P$ are associated with the same data $(\xi,\alpha)$, hence with the same $(X^P,\mu^P)$, and satisfy the same energy bound by (S1); Lemma \ref{lem:commonforward} gives the first assertion. For the second, (S4) applied with the same data provides $q'\in\mathfrak A_{\gamma',M}^P$ with $\|q-q'\|_{\infty,P}\le L_Q|\gamma-\gamma'|^\nu$, and Lemma \ref{lem:commonforward} with $\varepsilon=L_Q|\gamma-\gamma'|^\nu$ concludes.
\end{proof}

\begin{proposition}\label{prop:hausdorff-stability}
Fix $P\in\PP$ and data $(\xi,\alpha)\in\mathfrak D_P$, and suppose that $(H_{\rm quant})$ holds. Then, for all $\gamma,\gamma'\in(0,1]$,
\begin{equation}\label{eq:hausdorff-gamma}
d_H^{\mathfrak E_P^2}(\cC_{\gamma,M}^P,\cC_{\gamma',M}^P)\le C_\star L_Q|\gamma-\gamma'|^\nu.
\end{equation}
\end{proposition}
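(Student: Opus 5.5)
The plan is to bound the two excesses separately and symmetrically, using only the second assertion of Corollary~\ref{cor:lipschitzT}. The data $(\xi,\alpha)$ are fixed, so both cuts are images under the same solution map $\mathcal T_P$ of the classes $\mathfrak A_{\gamma,M}^P$ and $\mathfrak A_{\gamma',M}^P$, and these classes share the same forward state $X^P$ and flow $\mu^P$. By Theorem~\ref{thm:cutcompact} the cuts are compact, and in any case they are nonempty by (S1). Hence the Hausdorff distance is well defined as the maximum of the two excesses.

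For the excess $e(\cC_{\gamma,M}^P,\cC_{\gamma',M}^P)$, take an arbitrary element $\mathcal Y\in\cC_{\gamma,M}^P$ and write it as $\mathcal Y=\mathcal T_P(q)$ with $q\in\mathfrak A_{\gamma,M}^P$. Corollary~\ref{cor:lipschitzT}, via (S4) applied with identical data so that the $W_2$ and control terms vanish, provides $q'\in\mathfrak A_{\gamma',M}^P$ with
\[
\|\mathcal T_P(q)-\mathcal T_P(q')\|_{\mathfrak E_P^2}\le C_\star L_Q|\gamma-\gamma'|^\nu .
\]
Since $\mathcal T_P(q')\in\cC_{\gamma',M}^P$, this gives $\inf_{\mathcal Y'\in\cC_{\gamma',M}^P}\|\mathcal Y-\mathcal Y'\|_{\mathfrak E_P^2}\le C_\star L_Q|\gamma-\gamma'|^\nu$. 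Taking the supremum over $\mathcal Y$ bounds the excess.

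Exchanging the roles of $\gamma$ and $\gamma'$ gives the same bound for $e(\cC_{\gamma',M}^P,\cC_{\gamma,M}^P)$. This is legitimate because (S4) and Corollary~\ref{cor:lipschitzT} are symmetric in the two levels. Taking the maximum of the two excesses yields \eqref{eq:hausdorff-gamma}.

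There is no real obstacle here; the substantive work sits in Lemma~\ref{lem:commonforward}. The only point to check is that the element $q'$ produced by (S4) is regular with energy bound $M$ at level $\gamma'$, so that Lemma~\ref{lem:commonforward} applies. This holds because $q'\in\mathfrak A_{\gamma',M}^P$ and (S1) guarantees exactly these properties. I will also note that the bound (S4) holds $\dd t\otimes\dd P$-a.e., so it is an $\|\cdot\|_{\infty,P}$ bound, which is precisely the hypothesis $\|q-q'\|_{\infty,P}\le\varepsilon$ of that lemma.
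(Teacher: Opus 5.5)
Your proposal is correct and matches the paper's proof: for each $\mathcal T_P(q)$ in one cut, the second assertion of Corollary~\ref{cor:lipschitzT} supplies $q'$ at the other level with $\|\mathcal T_P(q)-\mathcal T_P(q')\|_{\mathfrak E_P^2}\le C_\star L_Q|\gamma-\gamma'|^\nu$, which bounds one excess, and exchanging $\gamma$ and $\gamma'$ bounds the other. Your additional checks are consistent with what that corollary already uses: nonemptiness via (S1), and the fact that the (S4) bound is an $\|\cdot\|_{\infty,P}$ bound between regular selections with energy bound $M$.
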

\begin{proof}
Let $\mathcal Y=\mathcal T_P(q)\in\cC_{\gamma,M}^P$ with $q\in\mathfrak A_{\gamma,M}^P$. By \eqref{eq:gamma-lip}, there exists $q'\in\mathfrak A_{\gamma',M}^P$ such that $\operatorname{dist}(\mathcal Y,\cC_{\gamma',M}^P)\le\|\mathcal Y-\mathcal T_P(q')\|_{\mathfrak E_P^2}\le C_\star L_Q|\gamma-\gamma'|^\nu$. Taking the supremum over $\mathcal Y$ bounds the excess $e(\cC_{\gamma,M}^P,\cC_{\gamma',M}^P)$; exchanging $\gamma$ and $\gamma'$ bounds the reverse excess, which proves \eqref{eq:hausdorff-gamma}.
\end{proof}
\section{The fuzzy set of solutions}\label{sec:coupes}

Recall that $\cC_{\gamma,M}^P=\mathcal T_P(\mathfrak A_{\gamma,M}^P)$; see \eqref{eq:cuts-def}.

\begin{proposition}
\label{prop:hausdorff-data}
Suppose that $(H_{\rm quant})$ holds. For each $P\in\PP$ and $(\xi,\alpha),(\xi',\alpha')\in\mathfrak D_P$, whose forward systems are constructed with the same $(B^P,\widetilde N^P)$, denote by $\cC_{\gamma,M}^P[\xi,\alpha]$ and $\cC_{\gamma,M}^P[\xi',\alpha']$ the corresponding solution cuts in $\mathfrak E_P^2$, and set
\[
\delta_P=\mathbb E^P\!\left[|\xi-\xi'|^2\right]
+\mathbb E^P\!\left[\int_0^T|\alpha_t-\alpha'_t|^2\dd t\right].
\]
Then there exists $C=C(M;\mathfrak C_{\rm str})$, independent of $P$ and of the two data sets, such that, for all $\gamma,\gamma'\in(0,1]$,
\[
d_H^{\mathfrak E_P^2}
\bigl(\cC_{\gamma,M}^P[\xi,\alpha],
\cC_{\gamma',M}^P[\xi',\alpha']\bigr)
\le C\delta_P^{1/4}+C|\gamma-\gamma'|^\nu.
\]
\end{proposition}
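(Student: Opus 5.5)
The plan is to bound each of the two excesses separately, using the transport property (S4) together with Theorem~\ref{thm:stability}. I will then sharpen the level dependence with Proposition~\ref{prop:hausdorff-stability}, so that the exponent on $|\gamma-\gamma'|$ is $\nu$ rather than $\nu/2$.

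\emph{Step 1 (reduce to equal levels).} By the triangle inequality for $d_H$,
\[
d_H\bigl(\cC_{\gamma,M}^P[\xi,\alpha],\cC_{\gamma',M}^P[\xi',\alpha']\bigr)
\le d_H\bigl(\cC_{\gamma,M}^P[\xi,\alpha],\cC_{\gamma,M}^P[\xi',\alpha']\bigr)
+d_H\bigl(\cC_{\gamma,M}^P[\xi',\alpha'],\cC_{\gamma',M}^P[\xi',\alpha']\bigr).
\]
Proposition~\ref{prop:hausdorff-stability}, applied to the fixed data $(\xi',\alpha')$, bounds the second term by $C_\star L_Q|\gamma-\gamma'|^\nu$. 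It therefore remains to treat two data sets at the same level $\gamma$.

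\emph{Step 2 (excess at equal level).} Take any $\mathcal Y=\mathcal T_P^{\xi,\alpha}(q)$ with $q\in\mathfrak A_{\gamma,M}^P[\xi,\alpha]$. By (S4) with $\gamma'=\gamma$, there is $q'\in\mathfrak A_{\gamma,M}^P[\xi',\alpha']$ satisfying \eqref{eq:metriccoupling} with $|\gamma-\gamma'|=0$. Both $q$ and $q'$ are regular selections with energy bound $M$ by (S1), so Theorem~\ref{thm:stability} applies with $\mathfrak R=\delta_P$. It gives
\[
\|\mathcal Y-\mathcal T_P^{\xi',\alpha'}(q')\|_{\mathfrak E_P^2}\le C\delta_P^{1/4},
\]
which bounds $\operatorname{dist}(\mathcal Y,\cC_{\gamma,M}^P[\xi',\alpha'])$. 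Taking the supremum over $\mathcal Y$ bounds the excess $e\bigl(\cC_{\gamma,M}^P[\xi,\alpha],\cC_{\gamma,M}^P[\xi',\alpha']\bigr)$. Exchanging the roles of the two data sets, which is legitimate because (S4) is symmetric in the data, bounds the reverse excess. Combining this with Step~1 gives the claim. The constant is uniform in $P$ and in the data, since those of Theorem~\ref{thm:stability} and Proposition~\ref{prop:hausdorff-stability} are.

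\emph{Main obstacle.} The substantive points are the cases $\delta_P=0$ and $\delta_P>1$ in Theorem~\ref{thm:stability}, and both are already handled there. When $\delta_P=0$ the data coincide and the cuts are equal. When $\delta_P>1$ the a priori bound \eqref{eq:Cap} suffices. The remaining check is that the two forward systems are built on the same $(B^P,\widetilde N^P)$, as the theorem requires; this is assumed in the statement. A direct application of Theorem~\ref{thm:stability} with $\gamma\ne\gamma'$ would only yield $|\gamma-\gamma'|^{\nu/2}$, and the triangle-inequality split in Step~1 is what avoids this loss.
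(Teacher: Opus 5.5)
Your proposal is correct and follows the paper's proof: the equal-level excesses are bounded via (S4) and Theorem~\ref{thm:stability} with $\mathfrak R=\delta_P$. The level change is then handled by the triangle inequality together with Proposition~\ref{prop:hausdorff-stability} applied to the primed data, which is exactly how the paper obtains the exponent $\nu$ rather than $\nu/2$.
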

\begin{proof}
Fix $P$ and the two data sets, and first consider the same level $\gamma$. Let $\mathcal Y=\mathcal T_P(q)\in\cC_{\gamma,M}^P[\xi,\alpha]$ with $q\in\mathfrak A_{\gamma,M}^P[\xi,\alpha]$. By (S4), there exists $q'\in\mathfrak A_{\gamma,M}^P[\xi',\alpha']$ satisfying \eqref{eq:metriccoupling} with $\gamma'=\gamma$; denote its solution by $\mathcal Y'$. Theorem \ref{thm:stability}, with $\mathfrak R=\delta_P$, implies $\|\mathcal Y-\mathcal Y'\|_{\mathfrak E_P^2}^2\le C\delta_P^{1/2}$, with a constant independent of $q$. Hence
\[
\sup_{\mathcal Y\in\cC_{\gamma,M}^P[\xi,\alpha]}
\inf_{\mathcal V\in\cC_{\gamma,M}^P[\xi',\alpha']}
\|\mathcal Y-\mathcal V\|_{\mathfrak E_P^2}
\le C\delta_P^{1/4}.
\]
Exchanging the two data sets yields the reverse directed distance, and therefore the Hausdorff bound at the same level. To change the level, apply \eqref{eq:hausdorff-gamma} to the primed data and then use the triangle inequality:
\begin{align*}
d_H\bigl(\cC_{\gamma,M}^P[\xi,\alpha],
\cC_{\gamma',M}^P[\xi',\alpha']\bigr)
&\le d_H\bigl(\cC_{\gamma,M}^P[\xi,\alpha],
\cC_{\gamma,M}^P[\xi',\alpha']\bigr)\\
&\quad+d_H\bigl(\cC_{\gamma,M}^P[\xi',\alpha'],
\cC_{\gamma',M}^P[\xi',\alpha']\bigr)\\
&\le C\delta_P^{1/4}+C|\gamma-\gamma'|^\nu.
\end{align*}
\end{proof}

\begin{theorem}\label{thm:cutcompact}
Fix $P\in\PP$ and data $(\xi,\alpha)\in\mathfrak D_P$, and suppose that $(H_{\rm quant})$ holds. Then, for every $\gamma\in(0,1]$, the solution cut $\cC_{\gamma,M}^P$ is nonempty and compact in $\mathfrak E_P^2$.
\end{theorem}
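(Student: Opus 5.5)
The cut is the image of a compact set under a continuous map, and I will prove it exactly that way. By (S1) of Definition~\ref{def:selsystem}, the class $\mathfrak A_{\gamma,M}^P$ is nonempty and compact in $(L^\infty(\dd t\otimes\dd P),\|\cdot\|_{\infty,P})$. By \eqref{eq:cuts-def}, $\cC_{\gamma,M}^P=\mathcal T_P(\mathfrak A_{\gamma,M}^P)$. The paragraph after \eqref{eq:Cap} shows that $\mathcal T_P$ is well defined on $\dd t\otimes\dd P$-equivalence classes, so it is a genuine map from a subset of $L^\infty(\dd t\otimes\dd P)$ into the Banach space $\mathfrak E_P^2$. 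Nonemptiness is then immediate: $\mathfrak A_{\gamma,M}^P\neq\varnothing$, and every selection in it has a unique selected solution by Corollary~\ref{cor:wellposed-selected}.

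For compactness I will invoke Corollary~\ref{cor:lipschitzT}, first assertion. Under $(H_{\rm quant})$ it says that $\mathcal T_P$ restricted to $(\mathfrak A_{\gamma,M}^P,\|\cdot\|_{\infty,P})$ is Lipschitz with constant $C_\star$, hence continuous. A continuous image of a compact metric space is compact, so $\cC_{\gamma,M}^P$ is compact in $\mathfrak E_P^2$.

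To be safe, I will also spell out the sequential argument. Let $(\mathcal Y^n)\subset\cC_{\gamma,M}^P$ with $\mathcal Y^n=\mathcal T_P(q^n)$. Extract a subsequence $q^{n_k}\to q$ in $\|\cdot\|_{\infty,P}$ with $q\in\mathfrak A_{\gamma,M}^P$; this is possible by compactness, which includes closedness of the class. Then \eqref{eq:commonforward-lip} gives
\[
\|\mathcal Y^{n_k}-\mathcal T_P(q)\|_{\mathfrak E_P^2}\le C_\star\|q^{n_k}-q\|_{\infty,P}\to0 .
\]

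The only point needing care is that Lemma~\ref{lem:commonforward} applies to every pair in the class. Its hypotheses are the same data $(\xi,\alpha)$, hence the same forward state and law flow, and regularity with energy bound $M$. Both are guaranteed by (S1), and the occupation estimate \eqref{eq:occupation-second} applies to all such $q$ by Remark~\ref{rem:occupation-scope}. This step is the main obstacle, but it has already been resolved: the Lipschitz property rests on the second-moment occupation bound, which controls branch switching in the source under uniform perturbations of $q$. No further work is needed beyond checking these hypotheses.
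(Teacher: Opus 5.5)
Your proposal is correct and follows the paper's proof essentially verbatim. Nonemptiness comes from (S1) and Corollary~\ref{cor:wellposed-selected}, and compactness comes from extracting a $\|\cdot\|_{\infty,P}$-convergent subsequence inside $\mathfrak A_{\gamma,M}^P$ and applying the Lipschitz bound of Corollary~\ref{cor:lipschitzT}, whose validity rests on the hypotheses of Lemma~\ref{lem:commonforward} that you check.
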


\begin{proof}
By (S1),
\[
\varnothing\ne\mathfrak A_{\gamma,M}^P\subset L^\infty(\dd t\otimes\dd P)
\]
is compact. For every $q\in\mathfrak A_{\gamma,M}^P$, Corollary \ref{cor:wellposed-selected} defines $\mathcal T_P(q)\in\mathfrak E_P^2$; hence $\cC_{\gamma,M}^P=\mathcal T_P(\mathfrak A_{\gamma,M}^P)\ne\varnothing.$
Let $(\mathcal Y^n)_{n\ge1}\subset\cC_{\gamma,M}^P$. Choose $q^n\in\mathfrak A_{\gamma,M}^P$ such that $\mathcal Y^n=\mathcal T_P(q^n).$ By compactness, there exist a subsequence $(q^{n_k})$ and $q\in\mathfrak A_{\gamma,M}^P$ such that
\[
\|q^{n_k}-q\|_{\infty,P}\longrightarrow0.
\]
Corollary \ref{cor:lipschitzT} gives
\[
\|\mathcal Y^{n_k}-\mathcal T_P(q)\|_{\mathfrak E_P^2}
\le C_\star\|q^{n_k}-q\|_{\infty,P}
\longrightarrow0.
\]
Thus every sequence in $\cC_{\gamma,M}^P$ has a subsequence converging to an element of $\cC_{\gamma,M}^P$; in the metric space $\mathfrak E_P^2$, this is equivalent to compactness.
\end{proof}

The nesting property
\[
\gamma_1\le\gamma_2
\quad\Longrightarrow\quad
\mathfrak A_{\gamma_2,M}^P\subseteq\mathfrak A_{\gamma_1,M}^P
\]
immediately implies $\cC_{\gamma_2,M}^P\subseteq\cC_{\gamma_1,M}^P.$

\begin{proposition}\label{prop:leftcuts}
Fix $P\in\PP$ and data $(\xi,\alpha)\in\mathfrak D_P$, and suppose that $(H_{\rm quant})$ holds. Then, for every $\gamma\in(0,1]$,
\[
\mathfrak A_{\gamma,M}^P
=
\bigcap_{0<\iota<\gamma}\mathfrak A_{\iota,M}^P
\qquad\text{and}\qquad
\cC_{\gamma,M}^P
=
\bigcap_{0<\iota<\gamma}\cC_{\iota,M}^P.
\]
\end{proposition}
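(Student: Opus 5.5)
The first identity is exactly property (S3) of Definition~\ref{def:selsystem}, which the fixed selection system satisfies by assumption (for parametric classes it is \eqref{eq:left-paramclass} of Proposition~\ref{prop:paramclass}, for feedback classes Theorem~\ref{thm:feedback-class}(c)). Only the second identity needs an argument. Since $\cC_{\iota,M}^P=\mathcal T_P(\mathfrak A_{\iota,M}^P)$, the inclusion $\cC_{\gamma,M}^P\subseteq\bigcap_{0<\iota<\gamma}\cC_{\iota,M}^P$ follows directly from the nesting (S2) and the resulting nesting of the solution cuts.

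For the reverse inclusion, take $\mathcal Y\in\bigcap_{0<\iota<\gamma}\cC_{\iota,M}^P$ and a sequence $\iota_n\uparrow\gamma$. For each $n$ pick $q^n\in\mathfrak A_{\iota_n,M}^P$ with $\mathcal T_P(q^n)=\mathcal Y$. By (S2), every $q^n$ with $n\ge m$ lies in $\mathfrak A_{\iota_m,M}^P$, and all of them lie in $\mathfrak A_{\iota_1,M}^P$, which is compact in $L^\infty(\dd t\otimes\dd P)$ by (S1). Extract a subsequence $q^{n_k}\to q$ in $\|\cdot\|_{\infty,P}$. For each fixed $m$, the tail of this subsequence lies in the compact, hence closed, set $\mathfrak A_{\iota_m,M}^P$, so $q\in\mathfrak A_{\iota_m,M}^P$. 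Therefore $q\in\bigcap_m\mathfrak A_{\iota_m,M}^P$. Because the classes are nested and $\iota_m\uparrow\gamma$, this intersection equals $\bigcap_{0<\iota<\gamma}\mathfrak A_{\iota,M}^P$, which by the first identity is $\mathfrak A_{\gamma,M}^P$.

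It remains to show $\mathcal T_P(q)=\mathcal Y$. The main subtlety is that Corollary~\ref{cor:lipschitzT} is stated on a single class $\mathfrak A_{\gamma,M}^P$, while the $q^{n_k}$ sit at different levels. To handle this, invoke Lemma~\ref{lem:commonforward} directly: it only requires regular admissible selections for the same data with energy bound $M$, at arbitrary levels, and all $q^{n_k}$ and $q$ qualify by (S1). This yields
\[
\|\mathcal Y-\mathcal T_P(q)\|_{\mathfrak E_P^2}=\|\mathcal T_P(q^{n_k})-\mathcal T_P(q)\|_{\mathfrak E_P^2}\le C_\star\|q^{n_k}-q\|_{\infty,P}\longrightarrow0 .
\]
Hence $\mathcal Y=\mathcal T_P(q)\in\cC_{\gamma,M}^P$, which completes the reverse inclusion.
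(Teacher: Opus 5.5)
Your proof is correct and follows the paper's argument step by step. The first identity comes from (S3), one inclusion from nesting, and the other from three ingredients: compactness of $\mathfrak A_{\iota_1,M}^P$, closedness of each $\mathfrak A_{\iota_j,M}^P$, and continuity of $\mathcal T_P$.

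The ``subtlety'' you flag is not really one. All $q^{n_k}$ and the limit $q$ lie in the single class $\mathfrak A_{\iota_1,M}^P$, where Corollary~\ref{cor:lipschitzT} already applies. Appealing to Lemma~\ref{lem:commonforward} directly is equally valid.
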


\begin{proof}
The first identity is (S3). Fix a sequence $\iota_n\uparrow\gamma$. Nesting gives
\[
\cC_{\gamma,M}^P\subseteq\cC_{\iota_n,M}^P,
\qquad n\ge1,
\]
and therefore
\[
\cC_{\gamma,M}^P\subseteq\bigcap_{n\ge1}\cC_{\iota_n,M}^P.
\]

Conversely, let
\[
\mathcal Y\in\bigcap_{n\ge1}\cC_{\iota_n,M}^P.
\]
For each $n$, choose $q^n\in\mathfrak A_{\iota_n,M}^P$ such that $\mathcal T_P(q^n)=\mathcal Y.$ Since $\iota_1\le\iota_n$, $q^n\in\mathfrak A_{\iota_n,M}^P\subseteq\mathfrak A_{\iota_1,M}^P.$ Compactness of $\mathfrak A_{\iota_1,M}^P$ provides a subsequence $(q^{n_k})$ and $q\in\mathfrak A_{\iota_1,M}^P$ such that
\[
\|q^{n_k}-q\|_{\infty,P}\longrightarrow0.
\]
Fix $j\ge1$. For $k$ sufficiently large, $n_k\ge j$ and hence
\[
q^{n_k}\in\mathfrak A_{\iota_{n_k},M}^P
\subseteq\mathfrak A_{\iota_j,M}^P.
\]
Compactness of $\mathfrak A_{\iota_j,M}^P$ implies closedness; passing to the limit yields $q\in\mathfrak A_{\iota_j,M}^P.$ Since $j$ is arbitrary,
\[
q\in\bigcap_{j\ge1}\mathfrak A_{\iota_j,M}^P
=\mathfrak A_{\gamma,M}^P.
\]
Continuity of $\mathcal T_P$ gives
\[
\mathcal T_P(q^{n_k})\longrightarrow\mathcal T_P(q)
\quad\text{in }\mathfrak E_P^2.
\]
But $\mathcal T_P(q^{n_k})=\mathcal Y$ for every $k$; hence $\mathcal Y=\mathcal T_P(q)\in\cC_{\gamma,M}^P.$ Consequently,
\[
\cC_{\gamma,M}^P=\bigcap_{n\ge1}\cC_{\iota_n,M}^P
=\bigcap_{0<\iota<\gamma}\cC_{\iota,M}^P.
\]
\end{proof}

We can therefore define the fuzzy set of stochastic trajectories $\widetilde{\mathcal Y}^{\,P,M}[\xi,\alpha]$, denoted by $\widetilde{\mathcal Y}^{\,P,M}$ when $P$ and the data are fixed, by
\[
\mathfrak m_{\widetilde{\mathcal Y}^{\,P,M}[\xi,\alpha]}(\mathcal Y)
=
\sup\{\gamma\in(0,1]:\mathcal Y\in\cC_{\gamma,M}^P[\xi,\alpha]\},\qquad \sup\varnothing=0.
\]
For $\gamma\in(0,1]$, the $\gamma$-cut of $\widetilde{\mathcal Y}^{\,P,M}[\xi,\alpha]$ is $[\widetilde{\mathcal Y}^{\,P,M}[\xi,\alpha]]_\gamma:=\{\mathcal Y\in\mathfrak E_P^2:\mathfrak m_{\widetilde{\mathcal Y}^{\,P,M}[\xi,\alpha]}(\mathcal Y)\ge\gamma\}$.

\begin{lemma}\label{lem:membership}
Fix $P\in\PP$ and data $(\xi,\alpha)\in\mathfrak D_P$, and suppose that $(H_{\rm quant})$ holds. Then, for every $\gamma\in(0,1]$, $[\widetilde{\mathcal Y}^{\,P,M}]_\gamma=\cC_{\gamma,M}^P.$ Moreover, if $\gamma_{\mathcal Y}:=\mathfrak m_{\widetilde{\mathcal Y}^{\,P,M}}(\mathcal Y)>0$, the supremum defining it is attained, i.e. $\mathcal Y\in\cC_{\gamma_{\mathcal Y},M}^P$.
\end{lemma}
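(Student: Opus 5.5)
The plan is to prove the two inclusions between $[\widetilde{\mathcal Y}^{\,P,M}]_\gamma$ and $\cC_{\gamma,M}^P$ using only nesting of the solution cuts and their left-continuity (Proposition~\ref{prop:leftcuts}). Attainment of the supremum will follow from the same left-continuity argument applied at $\gamma_{\mathcal Y}$.

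\emph{Inclusion $\cC_{\gamma,M}^P\subseteq[\widetilde{\mathcal Y}^{\,P,M}]_\gamma$.} If $\mathcal Y\in\cC_{\gamma,M}^P$, then $\gamma$ belongs to the set $\{\gamma'\in(0,1]:\mathcal Y\in\cC_{\gamma',M}^P\}$ whose supremum defines the membership. Hence $\mathfrak m_{\widetilde{\mathcal Y}^{\,P,M}}(\mathcal Y)\ge\gamma$, and this direction is immediate.

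\emph{Attainment.} Let $\gamma_{\mathcal Y}=\mathfrak m_{\widetilde{\mathcal Y}^{\,P,M}}(\mathcal Y)>0$ and put $\Gamma_{\mathcal Y}=\{\gamma'\in(0,1]:\mathcal Y\in\cC_{\gamma',M}^P\}$.
\begin{enumerate}[label=(\roman*),leftmargin=*]
\item By the nesting $\cC_{\gamma_2,M}^P\subseteq\cC_{\gamma_1,M}^P$ for $\gamma_1\le\gamma_2$, the set $\Gamma_{\mathcal Y}$ is an interval of the form $(0,\gamma_{\mathcal Y})$ or $(0,\gamma_{\mathcal Y}]$.
\item For every $\iota<\gamma_{\mathcal Y}$, the definition of the supremum gives some $\gamma'\in\Gamma_{\mathcal Y}$ with $\gamma'>\iota$. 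Nesting then yields $\mathcal Y\in\cC_{\iota,M}^P$.
\item Consequently $\mathcal Y\in\bigcap_{0<\iota<\gamma_{\mathcal Y}}\cC_{\iota,M}^P$, and Proposition~\ref{prop:leftcuts} identifies this intersection with $\cC_{\gamma_{\mathcal Y},M}^P$.
\end{enumerate}
This proves that the supremum is attained.

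\emph{Inclusion $[\widetilde{\mathcal Y}^{\,P,M}]_\gamma\subseteq\cC_{\gamma,M}^P$.} Let $\mathfrak m_{\widetilde{\mathcal Y}^{\,P,M}}(\mathcal Y)\ge\gamma>0$. The attainment step gives $\mathcal Y\in\cC_{\gamma_{\mathcal Y},M}^P$ with $\gamma_{\mathcal Y}\ge\gamma$, and nesting gives $\mathcal Y\in\cC_{\gamma,M}^P$.

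The only substantive step is the left-continuity of the cuts, which the argument takes from Proposition~\ref{prop:leftcuts}. That proposition in turn relies on (S3), the compactness in (S1), and the continuity of $\mathcal T_P$ from Corollary~\ref{cor:lipschitzT}. Once it is available, the remaining arguments are order-theoretic.
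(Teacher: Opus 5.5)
Your proof is correct and follows the paper's argument: it combines nesting with the left-continuity of the solution cuts from Proposition~\ref{prop:leftcuts}. The only difference is the order. You prove attainment first and then get $[\widetilde{\mathcal Y}^{\,P,M}]_\gamma\subseteq\cC_{\gamma,M}^P$ by nesting, whereas the paper proves this inclusion for every $\gamma\le\mathfrak m_{\widetilde{\mathcal Y}^{\,P,M}}(\mathcal Y)$ and reads off attainment as the case $\gamma=\gamma_{\mathcal Y}$.
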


\begin{proof}
If $\mathcal Y\in\cC_{\gamma,M}^P$, then $\mathfrak m_{\widetilde{\mathcal Y}^{\,P,M}}(\mathcal Y)\ge\gamma$ by definition. Conversely, suppose that $\mathfrak m_{\widetilde{\mathcal Y}^{\,P,M}}(\mathcal Y)\ge\gamma>0$ and let $\iota\in(0,\gamma)$. Since $\iota$ is strictly below the supremum, there exists $\iota'>\iota$ such that $\mathcal Y\in\cC_{\iota',M}^P$, and nesting gives $\mathcal Y\in\cC_{\iota,M}^P$. Hence $\mathcal Y\in\bigcap_{0<\iota<\gamma}\cC_{\iota,M}^P=\cC_{\gamma,M}^P$ by Proposition~\ref{prop:leftcuts}. The second assertion is the case $\gamma=\gamma_{\mathcal Y}$.
\end{proof}
Here the term \emph{fuzzy set} is used in the sense of \cite{DiamondKloeden1994}, namely a fuzzy set in the metric space $\mathfrak E_P^2$ whose positive cuts are nonempty, compact, nested, and left-continuous. The cuts need not be convex, since the solution map is nonlinear.

\begin{proposition}
\label{prop:fuzzy-standard}
Fix $P\in\PP$ and data $(\xi,\alpha)\in\mathfrak D_P$, and suppose that $(H_{\rm quant})$ holds. Let $\widetilde{\mathcal Y}^{\,P,M}$ be the fuzzy set on $\mathfrak E_P^2$ with membership function $\mathfrak m_{\widetilde{\mathcal Y}^{\,P,M}}$ defined above. Then $\widetilde{\mathcal Y}^{\,P,M}$ is normal and upper semicontinuous, and its support
\[
[\widetilde{\mathcal Y}^{\,P,M}]_0
:=\overline{\bigcup_{\gamma\in(0,1]}\cC_{\gamma,M}^P}
\]
is compact in $\mathfrak E_P^2$.
\end{proposition}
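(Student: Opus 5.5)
The proof has three parts: normality, upper semicontinuity, and compactness of the support. For normality, observe that $\cC_{1,M}^P=\mathcal T_P(\mathfrak A_{1,M}^P)$ is nonempty by Theorem~\ref{thm:cutcompact}. Any $\mathcal Y\in\cC_{1,M}^P$ therefore satisfies $\mathfrak m_{\widetilde{\mathcal Y}^{\,P,M}}(\mathcal Y)\ge1$. Since the membership function takes values in $[0,1]$ by construction, it equals $1$ at $\mathcal Y$.

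For upper semicontinuity, I will show that every strict superlevel set $\{\mathfrak m\ge\gamma\}$ with $\gamma\in(0,1]$ is closed. For $\gamma\le0$ the superlevel set is all of $\mathfrak E_P^2$, so there is nothing to prove. For $\gamma\in(0,1]$, Lemma~\ref{lem:membership} identifies $\{\mathfrak m\ge\gamma\}$ with $\cC_{\gamma,M}^P$, which is compact in the metric space $\mathfrak E_P^2$ by Theorem~\ref{thm:cutcompact}, hence closed. Since all superlevel sets are closed, $\mathfrak m$ is upper semicontinuous.

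The main step is compactness of the support. The union of a continuum of compact sets need not be relatively compact, so I will use (S5) instead. Let $\overline{\mathfrak A}_M^P$ be the compact ambient class from (S5), which contains every $\mathfrak A_{\gamma,M}^P$. Its elements are regular $\Q$-valued predictable processes with energy bound $M$, though they need not be admissible at any level. By Remark~\ref{rem:occupation-scope}, the occupation estimates hold for such processes with the same constants. The proof of Lemma~\ref{lem:commonforward} never uses membership in a cut, only regularity and the energy bound. Hence the estimate $\|\mathcal T_P(q)-\mathcal T_P(q')\|_{\mathfrak E_P^2}\le C_\star\|q-q'\|_{\infty,P}$ extends to all $q,q'\in\overline{\mathfrak A}_M^P$. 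Here $\mathcal T_P$ is still well defined on $\overline{\mathfrak A}_M^P$ by Corollary~\ref{cor:wellposed-selected}, whose proof uses only predictability and $\Q$-valuedness of $q$. The point to check carefully is that Proposition~\ref{prop:K} and Theorem~\ref{thm:backward} never invoke the cut constraint; they only need the Borel measurability of $(q,\mu,\alpha,x)\mapsto(A^q)^0$ on $\Q\times\dots$. Granting this, $\mathcal T_P$ is Lipschitz on the compact set $\overline{\mathfrak A}_M^P$, so $\mathcal T_P(\overline{\mathfrak A}_M^P)$ is compact in $\mathfrak E_P^2$. This image contains $\bigcup_{\gamma}\cC_{\gamma,M}^P$, so the closure $[\widetilde{\mathcal Y}^{\,P,M}]_0$ is a closed subset of a compact set and is therefore compact.
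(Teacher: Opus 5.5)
Your proposal is correct and follows the paper's proof essentially step for step. You get normality from $\cC_{1,M}^P\neq\varnothing$, upper semicontinuity from the identification $[\widetilde{\mathcal Y}^{\,P,M}]_\gamma=\cC_{\gamma,M}^P$ (Lemma~\ref{lem:membership}) together with closedness of the cuts, and compactness of the support by extending $\mathcal T_P$ as a Lipschitz map to the compact ambient class $\overline{\mathfrak A}_M^P$ of (S5) via Remark~\ref{rem:occupation-scope}; relying on (S5) rather than (S4) is also what lets the paper keep this result for the feedback classes with $d_q\ge2$ (Corollary~\ref{cor:feedback}).
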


\begin{proof}
For every predictable process $q$ with values in $\Q$, define $\mathcal T_P(q)$ through the same source and the same BSDE: the proofs of Proposition \ref{prop:K} and Theorem \ref{thm:backward} use only these two properties of $q$, not its membership in a cut. Likewise, the proofs of Theorem \ref{thm:occupation} and Lemma \ref{lem:commonforward} use only that $q$ takes values in $\Q$ and satisfies the energy bound $M$, which hold for the elements of $\overline{\mathfrak A}_M^P$ by (S5). The map $\mathcal T_P$ is therefore Lipschitz, and in particular continuous, on the compact set $\overline{\mathfrak A}_M^P$; hence $\mathcal T_P(\overline{\mathfrak A}_M^P)$ is a compact subset of $\mathfrak E_P^2$ containing every cut $\cC_{\gamma,M}^P$. Its support, being a closed subset of this compact set, is therefore compact. Nonemptiness of $\cC_{1,M}^P$ gives normality. Finally, the positive cuts are closed by Theorem \ref{thm:cutcompact}, and $[\widetilde{\mathcal Y}^{\,P,M}]_\gamma=\cC_{\gamma,M}^P$ by Lemma \ref{lem:membership}. Since all its positive upper level sets are closed, the membership function is upper semicontinuous.
\end{proof}
\section{Yosida approximation and approximation of the cuts}\label{sec:yosida}

For a fixed selection, the regularization acts only on the operator: the forward state $X^P$ and its law $\mu^P$ remain unchanged.

\subsection{Resolvent, translation, and minimal selection}

Fix $(q,\mu,\alpha)$ and write
\[
h=h^q(\mu,\alpha),\qquad
c=c^q(\mu,\alpha).
\]
Then $A^q(x,\mu,\alpha)=\partial\phi^q(x+h)+c.$ For $\lambda>0$, $J_\lambda^{q,\mu,\alpha}$ and $A_\lambda^q$ are defined in \eqref{eq:resolvent-def}.

\begin{lemma}\label{lem:resolventidentity}
Suppose that Assumption \ref{ass:operator} holds. Let $q\in\Q$, $\mu\in\Ptwo(\R^d)$, $\alpha\in\mathbb D$, and set $h=h^q(\mu,\alpha)$ and $c=c^q(\mu,\alpha)$. Then, for every $\lambda>0$ and every $x\in\R^d$,
\begin{align}
J_\lambda^{q,\mu,\alpha}(x)
&=
J_{\lambda\partial\phi^q}(x+h-\lambda c)-h,
\label{eq:resolvent-translation}\\
A_\lambda^q(x,\mu,\alpha)
&=
(\partial\phi^q)_\lambda(x+h-\lambda c)+c.
\label{eq:yosida-translation}
\end{align}
\end{lemma}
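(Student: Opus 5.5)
The plan is to compute the resolvent directly from its defining inclusion and then deduce the Yosida formula by subtraction. Fix $q,\mu,\alpha$ and $\lambda>0$. By definition, $u=J_\lambda^{q,\mu,\alpha}(x)$ is the unique point with
\[
x\in u+\lambda A^q(u,\mu,\alpha)=u+\lambda\partial\phi^q(u+h)+\lambda c .
\]
Existence and uniqueness of $u$ follow from maximal monotonicity of $A^q(\cdot,\mu,\alpha)$, which was recorded after \eqref{eq:resolvent-def} via \cite[chap.~II]{Brezis1973}.

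\emph{Step 1: change of variable.} Set $w=u+h$. The inclusion becomes $x+h-\lambda c\in w+\lambda\partial\phi^q(w)$, that is,
\[
w=(I+\lambda\partial\phi^q)^{-1}(x+h-\lambda c)=J_{\lambda\partial\phi^q}(x+h-\lambda c).
\]
Because $\phi^q$ is convex and finite on $\R^d$ (Assumption~\ref{ass:operator}), $\partial\phi^q$ is maximal monotone with domain $\R^d$, so $J_{\lambda\partial\phi^q}$ is single-valued and everywhere defined; uniqueness of $w$ therefore holds. Returning to $u=w-h$ gives \eqref{eq:resolvent-translation}.

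\emph{Step 2: Yosida formula.} Substitute \eqref{eq:resolvent-translation} into $A_\lambda^q=\lambda^{-1}(x-J_\lambda^{q,\mu,\alpha}(x))$. With $y:=x+h-\lambda c$ one has $x=y-h+\lambda c$, hence
\[
\lambda^{-1}\bigl(x-J_{\lambda\partial\phi^q}(y)+h\bigr)=\lambda^{-1}\bigl(y-J_{\lambda\partial\phi^q}(y)\bigr)+c=(\partial\phi^q)_\lambda(y)+c,
\]
which is \eqref{eq:yosida-translation}.

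There is no genuine obstacle in this argument. The only point needing care is Step 1: the translation by $h$ acts inside the subdifferential, whereas the shift by $c$ must be absorbed as $-\lambda c$ in the argument. Tracking the sign and the factor $\lambda$ there correctly is the part to check.
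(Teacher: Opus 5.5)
Your proof is correct and follows the paper's argument essentially verbatim. The paper also rewrites the defining inclusion $x-y\in\lambda(\partial\phi^q(y+h)+c)$ as $x+h-\lambda c-(y+h)\in\lambda\partial\phi^q(y+h)$, and then substitutes into $A_\lambda^q=(x-y)/\lambda$; your remark on single-valuedness of $J_{\lambda\partial\phi^q}$ is a harmless extra justification.
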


\begin{proof}
Set $y=J_\lambda^{q,\mu,\alpha}(x)$. By definition,
\[
x-y\in\lambda A^q(y,\mu,\alpha)
=
\lambda\bigl(\partial\phi^q(y+h)+c\bigr).
\]
Hence $x+h-\lambda c-(y+h)\in\lambda\partial\phi^q(y+h)$, that is, $y+h=J_{\lambda\partial\phi^q}(x+h-\lambda c)$. This proves \eqref{eq:resolvent-translation}. Next,
\begin{align*}
A_\lambda^q(x,\mu,\alpha)
&=\frac{x-y}{\lambda}\\
&=\frac{x+h-\lambda c-J_{\lambda\partial\phi^q}(x+h-\lambda c)}{\lambda}+c\\
&=(\partial\phi^q)_\lambda(x+h-\lambda c)+c.
\end{align*}
\end{proof}

\begin{lemma}\label{lem:yosida-minimal}
Suppose that Assumption \ref{ass:operator} holds. Let $q\in\Q$, $\mu\in\Ptwo(\R^d)$, $\alpha\in\mathbb D$, and $x\in\R^d$. Then, for every $\lambda>0$,
\begin{equation}\label{eq:Y-min-bound}
|A_\lambda^q(x,\mu,\alpha)|
\le |(A^q)^0(x,\mu,\alpha)|,
\end{equation}
and
\begin{equation}\label{eq:Y-point-conv}
A_\lambda^q(x,\mu,\alpha)
\longrightarrow
(A^q)^0(x,\mu,\alpha)
\qquad\text{as }\lambda\downarrow0.
\end{equation}
\end{lemma}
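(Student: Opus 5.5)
The plan is to reduce both assertions to the classical properties of the Yosida approximation of a single maximal monotone operator on $\R^d$ \cite[chap.~II]{Brezis1973}. Fix $(q,\mu,\alpha)$ and write $\mathcal A=A^q(\cdot,\mu,\alpha)$. This operator is maximal monotone with domain $\R^d$, as recorded after \eqref{eq:resolvent-def}. Moreover, $A_\lambda^q(\cdot,\mu,\alpha)=\mathcal A_\lambda$ is exactly its Yosida approximation by definition \eqref{eq:resolvent-def}. The translation identity of Lemma~\ref{lem:resolventidentity} is not needed here; the argument works directly with $\mathcal A$. Set $y=J_\lambda(x)$ and $v=\mathcal A_\lambda x=(x-y)/\lambda$, so that $v\in\mathcal A y$.

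For \eqref{eq:Y-min-bound}, take any $w\in\mathcal A x$; this set is nonempty since $D(\mathcal A)=\R^d$. Monotonicity applied to the pairs $(y,v)$ and $(x,w)$ gives $\langle v-w,y-x\rangle\ge0$. Since $y-x=-\lambda v$, this becomes $\lambda\langle v-w,v\rangle\le0$, hence $|v|^2\le\langle w,v\rangle\le|w||v|$, and so $|v|\le|w|$. Choosing $w=\mathcal A^0x=(A^q)^0(x,\mu,\alpha)$ yields \eqref{eq:Y-min-bound}.

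For \eqref{eq:Y-point-conv}, the first step is $|x-J_\lambda x|=\lambda|\mathcal A_\lambda x|\le\lambda|\mathcal A^0x|\to0$, so $J_\lambda x\to x$. Next, the family $v_\lambda=\mathcal A_\lambda x$ is bounded by \eqref{eq:Y-min-bound}, so any sequence $\lambda_n\downarrow0$ has a subsequence with $v_{\lambda_n}\to\bar v$. The pairs $(J_{\lambda_n}x,v_{\lambda_n})$ lie in the graph of $\mathcal A$. This graph is closed, because a maximal monotone operator in finite dimension has closed graph; equivalently, one can pass to the limit in the monotonicity inequality against every graph point and invoke maximality. Hence $\bar v\in\mathcal A x$. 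Lower semicontinuity of the norm together with \eqref{eq:Y-min-bound} gives $|\bar v|\le|\mathcal A^0x|$. Since $\mathcal A x$ is closed and convex, its element of minimal norm is unique, so $\bar v=\mathcal A^0x$. Every subsequential limit is therefore the same point, and the whole family converges.

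I do not expect a real obstacle. The only point needing care is the identification of the limit, that is, the closedness of the graph combined with uniqueness of the minimal-norm element. Both are standard in $\R^d$, so no weak topology is required.
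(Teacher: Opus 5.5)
Your proposal is correct and follows essentially the same route as the paper. The bound comes from testing monotonicity with the graph point $(x,(A^q)^0(x))$, and the convergence comes from boundedness, extraction of a convergent subsequence, identification of the limit in $A^q(x,\mu,\alpha)$ via maximality (or closed graph), and uniqueness of the minimal-norm element.
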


\begin{proof}
Since $A^q(\cdot,\mu,\alpha)$ is maximal monotone on $\R^d$, both assertions are classical properties of the Yosida approximation \cite[chap.~II]{Brezis1973}; we include the proof. Set $y_\lambda=J_\lambda^{q,\mu,\alpha}(x)$, $v_\lambda=A_\lambda^q(x,\mu,\alpha)$, and $v^0=(A^q)^0(x,\mu,\alpha)$. Then $v_\lambda\in A^q(y_\lambda,\mu,\alpha)$ and $x-y_\lambda=\lambda v_\lambda$, so monotonicity gives $\langle v_\lambda-v^0,y_\lambda-x\rangle=-\lambda\langle v_\lambda-v^0,v_\lambda\rangle\ge0$, that is, $|v_\lambda|^2\le\langle v^0,v_\lambda\rangle\le|v^0||v_\lambda|$. For \eqref{eq:Y-point-conv}, the preceding bound gives $y_\lambda=x-\lambda v_\lambda\to x$. Let $\lambda_n\downarrow0$; the bounded sequence $(v_{\lambda_n})$ has a subsequence converging to some $v$. For every $(z,w)$ in the graph of $A^q(\cdot,\mu,\alpha)$, monotonicity yields $\langle v_{\lambda_n}-w,y_{\lambda_n}-z\rangle\ge0$, hence $\langle v-w,x-z\rangle\ge0$ in the limit; maximality implies $v\in A^q(x,\mu,\alpha)$. Since $|v|\le|v^0|$ and the minimum-norm element of the closed convex set $A^q(x,\mu,\alpha)$ is unique, $v=v^0$. Since every cluster point equals $v^0$, we obtain $v_\lambda\to v^0$.
\end{proof}

\begin{corollary}\label{cor:regularized-wellposed}
Fix $P\in\PP$ and suppose that the assumptions of Theorem \ref{thm:backward} hold. Let $\xi\in L^2(\Omega,\mathcal F_0,P;\R^d)$, $\alpha\in\mathcal A_2$, $\gamma\in(0,1]$, let $q$ be an admissible epistemic selection at level $\gamma$, and let $\lambda>0$. Then the process $A_\lambda^{q_t}(X_{t-}^P,\mu_{t-}^P,\alpha_t)$ is predictable and satisfies
\[
|A_\lambda^{q_t}(X_{t-}^P,\mu_{t-}^P,\alpha_t)|\le|(A^{q_t})^0(X_{t-}^P,\mu_{t-}^P,\alpha_t)|,
\qquad\dd t\otimes\dd P\text{-a.e.}
\]
Consequently, the system admits a unique regularized solution in the sense of Definition \ref{def:solution}, and this solution satisfies \eqref{eq:K-apriori} and \eqref{eq:backward-apriori} with the same constants, independently of $\lambda$.
\end{corollary}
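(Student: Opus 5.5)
The plan is to reduce the statement to the pointwise properties of Lemma~\ref{lem:yosida-minimal} and then re-run the well-posedness chain of Section~\ref{sec:bienpose} with the density $(A^{q_t})^0$ replaced by $A_\lambda^{q_t}$.

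\emph{Predictability.} The forward pair $(X^P,\mu^P)$ does not depend on $\lambda$, so the only new object is the regularized density
\[
\chi_t^{\lambda}:=A_\lambda^{q_t}(X_{t-}^P,\mu_{t-}^P,\alpha_t).
\]
By Assumption~\ref{ass:measop}, the map $(q,\mu,\alpha,x)\mapsto A_\lambda^q(x,\mu,\alpha)$ is Borel for each fixed $\lambda>0$. The processes $q$, $\alpha$ and $X_-^P$ are predictable, and $\mu^P$ is a deterministic continuous flow (Lemma~\ref{lem:flowcontinuity}). Remark~\ref{rem:measop}(iii) therefore shows that $\chi^\lambda$ is predictable, exactly as for $\chi^{P,q}$ in Proposition~\ref{prop:K}.

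\emph{Domination and the a priori bounds.} Apply \eqref{eq:Y-min-bound} pointwise at $(q_t(\omega),X_{t-}^P(\omega),\mu_{t-}^P,\alpha_t(\omega))$. This gives $|\chi_t^\lambda|\le|\chi_t^{P,q}|$ for every $(t,\omega)$, which is stronger than $\dd t\otimes\dd P$-a.e. Two consequences follow at once. First, $\mathbb E^P[\int_0^T|\chi^\lambda_t|^2\dd t]$ is bounded by the corresponding quantity for $\chi^{P,q}$. Second, $K^\lambda=\int_0^\cdot\chi_s^\lambda\dd s$ satisfies $\sup_t|K_t^\lambda|^2\le T\int_0^T|\chi^{P,q}_s|^2\dd s$. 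Hence \eqref{eq:K-apriori} holds for $\chi^\lambda$ and $K^\lambda$ with the same constant, because the proof of Proposition~\ref{prop:K} only used the bound of Lemma~\ref{lem:minimalgrowth}. Continuity, adaptedness and absolute continuity of $K^\lambda$ are also immediate.

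\emph{Backward equation.} Theorem~\ref{thm:backward} treats the density as an exogenous square-integrable term of the generator. Its proof uses nothing about $\chi$ beyond predictability and $\chi\in L^2(\dd t\otimes\dd P)$. The Picard contraction in Step~2 of that proof therefore applies verbatim with $\chi$ replaced by $\chi^\lambda$, since the source cancels in $\delta\mathsf f$; this gives existence and uniqueness of $(Y^\lambda,Z^\lambda,U^\lambda)$. In Step~3, the quantity $\mathsf D_t$ now contains $|\chi^\lambda_t|\le|\chi_t|$. The resulting bound is therefore dominated by the one for the unregularized system, and combining it with \eqref{eq:forward-apriori} and the bound from the previous step yields \eqref{eq:backward-apriori} with the same constant. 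Uniqueness of the whole quintuple follows as in Corollary~\ref{cor:wellposed-selected}, since each step of the triangular composition has a unique output.

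\emph{Main point to check.} There is no real analytic obstacle here. The only thing to verify carefully is uniformity in $\lambda$: every constant in Proposition~\ref{prop:K} and Theorem~\ref{thm:backward} enters only through upper bounds on $|\chi|$. The pointwise domination \eqref{eq:Y-min-bound} makes all of these bounds hold independently of $\lambda$, and the measurability in the first step holds for each fixed $\lambda$.
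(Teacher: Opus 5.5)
Your proposal is correct and follows the paper's proof essentially step for step. You obtain predictability from Assumption~\ref{ass:measop} and Remark~\ref{rem:measop}(iii), domination from \eqref{eq:Y-min-bound}, and then rerun Theorem~\ref{thm:backward} verbatim with the smaller regularized density in $\mathsf D_t$, so the constants are independent of $\lambda$.
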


\begin{proof}
Predictability follows from Assumption \ref{ass:measop} and Remark \ref{rem:measop}(iii), while the domination follows from \eqref{eq:Y-min-bound}. Hence the $L^2(\dd t\otimes\dd P)$ norm of the regularized density is bounded by the right-hand side of \eqref{eq:K-apriori}, and its time integral $K^\lambda$ is adapted, continuous, of finite variation, and belongs to $\cS_P^2$, with bound \eqref{eq:K-apriori}. The proof of Theorem \ref{thm:backward} uses the source only through these properties and through the term $|\chi_t|$ in the process $\mathsf D_t$ of Step 3, which is replaced by the smaller regularized density. It therefore applies verbatim and yields existence, uniqueness, and \eqref{eq:backward-apriori}. The forward component is that of Theorem \ref{thm:forward}, which does not depend on the source.
\end{proof}

\subsection{Qualitative convergence for a fixed selection}

For an admissible selection $q$, define
\[
\chi_t^\lambda
=
A_\lambda^{q_t}(X_{t-}^P,\mu_{t-}^P,\alpha_t),
\qquad
\chi_t
=
(A^{q_t})^0(X_{t-}^P,\mu_{t-}^P,\alpha_t),
\]
and
\[
K_t^\lambda=\int_0^t\chi_s^\lambda\dd s,
\qquad
K_t=\int_0^t\chi_s\dd s.
\]

\begin{theorem}\label{thm:yosidaqual}
Fix $P\in\PP$ and suppose that the assumptions of Theorem \ref{thm:backward} hold. Let $\xi\in L^2(\Omega,\mathcal F_0,P;\R^d)$, $\alpha\in\mathcal A_2$, $\gamma\in(0,1]$, and let $q$ be an admissible epistemic selection at level $\gamma$. For $\lambda>0$, let $(Y^\lambda,Z^\lambda,U^\lambda,K^\lambda)$ be the regularized solution from Corollary \ref{cor:regularized-wellposed} and $(Y,Z,U,K)$ the selected solution (Definition \ref{def:solution}), and let $\chi^\lambda,\chi$ be the corresponding source densities. Then, as $\lambda\downarrow0$,
\begin{equation}\label{eq:Y-qual-source}
\mathbb E^P\!\left[\int_0^T|\chi_t^\lambda-\chi_t|^2\dd t\right]\longrightarrow0,
\end{equation}
\begin{equation}\label{eq:Y-qual-K}
\mathbb E^P\!\left[\sup_{t\le T}|K_t^\lambda-K_t|^2\right]\longrightarrow0,
\end{equation}
and
\begin{equation}\label{eq:Y-qual-BSDE}
\|(Y^\lambda,Z^\lambda,U^\lambda,K^\lambda)
-(Y,Z,U,K)\|_{\mathfrak E_P^2}\longrightarrow0.
\end{equation}
\end{theorem}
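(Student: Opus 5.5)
The plan is to prove the three convergences in the stated order, using only the general assumptions of Theorem~\ref{thm:backward}: no finite-branch geometry or occupation estimate is needed. The starting point is the pointwise convergence \eqref{eq:Y-point-conv} of Lemma~\ref{lem:yosida-minimal}. For every $(t,\omega)$ it gives $\chi_t^\lambda\to\chi_t$ as $\lambda\downarrow0$, because the Yosida approximation and the minimal selection are evaluated at the same point $(q_t,X_{t-}^P,\mu_{t-}^P,\alpha_t)$. The forward state and the law flow do not depend on $\lambda$.

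For \eqref{eq:Y-qual-source}, the domination \eqref{eq:Y-min-bound} gives $|\chi_t^\lambda-\chi_t|^2\le4|\chi_t|^2$. By Proposition~\ref{prop:K}, the right-hand side is integrable with respect to $\dd t\otimes\dd P$. Dominated convergence along an arbitrary sequence $\lambda_n\downarrow0$ then yields \eqref{eq:Y-qual-source}. The convergence holds along every sequence, hence as $\lambda\downarrow0$.

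For \eqref{eq:Y-qual-K}, we use
\[
\sup_{t\le T}|K_t^\lambda-K_t|^2\le\Bigl(\int_0^T|\chi_s^\lambda-\chi_s|\dd s\Bigr)^2\le T\int_0^T|\chi_s^\lambda-\chi_s|^2\dd s,
\]
and take expectations; the bound then follows from \eqref{eq:Y-qual-source}.

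For \eqref{eq:Y-qual-BSDE}, we apply Lemma~\ref{lem:energy} to the two BSDEs, with $X^1=X^2=X^P$, $\alpha^1=\alpha^2=\alpha$, $\zeta^1=\zeta^2=g(X_T^P,\mu_T^P)$, and sources $K^1=K^\lambda$, $K^2=K$. The lemma's hypotheses hold here: both sources are adapted, continuous, of finite variation, start at zero, and lie in $\cS_P^2$, by Proposition~\ref{prop:K} and Corollary~\ref{cor:regularized-wellposed}. Every forward and terminal term in \eqref{eq:backward-stability-source} vanishes, which leaves
\[
\mathbb E^P\Bigl[\sup_{t\le T}|Y_t^\lambda-Y_t|^2+\int_0^T\bigl(|Z^\lambda_t-Z_t|^2_{a_t^P}+\|U^\lambda_t-U_t\|_\Pi^2\bigr)\dd t\Bigr]\le C_E\,\mathbb E^P\Bigl[\sup_{t\le T}|K^\lambda_t-K_t|^2\Bigr].
\]
Adding the $K$ component of the $\mathfrak E_P^2$ norm and invoking \eqref{eq:Y-qual-K} concludes the proof.

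The only delicate point is the pointwise convergence of the densities at the points of the interface. Lemma~\ref{lem:yosida-minimal} handles it in full generality, because it holds at every point, including where $A^q$ is multivalued. Measurability of $\chi^\lambda$ comes from Assumption~\ref{ass:measop}. No uniformity in the selection is claimed, so dominated convergence for the fixed $q$ suffices.
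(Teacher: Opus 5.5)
Your proposal is correct and follows the paper's proof essentially step by step. It uses pointwise convergence and the domination $|\chi_t^\lambda|\le|\chi_t|$ from Lemma~\ref{lem:yosida-minimal}, with dominated convergence, for \eqref{eq:Y-qual-source}; the Cauchy--Schwarz bound in time for \eqref{eq:Y-qual-K}; and Lemma~\ref{lem:energy} with identical forward state, law flow, control and terminal condition, followed by adding the $K$ component, for \eqref{eq:Y-qual-BSDE}.
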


\begin{proof}
For $\dd t\otimes\dd P$-almost every $(t,\omega)$, Lemma \ref{lem:yosida-minimal} gives
\[
\chi_t^\lambda\longrightarrow\chi_t,
\qquad
|\chi_t^\lambda|\le|\chi_t|.
\]
Thus $|\chi_t^\lambda-\chi_t|^2\le4|\chi_t|^2.$ Proposition \ref{prop:K} ensures
\[
\mathbb E^P\!\left[\int_0^T|\chi_t|^2\dd t\right]<\infty.
\]
Dominated convergence yields \eqref{eq:Y-qual-source}. Next,
\[
\sup_{t\le T}|K_t^\lambda-K_t|^2
\le
T\int_0^T|\chi_t^\lambda-\chi_t|^2\dd t,
\]
which proves \eqref{eq:Y-qual-K}.

It remains to transfer this convergence to the BSDE. The two equations have the same forward system, the same flow $\mu^P$, the same control, and the same terminal condition. Lemma \ref{lem:energy}, applied with
\[
\zeta^1=\zeta^2=g(X_T^P,\mu_T^P),\qquad
X^1=X^2=X^P,\qquad \mu^1=\mu^2=\mu^P,\qquad \alpha^1=\alpha^2=\alpha,
\]
gives directly
\[
\|(Y^\lambda-Y,Z^\lambda-Z,U^\lambda-U)\|_{\cS_P^2\times\cH_P^2\times\cJ_P^2}^2
\le C \mathbb E^P\!\left[\sup_{t\le T}|K_t^\lambda-K_t|^2\right].
\]
Adding the $K$ component and using \eqref{eq:Y-qual-K} yields \eqref{eq:Y-qual-BSDE}.
\end{proof}

\subsection{Same-branch analysis}

\begin{lemma}\label{lem:samebranch}\label{lem:pointY}
Suppose that Assumptions \ref{ass:operator} and \ref{ass:finite-branches} hold, and set $M_\Theta=1+\alpha_A\Theta_*+C_c+\overline v$. Let $\lambda>0$ and $(q,x,\mu,\alpha)\in\Q\times\R^d\times\Ptwo(\R^d)\times\mathbb D$. Then:
\begin{enumerate}[label=(\alph*)]
\item if $\rho^q(x,\mu,\alpha)>M_\Theta\lambda$, then $A_\lambda^q(x,\mu,\alpha)=(1+\alpha_A\lambda)^{-1}(A^q)^0(x,\mu,\alpha)$;
\item in all cases,
\begin{equation}\label{eq:Y-point}
|A_\lambda^q(x,\mu,\alpha)-(A^q)^0(x,\mu,\alpha)|\le\alpha_A\lambda\,|(A^q)^0(x,\mu,\alpha)|+2\sqrt d\,\overline v\,\1_{\{\rho^q(x,\mu,\alpha)\le M_\Theta\lambda\}} .
\end{equation}
\end{enumerate}
\end{lemma}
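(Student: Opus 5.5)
The plan is to reduce everything to the convex potential $\phi^q$ through Lemma~\ref{lem:resolventidentity}, and then exploit separability to work one coordinate at a time. Write $h=h^q(\mu,\alpha)$, $c=c^q(\mu,\alpha)$, $z=x+h$ and $w=z-\lambda c$. By \eqref{eq:yosida-translation}, $A_\lambda^q(x,\mu,\alpha)=\lambda^{-1}(w-J_{\lambda\partial\phi^q}(w))+c$. Since $\phi^q$ is separable, the resolvent acts coordinatewise. Its $k$-th component $p_k$ solves
\[
w_k\in(1+\lambda\alpha_A)p_k+\lambda\,\partial\mathfrak g_k^q(p_k).
\]
If $p_k$ lies in the open branch $I_{k,j}$, this reads $p_k=(w_k-\lambda v_{k,j}^q)/(1+\lambda\alpha_A)$.

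\textbf{Step (a).} Assume $\rho^q>M_\Theta\lambda$, so every $z_k$ lies in some $I_{k,j}$ at distance greater than $M_\Theta\lambda$ from $\Theta_k$. Take the candidate
\[
p_k=\frac{z_k-\lambda c_k-\lambda v_{k,j}^q}{1+\lambda\alpha_A}.
\]
It is enough to check that $p_k\in I_{k,j}$; by uniqueness of the resolvent it then \emph{is} the resolvent. The displacement is
\[
p_k-z_k=-\frac{\lambda(\alpha_Az_k+c_k+v_{k,j}^q)}{1+\lambda\alpha_A}.
\]
Here $|c_k|\le C_c$ and $|v_{k,j}^q|\le\overline v$, but $z_k$ itself is unbounded. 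The fix is to measure the displacement relative to the nearest threshold $\theta$. The relevant sign of $\alpha_Az_k$ pushes $p_k$ toward $0$, i.e.\ toward the thresholds only by the amount $\alpha_A(z_k-\theta)$ scaled down. Concretely, compute
\[
p_k-\theta=\frac{(z_k-\theta)-\lambda(\alpha_A\theta+c_k+v_{k,j}^q)}{1+\lambda\alpha_A}.
\]
The numerator has the sign of $z_k-\theta$, because $|z_k-\theta|>M_\Theta\lambda\ge\lambda(\alpha_A\Theta_*+C_c+\overline v)$. Applying this to both endpoint thresholds of $I_{k,j}$ (at most one side if the branch is unbounded) gives $p_k\in I_{k,j}$. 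This sign check is the main point to verify carefully, and it is exactly why $M_\Theta$ contains $\alpha_A\Theta_*$.

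With $p_k$ identified, the $k$-th component of $A_\lambda^q$ is
\[
\frac{w_k-p_k}{\lambda}+c_k=\frac{\alpha_Az_k+c_k+v_{k,j}^q}{1+\lambda\alpha_A}.
\]
To close (a), use that off the interface $(A^q)^0_k=\alpha_Az_k+v_{k,j}^q+c_k$, since the subdifferential is a singleton. This gives $A_\lambda^q=(1+\alpha_A\lambda)^{-1}(A^q)^0$.

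\textbf{Step (b).} On $\{\rho^q>M_\Theta\lambda\}$, part (a) gives
\[
|A_\lambda^q-(A^q)^0|=\frac{\alpha_A\lambda}{1+\alpha_A\lambda}|(A^q)^0|\le\alpha_A\lambda|(A^q)^0|.
\]
On the complementary layer I need a bound of order $\alpha_A\lambda|(A^q)^0|+2\sqrt d\,\overline v$. I will not apply (a) coordinatewise there. Instead I use the explicit structure in every coordinate: whatever branch or kink $p_k$ sits at, the same computation gives
\[
(A_\lambda^q)_k=\frac{\alpha_Az_k+c_k+v_k}{1+\lambda\alpha_A}
\]
for some $v_k\in\partial\mathfrak g_k^q(p_k)\subset[-\overline v,\overline v]$. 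Indeed, $w_k-(1+\lambda\alpha_A)p_k=\lambda v_k$, and rearranging gives $p_k=z_k-\lambda(A_\lambda^q)_k$ together with the identity above. Likewise $(A^q)^0_k=\alpha_Az_k+c_k+v_k^0$ with $|v_k^0|\le\overline v$. Hence
\[
(A_\lambda^q)_k-(A^q)^0_k=-\frac{\lambda\alpha_A}{1+\lambda\alpha_A}(A^q)^0_k+\frac{v_k-v_k^0}{1+\lambda\alpha_A}.
\]
Taking Euclidean norms gives at most $\alpha_A\lambda|(A^q)^0|+2\sqrt d\,\overline v$. Combining the two cases yields \eqref{eq:Y-point}. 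The only delicate step is the localization argument in (a); (b) is then bookkeeping.
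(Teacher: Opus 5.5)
Your proof is correct and follows the paper's argument. It uses the same ingredients: the translation identity, the coordinatewise resolvent relation, the localization at each threshold with the constant $\alpha_A\Theta_*+C_c+\overline v$, and the splitting of the error into a contraction term plus a slope difference $v-v^0$. The differences are minor: in (a) you verify an explicit candidate and invoke uniqueness of the resolvent where the paper argues by contradiction, and in (b) your exact identity $(1+\lambda\alpha_A)A_\lambda^q=(A^q)^0+(v-v^0)$ lets you skip the domination $|A_\lambda^q|\le|(A^q)^0|$ of Lemma~\ref{lem:yosida-minimal}, with a marginally sharper constant.
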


\begin{proof}
Set $z=x+h^q(\mu,\alpha)$, $c=c^q(\mu,\alpha)$, and $w=J_{\lambda\partial\phi^q}(z-\lambda c)$. By Lemma \ref{lem:resolventidentity}, $J_\lambda^{q,\mu,\alpha}(x)=w-h^q(\mu,\alpha)$, hence $A_\lambda^q(x,\mu,\alpha)=\lambda^{-1}(z-w)$. The definition of the resolvent provides $v\in\prod_k\partial \mathfrak g_k^q(w_k)$, with $|v_k|\le\overline v$, such that
\begin{equation}\label{eq:poly-resolvent}
z=(1+\lambda\alpha_A)w+\lambda(c+v),
\qquad\text{that is,}\qquad
A_\lambda^q(x,\mu,\alpha)=\alpha_Aw+c+v .
\end{equation}
Let $\theta\in\Theta_k$. If $z_k>\theta+M_\Theta\lambda$ and $w_k\le\theta$, then \eqref{eq:poly-resolvent} gives
\[
z_k\le(1+\lambda\alpha_A)\theta+\lambda(C_c+\overline v)\le\theta+\lambda(\alpha_A\Theta_*+C_c+\overline v)<\theta+M_\Theta\lambda,
\]
a contradiction. Likewise, $z_k<\theta-M_\Theta\lambda$ and $w_k\ge\theta$ imply $z_k\ge\theta-\lambda(\alpha_A\Theta_*+C_c+\overline v)>\theta-M_\Theta\lambda$, again a contradiction. Under the assumption in (a), $z_k$ and $w_k$ are therefore strictly on the same side of every threshold: they belong to the same interval $I_{k,j}$, on which $\partial \mathfrak g_k^q$ is the singleton $\{v_{k,j}^q\}$. Thus $v_k=v^q_{k,j}$ and, since $z_k$ is not a threshold, $(A^q)^0_k(x,\mu,\alpha)=\alpha_Az_k+c_k+v^q_{k,j}$. Hence $A_\lambda^q-(A^q)^0=\alpha_A(w-z)=-\alpha_A\lambda A_\lambda^q$, which proves (a).

In general, $(A^q)^0(x,\mu,\alpha)=\alpha_Az+c+v^0$ with $v^0\in\prod_k\partial \mathfrak g_k^q(z_k)$, so $|v^0|\le\sqrt d\,\overline v$. Hence $A_\lambda^q-(A^q)^0=-\alpha_A\lambda A_\lambda^q+(v-v^0)$, where $v=v^0$ outside the layer $\{\rho^q\le M_\Theta\lambda\}$ by the preceding argument, and $|v-v^0|\le2\sqrt d\,\overline v$ inside the layer. Lemma \ref{lem:yosida-minimal} gives $|A_\lambda^q|\le|(A^q)^0|$, which proves (b).
\end{proof}

\subsection{Uniform Yosida rate}

\begin{theorem}\label{thm:yosidarate}
Suppose that $(H_{\rm quant})$ holds. For each admissible choice of $P\in\PP$, $(\xi,\alpha)\in\mathfrak D_P$, $\gamma\in(0,1]$, $q\in\mathfrak A_{\gamma,M}^P[\xi,\alpha]$, and $\lambda\in(0,1]$, let $\mathcal Y^{P,q}=(Y,Z,U,K)$ be the selected solution and $\mathcal Y^{P,q,\lambda}=(Y^\lambda,Z^\lambda,U^\lambda,K^\lambda)$ the regularized solution. Then there exists $C=C(M;\mathfrak C_{\rm str})$, independent of $P$, $(\xi,\alpha)$, $\gamma$, $q$, and $\lambda$, such that
\begin{equation}\label{eq:Y-rate-K-strong}
\sup_{P,\gamma,q}\mathbb E^P\!\left[\sup_{t\le T}|K_t^\lambda-K_t|^2\right]\le C\lambda^2,\qquad 0<\lambda\le1,
\end{equation}
and
\begin{equation}\label{eq:Y-linear-occ}
\sup_{P,\gamma,q}
\|\mathcal Y^{P,q,\lambda}-\mathcal Y^{P,q}\|_{\mathfrak E_P^2}
\le C\lambda,\qquad 0<\lambda\le1.
\end{equation}
\end{theorem}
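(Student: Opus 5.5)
The plan is to bound the integrated source error pathwise, using the pointwise estimate of Lemma~\ref{lem:samebranch}(b), and then apply the second occupation moment of Theorem~\ref{thm:occupation}. Fix $P$, $(\xi,\alpha)\in\mathfrak D_P$, $\gamma$, $q\in\mathfrak A_{\gamma,M}^P[\xi,\alpha]$ and $\lambda\in(0,1]$, and write $\chi^\lambda,\chi$ for the two source densities. Since $X^P$, $\mu^P$ and $\alpha$ are common to both systems, \eqref{eq:Y-point} gives, $\dd t\otimes\dd P$-a.e.,
\[
|\chi_t^\lambda-\chi_t|\le\alpha_A\lambda|\chi_t|+2\sqrt d\,\overline v\,\1_{\{\rho^{q_t}(X_{t-}^P,\mu_{t-}^P,\alpha_t)\le M_\Theta\lambda\}}.
\]
Integrating in time \emph{before} squaring, I would obtain, $P$-a.s.,
\[
\sup_{t\le T}|K_t^\lambda-K_t|\le\alpha_A\lambda\int_0^T|\chi_t|\dd t+2\sqrt d\,\overline v\,\mathsf O^{P,q}_{M_\Theta\lambda}.
\]

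Next I would square this using $(x_1+x_2)^2\le2x_1^2+2x_2^2$ and take expectations. The first term is at most $2\alpha_A^2\lambda^2T\,\mathbb E^P[\int_0^T|\chi_t|^2\dd t]$. By \eqref{eq:K-apriori} and the common bounds $\mathfrak M_\xi,\mathfrak M_\alpha$ of Remark~\ref{rem:constants}, this is $\le C\lambda^2$ uniformly in $P,\gamma,q$. The second term is $8d\,\overline v^2\,\mathbb E^P[(\mathsf O^{P,q}_{M_\Theta\lambda})^2]$. Here there is a minor issue: Theorem~\ref{thm:occupation} is stated for $r\le1$, whereas $M_\Theta\lambda$ may exceed $1$. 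I would split into two cases. If $M_\Theta\lambda\le1$, then \eqref{eq:occupation-second} gives $\le C_{\rm occ}^{(2)}M_\Theta^2\lambda^2$. If $M_\Theta\lambda>1$, I would use the trivial bound $\mathsf O\le T$, so the term is $\le T^2\le T^2M_\Theta^2\lambda^2$. Alternatively, Remark~\ref{rem:constants} notes that the local-time bounds hold for any fixed radius. Since $M_\Theta$ is structural, this proves \eqref{eq:Y-rate-K-strong}.

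Finally, I would transfer the bound to $(Y,Z,U)$ with Lemma~\ref{lem:energy}, applied with identical terminal values, forward states, flows and controls, exactly as in the proof of Theorem~\ref{thm:yosidaqual}. All terms except $\mathbb E^P[\sup_t|K_t^\lambda-K_t|^2]$ vanish, and $C_E$ depends only on $L_B,T$. Adding the $K$-component then gives $\|\mathcal Y^{P,q,\lambda}-\mathcal Y^{P,q}\|_{\mathfrak E_P^2}^2\le(1+C_E)C\lambda^2$, and taking square roots yields \eqref{eq:Y-linear-occ}.

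The genuine content is already in Lemma~\ref{lem:samebranch} and in \eqref{eq:occupation-second}. The main care point is checking that the hypotheses of Theorem~\ref{thm:occupation} hold uniformly. In particular, the energy bound $M$ on $q$ comes from (S1), and the bound on $\dot\alpha$ comes from Assumption~\ref{ass:finitestat}. One must also check that the layer width $M_\Theta\lambda$ stays in the admissible range, which the case split above handles. Using only the first moment here would lose a square root, as explained in the introduction; this is why I integrate before squaring.
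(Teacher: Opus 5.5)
Your proposal is correct and follows the paper's proof essentially step by step: the pointwise bound of Lemma~\ref{lem:samebranch}(b), integration in time before squaring, the second occupation moment \eqref{eq:occupation-second} at radius $r=M_\Theta\lambda$, and transfer to $(Y,Z,U)$ through Lemma~\ref{lem:energy} with identical forward data. The only difference is minor and concerns the range $M_\Theta\lambda>1$. There the paper drops the pointwise estimate and uses the domination $|\chi^\lambda_t-\chi_t|\le 2|\chi_t|$, whereas you keep \eqref{eq:Y-point} and bound the occupation time by $T$; both give $C\lambda^2$ because $\lambda>M_\Theta^{-1}$ in that range.
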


\begin{proof}
\emph{Step 1: uniform bounds on the source densities.} Fix $P\in\PP$, $(\xi,\alpha)\in\mathfrak D_P$, $\gamma\in(0,1]$, and $q\in\mathfrak A_{\gamma,M}^P[\xi,\alpha]$. Keep the notation $\chi^\lambda,\chi,K^\lambda,K$ introduced in the qualitative convergence result. Proposition \ref{prop:K}, \eqref{eq:forward-apriori}, and the uniform bounds on the data ensure that
\[
C_\chi:=\sup_{P,\gamma,q}\mathbb E^P\!\left[
\int_0^T|(A^{q_t})^0(X_{t-}^P,\mu_{t-}^P,\alpha_t)|^2\dd t\right]<\infty.
\]
Lemma \ref{lem:yosida-minimal}, applied for $\dd t\otimes\dd P$-almost every $(t,\omega)$, gives
\begin{equation}\label{eq:Y-minimal-domination}
|A_\lambda^{q_t}(X_{t-}^P,\mu_{t-}^P,\alpha_t)|
\le |(A^{q_t})^0(X_{t-}^P,\mu_{t-}^P,\alpha_t)|.
\end{equation}
This domination yields the same bound $C_\chi$ for $\mathbb E^P\!\left[\int_0^T|\chi_t^\lambda|^2\dd t\right]$, uniformly in $\lambda$.

\emph{Step 2: small $\lambda$: integrate before squaring.} Set $C_2=2\sqrt d\,\overline v$ and $\lambda_0=\min\{1,M_\Theta^{-1}\}>0$. Lemma \ref{lem:samebranch}(b) gives
\[
|\chi_t^\lambda-\chi_t|
\le\alpha_A\lambda|\chi_t|
+C_2\1_{\{\rho^{q_t}(X_{t-}^P,\mu_{t-}^P,\alpha_t)\le M_\Theta\lambda\}}.
\]
For $0<\lambda\le\lambda_0$, integrate this inequality before squaring it. By the definition of $K^\lambda$ and $K$,
\[
\sup_{u\le T}|K_u^\lambda-K_u|
\le \alpha_A\lambda\int_0^T|\chi_t|\dd t
+C_2\int_0^T
\1_{\{\rho^{q_t}(X_{t-}^P,\mu_{t-}^P,\alpha_t)\le M_\Theta\lambda\}}\dd t.
\]
The inequality $(a+b)^2\le2a^2+2b^2$ yields
\begin{align}
\mathbb E^P\!\left[\sup_{u\le T}|K_u^\lambda-K_u|^2\right]
\le{}&2\alpha_A^2\lambda^2 \mathbb E^P\!\left[
\left(\int_0^T|\chi_t|\dd t\right)^2\right]\notag\\
&+2C_2^2 \mathbb E^P\!\left[\left(\int_0^T
\1_{\{\rho^{q_t}(X_{t-}^P,\mu_{t-}^P,\alpha_t)\le M_\Theta\lambda\}}\dd t
\right)^2\right].
\label{eq:Y-K-secondmoment}
\end{align}
By Cauchy--Schwarz in time and the definition of $C_\chi$,
\[
\mathbb E^P\!\left[\left(\int_0^T|\chi_t|\dd t\right)^2\right]\le T\,\mathbb E^P\!\left[\int_0^T|\chi_t|^2\dd t\right]\le TC_\chi ,
\]
and, by the second-moment estimate of Theorem \ref{thm:occupation} applied with $r=M_\Theta\lambda\le1$,
\[
\mathbb E^P\!\left[\left(\int_0^T\1_{\{\rho^{q_t}(X_{t-}^P,\mu_{t-}^P,\alpha_t)\le M_\Theta\lambda\}}\dd t\right)^2\right]\le C_{\rm occ}^{(2)}M_\Theta^2\lambda^2 .
\]
Substituting into \eqref{eq:Y-K-secondmoment},
\[
\mathbb E^P\!\left[\sup_{u\le T}|K_u^\lambda-K_u|^2\right]\le C_K^{\rm small}\lambda^2,
\quad
C_K^{\rm small}=2\alpha_A^2TC_\chi+
2C_2^2C_{\rm occ}^{(2)}M_\Theta^2.
\]

\emph{Step 3: the range $\lambda_0<\lambda\le1$.} If $\lambda_0<\lambda\le1$, the domination \eqref{eq:Y-minimal-domination} gives $|\chi_t^\lambda-\chi_t|\le|\chi_t^\lambda|+|\chi_t|\le2|\chi_t|$, and therefore directly
\[
\mathbb E^P\!\left[\sup_{u\le T}|K_u^\lambda-K_u|^2\right]
\le4T \mathbb E^P\!\left[\int_0^T|\chi_t|^2\dd t\right]
\le4TC_\chi\lambda_0^{-2}\lambda^2.
\]
Consequently, \eqref{eq:Y-rate-K-strong} holds for every $\lambda\in(0,1]$, with $C_K=\max(C_K^{\rm small},4TC_\chi\lambda_0^{-2})$.

\emph{Step 4: transfer to the backward solution.} It remains to transfer this estimate to the backward solution without reintroducing the $L^2(\dd t\otimes\dd P)$ norm of $\chi^\lambda-\chi$. In Lemma \ref{lem:energy}, take $\zeta^1=\zeta^2=g(X_T^P,\mu_T^P)$, $X^1=X^2=X^P$, $\mu^1=\mu^2=\mu^P$, and $\alpha^1=\alpha^2=\alpha$. Its constant $C_E$ does not depend on $\lambda$. We obtain
\begin{align*}
&\mathbb E^P\!\left[\sup_{t\le T}|Y_t^\lambda-Y_t|^2\right]\\
&\quad+\mathbb E^P\!\left[\int_0^T
\left(|Z_t^\lambda-Z_t|_{a_t^P}^2+
\|U_t^\lambda-U_t\|_\Pi^2\right)\dd t\right]
\le C_E C_K\lambda^2.
\end{align*}
Adding the $K$ component in the product norm defined in Section~\ref{sec:bienpose},
\[
\|\mathcal Y^{P,q,\lambda}-\mathcal Y^{P,q}\|_{\mathfrak E_P^2}^2
\le(1+C_E)C_K\lambda^2.
\]
Taking the supremum over $P,\gamma,q$ and then the square root proves \eqref{eq:Y-linear-occ}.
\end{proof}

\begin{remark}
The $O(\lambda)$ rate in norm relies on the second moment of the occupation time. To identify the gain precisely, if only the first-moment bound were available, squaring \eqref{eq:Y-point} would give
\[
\mathbb E^P\!\left[\int_0^T|\chi_t^\lambda-\chi_t|^2
\dd t\right]\le C(\lambda^2+\lambda).
\]
The inequality
\(\sup_{t\le T}|K_t^\lambda-K_t|^2
\le T\int_0^T|\chi_s^\lambda-\chi_s|^2\dd s\)
would then yield only a squared error of order $O(\lambda)$ and a bound of order $O(\lambda^{1/2})$ in norm for the backward solution. By contrast, the second-moment occupation estimate allows the source difference to be integrated \emph{before} it is squared, yielding the $O(\lambda^2)$ bound for the integrated source. Thus the $O(\lambda)$ rate is a consequence of the finite-branch geometry and \eqref{eq:occupation-second}, not a generic rate for maximal monotone operators.
\end{remark}

\subsection{Approximation of the cuts}

For $\lambda>0$, $P\in\PP$, data $(\xi,\alpha)\in\mathfrak D_P$, $\gamma\in(0,1]$, and $q\in\mathfrak A_{\gamma,M}^P[\xi,\alpha]$, denote by $\mathcal T_{P,\lambda}^{\xi,\alpha}(q)=\mathcal Y^{P,q,\lambda}$ the solution obtained by replacing $(A^q)^0$ with $A_\lambda^q$ in \eqref{eq:K}, and define the regularized cuts
\[
\cC_{\gamma,M,\lambda}^P[\xi,\alpha]
=
\bigl\{\mathcal T_{P,\lambda}^{\xi,\alpha}(q):q\in\mathfrak A_{\gamma,M}^P[\xi,\alpha]\bigr\}.
\]
When $P$ and the data are fixed, we write $\mathcal T_{P,\lambda}$ and $\cC_{\gamma,M,\lambda}^P$.

\begin{lemma}\label{lem:resolvent-slopes}
Let $\alpha_A>0$ and, for $k=1,\ldots,d$, let $\mathfrak g_k,\mathfrak g_k':\R\to\R$ be two continuous convex functions, affine on each connected component of $\R\setminus\Theta_k$ for the same finite set $\Theta_k$, with respective slopes $v_{k,j}$ and $v'_{k,j}$ on the $j$-th component, and suppose that $\max_j|v_{k,j}-v'_{k,j}|\le\delta_k$. Set $\phi(s)=\frac{\alpha_A}{2}|s|^2+\sum_k\mathfrak g_k(s_k)$ and $\phi'(s)=\frac{\alpha_A}{2}|s|^2+\sum_k\mathfrak g'_k(s_k)$. Then, for every $\lambda>0$ and $u,u'\in\R^d$,
\[
|J_{\lambda\partial\phi}(u)-J_{\lambda\partial\phi'}(u')|
\le|u-u'|+\lambda\Bigl(\sum_{k=1}^d\delta_k^2\Bigr)^{1/2}.
\]
\end{lemma}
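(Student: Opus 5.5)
The plan is to write both resolvent points through their inclusions and compare them by strong monotonicity. Set $w=J_{\lambda\partial\phi}(u)$ and $w'=J_{\lambda\partial\phi'}(u')$. By the definition of the resolvent and the separable structure of $\phi$ (as in \eqref{eq:poly-resolvent} in the proof of Lemma~\ref{lem:samebranch}), there exist $v\in\prod_k\partial\mathfrak g_k(w_k)$ and $v'\in\prod_k\partial\mathfrak g'_k(w'_k)$ such that
\[
u=(1+\lambda\alpha_A)w+\lambda v,\qquad u'=(1+\lambda\alpha_A)w'+\lambda v'.
\]
Subtracting and taking the inner product with $w-w'$ gives
\[
(1+\lambda\alpha_A)|w-w'|^2+\lambda\langle v-v',w-w'\rangle=\langle u-u',w-w'\rangle .
\]
The whole argument therefore reduces to a lower bound on the cross term, coordinate by coordinate.

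The key step is the one-dimensional estimate
\[
(v_k-v'_k)(w_k-w'_k)\ge-\delta_k|w_k-w'_k|.
\]
To prove it, compare $\partial\mathfrak g'_k$ with $\partial\mathfrak g_k$ through an auxiliary element. Suppose $w'_k$ lies in the interior of a component $I_{k,j}$. Then $v'_k=v'_{k,j}$, and $\tilde v_k:=v_{k,j}\in\partial\mathfrak g_k(w'_k)$ satisfies $|\tilde v_k-v'_k|\le\delta_k$. Suppose instead that $w'_k=\theta_{k,j}$ is a threshold. Then $v'_k\in[v'_{k,j-1},v'_{k,j}]$, and the interval $\partial\mathfrak g_k(\theta_{k,j})=[v_{k,j-1},v_{k,j}]$ has endpoints within $\delta_k$ of those of $\partial\mathfrak g'_k(\theta_{k,j})$. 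Its projection $\tilde v_k$ of $v'_k$ onto $[v_{k,j-1},v_{k,j}]$ therefore satisfies $|\tilde v_k-v'_k|\le\delta_k$; this holds because both intervals are increasing and their endpoints differ by at most $\delta_k$. In both cases $\tilde v_k\in\partial\mathfrak g_k(w'_k)$. Monotonicity of $\partial\mathfrak g_k$ gives $(v_k-\tilde v_k)(w_k-w'_k)\ge0$, and the claim follows.

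Summing over $k$ and applying Cauchy--Schwarz yields
\[
\langle v-v',w-w'\rangle\ge-\delta|w-w'|,\qquad \delta=\Bigl(\sum_k\delta_k^2\Bigr)^{1/2}.
\]
Inserting this into the identity and dropping $\lambda\alpha_A|w-w'|^2\ge0$ gives
\[
|w-w'|^2\le|u-u'|\,|w-w'|+\lambda\delta|w-w'| .
\]
Dividing by $|w-w'|$, when it is nonzero, gives the claim; the case $w=w'$ is trivial.

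The only delicate point is the threshold case in the one-dimensional estimate. There one must check that the subdifferential intervals $[v_{k,j-1},v_{k,j}]$ and $[v'_{k,j-1},v'_{k,j}]$ are within Hausdorff distance $\delta_k$ of each other. This follows directly from the endpoint bounds. Convexity guarantees that the intervals are nondegenerate and correctly ordered, so the projection is well defined.
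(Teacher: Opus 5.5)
Your proof is correct and uses the same mechanism as the paper: an auxiliary subgradient $\tilde v_k\in\partial\mathfrak g_k(w'_k)$ within $\delta_k$ of $v'_k$ (the paper builds it as the same convex combination of the unperturbed slopes, while you take a projection), combined with monotonicity of $\partial\mathfrak g_k$. The only structural difference is that the paper first compares $J_{\lambda\partial\phi}(u')$ and $J_{\lambda\partial\phi'}(u')$ at the same input and then applies nonexpansiveness and the triangle inequality, whereas you handle $u\ne u'$ in a single monotonicity identity; this route would also let you keep the factor $(1+\lambda\alpha_A)^{-1}$ that you chose to drop.
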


\begin{proof}
Since the potential is separable, the resolvent acts coordinatewise: $J_{\lambda\partial\phi}(u)_k=j_k(u_k)$, where $j_k$ is the resolvent of $\lambda\partial(\frac{\alpha_A}{2}s^2+\mathfrak g_k)$ on $\R$; similarly for $j'_k$.

We first show that, for every $s\in\R$ and every $p'\in\partial \mathfrak g'_k(s)$, there exists $\tilde p\in\partial \mathfrak g_k(s)$ such that $|p'-\tilde p|\le\delta_k$. If $s$ is not a threshold and belongs to the $j$-th component, then $\partial \mathfrak g'_k(s)=\{v'_{k,j}\}$ and $\partial \mathfrak g_k(s)=\{v_{k,j}\}$. If $s=\theta_{k,j}$, then $p'=\tau v'_{k,j-1}+(1-\tau)v'_{k,j}$ for some $\tau\in[0,1]$, and $\tilde p=\tau v_{k,j-1}+(1-\tau)v_{k,j}$ works.

Fix $r\in\R$ and set $s=j_k(r)$, $s'=j'_k(r)$. By definition of the resolvent,
\[
p:=\frac{r-s}{\lambda}-\alpha_As\in\partial \mathfrak g_k(s),
\qquad
p':=\frac{r-s'}{\lambda}-\alpha_As'\in\partial \mathfrak g'_k(s'),
\]
so $p-p'=-(\lambda^{-1}+\alpha_A)(s-s')$. Choose $\tilde p\in\partial \mathfrak g_k(s')$ with $|p'-\tilde p|\le\delta_k$. Monotonicity of $\partial \mathfrak g_k$ gives
\[
0\le(p-\tilde p)(s-s')
=-(\lambda^{-1}+\alpha_A)|s-s'|^2+(p'-\tilde p)(s-s'),
\]
hence $|s-s'|\le\delta_k/(\lambda^{-1}+\alpha_A)\le\lambda\delta_k$. Thus $|J_{\lambda\partial\phi}(u')-J_{\lambda\partial\phi'}(u')|\le\lambda(\sum_k\delta_k^2)^{1/2}$. The nonexpansiveness of $J_{\lambda\partial\phi}$ \cite[chap.~II]{Brezis1973} and the triangle inequality conclude the proof.
\end{proof}

\begin{lemma}\label{lem:regularized-solution-continuity}
Fix $P\in\PP$, data $(\xi,\alpha)\in\mathfrak D_P$, $\gamma\in(0,1]$, and $\lambda>0$, and suppose that $(H_{\rm quant})$ holds. Let $q^n,q\in\mathfrak A_{\gamma,M}^P$ with $\|q^n-q\|_{\infty,P}\to0$. Then
\[
\mathcal T_{P,\lambda}(q^n)\longrightarrow\mathcal T_{P,\lambda}(q)
\qquad\text{in }\mathfrak E_P^2.
\]
\end{lemma}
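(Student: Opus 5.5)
Since the selections share the forward state, the flow, and the control, the plan is to reduce everything to the integrated regularized sources and then invoke Lemma~\ref{lem:energy}. Write $\chi_t^{\lambda,n}=A_\lambda^{q^n_t}(X_{t-}^P,\mu_{t-}^P,\alpha_t)$ and $\chi_t^{\lambda}=A_\lambda^{q_t}(X_{t-}^P,\mu_{t-}^P,\alpha_t)$, with integrals $K^{\lambda,n}$ and $K^\lambda$. The first step is a pointwise Lipschitz bound of $A_\lambda^q(x,\mu,\alpha)$ in $q$ for fixed $\lambda$, with a constant depending on $\lambda$. By \eqref{eq:yosida-translation},
\[
A_\lambda^q(x,\mu,\alpha)=\lambda^{-1}\bigl(u-J_{\lambda\partial\phi^q}(u)\bigr)+c^q(\mu,\alpha),\qquad u=x+h^q(\mu,\alpha)-\lambda c^q(\mu,\alpha).
\]
Assumption~\ref{ass:operator} gives $|u-u'|\le(L_h+\lambda L_c)|q-q'|$ and $|c^q-c^{q'}|\le L_c|q-q'|$ when only $q$ changes. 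Lemma~\ref{lem:resolvent-slopes}, with $\delta_k=L_v|q-q'|$ from Assumption~\ref{ass:finite-branches}, gives $|J_{\lambda\partial\phi^q}(u)-J_{\lambda\partial\phi^{q'}}(u')|\le|u-u'|+\lambda\sqrt d\,L_v|q-q'|$. Combining these,
\[
|A_\lambda^q(x,\mu,\alpha)-A_\lambda^{q'}(x,\mu,\alpha)|\le C_\lambda|q-q'|,
\]
with $C_\lambda=2\lambda^{-1}(L_h+\lambda L_c)+\sqrt d\,L_v+L_c$, uniformly in $(x,\mu,\alpha)$.

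Evaluating this bound along the common forward path gives $|\chi_t^{\lambda,n}-\chi_t^\lambda|\le C_\lambda|q^n_t-q_t|\le C_\lambda\|q^n-q\|_{\infty,P}$ for $\dd t\otimes\dd P$-a.e.\ $(t,\omega)$. Integrating in time then yields
\[
\sup_{t\le T}|K_t^{\lambda,n}-K_t^\lambda|\le TC_\lambda\|q^n-q\|_{\infty,P}\qquad P\text{-a.s.}
\]
It remains to apply Lemma~\ref{lem:energy} with $\zeta^1=\zeta^2=g(X_T^P,\mu_T^P)$, $X^1=X^2=X^P$, and $\alpha^1=\alpha^2=\alpha$. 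This is legitimate because the regularized solutions exist and are square integrable by Corollary~\ref{cor:regularized-wellposed}, and $K^{\lambda,n},K^\lambda$ are adapted, continuous, of finite variation, and lie in $\cS_P^2$. The lemma gives
\[
\|\mathcal T_{P,\lambda}(q^n)-\mathcal T_{P,\lambda}(q)\|_{\mathfrak E_P^2}^2\le(1+C_E)T^2C_\lambda^2\|q^n-q\|_{\infty,P}^2\longrightarrow0.
\]

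The only delicate point is the Lipschitz dependence of the resolvent on $q$ through the slopes, which is exactly what Lemma~\ref{lem:resolvent-slopes} supplies. No occupation estimate is needed, because for fixed $\lambda>0$ the map $A_\lambda$ is already Lipschitz. The factor $\lambda^{-1}$ in $C_\lambda$ is harmless here, since $\lambda$ is fixed; the argument in fact shows that $\mathcal T_{P,\lambda}$ is Lipschitz on $\mathfrak A_{\gamma,M}^P$, not merely continuous.
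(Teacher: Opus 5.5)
Your proposal is correct and follows essentially the same route as the paper: the same $\lambda$-dependent Lipschitz bound for $A_\lambda^q$ in $q$, obtained from \eqref{eq:yosida-translation} and Lemma~\ref{lem:resolvent-slopes} with the identical constant $C_\lambda$, then pathwise integration of the source difference, then Lemma~\ref{lem:energy} with common forward data. The final estimate $(1+C_E)C_\lambda^2T^2\|q^n-q\|_{\infty,P}^2$ is the same, so your remark that $\mathcal T_{P,\lambda}$ is in fact Lipschitz is consistent with the paper's argument.
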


\begin{proof}
Under Assumption \ref{ass:finite-branches}, $\phi^q$ has the form considered in Lemma \ref{lem:resolvent-slopes}, and the slopes of $\phi^q$ and $\phi^{q'}$ differ by at most $L_v|q-q'|$. Lemma \ref{lem:resolvent-slopes} gives
\[
|J_{\lambda\partial\phi^q}(u)-J_{\lambda\partial\phi^{q'}}(u')|
\le|u-u'|+\lambda\sqrt d\,L_v|q-q'|.
\]
Set $u^q=x+h^q(\mu,\alpha)-\lambda c^q(\mu,\alpha).$ Then
\[
|u^q-u^{q'}|
\le (L_h+\lambda L_c)|q-q'|.
\]
Identity \eqref{eq:yosida-translation} yields
\begin{align*}
|A_\lambda^q(x,\mu,\alpha)-A_\lambda^{q'}(x,\mu,\alpha)|
&\le \lambda^{-1}|u^q-u^{q'}|
+\lambda^{-1}|J_{\lambda\partial\phi^q}(u^q)-J_{\lambda\partial\phi^{q'}}(u^{q'})|
+L_c|q-q'|\\
&\le\left[\frac{2(L_h+\lambda L_c)}{\lambda}+\sqrt d\,L_v+L_c\right]|q-q'|
=:C_\lambda|q-q'|.
\end{align*}

Let $\varepsilon_n=\|q^n-q\|_{\infty,P}$. For every $t\in[0,T]$,
\begin{align*}
|K_t^{\lambda,q^n}-K_t^{\lambda,q}|
&\le\int_0^t
|A_\lambda^{q_s^n}(X_{s-}^P,\mu_{s-}^P,\alpha_s)
-A_\lambda^{q_s}(X_{s-}^P,\mu_{s-}^P,\alpha_s)|\dd s\\
&\le C_\lambda T\varepsilon_n.
\end{align*}
Consequently,
\[
\mathbb E^P\!\left[\sup_{t\le T}|K_t^{\lambda,q^n}-K_t^{\lambda,q}|^2\right]
\le C_\lambda^2T^2\varepsilon_n^2.
\]
The two systems have the same $X^P$, $\mu^P$, $\alpha$, and terminal condition. Lemma \ref{lem:energy} therefore gives
\[
\|\mathcal T_{P,\lambda}(q^n)-\mathcal T_{P,\lambda}(q)\|_{\mathfrak E_P^2}^2
\le (1+C_E)C_\lambda^2T^2\varepsilon_n^2
\longrightarrow0.
\]
\end{proof}

\begin{proposition}\label{prop:cutsY}
Fix $P\in\PP$, data $(\xi,\alpha)\in\mathfrak D_P$, and $\lambda>0$, and suppose that $(H_{\rm quant})$ holds. Then, for every $\gamma\in(0,1]$, the regularized cut $\cC_{\gamma,M,\lambda}^P$ is nonempty and compact in $\mathfrak E_P^2$, and $\cC_{\gamma_2,M,\lambda}^P\subseteq\cC_{\gamma_1,M,\lambda}^P$ for $0<\gamma_1\le\gamma_2\le1$.
\end{proposition}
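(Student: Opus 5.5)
The plan mirrors the proof of Theorem~\ref{thm:cutcompact}, with the solution map $\mathcal T_P$ replaced by $\mathcal T_{P,\lambda}$. Lemma~\ref{lem:regularized-solution-continuity} replaces Corollary~\ref{cor:lipschitzT} as the continuity input.

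\emph{Nonemptiness.} By (S1), $\mathfrak A_{\gamma,M}^P$ is nonempty. For each $q$ in this class, Corollary~\ref{cor:regularized-wellposed} yields a unique regularized solution $\mathcal T_{P,\lambda}(q)\in\mathfrak E_P^2$. Hence $\cC_{\gamma,M,\lambda}^P\neq\varnothing$.

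We also need $\mathcal T_{P,\lambda}$ to be well defined on $\dd t\otimes\dd P$-equivalence classes. If $q=q'$ a.e., then the densities $A_\lambda^{q_t}(X_{t-}^P,\mu_{t-}^P,\alpha_t)$ and $A_\lambda^{q'_t}(X_{t-}^P,\mu_{t-}^P,\alpha_t)$ coincide a.e. Therefore $K^\lambda$ agrees up to indistinguishability, and uniqueness in Corollary~\ref{cor:regularized-wellposed} identifies the solutions. This is the same argument used for $\mathcal T_P$ after \eqref{eq:cuts-def}.

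\emph{Compactness.} Take a sequence $\mathcal T_{P,\lambda}(q^n)$ in the cut. By (S1), some subsequence satisfies $q^{n_k}\to q$ in $\|\cdot\|_{\infty,P}$ with $q\in\mathfrak A_{\gamma,M}^P$. Lemma~\ref{lem:regularized-solution-continuity} then gives $\mathcal T_{P,\lambda}(q^{n_k})\to\mathcal T_{P,\lambda}(q)\in\cC_{\gamma,M,\lambda}^P$ in $\mathfrak E_P^2$. So the cut is sequentially compact, which is equivalent to compactness in the metric space $\mathfrak E_P^2$.

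Alternatively, the estimate in that lemma's proof shows $\mathcal T_{P,\lambda}$ is Lipschitz with constant $((1+C_E)C_\lambda^2T^2)^{1/2}$. The cut is then the continuous image of a compact set. Here $C_\lambda$ depends on $\lambda$, but $\lambda$ is fixed, so this causes no difficulty.

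\emph{Nesting.} For $\gamma_1\le\gamma_2$, (S2) gives $\mathfrak A_{\gamma_2,M}^P\subseteq\mathfrak A_{\gamma_1,M}^P$. Applying $\mathcal T_{P,\lambda}$ to both sides yields $\cC_{\gamma_2,M,\lambda}^P\subseteq\cC_{\gamma_1,M,\lambda}^P$.

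\emph{Main obstacle.} The only delicate point is the continuity of $q\mapsto A_\lambda^q$ used in Lemma~\ref{lem:regularized-solution-continuity}. That lemma already supplies it via Lemma~\ref{lem:resolvent-slopes}, so no occupation estimate is needed at fixed $\lambda$.
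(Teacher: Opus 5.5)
Your proposal is correct and follows essentially the same route as the paper. Nonemptiness and nesting come from (S1)--(S2) by applying $\mathcal T_{P,\lambda}$, and compactness comes from sequential compactness of $\mathfrak A_{\gamma,M}^P$ combined with Lemma~\ref{lem:regularized-solution-continuity}; your remarks on well-definedness on $\dd t\otimes\dd P$-equivalence classes and on the $\lambda$-dependent Lipschitz constant are valid details the paper leaves implicit.
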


\begin{proof}
Nonemptiness and nesting follow from those of $\mathfrak A_{\gamma,M}^P$ ((S1)--(S2)) by applying $\mathcal T_{P,\lambda}$. If $(q^n)\subset\mathfrak A_{\gamma,M}^P$, compactness of $\mathfrak A_{\gamma,M}^P$ yields a subsequence $q^{n_k}\to q\in\mathfrak A_{\gamma,M}^P$ in $\|\cdot\|_{\infty,P}$, and Lemma \ref{lem:regularized-solution-continuity} gives $\mathcal T_{P,\lambda}(q^{n_k})\to\mathcal T_{P,\lambda}(q)$ in $\mathfrak E_P^2$; compactness follows. The corresponding properties of $\cC_{\gamma,M}^P$ are Theorem \ref{thm:cutcompact} and the nesting property stated immediately after it.
\end{proof}

\begin{theorem}\label{thm:hausY}
Suppose that $(H_{\rm quant})$ holds. Then there exists $C=C(M;\mathfrak C_{\rm str})$ such that, for every $\lambda\in(0,1]$,
\begin{equation}\label{eq:Y-Haus-linear}
\sup_{P\in\PP}\sup_{(\xi,\alpha)\in\mathfrak D_P}\sup_{\gamma\in(0,1]}
d_H^{\mathfrak E_P^2}\bigl(\cC_{\gamma,M,\lambda}^P[\xi,\alpha],\cC_{\gamma,M}^P[\xi,\alpha]\bigr)
\le C\lambda.
\end{equation}
\end{theorem}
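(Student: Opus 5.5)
The plan is to reduce the Hausdorff bound to the pointwise-in-selection rate of Theorem~\ref{thm:yosidarate}, exploiting the fact that both cuts are images of the \emph{same} selection class $\mathfrak A_{\gamma,M}^P[\xi,\alpha]$. Fix $P\in\PP$, $(\xi,\alpha)\in\mathfrak D_P$, $\gamma\in(0,1]$ and $\lambda\in(0,1]$. Both cuts are nonempty and compact: for $\cC_{\gamma,M}^P$ this is Theorem~\ref{thm:cutcompact}, and for $\cC_{\gamma,M,\lambda}^P$ it is Proposition~\ref{prop:cutsY}. Hence $d_H^{\mathfrak E_P^2}$ is a genuine distance between them and it suffices to bound the two excesses.

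For the excess $e(\cC_{\gamma,M,\lambda}^P,\cC_{\gamma,M}^P)$, take an arbitrary element $\mathcal Y^\lambda\in\cC_{\gamma,M,\lambda}^P$ and write $\mathcal Y^\lambda=\mathcal T_{P,\lambda}(q)$ with $q\in\mathfrak A_{\gamma,M}^P$. The unregularized image $\mathcal T_P(q)$ lies in $\cC_{\gamma,M}^P$ by \eqref{eq:cuts-def}. Estimate \eqref{eq:Y-linear-occ} then gives
\[
\operatorname{dist}\bigl(\mathcal Y^\lambda,\cC_{\gamma,M}^P\bigr)\le\|\mathcal T_{P,\lambda}(q)-\mathcal T_P(q)\|_{\mathfrak E_P^2}\le C\lambda .
\]
The reverse excess follows symmetrically: for $\mathcal Y=\mathcal T_P(q)\in\cC_{\gamma,M}^P$, the element $\mathcal T_{P,\lambda}(q)$ lies in $\cC_{\gamma,M,\lambda}^P$ and satisfies the same bound. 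Taking suprema over the cut elements, then the maximum of the two excesses, yields $d_H\le C\lambda$.

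The only point requiring care is uniformity. The constant in \eqref{eq:Y-linear-occ} is $C=C(M;\mathfrak C_{\rm str})$, and it already carries the supremum over $P$, the data in $\mathfrak D_P$, $\gamma$, and $q\in\mathfrak A_{\gamma,M}^P$. It does not depend on the particular $q$ chosen to represent a cut element. This matters because the representation need not be unique, but the bound holds for any representative. Taking $\sup_{P}\sup_{(\xi,\alpha)}\sup_\gamma$ therefore gives \eqref{eq:Y-Haus-linear} with the same constant.

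There is no real obstacle, since all the analytic content sits in Theorem~\ref{thm:yosidarate}. The one thing to check is that the regularized and unregularized solutions for a given $q$ are built with the same forward state $X^P$ and flow $\mu^P$, so that Theorem~\ref{thm:yosidarate} applies verbatim. This holds because the regularization acts only on the operator.
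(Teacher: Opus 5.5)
Your proposal is correct and follows the paper's proof: both excesses are bounded by pairing $\mathcal T_{P,\lambda}(q)$ with $\mathcal T_P(q)$ for the same $q\in\mathfrak A_{\gamma,M}^P$, invoking the uniform estimate \eqref{eq:Y-linear-occ}, and then taking suprema. Your appeal to compactness of the two cuts is not needed for the inequality itself, but it is harmless.
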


\begin{proof}
Fix $P\in\PP$, $(\xi,\alpha)\in\mathfrak D_P$, and $\gamma\in(0,1]$. For every
\[
\mathcal Y^{P,q,\lambda}
=
\mathcal T_{P,\lambda}(q)
\in\cC_{\gamma,M,\lambda}^P,
\]
the same parameter $q$ belongs to $\mathfrak A_{\gamma,M}^P$ and $\mathcal Y^{P,q}=\mathcal T_P(q)\in\cC_{\gamma,M}^P.$ Hence
\begin{align*}
\operatorname{dist}_{\mathfrak E_P^2}
(\mathcal Y^{P,q,\lambda},\cC_{\gamma,M}^P)
&\le
\|\mathcal Y^{P,q,\lambda}-\mathcal Y^{P,q}\|_{\mathfrak E_P^2}\\
&\le
C\lambda.
\end{align*}
Taking the supremum over $q$ gives
\[
e(\cC_{\gamma,M,\lambda}^P,\cC_{\gamma,M}^P)
\le
C\lambda.
\]
Conversely, for every $\mathcal Y^{P,q}\in\cC_{\gamma,M}^P$, the same $q$ produces $\mathcal Y^{P,q,\lambda}\in\cC_{\gamma,M,\lambda}^P$, and therefore
\[
e(\cC_{\gamma,M}^P,\cC_{\gamma,M,\lambda}^P)
\le
C\lambda.
\]
Since the Hausdorff distance is the maximum of the two excesses,
\[
d_H^{\mathfrak E_P^2}
(\cC_{\gamma,M,\lambda}^P,\cC_{\gamma,M}^P)
\le
C\lambda.
\]
Taking the suprema over $P$, $(\xi,\alpha)$, and $\gamma$ gives \eqref{eq:Y-Haus-linear}.
\end{proof}

\subsection{Joint stability in \texorpdfstring{$\lambda$}{lambda} and the fuzzy level}

\begin{corollary}\label{cor:Y-joint}
Suppose that $(H_{\rm quant})$ holds. There exists $C=C(M;\mathfrak C_{\rm str})$ such that, for every $P\in\PP$, $(\xi,\alpha)\in\mathfrak D_P$, $\lambda\in(0,1]$, and $\gamma,\gamma'\in(0,1]$,
\[
d_H^{\mathfrak E_P^2}\bigl(\cC_{\gamma,M,\lambda}^P[\xi,\alpha],\cC_{\gamma',M}^P[\xi,\alpha]\bigr)
\le C\left(\lambda+|\gamma-\gamma'|^\nu\right).
\]
\end{corollary}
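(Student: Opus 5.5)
The proof is a triangle inequality for the Hausdorff distance on nonempty compact subsets of $\mathfrak E_P^2$, with the fuzzy level at which the regularization error is measured as an intermediate point. By Proposition~\ref{prop:cutsY} and Theorem~\ref{thm:cutcompact}, the three sets $\cC_{\gamma,M,\lambda}^P$, $\cC_{\gamma,M}^P$ and $\cC_{\gamma',M}^P$ are nonempty and compact in $\mathfrak E_P^2$. Since $d_H$ is a metric on this collection, we may write
\[
d_H\bigl(\cC_{\gamma,M,\lambda}^P,\cC_{\gamma',M}^P\bigr)\le d_H\bigl(\cC_{\gamma,M,\lambda}^P,\cC_{\gamma,M}^P\bigr)+d_H\bigl(\cC_{\gamma,M}^P,\cC_{\gamma',M}^P\bigr).
\]
The first term is bounded by $C\lambda$ by Theorem~\ref{thm:hausY}, uniformly in $P$, the data and $\gamma$. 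The second term is bounded by $C_\star L_Q|\gamma-\gamma'|^\nu$ by Proposition~\ref{prop:hausdorff-stability}. This proposition relies on (S4) through Corollary~\ref{cor:lipschitzT}, which explains why the corollary is excluded in the $d_q\ge2$ convex-cut regime. Taking $C$ to be the maximum of the constant in Theorem~\ref{thm:hausY} and $C_\star L_Q$ gives the claim. This constant depends only on $M$ and $\mathfrak C_{\rm str}$, because $L_Q$ and $\nu$ are structural constants.

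There is no real obstacle. The only points to check are that all three cuts are nonempty and compact, so that the triangle inequality for $d_H$ applies (it in fact holds for the Hausdorff pseudo-distance on any nonempty sets), and that the constants are uniform in $P$, $(\xi,\alpha)$, $\gamma$, $\gamma'$ and $\lambda$. Both points follow directly from the cited results.
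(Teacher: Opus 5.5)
Your proposal is correct and follows the paper's own proof: the triangle inequality for $d_H$ through the intermediate cut $\cC_{\gamma,M}^P$, followed by Theorem~\ref{thm:hausY} and Proposition~\ref{prop:hausdorff-stability}. The only difference is cosmetic: the paper takes the constant $C_Y+C_\gamma$ where you take $\max(C_Y,C_\star L_Q)$, and both choices work.
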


\begin{proof}
Fix $P\in\PP$ and $(\xi,\alpha)\in\mathfrak D_P$, and omit $[\xi,\alpha]$ from the notation. The triangle inequality for the Hausdorff distance gives
\begin{align*}
d_H^{\mathfrak E_P^2}(\cC_{\gamma,M,\lambda}^P,\cC_{\gamma',M}^P)
&\le d_H^{\mathfrak E_P^2}(\cC_{\gamma,M,\lambda}^P,\cC_{\gamma,M}^P)
+d_H^{\mathfrak E_P^2}(\cC_{\gamma,M}^P,\cC_{\gamma',M}^P)\\
&\le C_Y\lambda+C_\gamma|\gamma-\gamma'|^\nu\\
&\le (C_Y+C_\gamma)\left(\lambda+|\gamma-\gamma'|^\nu\right),
\end{align*}
where $C_Y$ and $C_\gamma$ are the constants in \eqref{eq:Y-Haus-linear} and \eqref{eq:hausdorff-gamma}, respectively, used in the second line.
\end{proof}

\subsection{Optimality of the rate}

To show that the upper rate in Theorem \ref{thm:yosidarate} is sharp, we now give a model in the admissible class for which a matching lower bound holds uniformly over $\PP$. Here $n=1$, so $\underline a\le a_t^P\le\overline a$ are scalars.

\begin{theorem}\label{thm:sharp}
Suppose that $\PP$ satisfies Assumption \ref{ass:base} with $n=1$, and let $d=d_q=d_\alpha=1$, $\mathbb D=\{0\}$, $\Q=\{0\}$, $\Q_\gamma\equiv\{0\}$, $h\equiv0$, $c^q\equiv c\in\R$, and $\phi^q(s)=\kappa_+s^++\kappa_-s^-+\frac{\alpha_A}{2}s^2$ with $\kappa_\pm>0$ and $\alpha_A>0$. Let $b\equiv0$, $\beta\equiv0$, $\sigma\equiv\sigma_0>0$, $f\equiv0$, $g\equiv0$, and $\mathfrak D_P=\{(x_0,0)\}$ with $x_0\in\R$. Set $\overline\kappa=\max(\kappa_+,\kappa_-)$, $M_0=1+|c|+\overline\kappa$,
\[
C_\ell=\frac{2M_0(\overline aT)^{1/2}}{\sigma_0\underline a},\qquad
\overline\chi=\alpha_AM_0+\overline\kappa+|c|,\qquad
c_*=\frac{\alpha_AT(\alpha_Ax_0+c-\overline\kappa)}{1+\alpha_A}-4\overline\kappa C_\ell ,
\]
and suppose that $c_*>0$, which holds for $x_0$ sufficiently large since $C_\ell$ does not depend on $x_0$. Then $(H_{\rm quant})$ holds and, for every $\lambda\in(0,\lambda_*]$ with $\lambda_*=\min\{1,\overline\kappa/(\alpha_A\overline\chi)\}$,
\begin{equation}\label{eq:sharp}
c_*\lambda\le\inf_{P\in\PP}\|\mathcal Y^{P,\lambda}-\mathcal Y^P\|_{\mathfrak E_P^2}
\le\sup_{P\in\PP}\|\mathcal Y^{P,\lambda}-\mathcal Y^P\|_{\mathfrak E_P^2}\le C\lambda ,
\end{equation}
where $\mathcal Y^P$ and $\mathcal Y^{P,\lambda}$ are the selected and regularized solutions for the unique selection $q\equiv0$; the constant $C$ depends on $x_0$ through $\mathfrak M_\xi$, and both $c_*$ and $C$ are fixed once $x_0$ is chosen. Moreover, $\inf_{P\in\PP}\mathbb E^P[\sup_{t\le T}|K_t^\lambda-K_t|^2]\ge c_*^2\lambda^2$.
\end{theorem}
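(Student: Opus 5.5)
The plan is to reduce the lower bound to the integrated-source component of the norm, and to obtain the upper bound directly from Theorem~\ref{thm:yosidarate}.

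\emph{Verifying $(H_{\rm quant})$.} Assumptions~\ref{ass:base}--\ref{ass:B} are immediate since all coefficients are constant or zero. Assumption~\ref{ass:finitestat} holds with $\ell=1$, any $\psi_1\in C_b^2$, $\widehat h\equiv0$, and $\dot q=\dot\alpha=0$. The ALA Assumption~\ref{ass:ALA} is a special case of Assumption~\ref{ass:finite-branches}. For Assumption~\ref{ass:elliptic}, note that $\sigma_0^2a_t^P\ge\sigma_0^2\underline a>0$. The trivial class $\{q\equiv0\}$ is a selection system. Theorem~\ref{thm:yosidarate} then gives the right-hand inequality in \eqref{eq:sharp}.

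\emph{Setup for the lower bound.} For every $P$ we have $\|\mathcal Y^{P,\lambda}-\mathcal Y^P\|_{\mathfrak E_P^2}^2\ge\mathbb E^P[\sup_t|K^\lambda_t-K_t|^2]\ge(\mathbb E^P[K_T-K_T^\lambda])^2$. It therefore suffices to show $\mathbb E^P[K_T-K_T^\lambda]\ge c_*\lambda$ uniformly in $P$. Here $X_t=x_0+\sigma_0B_t^P$ and $\rho=|X_{t-}|$. By Lemma~\ref{lem:samebranch}(a) with $M_\Theta=1+|c|+\overline\kappa=M_0$ (since $\Theta_*=0$, $C_c=|c|$, $\overline v=\overline\kappa$), on $\{|X_{t-}|>M_0\lambda\}$ we get
\[
\chi_t-\chi^\lambda_t=\tfrac{\alpha_A\lambda}{1+\alpha_A\lambda}\chi_t .
\]
On the layer $\{|X_{t-}|\le M_0\lambda\}$ we have $|\chi_t|\le\overline\chi$ for $\lambda\le1$. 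Using $|\chi^\lambda|\le|\chi|$ and \eqref{eq:Y-point}, the discrepancy between $\chi_t-\chi_t^\lambda$ and $\tfrac{\alpha_A\lambda}{1+\alpha_A\lambda}\chi_t$ there is at most $2\alpha_A\lambda\overline\chi+2\overline\kappa\le4\overline\kappa$, because $\lambda\le\lambda_*$. Hence
\[
K_T-K_T^\lambda\ge\tfrac{\alpha_A\lambda}{1+\alpha_A\lambda}\int_0^T\chi_t\dd t-4\overline\kappa\,\mathsf O_{M_0\lambda}.
\]

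\emph{Bounding the main term.} From the explicit minimal selection formula, in every branch (including $X=0$, where $c+\proj_{[-\kappa_-,\kappa_+]}(-c)\ge c-\overline\kappa$) we have $\chi_t\ge\alpha_AX_{t-}+c-\overline\kappa$. Since $\mathbb E^P[X_t]=x_0$, this gives $\mathbb E^P\int_0^T\chi_t\dd t\ge T(\alpha_Ax_0+c-\overline\kappa)$, which is positive under $c_*>0$. Since $\lambda\le1$, $\tfrac{\alpha_A\lambda}{1+\alpha_A\lambda}\ge\tfrac{\alpha_A\lambda}{1+\alpha_A}$.

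\emph{Bounding the occupation term.} This is the step needing explicit constants, and it is the main obstacle because the constant must be uniform in $P$ and independent of $x_0$. We need $\mathbb E^P[\mathsf O_{M_0\lambda}]\le C_\ell\lambda$. We redo the proof of Theorem~\ref{thm:occupation} in this one-dimensional drift-free case. Tanaka's formula gives
\[
\mathbb E^P[\mathsf L_T^y]=\mathbb E^P|X_T-y|-|x_0-y|\le\mathbb E^P|X_T-x_0|\le\sigma_0(\overline aT)^{1/2}.
\]
The occupation density formula with $\dd\langle X\rangle=\sigma_0^2a^P_t\dd t\ge\sigma_0^2\underline a\,\dd t$ then yields $\mathbb E^P[\mathsf O_r]\le2r(\overline aT)^{1/2}/(\sigma_0\underline a)$. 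Taking $r=M_0\lambda$ gives exactly $C_\ell\lambda$.

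Combining the three steps yields $\mathbb E^P[K_T-K_T^\lambda]\ge c_*\lambda$ for every $P$. This proves both the bound on $K$ and the left inequality in \eqref{eq:sharp}.
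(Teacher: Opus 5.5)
Your proposal is correct and follows the paper's proof essentially step by step. You use the same bulk/layer splitting from Lemma~\ref{lem:samebranch} with $M_\Theta=M_0$, the same branchwise bound $\chi_t\ge\alpha_AX_{t-}+c-\overline\kappa$ for the bulk term, and the same Tanaka and occupation-density estimate giving $C_\ell\lambda$ for the layer; the only cosmetic difference is that you read the lower bound off the $K$-component of the $\mathfrak E_P^2$ norm, whereas the paper uses $Y_0-Y_0^\lambda=\mathbb E^P[K_T-K_T^\lambda\mid\mathcal F_0^P]$, and both reduce to $(\mathbb E^P[K_T-K_T^\lambda])^2$ (the paper itself uses your route for the final claim about $K$).
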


\begin{proof}
\emph{Assumptions.} Assumptions \ref{ass:F} and \ref{ass:B} are immediate; Assumption \ref{ass:fuzzycuts} holds with constant cuts, the parametric class reduced to $q\equiv0$ satisfies Assumption \ref{ass:paramclass} and is therefore a selection system by Proposition \ref{prop:paramclass}; Assumption \ref{ass:finitestat} holds with $\ell=1$, $\psi_1=0$, $\widehat h=0$, and $\dot\alpha=0$; Assumption \ref{ass:ALA} holds with constant slopes; Assumption \ref{ass:measop} follows from Remark \ref{rem:measop}(ii); and Assumption \ref{ass:elliptic} holds with $\underline\sigma^2=\sigma_0^2\underline a$. The upper bound in \eqref{eq:sharp} is Theorem \ref{thm:yosidarate}.

\emph{Step 1: reduction to the source.} Fix $P\in\PP$. Here $X_t=x_0+\sigma_0B_t^P$ is a continuous square-integrable martingale, $\chi_t=A^0(X_t)$ and $\chi_t^\lambda=A_\lambda(X_t)$, where $A=\partial\phi+c$. Since $f=g=0$, \eqref{eq:backward} becomes $Y_t=K_T-K_t-\int_t^TZ_s\dd B_s^P-\int_t^T\int_EU_s(e)\widetilde N^P(\dd s,\dd e)$, and the stochastic integrals are martingales; hence $Y_t=\mathbb E^P[K_T-K_t\mid\mathcal F_t^P]$, and likewise for the regularized solution. Consequently,
\[
\|\mathcal Y^{P,\lambda}-\mathcal Y^P\|_{\mathfrak E_P^2}^2\ge\mathbb E^P[|Y_0^\lambda-Y_0|^2]\ge\bigl(\mathbb E^P[Y_0-Y_0^\lambda]\bigr)^2
=\Bigl(\mathbb E^P\!\left[\int_0^T(\chi_t-\chi_t^\lambda)\dd t\right]\Bigr)^2 .
\]

\emph{Step 2: bulk and layer contributions.} Set $I_t=\1_{\{|X_t|\le M_0\lambda\}}$; here $\rho(x)=|x|$. On $\{I_t=0\}$, Lemma \ref{lem:samebranch}(a), whose constant $M_\Theta$ equals $M_0$ here, gives $\chi_t-\chi_t^\lambda=\alpha_A\lambda(1+\alpha_A\lambda)^{-1}\chi_t$. On $\{I_t=1\}$, Lemma \ref{lem:samebranch}(b) gives $|\chi_t-\chi_t^\lambda|\le\alpha_A\lambda|\chi_t|+2\overline\kappa$, and $|\chi_t|\le\alpha_AM_0\lambda+\overline\kappa+|c|\le\overline\chi$ since $\lambda\le1$. Thus, provided that $\mathbb E^P\!\left[\int_0^T\chi_t\dd t\right]\ge0$,
\[
\mathbb E^P\!\left[\int_0^T(\chi_t-\chi_t^\lambda)\dd t\right]
\ge\frac{\alpha_A\lambda}{1+\alpha_A}\,\mathbb E^P\!\left[\int_0^T\chi_t\dd t\right]-\bigl(2\alpha_A\lambda\overline\chi+2\overline\kappa\bigr)\,\mathbb E^P\!\left[\int_0^TI_t\dd t\right] .
\]

\emph{Step 3: the bulk term.} Here $A^0(x)=\alpha_Ax+\kappa(x)+c$, with $\kappa(x)=\kappa_+$ for $x>0$, $\kappa(x)=-\kappa_-$ for $x<0$, and $\kappa(0)=\proj_{[-\kappa_-,\kappa_+]}(-c)$. Since $\kappa(x)\ge-\kappa_-\ge-\overline\kappa$ for every $x$ and $\mathbb E^P[X_t]=x_0$,
\[
\mathbb E^P\!\left[\int_0^T\chi_t\dd t\right]=\mathbb E^P\!\left[\int_0^T\bigl(\alpha_AX_t+\kappa(X_t)+c\bigr)\dd t\right]\ge T(\alpha_Ax_0+c-\overline\kappa)>0,
\]
where positivity follows from $c_*>0$.

\emph{Step 4: the layer term.} The occupation density formula and $\dd\langle X\rangle_t=\sigma_0^2a_t^P\dd t\ge\sigma_0^2\underline a\dd t$ give $\sigma_0^2\underline a\int_0^TI_t\dd t\le\int_{-M_0\lambda}^{M_0\lambda}\mathsf L_T^y(X)\dd y$. Since $X$ is a continuous martingale, the Meyer--Tanaka formula gives $\mathbb E^P[\mathsf L_T^y(X)]=\mathbb E^P[|X_T-y|]-|x_0-y|\le\mathbb E^P[|X_T-x_0|]\le\sigma_0(\overline aT)^{1/2}$. Hence $\mathbb E^P\!\left[\int_0^TI_t\dd t\right]\le C_\ell\lambda$, with $C_\ell$ independent of $x_0$ and $P$.

\emph{Step 5: conclusion.} Steps 2--4 give
\[
\mathbb E^P\!\left[\int_0^T(\chi_t-\chi_t^\lambda)\dd t\right]\ge\lambda\Bigl[\frac{\alpha_AT(\alpha_Ax_0+c-\overline\kappa)}{1+\alpha_A}-2\overline\kappa C_\ell-2\alpha_A\overline\chi C_\ell\lambda\Bigr]\ge c_*\lambda,
\]
since $2\alpha_A\overline\chi\lambda\le2\overline\kappa$ for $\lambda\le\lambda_*$. Step 1 gives the lower bound in \eqref{eq:sharp}, uniformly in $P$. Finally, $\mathbb E^P[\sup_t|K_t^\lambda-K_t|^2]\ge(\mathbb E^P[K_T-K_T^\lambda])^2\ge c_*^2\lambda^2$.
\end{proof}

\begin{remark}
Outside the layer, the regularization contracts the source by the factor $(1+\alpha_A\lambda)^{-1}$ and creates an error of order $\lambda$; inside the layer, the error is bounded but its occupation time is of order $\lambda$. The condition $c_*>0$ prevents cancellation between these two contributions, and Theorem~\ref{thm:sharp} shows the optimality of \eqref{eq:Y-rate-K-strong}.
\end{remark}
\section{A verifiable example}\label{sec:example}

Take $d=d_\alpha=d_q=1$, $\mathbb D=\R$, $\Q=[-R,R]$, data and controls with common quadratic bounds, with absolutely continuous controls satisfying
$\sup_{P\in\PP}\sup_{(\xi,\alpha)\in\mathfrak D_P}\mathbb E^P[\int_0^T|\dot\alpha_t|^2\dd t]<\infty$, together with backward coefficients satisfying \ref{ass:B}. Set
\[
m(\mu)=\int_\R\psi(x)\mu(\dd x),\qquad \psi\in C_b^2(\R),
\]
\[
\mathfrak q(\mu,\alpha)=\mathfrak q_{\rm c}+\varepsilon_\mu\tanh(m(\mu))+\varepsilon_\alpha\tanh(\alpha),
\qquad
\Q_\gamma(\mu,\alpha)=[\mathfrak q(\mu,\alpha)-\varepsilon_Q(1-\gamma),\mathfrak q(\mu,\alpha)+\varepsilon_Q(1-\gamma)],
\]
with $\varepsilon_Q>0$ and
$|\mathfrak q_{\rm c}|+|\varepsilon_\mu|+|\varepsilon_\alpha|+\varepsilon_Q\le R$. Fix $L>0$ and
\[
\mathfrak T_L=\{\vartheta\in C([0,T]): |\vartheta_t|\le1,\ |\vartheta_t-\vartheta_s|\le L|t-s|\}.
\]
By Arzelà--Ascoli, $\mathfrak T_L$ is compact and stable under multiplication by $s\in[0,1]$. With $\mathfrak P=\mathfrak T_L$, define
\begin{equation}\label{eq:example-selection}
q_t^{\gamma,\vartheta}[\mu,\alpha]=\mathfrak q(\mu_t,\alpha_t)+\varepsilon_Q(1-\gamma)\vartheta_t.
\end{equation}
Then $q_t^{\gamma,\vartheta}[\mu,\alpha]\in\Q_\gamma(\mu_t,\alpha_t)$ and
\[
\|q^{\gamma,\vartheta}[\mu,\alpha]-q^{\gamma,\vartheta'}[\mu,\alpha]\|_{\infty,P}=\varepsilon_Q(1-\gamma)\|\vartheta-\vartheta'\|_\infty,
\qquad
\|q^{\gamma,\vartheta}[\mu,\alpha]-q^{\gamma',\vartheta}[\mu,\alpha]\|_{\infty,P}\le\varepsilon_Q|\gamma-\gamma'|.
\]
Since $\tanh$ is $1$-Lipschitz and $|\tanh'|\le1$,
\[
\Bigl|\frac{\mathrm d}{\mathrm dt}\mathfrak q(\mu_t^P,\alpha_t)\Bigr|\le|\varepsilon_\mu||\dot m^P(t)|+|\varepsilon_\alpha||\dot\alpha_t|,
\qquad m^P(t)=\mathbb E^P[\psi(X_t^P)],
\]
for almost every $t$. Since the $L^1$ bound of Lemma~\ref{lem:lawstatistics} does not control the energy of $\dot q$, choose the additive dynamics below. With $B_\psi=\|\psi'\|_\infty$, $D_\psi=\|\psi''\|_\infty$, and $B_\beta=\int_E|\beta_0(e)|^2\Pi(\dd e)$, \eqref{eq:law-stat-derivative} gives
\[
|\dot m^P(t)|\le B_\psi\mathbb E^P\!\left[|b_0(X_{t-}^P,m(\mu_t^P),\alpha_t)|\right]
+\tfrac12D_\psi(\sigma_0^2\overline a+B_\beta),
\]
and the linear growth of $b_0$, the bound $|m(\mu_t^P)|\le\|\psi\|_\infty$, and Jensen's inequality imply
\[
|\dot m^P(t)|^2\le C\left(1+\mathbb E^P[|X_{t-}^P|^2]+\mathbb E^P[|\alpha_t|^2]\right).
\]
By \eqref{eq:forward-apriori},
\[
C_m:=\sup_{P\in\PP}\sup_{(\xi,\alpha)\in\mathfrak D_P}\int_0^T|\dot m^P(t)|^2\dd t<\infty.
\]
Moreover,
\[
|\dot q_t^{\gamma,\vartheta}|^2
\le3|\varepsilon_\mu|^2|\dot m^P(t)|^2+
3|\varepsilon_\alpha|^2|\dot\alpha_t|^2+3\varepsilon_Q^2L^2,
\]
so, with $C_{\dot\alpha}=\sup_{P}\sup_{(\xi,\alpha)\in\mathfrak D_P}\mathbb E^P[\int_0^T|\dot\alpha_t|^2\dd t]$,
\[
M^2=3|\varepsilon_\mu|^2C_m+3|\varepsilon_\alpha|^2C_{\dot\alpha}+3\varepsilon_Q^2L^2T.
\]
This bound holds along the flows $\mu^P[\xi,\alpha]$. If $\gamma_1\le\gamma_2$ and $\gamma_1<1$, then
\[
\vartheta'=\frac{1-\gamma_2}{1-\gamma_1}\vartheta\in\mathfrak T_L,
\]
and $q^{\gamma_2,\vartheta}[\mu,\alpha]=q^{\gamma_1,\vartheta'}[\mu,\alpha]$; the case $\gamma_1=1$ is trivial. Thus \ref{ass:paramclass} holds with $\nu=1$. For the same $\vartheta$ and two pairs $(\mu,\alpha)$, $(\mu',\alpha')$,
\[
|q_t^{\gamma,\vartheta}[\mu,\alpha]-q_t^{\gamma',\vartheta}[\mu',\alpha']|
\le |\varepsilon_\mu|\|\psi'\|_\infty W_2(\mu_t,\mu'_t)+|\varepsilon_\alpha||\alpha_t-\alpha'_t|+\varepsilon_Q|\gamma-\gamma'|,
\]
which verifies the cross-coupling condition (vi). These selections are continuous and adapted, hence predictable, and absolutely continuous; continuity of $\mu^P$ allows one to use either $\mu_t$ or $\mu_{t-}$ in \ref{ass:paramclass}.

Choose
\[
h^q(\mu,\alpha)=q+\ell_\mu m(\mu)+\ell_\alpha \alpha,
\qquad c^q(\mu,\alpha)=c_0+c_1\tanh(q),
\]
\[
\phi^q(s)=\kappa_+^q s^++\kappa_-^q s^-+\frac{\alpha_A}{2}s^2,
\qquad
\kappa_\pm^q=\underline\kappa_\pm+\varepsilon_\pm\tanh(q),
\]
with $\alpha_A>0$ and $|\varepsilon_\pm|<\underline\kappa_\pm$, so that
$\underline\kappa=\min_\pm(\underline\kappa_\pm-|\varepsilon_\pm|)>0$ and
$\overline\kappa=\max_\pm(\underline\kappa_\pm+|\varepsilon_\pm|)$. Finally, take
\[
\dd X_t^P=b_0(X_t^P,m(\mu_t^P),\alpha_t)\dd t+\sigma_0\dd B_t^P+\int_E\beta_0(e)\widetilde N^P(\dd t,\dd e),
\]
where $\sigma_0\ne0$, $b_0$ is Lipschitz, and $\int_E|\beta_0(e)|^2\Pi(\dd e)<\infty$.

The Hausdorff distance between $[l,u]$ and $[l',u']$ equals $\max(|l-l'|,|u-u'|)$, hence
\[
d_H(\Q_\gamma(\mu,\alpha),\Q_{\gamma'}(\mu',\alpha'))
\le |\mathfrak q(\mu,\alpha)-\mathfrak q(\mu',\alpha')|+\varepsilon_Q|\gamma-\gamma'|.
\]
Together with $|\mathfrak q(\mu,\alpha)-\mathfrak q(\mu',\alpha')|\le|\varepsilon_\mu|\|\psi'\|_\infty W_2(\mu,\mu')+|\varepsilon_\alpha||\alpha-\alpha'|$, this shows that the cuts are compact, nonempty, nested, and continuous, hence measurable in $(\mu,\alpha)$. Moreover,
$|(\partial\phi^q)^0(z)|\le\alpha_A|z|+\overline\kappa$; $h^q,c^q$ satisfy the required bounds, $(q,z)\mapsto\phi^q(z)$ is continuous and convex in $z$, the resolvents are continuous, and the minimal selection is Borel measurable branch by branch. Thus \ref{ass:operator} and \ref{ass:measop} hold, while \ref{ass:finitestat} follows from $\widehat h(q,\alpha,m)=q+\ell_\mu m+\ell_\alpha\alpha$.

A concrete random control is
\[
\alpha_t^P=\alpha_0+\int_0^t\tanh(X_{s-}^P)\dd s,
\qquad P\text{-a.s., for every }t\in[0,T],
\]
with deterministic $\alpha_0$ and, for instance, $b_0(x,m,\alpha)=b_xx+b_mm+b_\alpha\alpha$. The augmented system $(X^P,\alpha^P)$ is Lipschitz since $|m(\mu)-m(\mu')|\le\|\psi'\|_\infty W_2(\mu,\mu')$; Theorem~\ref{thm:forward} applies to it, after which $\alpha$ is regarded as exogenous in the triangular system. Moreover,
\[
|\alpha_t^P|\le|\alpha_0|+T,
\qquad
|\dot\alpha_t^P|\le1\quad\dd t\text{-a.e.},
\]
so $C_{\dot\alpha}\le T$. This control is continuous, adapted, predictable, and belongs to $\mathcal A_2^P$; its bounds are uniform in $P$. The continuous martingale part of $X+h^q(\mu,\alpha)$ remains $\sigma_0B^P$, so $\underline\sigma^2=\sigma_0^2\underline a$ for $n=1$. A statistic with genuine law dependence is obtained, for example, by taking $\psi(x)=\tanh(x)$ and $\ell_\mu\ne0$.

Finally, one may choose
\begin{align*}
f(t,x,y,z,u,\mu,\eta,\alpha)
&=f_xx+f_yy+f_\alpha\alpha+f_\mu\int v\,\mu(\dd v)+f_\eta\int v\,\eta(\dd v),\\
g(x,\mu)&=g_xx+g_\mu\int v\,\mu(\dd v),
\end{align*}
with constant real coefficients. Since
$|\int v\,\mu(\dd v)-\int v\,\mu'(\dd v)|\le W_2(\mu,\mu')$, Assumption~\ref{ass:B} holds uniformly over every family sharing the preceding constants and bounds.
\section{Conclusion}\label{sec:conclusion}

The central difficulty is the jump of the selected monotone source when the forward state crosses an interface that moves with the selection, the law, and the control. Controlling how long the state remains near that interface makes the discontinuity quantitatively manageable. For a fixed selection the system is well posed, while compact selection classes generate compact, nested, left-continuous solution cuts. The second occupation moment is the key estimate behind stability with respect to the selection, the fuzzy level, and the data; for separable finite-branch potentials it also leads to the sharp $O(\lambda)$ Yosida rate uniformly over $\PP$. We further construct intrinsic selection systems for center--radius cuts in arbitrary dimension and, in the scalar case when (S4) is available, for convex feedback cuts.

The two uncertainty layers remain separate throughout the analysis: $q$ represents fuzzy uncertainty, whereas $P\in\PP$ represents model uncertainty. We do not replace either layer by a deterministic representative. Several natural extensions therefore remain outside the scope of the paper. A decision-theoretic treatment, after propagation of chaos, would require robust control or Pontryagin-type arguments; quasi-sure aggregation in the spirit of second-order BSDEs, quantitative Hausdorff stability for general convex cuts when $d_q\ge2$, and fuzzy particle approximations would require additional ideas as well.

\section*{Author contributions (CRediT)}
\textbf{Kayembe Tshiswaka Tcheick}: Conceptualization. \textbf{Mabela Matendo Rostin}: Formal analysis. \textbf{Kabengele Mpunga Yannick}: Writing. \textbf{Bosonga Bofeki Jean-Pierre}: Supervision.

\section*{Disclosure statement}
The authors report there are no competing interests to declare.

\section*{Funding}
This research received no specific grant from funding agencies in the public, commercial, or not-for-profit sectors.

\section*{Data availability statement}
This is a theoretical study and does not rely on any data set.

\end{document}